\newtheorem{theorem}{Theorem}[section]
\newtheorem{remark}{Remark}[section]
\newtheorem{lemma}[theorem]{Lemma}
\newtheorem{proposition}[theorem]{Proposition}
\newtheorem{define}{Definition}[section]
\begin{document}
\title[Non-uniqueness in law for 3D Navier-Stokes equations]{Non-uniqueness in law of three-dimensional Navier-Stokes equations diffused via a fractional Laplacian with power less than one half}
 
\author{Kazuo Yamazaki}  
\address{Texas Tech University, Department of Mathematics and Statistics, Lubbock, TX, 79409-1042, U.S.A.; Phone: 806-834-6112; Fax: 806-742-1112; E-mail: (kyamazak@ttu.edu)}
\date{}
\maketitle

\begin{abstract}
Non-uniqueness of three-dimensional Euler equations and Navier-Stokes equations forced by random noise, path-wise and more recently even in law, have been proven by various authors. We prove non-uniqueness in law of the three-dimensional Navier-Stokes equations forced by random noise and diffused via a fractional Laplacian that has power between zero and one half. The solution we construct has H$\ddot{\mathrm{o}}$lder regularity with a small exponent rather than Sobolev regularity with a small exponent. For the power sufficiently small, the non-uniqueness in law holds at the level of Leray-Hopf regularity. In particular, in order to handle transport error, we consider phase functions convected by not only a mollified velocity field but a sum of that with a mollified Ornstein-Uhlenbeck process if noise is additive and a product of that with a mollified exponential Brownian motion if noise is multiplicative. 
\vspace{5mm}

\textbf{Keywords: convex integration; fractional Laplacian; Navier-Stokes equations; non-uniqueness; random noise.}
\end{abstract}
\footnote{2010MSC : 35A02; 35Q30; 35R60}

\section{Introduction}\label{Introduction}

\subsection{Motivation from physics and mathematics}\label{Motivation from physics and mathematics}
Various ways to describe dissipation have been proposed by atmospheric scientists and geophysicists (e.g., frictional dissipation in \cite{PBH00}). In particular, in models such as surface quasi-geostrophic equations, diffusion in a form of a fractional Laplacian appears naturally (e.g., \cite{C02}); specifically, $(-\Delta)^{m}$ for $m \in \mathbb{R}_{+}$ as a Fourier operator with a symbol $\lvert \xi \rvert^{2m}$ so that $\widehat{(-\Delta)^{m} f}(\xi) = \lvert \xi \rvert^{2m}\hat{f}(\xi)$ for any integrable function $f$ on $\mathbb{R}^{n}$ or $\mathbb{T}^{n} = [-\pi, \pi]^{n}, n\in\mathbb{N} \triangleq \{1, 2, \hdots \}$. Introduced for the first time by Lions \cite[p. 263]{L59} who subsequently in \cite[Equ. (6.164) on p. 97]{L69} claimed the uniqueness of its Leray-Hopf weak solution (see Definition \ref{weak and Leray-Hopf}) when $m\geq \frac{1}{2} + \frac{n}{4}$, the generalized Navier-Stokes (GNS) equations \eqref{deterministic GNS} that has diffusion in the form of $-(-\Delta)^{m}$ (so that it recovers the classical NS equations when $m = 1$) has captured the interests of mathematicians for more than sixty years. Except logarithmic improvements in the case of smooth initial data that was initiated by Tao \cite{T09} (also \cite{BMR14}), Lions' threshold of $\frac{1}{2} + \frac{n}{4}$ remains unbroken. On the other hand, non-uniqueness of Leray-Hopf weak solutions to the GNS equations \eqref{deterministic GNS} when $m =1$ was famously conjectured by Ladyzhenskaya \cite{L67} and remains open. Analogous statements may be made for the NS equations forced by random noise that have received much attention from researchers for more than half a century since the work of Novikov \cite{N65} (e.g.,  \cite{CGV14} for the GNS equations forced by random noise). In particular, failure of path-wise uniqueness of Leray-Hopf weak solution to the GNS equations forced by random noise \eqref{stochastic GNS} with exponent $m= 1$ remains open. This research direction concerning non-uniqueness has received special attention from the general community of stochastic partial differential equations and significant progress has been made for a certain heat equation (e.g., \cite{M96, MMP14, MS93}); however, extending the techniques developed therein to the GNS equations that are non-linear and non-local seems to be completely out of reach. In this manuscript, we prove non-uniqueness, not only path-wise but even in law (see Definition \ref{path-wise and in law}), of the GNS equations with exponent $m \in (0, \frac{1}{2})$ forced by random noise \eqref{stochastic GNS} at the level of spatial regularity $C_{x}^{\gamma}$, $\gamma > 0$ sufficiently small (see Theorems \ref{Theorem 2.1}-\ref{Theorem 2.4}). Consequences of our results include non-uniqueness in law of the GNS equations forced by random noise \eqref{stochastic GNS} at the level of Leray-Hopf regularity when $m$ is sufficiently small (see Remark \ref{Remark 2.1}). In what follows, we elaborate to make these statements precise. 

\subsection{Previous works}\label{Previous works}
We denote $\partial_{t}\triangleq \frac{\partial}{\partial t}$, velocity and pressure fields, and viscosity by 
$u: \mathbb{R}_{+} \times \mathbb{T}^{n} \mapsto \mathbb{R}^{n}, \pi: \mathbb{R}_{+} \times \mathbb{T}^{n} \mapsto \mathbb{R}$, and $\nu \geq 0$, respectively. Then the GNS equations read 
\begin{equation}\label{deterministic GNS}
\partial_{t} u + (u\cdot\nabla)u + \nabla \pi + \nu (-\Delta)^{m} u = 0, \hspace{3mm} \nabla\cdot u = 0, \hspace{3mm} \text{ for } t > 0. 
\end{equation} 
The case $m = 1, \nu > 0$ gives the classical NS equations while $\nu = 0$ the Euler equations. 
\begin{define}\label{weak and Leray-Hopf}
(E.g., \cite[Def. 3.5 and 3.6]{BV19b}) Suppose $\nu > 0$. If $u(t, \cdot)$ is weakly divergence-free, mean-zero, satisfies \eqref{deterministic GNS} distributionally and $\lVert u(t) \rVert_{L_{x}^{2}}^{2} + 2\nu \lVert u \rVert_{L_{t}^{2} \dot{H}_{x}^{m}}^{2} \leq \lVert u(0) \rVert_{L_{x}^{2}}^{2}$ for any $t \in [0,T]$, then $u \in C_{\text{weak}}^{0} ([0,T]; L_{x}^{2}) \cap L^{2}([0,T]; \dot{H}_{x}^{m})$ is a Leray-Hopf weak solution of \eqref{deterministic GNS}. On the other hand, if $u(t, \cdot)$ is weakly divergence-free, mean-zero, and satisfies \eqref{deterministic GNS} distributionally for any $t \in [0,T]$, then $u \in C_{T}^{0} L_{x}^{2}$ is a weak solution of \eqref{deterministic GNS}. 
\end{define}
The global existence of a Leray-Hopf weak solution to \eqref{deterministic GNS} in case $m =1$ is classical \cite{H51, L34}, while the case $m \in (0,1)$ can be found in \cite[The. 1.1]{CDD18}. Next, let us consider 
\begin{equation}\label{stochastic GNS}
du + [\nu (-\Delta)^{m} u + \text{div} (u\otimes u) + \nabla \pi] dt = F(u) dB, \hspace{3mm} \nabla\cdot u = 0, \hspace{3mm} \text{ for } t > 0. 
\end{equation} 

\begin{define}\label{path-wise and in law}
Suppose that $F$ is a certain operator (to be stated precisely in Section \ref{Preliminaries}) and $B$ is a Brownian motion. Then the existence of a Leray-Hopf weak solution to \eqref{stochastic GNS} in case $m = 1$, i.e., at the regularity of $L_{t}^{\infty} L_{x}^{2} \cap L_{t}^{2} \dot{H}_{x}^{1}$ and the energy inequality, was proven in \cite{FR08} (see \cite[Def. 3.3]{FR08}); cf. \cite{FG95} in which the existence of a weak solution to \eqref{stochastic GNS} in case $m = 1$ was proven but without the energy inequality (see \cite[Def. 3.1]{FG95}). If for any solution $(u, B)$ and $(\tilde{u}, \tilde{B})$ with same initial distributions, defined potentially on different filtered probability spaces, $\mathcal{L}(u) = \mathcal{L} (\tilde{u})$ holds, where $\mathcal{L}(v)$ represents the law of $v$, then uniqueness in law holds for \eqref{stochastic GNS}. On the other hand, if for any solutions $(u,B)$ and $(\tilde{u}, B)$ with common initial data defined on same probability space, $u(t) = \tilde{u}(t)$ for all $t$ with probability one, then path-wise uniqueness holds for \eqref{stochastic GNS}. While uniqueness in law does not imply path-wise uniqueness (see \cite[Exa. 2.2]{C03}), Yamada-Watanabe theorem implies the converse. Moreover, if a solution is adapted to the canonical right continuous filtration generated by $B$ and augmented by all the negligible sets, then it is a strong solution. By \cite[The. 3.2]{C03}, existence of a strong solution and uniqueness in law together imply path-wise uniqueness. 
\end{define} 
We point out that a typical proof of path-wise uniqueness, when possible, is similar to the deterministic case. For such a reason and more, a general consensus has been to devote effort to prove uniqueness in law for \eqref{stochastic GNS} (see \cite[p. 878--879]{DD03}), until the recent breakthrough developments of convex integration, which we review next. 

Gromov \cite[Par. 2.4]{G86} considered the $C^{1}$ isometric embedding theorem due to Nash \cite{N54} and Kuiper \cite{K55} as a primary example of homotopy-principle and developed convex integration technique. M$\ddot{\mathrm{u}}$ller and $\check{\mathrm{S}}$verak applied this technique to prove the existence of unexpected solutions to various equations in \cite{MS98}, and extended it to Lipschitz mappings in \cite{MS03}. De Lellis and Sz$\acute{\mathrm{e}}$kelyhidi Jr. \cite{DS09} extended the technique and proved the global existence of a weak solution to $n$-dimensional ($n$D) Euler equations, $n \geq 2$, in $L_{t,x}^{\infty}$ with compact support in space-time, extending the previous works of \cite{S93, S97} in case of $n = 2$ and regularity only in $L_{t,x}^{2}$. Improving the convex integration technique was further motivated in effort to prove the negative direction of Onsager's conjecture \cite{O49} (see \cite{CET94, E94} for positive direction) and wealth of remarkable results flourished (e.g., \cite{BDIS15, DS10, DS12, DS13}) until Isett \cite{I18} provided its complete resolution. Although the convex integration technique was limited to the Euler equations up to this point, Buckmaster and Vicol \cite{BV19a} introduced new tool called intermittent Beltrami flows and proved the non-uniqueness of 3D NS equations.  This inspired many variations: non-uniqueness of 3D GNS equations for $m \in [1, \frac{5}{4})$ \cite{LT20} and furthermore in the class of weak solutions with bounded kinetic energy, integrable vorticity that are smooth outside a fractal set of singular times with Hausdorff dimension strictly less than one \cite{BCV18}; non-uniqueness of 2D GNS equations for $m \in (0,1)$ \cite{LQ20} (and Boussinesq system \cite{LTZ20}). 

The implications of convex integration reached the stochastic case as well: non-uniqueness path-wise of compressible Euler \cite{BFH20, CFF19} (\cite{HZZ20} for non-uniqueness in law); non-uniqueness in law of stochastic NS equations \eqref{stochastic GNS} with $n = 3, m = 1$ in \cite{HZZ19}, $n = 3, m \in (\frac{13}{20}, \frac{5}{4})$ in \cite{Y20a}, $n = 2, m \in (0, 1)$ in \cite{Y20c} (and stochastic Boussinesq system in \cite{Y21a}). A natural question is whether such non-uniqueness results can be extended to the case $n = 3, m \in (0, \frac{13}{20}]$. The heart of the matter in the proof is the careful adaptation of convex integration technique to the stochastic case and upon a close inspection, it turns out that the convex integration part of \cite{Y20a} cannot be extended to the case $m \leq \frac{13}{20}$. The previous works in the deterministic case (e.g., \cite[The. 1.5]{BCV18} and \cite[The. 1]{LT20}) also required $m \geq 1$. This direction of research was partially explored by the authors in \cite{CDD18, D19} who proved the non-uniqueness of Leray-Hopf weak solution to the 3D GNS equations for $m \in (0, \frac{1}{3})$ (see \cite[The. 1.2]{CDD18} and \cite[The. 1.2]{D19}) while commenting without providing a full proof that appropriate modifications of their arguments can prove the non-uniqueness of weak solution for $m \in (0, \frac{1}{2})$ (see \cite[Cor. 2.3]{CDD18}, \cite[p. 337]{D19}). While this raises hope that appropriately adapting the proofs within \cite{CDD18, D19} to the probabilistic settings of \cite{HZZ19} can lead to the non-uniqueness in law of the 3D stochastic GNS equations \eqref{stochastic GNS} with $m \in (0, \frac{1}{2})$, unfortunately, major obstacles arise, which the author has not been able to resolve directly. The main iteration schemes within \cite{CDD18, D19} consist of of an estimate of a convective derivative, and therefore a time derivative, of Reynolds stress, e.g., $\lVert \partial_{t} \mathring{R}_{q+1} + v_{q+1} \cdot \nabla \mathring{R}_{q+1} \rVert_{0}$ in \cite[Equ. (68)]{CDD18} (see also \cite[Equ. (15)]{CDD18}, \cite[Equ. (5.43)]{D19}). The respective Reynolds stress $\mathring{R}_{q+1}$ for the stochastic GNS equations \eqref{stochastic GNS} in an additive noise case for example is given in \eqref{estimate 115} that consists of $R_{\text{com2}}$ defined in \eqref{estimate 51} which in turn consists of an Ornstein-Uhlenbeck process $z$ that is only in $C_{t}^{\alpha}$ for $\alpha < \frac{1}{2}$ by Proposition \ref{[Pro. 4.4, Y20a]} (see \cite[Rem. 1.2]{Y20c} for similar explanation). Therefore, modifying the arguments in \cite{CDD18, D19} suitably to the stochastic case seems  very difficult. 

\section{Statement of main results}
Despite the obstacles aforementioned in Subsection \ref{Previous works}, we obtain the following results; for simplicity we assume hereafter that $\nu = 1$ in \eqref{stochastic GNS} and denote an adjoint operator by an asterisk. 
\begin{theorem}\label{Theorem 2.1}
Suppose that $n = 3, m \in (0, \frac{1}{2}), F \equiv 1, B$ is a $GG^{\ast}$-Wiener process on $(\Omega, \mathcal{F}, \textbf{P})$, and $\text{Tr} ((-\Delta)^{\frac{5}{2} - m + 2 \sigma} GG^{\ast}) < \infty$ for some $\sigma > 0$. Then given $T > 0, K > 1$, and $\kappa \in (0,1)$, there exist $\gamma \in (0,1)$ and a $\textbf{P}$-almost surely (a.s.) strictly positive stopping time $\mathfrak{t}$ such that 
\begin{equation}\label{estimate 1}
\textbf{P} ( \{ \mathfrak{t} \geq T \}) > \kappa 
\end{equation} 
and the following is additionally satisfied. There exists an $(\mathcal{F}_{t})_{t\geq 0}$-adapted process $u$ that is a weak solution to \eqref{stochastic GNS} starting from a deterministic initial condition $u^{\text{in}}$, satisfies 
\begin{equation}\label{estimate 2}
\text{esssup}_{\omega \in \Omega} \lVert u(\omega) \rVert_{C_{\mathfrak{t}} C_{x}^{\gamma}} < \infty, 
\end{equation} 
and on the set $\{\mathfrak{t} \geq T \}$, 
\begin{equation}\label{estimate 7}
\lVert u(T) \rVert_{L_{x}^{2}} > K \lVert u^{\text{in}} \rVert_{L_{x}^{2}} + K ( T \text{Tr} (GG^{\ast} ))^{\frac{1}{2}}. 
\end{equation} 
\end{theorem}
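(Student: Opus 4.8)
The plan is to construct the claimed solution $u$ by recasting \eqref{stochastic GNS} as a random PDE and running a convex integration scheme on a random time interval, the novelty — announced in the abstract — being that the building blocks are advected by the sum of a mollified velocity field and a mollified Ornstein--Uhlenbeck process. First I would let $z$ solve the linear problem $dz + (-\Delta)^m z\, dt = dB$ with $z(0) = 0$, so that $z(t) = \int_0^t e^{-(t-s)(-\Delta)^m}\,dB(s)$; by Proposition \ref{[Pro. 4.4, Y20a]} and the hypothesis $\mathrm{Tr}((-\Delta)^{\frac52 - m + 2\sigma} GG^*) < \infty$, the process $z$ is $\mathbf{P}$-a.s.\ continuous in time with values in a high Hölder space (in particular $z \in C_t C_x^1$ and better, which is what the scheme needs) and belongs to $C_t^{1/2-\delta} C_x$ for small $\delta > 0$. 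Putting $v := u - z$, the field $v$ (with suitably modified pressure) must solve $\partial_t v + (-\Delta)^m v + \mathrm{div}((v+z)\otimes(v+z)) + \nabla p = 0$, $\nabla \cdot v = 0$, which is the equation convex integration will act on.

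Next I would fix $L > 1$ and set
\begin{equation*}
\mathfrak{t} = \mathfrak{t}_L := \inf\{ t \geq 0: \|z(t)\|_{C_x^{N}} \geq L \} \wedge \inf\{ t \geq 0: \|z\|_{C_t^{1/2-\delta}C_x} \geq L \} \wedge L,
\end{equation*}
with $N = N(m)$ large; since $z(0) = 0$ and $z$ has a.s.\ continuous trajectories in these norms, $\mathfrak{t}_L > 0$ a.s.\ and $\mathbf{P}(\mathfrak{t}_L \geq T) \uparrow 1$ as $L \to \infty$, so I choose $L$ so large that \eqref{estimate 1} holds. The core is then an inductive proposition furnishing $(\mathcal{F}_t)_{t\geq 0}$-adapted pairs $(v_q, \mathring{R}_q)$ solving the Reynolds-defect system $\partial_t v_q + (-\Delta)^m v_q + \mathrm{div}((v_q+z)\otimes(v_q+z)) + \nabla p_q = \mathrm{div}\,\mathring{R}_q$ on $[0,\mathfrak{t}_L]$, obeying geometric-in-$q$ bounds on $\|v_q\|_{C^1}$ and $\|\mathring{R}_q\|_{C^0}$ in terms of frequencies $\lambda_q = a^{b^q}$ and amplitudes $\delta_q \sim \lambda_q^{-2\beta}$, with $v_q(0) = u^{\mathrm{in}}$ for every $q$ (so $u(0) = v(0) + z(0) = u^{\mathrm{in}}$) and the kinetic energy of $v_q$ shadowing a prescribed profile $e(t)$ with $e(0)$ just above $\|u^{\mathrm{in}}\|_{L^2_x}^2$ and $e(T)$ as large as needed. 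The perturbation $w_{q+1} := v_{q+1} - v_q$ is built from Hölder-type concentrated Mikado/Beltrami building blocks — which is why the output lies in $C_x^\gamma$ for small $\gamma$ rather than in a small Sobolev space — with amplitudes obtained from the geometric lemma applied to $\rho(\mathrm{Id} - \mathring{R}_q/\rho)$ and phase functions $\phi_{(\xi)}\circ\Phi_\ell$, where $\Phi_\ell$ is the flow of the mollified drift $v_\ell + z_\ell$, the mollification being at a scale between $\lambda_q^{-1}$ and $\lambda_{q+1}^{-1}$; temporal cutoffs confine $w_{q+1}$ away from $t=0$ to preserve the datum, and all objects inherit adaptedness from $v_q$ and $z$.

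Third, I would estimate the new stress $\mathring{R}_{q+1}$ via the familiar split into transport, oscillation, Nash and corrector errors, plus the dissipative error from $(-\Delta)^m w_{q+1}$ and the commutator errors $R_{\mathrm{com}}$ arising from replacing $z$ by $z_\ell$ and from the cross-terms $v\otimes z + z\otimes v + z\otimes z$ — the latter being precisely the merely-$C_t^{1/2-}$ terms (the analogue of $R_{\mathrm{com2}}$) identified as the obstruction in Subsection \ref{Previous works}. Two points are decisive: advecting the phases by $v_\ell + z_\ell$ rather than by $v_\ell$ alone renders the material derivative $(\partial_t + (v+z)\cdot\nabla)w_{q+1}$ small, so the transport error is controlled despite $z$ being only $C_t^{1/2-}$; and the fractional diffusion contributes an error of order $\sim \lambda_{q+1}^{2m}$ times the amplitude, which is absorbable exactly because $m < \frac12$ — the quantitative counterpart of \cite[Cor.~2.3]{CDD18}. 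Since every $z$-dependent constant on $[0,\mathfrak{t}_L]$ is bounded by a power of $L$, choosing $a$ (hence $\lambda_0$) large depending on $L, T, e, b, \beta$ closes the induction.

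Finally, summing the bounds gives $v_q \to v$ in $C_{\mathfrak{t}_L} C_x^\gamma$ for some $\gamma \in (0,1)$ and $\mathring{R}_q \to 0$ in $C_{\mathfrak{t}_L} L^1_x$, so $u := v + z$ is an $(\mathcal{F}_t)_{t\geq 0}$-adapted weak solution of \eqref{stochastic GNS} on $[0,\mathfrak{t}]$ with $u(0) = u^{\mathrm{in}}$ (concatenated, if one insists on a global solution, with an arbitrary weak solution after $\mathfrak{t}$); \eqref{estimate 2} holds because the limiting $C_x^\gamma$ bound depends only on $L$, and on $\{\mathfrak{t}\geq T\}$ one has $\|v(T)\|_{L^2_x}^2 \approx e(T)$ and $\|z(T)\|_{L^2_x}\lesssim L$, so choosing $e(T) > (K\|u^{\mathrm{in}}\|_{L^2_x} + K(T\,\mathrm{Tr}(GG^*))^{1/2} + L + 1)^2$ yields \eqref{estimate 7}. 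I expect the genuine difficulty to be the transport and commutator analysis for the $z$-advected phases — bounding $\nabla\Phi_\ell$, $(\nabla\Phi_\ell)^{-1}$ and their time derivatives when the drift has only $C_t^{1/2-}$ regularity — carried out simultaneously with keeping the dissipative error negligible for $m$ close to $\frac12$; it is this simultaneous balance that the deterministic references only sketched.
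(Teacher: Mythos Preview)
Your proposal captures the essential architecture and, crucially, the one genuinely new idea: advecting the phases by the mollified drift $v_{l}+z_{l}$ so that the transport term $(z_{l}\cdot\nabla)w_{q+1}$, which would otherwise be fatal (as the paper stresses in Remark~\ref{transport error}), is absorbed into the material derivative. You also correctly isolate the dissipation error as the place where $m<\tfrac12$ enters.

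Where you diverge from the paper is tactical, in two linked ways. First, the paper does \emph{not} track a prescribed energy profile $e(t)$; there is no inductive hypothesis on $\|v_q(t)\|_{L^2_x}$ at all. Instead it launches the iteration from an explicit $v_0(t,x)=(2\pi)^{-3/2}L^2e^{2Lt}(\sin x^3,0,0)^T$ with built-in exponential growth governed by $M_0(t)=L^4e^{4Lt}$; the inductive bounds \eqref{[Equ. (40), Y20a]} are purely $C_{t,x}$ and $C_{t,x}^1$, and the large $L^2$ norm at time $T$ in \eqref{estimate 7} is read off \emph{a posteriori} from $\|v_0(T)\|_{L^2}=M_0(T)^{1/2}/\sqrt2$ together with the $C_{t,x}$-smallness of $v-v_0$ (see \eqref{estimate 10}--\eqref{estimate 15}). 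Second, and consequently, the paper does \emph{not} fix $v_q(0)=u^{\mathrm{in}}$ via temporal cutoffs near $t=0$. The perturbations are allowed to modify the initial datum; what is preserved inductively is only that $v_q(0,x)$ and $\mathring R_q(0,x)$ remain \emph{deterministic} (guaranteed because the temporal mollifier $\varphi_l$ is supported in $\mathbb{R}_+$ and $z(0)=0$), and $u^{\mathrm{in}}$ is then \emph{defined} as the limit $v(0)$. This matches the theorem statement, where $u^{\mathrm{in}}$ is an output of the construction, not an input.

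Your route---energy profile plus time-localized perturbations preserving a fixed datum---is the De~Lellis--Sz\'ekelyhidi/BDIS15 variant and is viable in principle, but it carries more bookkeeping (an extra energy hypothesis, the gluing near $t=0$) for no gain here, since the theorem does not require a prescribed initial condition. The paper's choice, following \cite[Sec.~5]{BV19b}, is leaner: three $C_{t,x}$-type inductive bounds, no energy iteration, and the growth is hard-wired into $v_0$ and $M_0$. If you rewrite along those lines your proof will align exactly with the paper's.
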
 
\begin{theorem}\label{Theorem 2.2}
Suppose that $n = 3, m \in (0, \frac{1}{2}), F \equiv 1, B$ is a $GG^{\ast}$-Wiener process on $(\Omega, \mathcal{F}, \textbf{P})$, and $Tr ((-\Delta)^{\frac{5}{2} - m + 2 \sigma} GG^{\ast}) < \infty$ for some $\sigma > 0$. Then non-uniqueness in law holds for \eqref{stochastic GNS} on $[0,\infty)$. Moreover, for all $T > 0$ fixed, non-uniqueness in law holds for \eqref{stochastic GNS} on $[0,T]$. 
\end{theorem}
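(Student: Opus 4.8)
The plan is to deduce non-uniqueness in law from Theorem \ref{Theorem 2.1} by placing the solution constructed there side by side with a Leray-Hopf solution issued from the \emph{same} deterministic datum. Fix $T > 0$. Since the process $u$ of Theorem \ref{Theorem 2.1} lies in $C_{\mathfrak{t}} C_x^\gamma$ by \eqref{estimate 2}, its initial value $u^{\text{in}} = u(0)$ is a deterministic, divergence-free, mean-zero element of $L_x^2$; hence, by a routine Galerkin and stochastic compactness argument in the spirit of \cite{FR08} (now with $m \in (0, \frac{1}{2})$, $F \equiv 1$ and the stated trace condition on $GG^\ast$), there exists on some stochastic basis a weak solution $(\tilde u, \tilde B)$ of \eqref{stochastic GNS} with $\mathcal{L}(\tilde u(0)) = \delta_{u^{\text{in}}}$ that obeys the energy inequality. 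Applying Itô's formula to $\lVert \tilde u(t) \rVert_{L_x^2}^2$, using $\langle \text{div}(\tilde u \otimes \tilde u) + \nabla \tilde \pi, \tilde u \rangle_{L_x^2} = 0$ for the divergence-free field $\tilde u$ and the Itô correction $\text{Tr}(GG^\ast)$, and taking expectations yields
\begin{equation*}
\mathbb{E} \big[ \lVert \tilde u(T) \rVert_{L_x^2}^2 \big] \leq \lVert u^{\text{in}} \rVert_{L_x^2}^2 + T \, \text{Tr}(GG^\ast) .
\end{equation*}

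Next I would apply Theorem \ref{Theorem 2.1} with this $T$ and the given $\kappa \in (0,1)$, choosing the free constant $K > 1$ large enough that $\kappa K^2 > 1$, and let $u$, $\mathfrak{t}$ be the resulting weak solution and a.s. positive stopping time. On the event $\{\mathfrak{t} \geq T\}$, estimate \eqref{estimate 7} together with $(a+b)^2 \geq a^2 + b^2$ for $a, b \geq 0$ gives $\lVert u(T) \rVert_{L_x^2}^2 > K^2 ( \lVert u^{\text{in}} \rVert_{L_x^2}^2 + T \, \text{Tr}(GG^\ast) )$, so by \eqref{estimate 1},
\begin{equation*}
\mathbb{E} \big[ \lVert u(T) \rVert_{L_x^2}^2 \big] \geq \mathbb{E} \big[ \mathbf{1}_{\{\mathfrak{t} \geq T\}} \lVert u(T) \rVert_{L_x^2}^2 \big] > \kappa K^2 \big( \lVert u^{\text{in}} \rVert_{L_x^2}^2 + T \, \text{Tr}(GG^\ast) \big) \geq \lVert u^{\text{in}} \rVert_{L_x^2}^2 + T \, \text{Tr}(GG^\ast) ,
\end{equation*}
the last inequality using $\kappa K^2 > 1$ and $\lVert u^{\text{in}} \rVert_{L_x^2}^2 + T \, \text{Tr}(GG^\ast) > 0$ -- valid whenever the noise is nondegenerate, the remaining purely deterministic case being settled by comparison with the vanishing solution. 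Now $u$ and $\tilde u$ are both weak solutions of \eqref{stochastic GNS} on $[0,T]$ with common initial distribution $\delta_{u^{\text{in}}}$, and $v \mapsto \lVert v(T) \rVert_{L_x^2}^2$ (set to $+\infty$ off $L_x^2$) is a Borel -- indeed lower semicontinuous -- functional on the path space common to the two laws; consequently $\mathcal{L}(u) = \mathcal{L}(\tilde u)$ would force $\mathbb{E}[\lVert u(T) \rVert_{L_x^2}^2] = \mathbb{E}[\lVert \tilde u(T) \rVert_{L_x^2}^2]$, contradicting the two displays above. This gives non-uniqueness in law on $[0,T]$ for every $T > 0$.

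To obtain the statement on $[0,\infty)$, the Leray-Hopf solution $\tilde u$ is (or extends to) a global probabilistically weak solution, while $u$ is extended to $[0,\infty)$, if it is not already defined there, by concatenating it at time $T$ with a Leray-Hopf solution restarted from $u(T)$ and invoking the disintegration / measurable-selection (gluing) procedure of \cite{HZZ19} to check that the pasted process is again a probabilistically weak solution of \eqref{stochastic GNS}. Since the restrictions of the two global laws to $[0,T]$ already differ -- witnessed by $v \mapsto \lVert v(T) \rVert_{L_x^2}^2$ -- the global laws differ, which yields non-uniqueness in law on $[0,\infty)$.

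I expect the genuine difficulty here to be bookkeeping of a measure-theoretic nature rather than any new estimate: one has to confirm that the object delivered by convex integration in Theorem \ref{Theorem 2.1} is a probabilistically weak solution in exactly the sense under which the Leray-Hopf solution is a solution, so that the uniqueness-in-law notion of Definition \ref{path-wise and in law} applies to the pair $(u, B)$, $(\tilde u, \tilde B)$, and that the concatenation in the $[0,\infty)$ step preserves the solution property; the hard analytic content has already been packed into Theorem \ref{Theorem 2.1}.
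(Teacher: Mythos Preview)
Your overall strategy---comparing $\mathbb{E}[\lVert \cdot(T)\rVert_{L^2}^2]$ for the convex-integration solution against a Galerkin/Leray--Hopf solution from the same deterministic datum, with $\kappa K^2>1$---is exactly what the paper does, and the arithmetic is fine. The gap is in where and when you extend $u$.

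The process $u$ produced by Theorem~\ref{Theorem 2.1} is a solution only on the random interval $[0,\mathfrak t]$ with $\mathfrak t=T_L$; note that \eqref{estimate 2} is a $C_{\mathfrak t}C_x^\gamma$ bound and \eqref{estimate 7} is asserted only on $\{\mathfrak t\ge T\}$. Since Theorem~\ref{Theorem 2.1} guarantees merely $\textbf{P}(\{\mathfrak t\ge T\})>\kappa$ with $\kappa<1$, the event $\{\mathfrak t<T\}$ may well have positive probability, and there $u(T)$ is undefined. Consequently your claim that ``$u$ and $\tilde u$ are both weak solutions of \eqref{stochastic GNS} on $[0,T]$'' is unjustified, and your proposed remedy for the global case---``concatenating it at time $T$ with a Leray--Hopf solution restarted from $u(T)$''---cannot be executed on $\{\mathfrak t<T\}$. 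The gluing must happen at the \emph{random} stopping time, not at the fixed $T$.

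The paper carries this out on the canonical path space via the martingale-solution framework (Definitions~\ref{[Def. 4.1, Y20a]}--\ref{[Def. 4.2, Y20a]}). Proposition~\ref{[Proposition 3.7, HZZ19]} shows $P=\mathcal L(u)$ is a martingale solution on $[0,\tau_L]$, where $\tau_L$ is defined intrinsically on $\Omega_0$ by \eqref{[Equation (3.13b), HZZ19]} and satisfies $\tau_L(u)=T_L$ $\textbf{P}$-a.s.\ (see \eqref{[Equ. (184), Y20a]}). Proposition~\ref{[Proposition 3.8, HZZ19]} then shows the glued measure $P\otimes_{\tau_L}R$ of Lemma~\ref{[Lem. 4.3, Y20a]} is a martingale solution on $[0,\infty)$. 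Because $P\otimes_{\tau_L}R$ coincides with $P$ on $\Omega_{0,\tau_L}$, the bound \eqref{estimate 7} survives on $\{\tau_L\ge T\}$ and yields \eqref{estimate 161}; comparing with the Galerkin martingale solution $\Theta$ gives $P\otimes_{\tau_L}R\neq\Theta$. Non-uniqueness on $[0,T]$ then follows immediately since the discriminating functional $\lVert\xi(T)\rVert_{L^2}^2$ depends only on the path up to time $T$. What you labeled ``bookkeeping'' is precisely this extension at the correct random time, and without it the comparison you wrote down is between objects that do not yet live in the same solution class on $[0,T]$.
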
 

\begin{theorem}\label{Theorem 2.3}
Suppose that $n = 3, m \in (0, \frac{1}{2}), F(u) = u$, and $B$ is a $\mathbb{R}$-valued Wiener process on $(\Omega, \mathcal{F}, \textbf{P})$. Then given $T > 0, K > 1,$ and $\kappa \in (0,1)$, there exist $\gamma \in (0,1)$ and a $\textbf{P}$-a.s. strictly positive stopping time $\mathfrak{t}$ such that \eqref{estimate 1} holds and the following is additionally satisfied. There exists an $(\mathcal{F}_{t})_{t\geq 0}$-adapted process $u$ which is a weak solution to \eqref{stochastic GNS} starting from a deterministic initial condition $u^{\text{in}}$, satisfies \eqref{estimate 2}, and on the set $\{\mathfrak{t} \geq T \}$, 
\begin{equation}\label{estimate 68}
\lVert u(T) \rVert_{L_{x}^{2}} > K e^{\frac{T}{2}} \lVert u^{\text{in}} \rVert_{L_{x}^{2}}. 
\end{equation} 
\end{theorem}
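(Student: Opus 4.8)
The plan is to remove the multiplicative noise by a random change of variables, turning \eqref{stochastic GNS} into a random PDE, and then to adapt the H\"older-space convex integration scheme of \cite{CDD18, D19} to that PDE; the one genuinely new point is the transport error produced by the random coefficient that the change of variables introduces. First, with $F(u)=u$ and $B$ an $\mathbb{R}$-valued Wiener process, we set $\Gamma_{t}\triangleq e^{B_{t}-\frac{t}{2}}$, the unique solution of $d\Gamma_{t}=\Gamma_{t}\,dB_{t}$ with $\Gamma_{0}=1$, and $v_{t}\triangleq\Gamma_{t}^{-1}u_{t}$. Since $v$ will have finite variation in time, It\^o's product rule (the cross-variation term then vanishes) turns \eqref{stochastic GNS} into
\[
\partial_{t}v+(-\Delta)^{m}v+\Gamma\,\text{div}(v\otimes v)+\nabla p=0,\qquad \nabla\cdot v=0,\qquad v(0)=u^{\text{in}},
\]
and conversely any sufficiently regular adapted solution of this system yields, via $u=\Gamma v$, a weak solution of \eqref{stochastic GNS} with the same deterministic initial value. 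Because $\Gamma_{0}=1$ and $\Gamma$ is a.s.\ continuous and positive, on the event where $\Gamma$ stays in a fixed compact subset of $(0,\infty)$ the bounds \eqref{estimate 2} and \eqref{estimate 68} for $u$ reduce to the corresponding ones for $v$; in particular $\lVert u(T)\rVert_{L_{x}^{2}}=\Gamma_{T}\lVert v(T)\rVert_{L_{x}^{2}}$, so it suffices to drive $\lVert v(T)\rVert_{L_{x}^{2}}$ to be as large as we wish while keeping $v^{\text{in}}=u^{\text{in}}$ of fixed small size.

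Next, for a large parameter $L=L(T,K,\kappa)$ and a small $\delta>0$ we define the stopping time
\[
\mathfrak{t}\triangleq\inf\{\,t\geq0:\ \Gamma_{t}\notin(L^{-1},L)\ \text{or}\ \lVert\Gamma\rVert_{C_{t}^{\frac{1}{2}-\delta}}\geq L\,\}\wedge T .
\]
Since Brownian paths, hence $\Gamma$, are a.s.\ $(\frac{1}{2}-\delta)$-H\"older and $\Gamma_{0}=1$, $\mathfrak{t}$ is a.s.\ strictly positive and $\mathbf{P}(\{\mathfrak{t}\geq T\})\to1$ as $L\to\infty$, so we fix $L$ large enough that \eqref{estimate 1} holds. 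We then run the convex integration pathwise on $[0,\mathfrak{t}]$, treating $t\mapsto\Gamma_{t}$ as a given deterministic coefficient subject to the above bounds; adaptedness of $v$, hence of $u$, is retained because every operation at step $q$ — mollification of $\Gamma$ and $v_{q}$ in space and time, solving transport ODEs, Fourier multipliers, and the antidivergence — depends measurably on $\Gamma|_{[0,t]}$ and on $u^{\text{in}}$, as in \cite{HZZ19}.

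The core is the iteration producing $(v_{q},\mathring{R}_{q})_{q\geq0}$ solving $\partial_{t}v_{q}+(-\Delta)^{m}v_{q}+\Gamma\,\text{div}(v_{q}\otimes v_{q})+\nabla p_{q}=\text{div}\,\mathring{R}_{q}$ with frequencies $\lambda_{q}$ growing super-exponentially, amplitudes $\delta_{q}\to0$, inductive bounds $\lVert v_{q}\rVert_{C_{t,x}}\lesssim1$, $\lVert v_{q}\rVert_{C^{1}_{t,x}}\lesssim\lambda_{q}$, $\lVert\mathring{R}_{q}\rVert_{C_{t,x}}\leq\delta_{q+1}$, $\lVert v_{q+1}-v_{q}\rVert_{C_{t,x}}\lesssim\delta_{q+1}^{1/2}$, and the prescribed-energy bound $\lvert\,\lVert v_{q}(t)\rVert_{L_{x}^{2}}^{2}-e(t)\,\rvert\lesssim\delta_{q+1}$ for a smooth profile $e$ with $e(0)$ small and $e(T)$ as large as required. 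The perturbation $w_{q+1}=v_{q+1}-v_{q}$ is built from the Mikado-type building blocks of \cite{CDD18,D19} oscillating at frequency $\lambda_{q+1}$, with amplitude governed by $\mathring{R}_{q}$ and the gap $e-\lVert v_{q}\rVert_{L_{x}^{2}}^{2}$, supported away from $t=0$ so that $v_{q+1}(0)=u^{\text{in}}$. Since $m<\frac{1}{2}$, the new dissipative error $(-\Delta)^{m}w_{q+1}$, of size $\lambda_{q+1}^{2m}\lVert w_{q+1}\rVert$ with $2m<1$, is dominated by the transport and oscillation errors, so no lower bound on $m$ is needed; summing the differences yields $v=\lim_{q}v_{q}\in C_{\mathfrak{t}}C_{x}^{\gamma}$ for some small $\gamma>0$ solving the transformed system with $\mathring{R}\equiv0$ and $\lVert v(T)\rVert_{L_{x}^{2}}^{2}=e(T)$. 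Setting $u\triangleq\Gamma v$ on $[0,\mathfrak{t}]$ and continuing it after $\mathfrak{t}$ by any available weak solution of \eqref{stochastic GNS} as in \cite{HZZ19}, and choosing $e(T)$ large relative to $L,K,T$ and $\lVert u^{\text{in}}\rVert_{L_{x}^{2}}$, we obtain an adapted weak solution satisfying \eqref{estimate 2} and, on $\{\mathfrak{t}\geq T\}$, $\lVert u(T)\rVert_{L_{x}^{2}}=\Gamma_{T}e(T)^{\frac{1}{2}}>Ke^{\frac{T}{2}}\lVert u^{\text{in}}\rVert_{L_{x}^{2}}$, which is \eqref{estimate 68}.

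The hard part is the transport error. Because the nonlinearity carries the factor $\Gamma$, the linearized transport operator dictating the placement of the building blocks is $\partial_{t}+(\Gamma_{\ell}v_{\ell})\cdot\nabla$ with $\Gamma_{\ell},v_{\ell}$ suitable space-time mollifications, so the phase functions must be convected by the \emph{product} $\Gamma_{\ell}v_{\ell}$: they solve $(\partial_{t}+(\Gamma_{\ell}v_{\ell})\cdot\nabla)\Phi=0$ with $\Phi=x$ at the left endpoint of each subinterval of length $\tau_{q}$, and we need the usual deformation estimates ($\lVert\nabla\Phi-\mathrm{Id}\rVert$ small, $\nabla\Phi$ invertible with controlled inverse) on such subintervals. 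This is exactly where a direct transcription of \cite{CDD18,D19} breaks down — cf.\ \cite[Equ. (68)]{CDD18} and the discussion in Subsection \ref{Previous works} — since $\Gamma\in C_{t}^{\frac{1}{2}-\delta}$ only, so its time-mollification at scale $\ell$ obeys merely $\lVert\partial_{t}\Gamma_{\ell}\rVert_{C_{t,x}}\lesssim L\ell^{-\frac{1}{2}-\delta}$ and $\lVert\Gamma-\Gamma_{\ell}\rVert_{C_{t,x}}\lesssim L\ell^{\frac{1}{2}-\delta}$; these losses must be absorbed in the transport, Nash, and commutator errors and, most delicately, in the estimate of the convective derivative of the new stress $\mathring{R}_{q+1}$ along $\Gamma_{q+1}v_{q+1}$. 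I expect the balancing of these $\Gamma$-induced losses — tuning $\tau_{q}$, the mollification scales, and the ratio $\lambda_{q+1}/\lambda_{q}$ so that the scheme still closes with a tiny but positive exponent $\gamma$ — to be the crux of the argument, with the remaining estimates being routine modifications of the deterministic scheme.
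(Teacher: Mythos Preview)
Your proposal has a genuine gap at exactly the point you yourself flag as ``the crux.'' You plan to follow the scheme of \cite{CDD18,D19}, which carries in its inductive hypotheses an estimate on the \emph{material derivative} of the Reynolds stress (their $\lVert(\partial_{t}+v_{q+1}\cdot\nabla)\mathring{R}_{q+1}\rVert_{0}$), and you acknowledge that with $\Gamma\in C_{t}^{1/2-\delta}$ the commutator and linearized errors then carry losses of order $\ell^{-1/2-\delta}$ that must be ``balanced.'' But they cannot be: the new stress contains a commutator of the form $(\Gamma-\Gamma_{\ell})(v_{q+1}\mathring{\otimes}v_{q+1})$ (the multiplicative analogue of the paper's $R_{\text{com2}}$), and any time derivative of this term forces a derivative onto $\Gamma-\Gamma_{\ell}$, which is not even $C_{t}^{1/2}$. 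This is precisely the obstruction the paper spells out in Subsection~\ref{Previous works}, and no tuning of $\tau_{q}$, $\ell$, or $\lambda_{q+1}/\lambda_{q}$ rescues it.

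The paper's resolution is not to balance this loss but to \emph{never incur it}: it abandons the \cite{CDD18,D19} iteration and instead adapts the Beltrami-flow Euler scheme of \cite[Sec.~5]{BV19b}, whose inductive hypotheses are only $\lVert v_{q}\rVert_{C_{t,x}}$, $\lVert v_{q}\rVert_{C^{1}_{t,x}}$, and $\lVert\mathring{R}_{q}\rVert_{C_{t,x}}$ --- no convective derivative of the stress is ever estimated. The transport error is tamed instead by a time partition with cutoffs $\chi_{j}$ and flow maps $\Phi_{j}$ solving $(\partial_{t}+(\Upsilon_{l}v_{l})\cdot\nabla)\Phi_{j}=0$ on short intervals, together with the stationary-phase Lemma~\ref{[Lem. 5.7, BV19b]}; the point of putting $\Upsilon_{l}$ (your $\Gamma_{\ell}$) inside the transport velocity is exactly the cancellation you anticipate, but it is exploited only at the level of $C_{t,x}$ bounds (see Remark~\ref{transport error again} and the identities \eqref{estimate 155}, \eqref{estimate 131}). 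Correspondingly, there is no prescribed energy profile $e(t)$ and no Mikado flows: the growth of $\lVert v(T)\rVert_{L^{2}_{x}}$ is engineered by choosing the explicit base flow $v_{0}\propto m_{L}e^{2Lt+L}(\sin x^{3},0,0)$ and showing the perturbations are small in $L^{2}$ relative to it, and the initial datum $u^{\text{in}}=v(0)$ is \emph{constructed} (deterministic because the iteration preserves determinism at $t=0$), not prescribed in advance.
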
 

\begin{theorem}\label{Theorem 2.4}
Suppose that $n = 3, m \in (0, \frac{1}{2}), F(u) = u$, and $B$ is a $\mathbb{R}$-valued Wiener process on $(\Omega, \mathcal{F}, \textbf{P})$. Then non-uniqueness in law holds for \eqref{stochastic GNS} on $[0,\infty)$. Moreover, for all $T > 0$ fixed, non-uniqueness in law holds for \eqref{stochastic GNS} on $[0,T]$. 
\end{theorem}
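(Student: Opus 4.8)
\textbf{Proof proposal for Theorem \ref{Theorem 2.4}.}
The plan is to reduce Theorem \ref{Theorem 2.4} to Theorem \ref{Theorem 2.3} by the scheme introduced in \cite{HZZ19} and used in \cite{Y20a, Y20c}: turn the pathwise ``fattening'' of the kinetic energy produced by convex integration into a statement about laws. First I would fix once and for all the class in which solutions are compared, namely \emph{probabilistically weak} (martingale) solutions of \eqref{stochastic GNS} with $F(u)=u$, formulated on the canonical path space with canonical process $\xi$ and its right-continuous augmented filtration, issuing from a prescribed deterministic initial datum $u^{\mathrm{in}}$, in the precise sense to be spelled out in Section \ref{Preliminaries} (compatibly with Definition \ref{path-wise and in law}). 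Two auxiliary facts are needed. First, for every $u^{\mathrm{in}}$ there exists such a solution that is moreover of Leray--Hopf type: for $m\in(0,\tfrac12)$ this follows, exactly as in \cite{FR08}, from a Galerkin approximation together with It\^o's formula and a stochastic compactness (Skorokhod--Jakubowski) argument, since no uniqueness is required and the dissipation $(-\Delta)^m$, although weak, still furnishes the a priori bounds needed for tightness. Second, every Leray--Hopf probabilistically weak solution obeys an expected energy inequality: applying It\^o's formula to $\lVert \xi(t)\rVert_{L_x^2}^2$ (justified as in \cite{FR08}), using the cancellations $\langle \xi,\mathrm{div}(\xi\otimes\xi)\rangle=0$ and $\langle\xi,\nabla\pi\rangle=0$ and the fact that the It\^o correction arising from $F(u)=u$ adds $\lVert\xi\rVert_{L_x^2}^2\,dt$ to the drift, then taking expectation and invoking Gronwall's inequality, one obtains
\begin{equation}\label{proposal:energy}
\mathbb{E}\,\lVert \xi(t)\rVert_{L_x^2}^2 \;\le\; e^{t}\,\lVert u^{\mathrm{in}}\rVert_{L_x^2}^2, \qquad t\ge 0.
\end{equation}

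Next I would build a second solution violating \eqref{proposal:energy}. Theorem \ref{Theorem 2.3} supplies, for the given $T$ and for any $K>1$ and $\kappa\in(0,1)$, an $(\mathcal F_t)_{t\ge0}$-adapted weak solution $u$ of \eqref{stochastic GNS} from a deterministic datum $u^{\mathrm{in}}$ on some stochastic basis carrying a one-dimensional Wiener process $B$, together with an a.s.\ strictly positive stopping time $\mathfrak t$ with $\textbf{P}(\{\mathfrak t\ge T\})>\kappa$ and $\lVert u(T)\rVert_{L_x^2} > K e^{T/2}\lVert u^{\mathrm{in}}\rVert_{L_x^2}$ on $\{\mathfrak t\ge T\}$. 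This object is naturally alive only up to $\mathfrak t$, so to make it comparable with the global solutions above I would extend it past $\mathfrak t$ by concatenation: on $\{t\ge\mathfrak t\}$ solve the martingale problem afresh starting from $u(\mathfrak t)$, driven by the Brownian increments $(B_t-B_{\mathfrak t})_{t\ge\mathfrak t}$, using a jointly measurable selection of Leray--Hopf martingale solutions indexed by the initial state, and glue the two pieces at $\mathfrak t$. That the resulting process is again a probabilistically weak solution from $u^{\mathrm{in}}$ follows from the usual stability of the martingale-problem formulation under such concatenation (a Stroock--Varadhan-type argument, adapted to \eqref{stochastic GNS} as in \cite{HZZ19}); here it is used that $\mathfrak t$ is a stopping time of the canonical filtration, which is built into Theorem \ref{Theorem 2.3}. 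Write $u$ again for the extended solution. Since its value at time $T$ is unchanged on $\{\mathfrak t\ge T\}$,
\begin{equation}\label{proposal:contra}
\mathbb{E}\,\lVert u(T)\rVert_{L_x^2}^2 \;\ge\; \textbf{P}(\{\mathfrak t\ge T\})\, K^2 e^{T}\lVert u^{\mathrm{in}}\rVert_{L_x^2}^2 \;>\; \kappa K^2 e^{T}\lVert u^{\mathrm{in}}\rVert_{L_x^2}^2 .
\end{equation}

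To conclude, choose $u^{\mathrm{in}}\not\equiv 0$ and then $K$ so large (allowed for the given $\kappa$) that $\kappa K^2 > 1$; the right-hand side of \eqref{proposal:contra} then exceeds $e^{T}\lVert u^{\mathrm{in}}\rVert_{L_x^2}^2$, so the extended convex integration solution fails \eqref{proposal:energy}, whereas the Leray--Hopf martingale solution from the same datum satisfies it. Thus two probabilistically weak solutions with the same deterministic initial law have distinct laws, which is non-uniqueness in law on $[0,\infty)$. For a fixed finite horizon, the same pair, with laws restricted to $C([0,T];\cdot)$, still disagree: if these restricted laws coincided, the laws of $u(T)$ and of the Leray--Hopf solution at time $T$ would coincide, contradicting \eqref{proposal:contra} versus \eqref{proposal:energy}; this yields non-uniqueness in law on $[0,T]$.

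The genuine mathematical content lies entirely in Theorem \ref{Theorem 2.3}, i.e.\ in running the convex integration scheme for $m<\tfrac12$ in H\"older rather than Sobolev scales and in absorbing the transport error through phase functions convected by a mollified velocity field multiplied by a mollified exponential Brownian motion; that is granted here. Within the present reduction the only points requiring real care, and hence the main obstacles, are (a) the concatenation argument verifying that the post-$\mathfrak t$ extension is still a probabilistically weak solution --- the bookkeeping of filtrations, the joint measurability of the selection, and the shift/strong-Markov splitting of the Wiener path --- and (b) checking that the Leray--Hopf existence theory of \cite{FR08} and the expected energy inequality \eqref{proposal:energy} persist for $m\in(0,\tfrac12)$; both are expected to be routine adaptations of \cite{HZZ19, Y20a, Y20c}. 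The additive case (Theorems \ref{Theorem 2.1}--\ref{Theorem 2.2}) and the multiplicative case (Theorems \ref{Theorem 2.3}--\ref{Theorem 2.4}) then differ only in which auxiliary stochastic process is divided out --- an Ornstein--Uhlenbeck process versus $e^{B_t-t/2}$ --- so the two non-uniqueness-in-law arguments are structurally the same once the relevant convex integration theorem is in hand.
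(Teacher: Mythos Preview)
Your proposal is correct and follows essentially the same route as the paper: extend the convex-integration solution from Theorem~\ref{Theorem 2.3} past the stopping time by concatenating with a measurably selected probabilistically weak solution (this is exactly what Lemmas~\ref{[Lem. 5.2, Y20a]}--\ref{[Lem. 5.3, Y20a]} and Propositions~\ref{[Pro. 5.4, Y20a]}--\ref{[Pro. 5.5, Y20a]} package as $P\otimes_{\tau_L}R$), then contrast the resulting energy lower bound \eqref{proposal:contra} with the Galerkin solution's inequality \eqref{proposal:energy} under the choice $\kappa K^2>1$. Two minor remarks: in the paper the exponential factor is $\Upsilon(t)=e^{B(t)}$ rather than $e^{B_t-t/2}$, and the paper phrases the final step as failure of \emph{joint} uniqueness in law followed by an appeal to the infinite-dimensional Cherny theorem \cite[The.~C.1]{HZZ19}, whereas you argue directly that the marginal laws of $u$ differ---both conclusions are equivalent here since the initial datum is deterministic.
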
 

We emphasize that the spatial regularity $C_{x}^{\gamma}$ for $\gamma > 0$ in \eqref{estimate 2} is higher than $H_{x}^{\gamma}$ of the  solutions constructed in previous works such as \cite{HZZ19, Y20a}. 

\begin{remark}\label{Remark 2.1}
From \eqref{estimate 62}-\eqref{[Equ. (47), Y20a]} in case of Theorems \ref{Theorem 2.1}-\ref{Theorem 2.2} and \eqref{estimate 160} in case of Theorems \ref{Theorem 2.3}-\ref{Theorem 2.4}, we see that the only condition on $\gamma$ is that $\gamma < \beta$; we choose not to pursue the explicit lower bound of this $\beta \in (0, \frac{1}{2})$ because it is taken to be quite small in the proofs of Theorems \ref{Theorem 2.1} and \ref{Theorem 2.3}. Nonetheless, because $C_{t}C_{x}^{\gamma} \subset L_{t}^{2}C_{x}^{\gamma}$, we see that there exists $m \in (0, \frac{1}{2})$ such that the non-uniqueness in law stated in Theorems \ref{Theorem 2.2} and \ref{Theorem 2.4} hold for solutions at the level of Leray-Hopf regularity. Proving the non-uniqueness of Leray-Hopf weak solution requires additionally showing that appropriate energy inequality holds, and that seems difficult (cf. analogous situation for the deterministic Hall-magnetohydrodynamics system \cite{D19a}). 
\end{remark}

\begin{remark}
There are two reasons to believe that extensions of Theorems \ref{Theorem 2.1}-\ref{Theorem 2.4} to higher spatial regularity beyond \eqref{estimate 2} or $m \geq \frac{1}{2}$ will require new ideas. The first reason is simply technical; in order to handle the diffusive term $(-\Delta)^{m} u$ in \eqref{stochastic GNS}, we rely on Lemma \ref{[The. 1.4, RS16]} and its hypothesis requires that $2m + \epsilon \leq 1$ for some $\epsilon > 0$ and consequently $m < \frac{1}{2}$ (see \eqref{estimate 35} and \eqref{estimate 117}). Second, the solution to the GNS equations \eqref{deterministic GNS} possesses scaling-invariance of $u_{\lambda}(t,x) \triangleq \lambda^{2m-1} u(\lambda^{2m}t, \lambda x)$ for any $\lambda > 0$, and it follows from the definition of H$\ddot{o}$lder semi-norm that $C^{1-2m}(\mathbb{T}^{3})$ is a critical space; i.e., $\lVert u_{\lambda} (t) \rVert_{C_{x}^{1-2m}} = \lVert u(\lambda^{2m} t) \rVert_{C_{x}^{1-2m}}$ (cf. \cite[Sec. 5]{W04}). We note that local well-posedness of GNS equations in critical Besov spaces have been studied in \cite{MY06, W04, W05}; however, in this largest critical Besov space $B_{\infty, \infty}^{1-2m} = C^{1-2m}$ (see \cite[p. 99]{BCD11}), we were able to locate only \cite{YZ12} in which Yu and Zhai  proved the local existence and uniqueness of solution to the deterministic GNS equations in $B_{\infty,\infty}^{1-2m} = C^{1-2m}$ but only in case $m \in (\frac{1}{2}, 1)$ so that the non-uniqueness result in case $m < \frac{1}{2}$ can be seen as a complimentary result to \cite{YZ12} and the case $m = \frac{1}{2}$ remains intriguingly open. We were not able to locate in the literature an extension of \cite{YZ12} to the case $m \in (0,\frac{1}{2}]$; our work does not cross out such possibilities because $1-2m> 0$ for any $m \in (0, \frac{1}{2})$ and $\gamma$ in \eqref{estimate 2} is quite small. In fact, while $\gamma \in (0,\beta)$ from \eqref{estimate 62}, the estimate of \eqref{estimate 32} in the proof of Theorem \ref{Theorem 2.1} requires $\beta < \frac{1}{3} (1-2m-\epsilon)$  where $\epsilon \in (0, 1-2m)$ and therefore $\gamma < \frac{1}{3} (1-2m-\epsilon) < 1-2m$ as expected (and identically in \eqref{estimate 118} within the proof of Theorem \ref{Theorem 2.3}).  In this perspective, it seems that one should not expect any better regularity than what we achieved in case $m < \frac{1}{2}$ but close to $\frac{1}{2}$. This is in sharp contrast to the solutions $u \in H^{s}(\mathbb{T}^{3})$ for $s > 0$ quite small which were previously constructed in case $m = 1$ (e.g., \cite{BV19a, HZZ19}) that has potential to rise to the level of $H^{s}(\mathbb{T}^{3})$ for any $s < \frac{1}{2}$ as the relevant critical space in this case is $H^{\frac{1}{2}}(\mathbb{T}^{3}) = B_{2,2}^{\frac{1}{2}}(\mathbb{T}^{3})$. 

Heuristically, our proofs of Theorems \ref{Theorem 2.1}-\ref{Theorem 2.4} consists of extending ``upward'' to the GNS equations the approach on the Euler equations in \cite[Sec. 5]{BV19b} which applied convex integration at level of $C_{t,x}$ to give a simple proof of \cite[The. 1.1]{DS13} similarly to how \cite{CDD18, D19} extended the work of \cite{BDIS15} on the Euler equations. Simultaneously, we must adapt such arguments to a probabilistic setting from \cite{HZZ19} while facing major difficulty due to a transport error within the Reynolds stress on which we will elaborate in Remarks \ref{transport error} and \ref{transport error again}. Our proof can be readily simplified to prove analogous results in the deterministic case as well, and therefore gives a new simple proof of the non-uniqueness of weak solution to the 3D GNS equations \eqref{deterministic GNS} when $m \in (0, \frac{1}{2})$. 
\end{remark} 
\begin{remark}
As aforementioned, non-uniqueness in law of the GNS equations \eqref{stochastic GNS} in \cite{Y20a, Y20c} were successfully extended to the Boussinesq system \cite{Y21a}. An attempt at extensions of Theorems \ref{Theorem 2.1}-\ref{Theorem 2.4} to the Boussinesq system was countered by a surprising but somewhat inherent difficulty. In the  Boussinesq system, the equation of velocity field \eqref{stochastic GNS} contains $\theta e^{3}$ where $\theta: \mathbb{R}_{+} \times \mathbb{T}^{3} \mapsto \mathbb{R}$ represents temperature and $e^{3}$ the standard basis of $\mathbb{R}^{3}$. Consequently, $\mathring{R}_{q+1}$ in \eqref{estimate 115} would consist of $(\theta_{l} - \theta_{q+1})e^{3}$ where $\theta_{l}$ is $\theta_{q}$ after mollification in space-time (see \cite[Equ. (93), (116), and  (117a)]{Y21a}). Although the iteration scheme in \cite{Y21a} required only $\lVert \mathring{R}_{q+1} \rVert_{C_{t}L_{x}^{1}}$ (see \cite[Equ. (60b)]{Y21a}), those in the current manuscript will require $\lVert \mathring{R}_{q+1}\rVert_{C_{t,x}}$ (see \eqref{[Equ. (40c), Y20a]}). Considering that one can apply mollifier estimates to $\theta_{l} - \theta_{q}$, we can split
\begin{equation*}
\lVert \mathcal{R} ( ( \theta_{l} - \theta_{q+1}) e^{3}) \rVert_{C_{t,x}} \leq \lVert \mathcal{R} ((\theta_{l} - \theta_{q}) e^{3}) \rVert_{C_{t,x}} + \lVert \mathcal{R} ( ( \theta_{q} - \theta_{q+1}) e^{3}) \rVert_{C_{t,x}}
\end{equation*} 
(see \cite[Equ. (183)]{Y21a}) and reduce the workload of $\lVert \mathcal{R} ((\theta_{l} - \theta_{q+1}) e^{3}) \rVert_{C_{t,x}}$ to $\lVert \mathcal{R} ((\theta_{q} - \theta_{q+1}) e^{3}) \rVert_{C_{t,x}}$ where $\mathcal{R}$ is a  divergence-inverse operator (see Lemma \ref{divergence inverse}). Now one way to proceed is, similarly to \cite[Equ. (126)]{Y21a}, to rely on $\dot{W}^{1,4}(\mathbb{T}^{3}) \hookrightarrow C(\mathbb{T}^{3})$ and obtain
\begin{equation}\label{estimate 3}
\lVert \mathcal{R} ((\theta_{q+1} - \theta_{q}) e^{3}) \rVert_{C_{t,x}} \leq C \lVert \theta_{q+1} - \theta_{q} \rVert_{C_{t}L_{x}^{4}} \leq C \lVert v_{q+1} - v_{q} \rVert_{C_{t}L_{x}^{4}} \int_{0}^{t}\lVert \theta_{q} \rVert_{\dot{W}_{x}^{1,\infty}} dr. 
\end{equation}  
For $\lVert v_{q+1} - v_{q} \rVert_{C_{t}L_{x}^{4}}$ within \eqref{estimate 3}, \eqref{[Equ. (45), Y20a]} offers a bound by $(2\pi)^{\frac{3}{4}} \lVert v_{q+1} - v_{q} \rVert_{C_{t,x}} \leq (2\pi)^{\frac{3}{4}} M_{0}(t) \delta_{q+1}$ (see \eqref{estimate 69} for definitions of $M_{0}(t)$ and $\delta_{q+1}$); however, we need to bound \eqref{estimate 3} by a constant multiple of $M_{0}(t) \delta_{q+2}$ (see \eqref{[Equ. (40c), Y20a]}) and this will not be small enough because $\delta_{q+2} \ll \delta_{q+1}$. Here, there is a room for improvement between $\lVert v_{q+1} - v_{q} \rVert_{C_{t}L_{x}^{4}}$ of \eqref{estimate 3} and $\lVert v_{q+1} - v_{q} \rVert_{C_{t,x}}$ in \eqref{[Equ. (45), Y20a]}. Indeed, in \cite[Equ. (133)]{Y21a}, this issue is overcome by first splitting $\lVert v_{q+1} - v_{q} \rVert_{C_{t}L_{x}^{4}}$ to $\lVert v_{q+1} - v_{l} \rVert_{C_{t}L_{x}^{4}}  + \lVert v_{l} - v_{q} \rVert_{C_{t}L_{x}^{4}}$ where the second term can be handled via standard mollifier estimates while the first term by careful $L^{p}(\mathbb{T}^{3})$-estimates. Unfortunately, the convex integration schemes within this manuscript are extensions of the approach on the Euler equations and will be completely at the level of $C(\mathbb{T}^{3})$ (see \eqref{[Equ. (40), Y20a]}), in contrast to the approach on the NS equations which can be on $L^{p}(\mathbb{T}^{3})$ for $p < \infty$ (e.g., \cite[Sec. 7]{BV19b}).  
\end{remark} 

\section{Preliminaries}\label{Preliminaries}
We denote by $\lceil{x}\rceil$ the smallest integer $j$ such that $j \geq x$. For vector $v$, we denote its $j$-th component by $v^{j}$. We write $A \overset{(\cdot)}{\lesssim}_{a,b}B$ and $A \overset{(\cdot)}{\approx}_{a,b}B$ to imply that there exists a constant $C(a,b) \geq 0$ such that $A \leq C(a,b) B$ and $A = C(a,b) B$ due to $(\cdot)$, respectively. We set $\mathbb{N}_{0} \triangleq \mathbb{N} \cup \{0\}$. For $j, m \in \mathbb{N}_{0}$ we denote supremum norm by 
\begin{equation}\label{estimate 140}
\lVert f \rVert_{C_{t,x}} \triangleq \lVert f \rVert_{C_{t,x}^{0}} \triangleq \sup_{s \in [0,t], x \in \mathbb{T}^{3}} \lvert f(t,x) \rvert \hspace{2mm} 
 \text{ and } \hspace{2mm} \lVert f \rVert_{C_{t,x}^{m}} \triangleq \sum_{0 \leq j + \lvert \beta \rvert \leq m} \lVert \partial_{t}^{j} D^{\beta} f \rVert_{C_{t,x}^{0}}.
\end{equation} 
Furthermore, given $\alpha \in (0,1)$, we define H$\ddot{\mathrm{o}}$lder semi-norms and norms respectively by
\begin{subequations}
\begin{align}
&  [f]_{C_{t}C_{x}^{m}} \triangleq \max_{\lvert \beta \rvert = m} \lVert D^{\beta} f \rVert_{C_{t,x}^{0}}, \hspace{5mm} [f]_{C_{t}C_{x}^{m+ \alpha}} \triangleq \max_{\lvert \beta \rvert =m} \sup_{s\in[0,t], x, y \in \mathbb{T}^{3}: x \neq y} \frac{ \lvert D^{\beta} f (s,x) - D^{\beta} f(s,x) \rvert}{\lvert x-y \rvert^{\alpha}}, \label{estimate 29}\\ 
&\lVert f \rVert_{C_{t}C_{x}^{m}} \triangleq \sum_{j=0}^{m} [f]_{C_{t} C_{x}^{j}}, \hspace{9mm}  \lVert f \rVert_{C_{t} C_{x}^{m+\alpha}} \triangleq \lVert f \rVert_{C_{t}C_{x}^{m}} + [f]_{C_{t} C_{x}^{m+\alpha}}; \label{estimate 30} 
\end{align}
\end{subequations} 
here, $\beta$ is a multi-index over $\mathbb{T}^{3}$. Let us recall from \cite[Equ. (128)]{BDIS15} that for $r \geq s \geq 0$, 
\begin{equation}\label{estimate 9}
[f]_{C_{t} C_{x}^{s}} \lesssim \lVert f \rVert_{C_{t,x}}^{1- \frac{s}{r}} [f]_{C_{t}C_{x}^{r}}^{\frac{s}{r}}.
\end{equation}
We define $L_{\sigma}^{2} \triangleq \{f \in L^{2}(\mathbb{T}^{3}): \nabla\cdot f = 0, \int_{\mathbb{T}^{3}} fdx = 0 \}$, $\mathbb{P}$ to be the Leray projection operator and denote by $\mathring{\otimes}$ the trace-free part of a tensor product. For any Polish space $H$, we define $\mathcal{B}(H)$ to be the $\sigma$-algebra of Borel sets in $H$. Given any probability measure $P$, $\mathbb{E}^{P}$ denotes a mathematical expectation with respect to (w.r.t.) $P$. We represent an $L^{2}(\mathbb{T}^{3})$-inner product of $A$ and $B$ and a quadratic variation of $A$ by $\langle A, B \rangle$ and $\langle \langle A \rangle \rangle \triangleq \langle \langle A, A \rangle \rangle$, respectively. We define $\mathcal{P} (\Omega_{0})$ to be the set of all probability measures on $(\Omega_{0}, \mathcal{B})$ where $\Omega_{0} \triangleq C([0,\infty); H^{-3} (\mathbb{T}^{3})) \cap L_{\text{loc}}^{\infty} ([0,\infty); L_{\sigma}^{2})$ and $\mathcal{B}$ is the Borel $\sigma$-field of $\Omega_{0}$ from the topology of locally uniform convergence on $\Omega_{0}$. We define the canonical process $\xi: \Omega_{0} \mapsto H^{-3} (\mathbb{T}^{3})$ by $\xi_{t} (\omega) \triangleq \omega(t)$. Similarly, for $t \geq 0$ we define $\Omega_{t} \triangleq C([t,\infty); H^{-3} (\mathbb{T}^{3})) \cap L_{\text{loc}}^{\infty} ([t, \infty); L_{\sigma}^{2})$ and the following Borel $\sigma$-algebras: $\mathcal{B}^{t} \triangleq \sigma ( \{ \xi(s): s \geq t \}); \mathcal{B}_{t}^{0} \triangleq \sigma ( \{ \xi(s): s \leq t \}); \mathcal{B}_{t} \triangleq \cap_{s > t} \mathcal{B}_{s}^{0}$. For any Hilbert space $U$ we denote by $L_{2}(U, L_{\sigma}^{2})$ the space of all Hilbert-Schmidt operators from $U$ to $L_{\sigma}^{2}$ with its norm $\lVert \cdot \rVert_{L_{2}(U, L_{\sigma}^{2})}$. We require $G: L_{\sigma}^{2} \mapsto L_{2}(U, L_{\sigma}^{2})$ to be $\mathcal{B}(L_{\sigma}^{2}) / \mathcal{B}(L_{2}(U, L_{\sigma}))$-measurable and satisfy for any $\psi \in C^{\infty} (\mathbb{T}^{3}) \cap L_{\sigma}^{2}$ 
\begin{equation}
\lVert G(\psi) \rVert_{L_{2}(U, L_{\sigma}^{2})} \leq C(1+ \lVert \psi \rVert_{L_{x}^{2}}) \hspace{2mm} \text{ and } \hspace{2mm} \lim_{j\to\infty} \lVert (\theta_{j})^{\ast} \psi - G(\theta)^{\ast} \psi \rVert_{U} = 0 
\end{equation} 
for some constant $C \geq 0$ if $\lim_{j\to\infty} \lVert \theta_{j} - \theta \rVert_{L_{x}^{2}} = 0$. Furthermore, we assume the existence of another Hilbert space $U_{1}$ such that the embedding $U \hookrightarrow U_{1}$ is Hilbert-Schmidt. We define $\bar{\Omega} \triangleq C([0,\infty); H^{-3} (\mathbb{T}^{3}) \times U_{1}) \cap L_{\text{loc}}^{\infty} ([0,\infty); L_{\sigma}^{2} \times U_{1})$ and $\mathcal{P} (\bar{\Omega})$ as the set of all probability measures on $(\bar{\Omega}, \bar{\mathcal{B}})$, where $\bar{\mathcal{B}}$ is the Borel $\sigma$-algebra on $\bar{\Omega}$. Analogously, we also define the canonical process on $\bar{\Omega}$ as $(\xi, \theta): \bar{\Omega} \mapsto H^{-3}(\mathbb{T}^{3}) \times U_{1}$ by $(\xi_{t}(\omega), \theta_{t}(\omega)) \triangleq \omega(t)$. We extend the previous definitions of $\mathcal{B}^{t}, \mathcal{B}_{t}^{0}$, and $\mathcal{B}_{t}$ to $\bar{\mathcal{B}}^{t} \triangleq \sigma ( \{ (\xi, \theta)(s): s \geq t \})$, $\bar{\mathcal{B}}_{t}^{0} \triangleq \sigma( \{ (\xi, \theta) (s): s \leq t \})$, and $\bar{\mathcal{B}}_{t} \triangleq \cap_{s > t} \bar{\mathcal{B}}_{s}^{0}$ for $t\geq 0$, respectively.

The convex integration scheme we will employ in this manuscript is different from those in \cite{CDD18, D19} (deterministic) or \cite{HZZ19, Y20a, Y20c, Y21a} (stochastic). We recall some setups from \cite{BV19b, DS13} which were actually applied to the 3D deterministic Euler equations rather than the GNS equations. First, given $\zeta \in \mathbb{S}^{2} \cap \mathbb{Q}^{3}$, let $A_{\zeta} \in \mathbb{S}^{2}$ satisfy 
\begin{equation}\label{[Equ. (5.11a), BV19b]}
A_{\zeta} \cdot \zeta = 0 \hspace{1mm} \text{ and } \hspace{1mm} A_{-\zeta} = A_{\zeta}. 
\end{equation} 
We define 
\begin{equation}\label{[Equ. (5.11b), BV19b]}
B_{\zeta} \triangleq 2^{-\frac{1}{2}} (A_{\zeta} + i \zeta \times A_{\zeta}) \in \mathbb{C}^{3}. 
\end{equation} 
It follows that 
\begin{equation}\label{[Equ. (5.11c), BV19b]}
\lvert B_{\zeta} \rvert = 1, \hspace{3mm} B_{\zeta} \cdot \zeta = 0, \hspace{3mm} i \zeta \times B_{\zeta} = B_{\zeta}, \hspace{2mm} \text{ and } \hspace{2mm} B_{-\zeta} = \bar{B}_{\zeta}. 
\end{equation} 
Next, for any $\lambda \in \mathbb{Z}$ such that $\lambda \zeta \in \mathbb{Z}^{3}$ we define 
\begin{equation}\label{[Equ. (5.12), BV19b]}
W_{(\zeta)} (x) \triangleq W_{\zeta, \lambda} (x) \triangleq B_{\zeta} e^{i\lambda \zeta \cdot x}
\end{equation} 
so that it is $\mathbb{T}^{3}$-periodic, divergence-free, and 
\begin{equation}\label{eigenvector}
\nabla \times W_{(\zeta)} = \lambda W_{(\zeta)}.
\end{equation} 
\begin{lemma}\label{[Pro. 5.5, BV19b]}
\rm{(\cite[Pro. 3.1]{DS13}, \cite[Pro. 5.5]{BV19b})}  Let $\Lambda$ be a given finite subset of $\mathbb{S}^{2} \cap \mathbb{Q}^{3}$ such that $-\Lambda = \Lambda$, and $\lambda \in \mathbb{Z}$ be such that $\lambda \Lambda \subset \mathbb{Z}^{3}$. Then for any choice of coefficients $a_{\zeta} \in \mathbb{C}$ such that $\bar{a}_{\zeta} = a_{-\zeta}$ and $B_{\zeta}$ defined by \eqref{[Equ. (5.11b), BV19b]}, the vector field 
\begin{equation}
W(x) \triangleq \sum_{\zeta \in \Lambda} a_{\zeta} B_{\zeta} e^{i\lambda \zeta \cdot x} 
\end{equation} 
is a $\mathbb{R}$-valued, divergence-free Beltrami vector field such that $\nabla\times W = \lambda W$, and thus it is a stationary solution of the Euler equations 
\begin{equation}
\text{div} (W \otimes W) = \nabla \frac{ \lvert W \rvert^{2}}{2}. 
\end{equation} 
Furthermore, the following identities hold: 
\begin{equation}\label{estimate 112}
B_{\zeta} \otimes B_{-\zeta} + B_{-\zeta} \otimes B_{\zeta}  = \text{Id} - \zeta \otimes \zeta \hspace{2mm} \text{ and } \hspace{2mm}  \fint_{\mathbb{T}^{3}} W \otimes W dx = \frac{1}{2} \sum_{\zeta \in \Lambda} \lvert a_{\zeta} \rvert^{2}(\text{Id} - \zeta \otimes \zeta). 
\end{equation} 
\end{lemma}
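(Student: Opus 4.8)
This lemma collects standard facts about complex Beltrami plane waves, and the plan is to verify each assertion by a direct computation exploiting two structural features: the pairing of $\zeta$ with $-\zeta$ enforced by $-\Lambda = \Lambda$, and the fact that $\{\zeta, A_{\zeta}, \zeta\times A_{\zeta}\}$ is an orthonormal basis of $\mathbb{R}^{3}$ for every $\zeta\in\Lambda$. First I would record that $W$ is $\mathbb{R}$-valued: since $\lvert\zeta\rvert = 1$ we never have $\zeta = -\zeta$, so the sum splits into conjugate pairs, and by the last relation in \eqref{[Equ. (5.11c), BV19b]} together with the hypothesis $\bar{a}_{\zeta} = a_{-\zeta}$ the $(-\zeta)$-summand $a_{-\zeta}B_{-\zeta}e^{-i\lambda\zeta\cdot x}$ equals the complex conjugate of $a_{\zeta}B_{\zeta}e^{i\lambda\zeta\cdot x}$, so each pair contributes $2\,\mathrm{Re}(a_{\zeta}B_{\zeta}e^{i\lambda\zeta\cdot x})\in\mathbb{R}^{3}$. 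Divergence-freeness and the Beltrami property are checked mode-by-mode: $\nabla\cdot(B_{\zeta}e^{i\lambda\zeta\cdot x}) = i\lambda(\zeta\cdot B_{\zeta})e^{i\lambda\zeta\cdot x} = 0$ by $\zeta\cdot B_{\zeta} = 0$, while $\nabla\times(B_{\zeta}e^{i\lambda\zeta\cdot x}) = i\lambda(\zeta\times B_{\zeta})e^{i\lambda\zeta\cdot x} = \lambda B_{\zeta}e^{i\lambda\zeta\cdot x}$ by $i\zeta\times B_{\zeta} = B_{\zeta}$ in \eqref{[Equ. (5.11c), BV19b]}, which is precisely \eqref{eigenvector} for the single mode \eqref{[Equ. (5.12), BV19b]}; summing yields $\nabla\times W = \lambda W$.

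Next I would deduce the stationary Euler identity from the vector-calculus formula $(W\cdot\nabla)W = \nabla\tfrac{\lvert W\rvert^{2}}{2} - W\times(\nabla\times W)$, valid for the real field $W$: since $\nabla\times W = \lambda W$ the last term is $\lambda\,W\times W = 0$, and combining with $\nabla\cdot W = 0$ gives $\mathrm{div}(W\otimes W) = (W\cdot\nabla)W + (\nabla\cdot W)W = \nabla\tfrac{\lvert W\rvert^{2}}{2}$. For the first identity in \eqref{estimate 112}, I would use $B_{-\zeta} = \bar{B}_{\zeta} = 2^{-1/2}(A_{\zeta} - i\,\zeta\times A_{\zeta})$ (from \eqref{[Equ. (5.11a), BV19b]}--\eqref{[Equ. (5.11b), BV19b]}); expanding $B_{\zeta}\otimes B_{-\zeta} + B_{-\zeta}\otimes B_{\zeta}$, the imaginary antisymmetric parts cancel and there remains $A_{\zeta}\otimes A_{\zeta} + (\zeta\times A_{\zeta})\otimes(\zeta\times A_{\zeta})$. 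Since $A_{\zeta}\cdot\zeta = 0$ and $\lvert\zeta\rvert = \lvert A_{\zeta}\rvert = 1$, the triple $\{\zeta, A_{\zeta}, \zeta\times A_{\zeta}\}$ is an orthonormal basis, so that $\mathrm{Id} = \zeta\otimes\zeta + A_{\zeta}\otimes A_{\zeta} + (\zeta\times A_{\zeta})\otimes(\zeta\times A_{\zeta})$, which rearranges to the claim.

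Finally, for the averaging identity I would expand $W\otimes W = \sum_{\zeta,\zeta'\in\Lambda}a_{\zeta}a_{\zeta'}\,B_{\zeta}\otimes B_{\zeta'}\,e^{i\lambda(\zeta+\zeta')\cdot x}$ and integrate over $\mathbb{T}^{3}$: every mode with $\lambda(\zeta+\zeta')\neq 0$ averages to zero, and since $\lambda\neq 0$ (implicit, as $\lambda$ is a large integer parameter in the applications) this forces $\zeta' = -\zeta$, leaving $\fint_{\mathbb{T}^{3}}W\otimes W\,dx = \sum_{\zeta\in\Lambda}\lvert a_{\zeta}\rvert^{2}B_{\zeta}\otimes B_{-\zeta}$, using $a_{\zeta}a_{-\zeta} = \lvert a_{\zeta}\rvert^{2}$. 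Re-indexing $\zeta\mapsto-\zeta$ (legitimate because $\Lambda = -\Lambda$ and $\lvert a_{-\zeta}\rvert = \lvert a_{\zeta}\rvert$) and averaging the two representations produces $\tfrac12\sum_{\zeta\in\Lambda}\lvert a_{\zeta}\rvert^{2}(B_{\zeta}\otimes B_{-\zeta} + B_{-\zeta}\otimes B_{\zeta})$, and the first identity of \eqref{estimate 112} finishes the computation. There is no genuine analytic obstacle in this lemma; the only steps that demand care are the conjugate-pair bookkeeping in the reality and resonance arguments and the verification that $\{\zeta, A_{\zeta}, \zeta\times A_{\zeta}\}$ is orthonormal.
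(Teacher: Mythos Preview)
Your proof is correct and complete. The paper does not supply its own proof of this lemma but simply cites it from \cite[Pro.~3.1]{DS13} and \cite[Pro.~5.5]{BV19b}; your direct mode-by-mode verification (conjugate pairing for reality, $\zeta\cdot B_{\zeta}=0$ for divergence-freeness, $i\zeta\times B_{\zeta}=B_{\zeta}$ for the Beltrami property, the orthonormal-basis decomposition $\mathrm{Id}=\zeta\otimes\zeta+A_{\zeta}\otimes A_{\zeta}+(\zeta\times A_{\zeta})\otimes(\zeta\times A_{\zeta})$ for the tensor identity, and Fourier orthogonality for the averaging formula) is exactly the standard argument given in those references.
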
 
\begin{lemma}\label{[Pro. 5.6, BV19b]}
\rm{(\cite[Lem. 3.2]{DS13}, \cite[Pro. 5.6]{BV19b})} There exists a sufficiently small constant $C_{\ast} > 0$ with the following properties. Let $B_{C_{\ast}}(\text{Id})$ denote the closed ball of symmetric $3 \times 3$ matrices, centered at $\text{Id}$ of radius $C_{\ast}$. Then there exist pair-wise disjoint subsets 
\begin{equation}\label{[Equ. (5.15a), BV19b]}
\Lambda_{\alpha} \subset \mathbb{S}^{2} \cap \mathbb{Q}^{3}, \hspace{5mm} \alpha \in \{0,1\}, 
\end{equation} 
and smooth positive functions 
\begin{equation}\label{[Equ. (5.15b), BV19b]}
\gamma_{\zeta}^{(\alpha)} \in C^{\infty} (B_{C_{\ast}} (\text{Id} )), \hspace{5mm} \alpha \in \{0,1\}, \zeta \in \Lambda_{\alpha}, 
\end{equation} 
such that for every $\zeta \in \Lambda_{\alpha}$ we have $- \zeta \in \Lambda_{\alpha}$ and $\gamma_{\zeta}^{(\alpha)} = \gamma_{-\zeta}^{(\alpha)}$, while for every $R \in B_{C_{\ast}}(\text{Id})$ we have the identity 
\begin{equation}\label{[Equ. (5.16), BV19b]}
R = \frac{1}{2} \sum_{\zeta \in \Lambda_{\alpha}} (\gamma_{\zeta}^{(\alpha)} (R))^{2} (\text{Id} - \zeta \otimes \zeta).
\end{equation} 
\end{lemma}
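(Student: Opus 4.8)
The plan is to prove this exactly as the classical geometric lemma of \cite{DS13} (cf. \cite[Pro. 5.6]{BV19b}): reduce it to (i) an elementary convex-geometry fact about the symmetric matrices $\mathrm{Id}-\zeta\otimes\zeta$, $\zeta\in\mathbb{S}^{2}$, and (ii) a linear-algebra argument that upgrades a decomposition of $\mathrm{Id}$ to a smooth decomposition on a neighborhood. Write $\mathrm{Sym}$ for the $6$-dimensional space of symmetric $3\times 3$ matrices. For a finite set $\Lambda\subset\mathbb{S}^{2}\cap\mathbb{Q}^{3}$ with $-\Lambda=\Lambda$, consider the linear map $\Phi_{\Lambda}\colon\{c\in\mathbb{R}^{\Lambda}\colon c_{\zeta}=c_{-\zeta}\}\to\mathrm{Sym}$ given by $\Phi_{\Lambda}(c)=\tfrac12\sum_{\zeta\in\Lambda}c_{\zeta}(\mathrm{Id}-\zeta\otimes\zeta)$. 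If $\Lambda$ can be chosen so that (a) $\Phi_{\Lambda}$ is surjective and (b) there is $\bar c$ with $\bar c_{\zeta}>0$ for all $\zeta$ and $\Phi_{\Lambda}(\bar c)=\mathrm{Id}$, then the lemma follows at once: pick a linear right inverse $L$ of $\Phi_{\Lambda}$ (its range automatically respects $c_{\zeta}=c_{-\zeta}$), set $c(R)\triangleq\bar c+L(R-\mathrm{Id})$, which is affine in $R$ and equals $\bar c$ at $R=\mathrm{Id}$, so by continuity $c_{\zeta}(R)>0$ for every $\zeta$ and every $R$ in a small ball $B_{C_{*}}(\mathrm{Id})$; then $\gamma_{\zeta}^{(\alpha)}(R)\triangleq\sqrt{c_{\zeta}(R)}$ lies in $C^{\infty}(B_{C_{*}}(\mathrm{Id}))$, is positive, satisfies $\gamma_{\zeta}^{(\alpha)}=\gamma_{-\zeta}^{(\alpha)}$ and $-\zeta\in\Lambda$, and yields precisely \eqref{[Equ. (5.16), BV19b]}.

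The geometric heart is the following. \emph{Claim:} $\mathrm{Id}$ lies in the interior, relative to $\mathrm{Sym}$, of the convex cone $\mathcal{C}\triangleq\{\sum_{k}c_{k}(\mathrm{Id}-\zeta_{k}\otimes\zeta_{k})\colon c_{k}\geq 0,\ \zeta_{k}\in\mathbb{S}^{2}\}$. Indeed $\mathrm{Id}=\tfrac12\sum_{i=1}^{3}(\mathrm{Id}-e_{i}\otimes e_{i})\in\mathcal{C}$, and if $\mathrm{Id}$ were not interior there would be a supporting functional $H\in\mathrm{Sym}\setminus\{0\}$ with $\langle H,\mathrm{Id}-\zeta\otimes\zeta\rangle\geq 0$ for all unit $\zeta$ and $\langle H,\mathrm{Id}\rangle=0$; that is, $\mathrm{tr}\,H=0$ and $-\zeta^{\top}H\zeta\geq 0$ for every unit $\zeta$, forcing $H$ to be negative semidefinite with zero trace, hence $H=0$, a contradiction. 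In particular $\mathcal{C}$ is $6$-dimensional, so $\mathrm{span}\{\mathrm{Id}-\zeta\otimes\zeta\colon\zeta\in\mathbb{S}^{2}\}=\mathrm{Sym}$.

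Next I pass to a finite rational set. Taking a small $6$-simplex around $\mathrm{Id}$ inside $\mathcal{C}$ and expressing each of its (finitely many) vertices as a finite nonnegative combination of the generators shows that $\mathrm{Id}\in\mathrm{int}\,\mathrm{cone}(\{\mathrm{Id}-\zeta\otimes\zeta\colon\zeta\in F\})$ for some finite $F\subset\mathbb{S}^{2}$; since this is an open condition and $\mathbb{S}^{2}\cap\mathbb{Q}^{3}$ is dense in $\mathbb{S}^{2}$, I may replace each vector of $F$ by a nearby rational one and symmetrize, obtaining finite $\Lambda\subset\mathbb{S}^{2}\cap\mathbb{Q}^{3}$ with $-\Lambda=\Lambda$ and $\mathrm{Id}\in\mathrm{int}\,\mathrm{cone}(\{\mathrm{Id}-\zeta\otimes\zeta\colon\zeta\in\Lambda\})$. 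This gives (a). For (b), set $P\triangleq\sum_{\zeta\in\Lambda}(\mathrm{Id}-\zeta\otimes\zeta)$; since $\mathrm{Id}$ is interior, $\mathrm{Id}-\varepsilon P\in\mathrm{cone}$ for small $\varepsilon>0$, say $\mathrm{Id}-\varepsilon P=\sum_{\zeta}c'_{\zeta}(\mathrm{Id}-\zeta\otimes\zeta)$ with $c'_{\zeta}\geq 0$, whence $\mathrm{Id}=\sum_{\zeta}(c'_{\zeta}+\varepsilon)(\mathrm{Id}-\zeta\otimes\zeta)$ with all coefficients $\geq\varepsilon>0$; symmetrizing the coefficients and rescaling by $2$ to absorb the $\tfrac12$ in $\Phi_{\Lambda}$ produces the desired $\bar c$.

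Finally, for the two disjoint families $\Lambda_{0},\Lambda_{1}$: the construction only perturbs the fixed finite set $F$ within small balls, and $\mathbb{S}^{2}\cap\mathbb{Q}^{3}$ is infinite inside each such ball, so I carry it out twice with disjoint rational choices, also arranging that no chosen vector of one family is the antipode of one in the other, so that $\Lambda_{0}\cap\Lambda_{1}=\emptyset$; each run furnishes a radius $C_{*}^{(\alpha)}>0$ on which the corresponding $\gamma_{\zeta}^{(\alpha)}$ work, and I take $C_{*}\triangleq\min\{C_{*}^{(0)},C_{*}^{(1)}\}$. The only genuinely nontrivial ingredient is the interior claim of the second paragraph (together with the density of rational points on $\mathbb{S}^{2}$); once these are in hand everything else is soft, and I expect the sole point requiring care to be the bookkeeping that keeps the two families disjoint while preserving rationality, antipodal symmetry, and the interior property.
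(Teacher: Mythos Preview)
The paper does not supply its own proof of this lemma; it simply quotes the result from \cite[Lem.~3.2]{DS13} and \cite[Pro.~5.6]{BV19b}. Your proposal is a correct reconstruction of the classical argument from those references: the convex-geometry claim that $\mathrm{Id}$ is interior to the cone generated by $\{\mathrm{Id}-\zeta\otimes\zeta\}$, the passage to a finite rational symmetric set by density and openness, the linear right-inverse to produce affine coefficient functions positive on a ball, and the duplication with disjoint rational choices are exactly the ingredients used in \cite{DS13}. There is nothing to compare against in the present paper, and your sketch is sound.
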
 

It suffices to consider index sets $\Lambda_{0}$ and $\Lambda_{1}$ in Lemma \ref{[Pro. 5.6, BV19b]} to have 12 elements (cf. \cite[Rem. 3.3]{BV19a}). By abuse of notation, we hereafter denote $\Lambda_{j} = \Lambda_{j \text{ mod} 2}$ for $j \in \mathbb{N}_{0}$. For convenience, we denote a universal constant $M$ such that for both $j \in \{0,1\}$
\begin{equation}\label{[Equ. (5.17), BV19b]}
\sum_{\zeta \in \Lambda_{j}} \lVert \gamma_{\zeta}^{(j)} \rVert_{C^{(\lceil \frac{1}{2m} \rceil + 1) \vee 10} (B_{C_{\ast}} (\text{Id} ))} \leq M. 
\end{equation} 
We leave rest of the preliminaries in the Appendix \ref{Appendix}. 

\section{Proofs of Theorems \ref{Theorem 2.1}-\ref{Theorem 2.2}}\label{Proofs Theorems 2.1-2.2}
Without loss of generality we assume that $\sigma$ in the hypothesis of Theorems \ref{Theorem 2.1}-\ref{Theorem 2.2} satisfy $\sigma \in (0,1)$. 
\subsection{Proof of Theorem \ref{Theorem 2.2} assuming Theorem \ref{Theorem 2.1}}
We fix $\gamma \in (0,1)$ for the following definitions. 
\begin{define}\label{[Def. 4.1, Y20a]}
Let $s \geq 0$ and $\xi^{\text{in}} \in L_{\sigma}^{2}$. Then $P \in \mathcal{P}(\Omega_{0})$ is a martingale solution to \eqref{stochastic GNS} with initial condition $\xi^{\text{in}}$ at initial time $s$ if 
\begin{itemize}
\item [] (M1) $P( \{ \xi(t) = \xi^{\text{in}} \hspace{1mm} \forall \hspace{1mm} t \in [0,s] \}) = 1$ and for all $l \in \mathbb{N}$ 
\begin{equation}
P( \{ \xi \in \Omega_{0}: \int_{0}^{l} \lVert G(\xi(r)) \rVert_{L_{2}(U, L_{\sigma}^{2})}^{2} dr < \infty \}) = 1, 
\end{equation} 
\item [] (M2) for every $\psi_{i} \in C^{\infty} (\mathbb{T}^{3} ) \cap L_{\sigma}^{2}$ and $t \geq s$, the process 
\begin{equation}
M_{t,s}^{i} \triangleq \langle \xi(t) - \xi(s), \psi_{i} \rangle + \int_{s}^{t} \langle \text{div} (\xi(r) \otimes \xi(r)) + (-\Delta)^{m} \xi(r), \psi_{i} \rangle dr
\end{equation}
is a continuous, square-integrable $(\mathcal{B}_{t})_{t\geq s}$-martingale under $P$ with $\langle \langle M_{t,s}^{i} \rangle \rangle = \int_{s}^{t} \lVert G(\xi(r))^{\ast} \psi_{i} \rVert_{U}^{2} dr$,  
\item [] (M3) for any $q \in \mathbb{N}$, there exists a function $t \mapsto C_{t,q} \in \mathbb{R}_{+}$ such that for all $t \geq s$, 
\begin{equation}\label{estimate 4}
\mathbb{E}^{p} [ \sup_{r \in [0,t]} \lVert \xi(r) \rVert_{L_{x}^{2}}^{2q} + \int_{s}^{t} \lVert \xi(r) \rVert_{\dot{H}_{x}^{\gamma}}^{2} dr ] \leq C_{t,q} (1+ \lVert \xi^{\text{in}} \rVert_{L_{x}^{2}}^{2q}). 
\end{equation}
\end{itemize}
The set of all such martingale solutions with the same constant $C_{t,q}$ in \eqref{estimate 4} for every $q \in \mathbb{N}$ and $t \geq s$ will be denoted by $\mathcal{C} (s, \xi^{\text{in}}, \{C_{t,q}\}_{q\in\mathbb{N}, t \geq s})$. 
\end{define}
In the current case of additive noise, if $\{\psi_{i}\}_{i=1}^{\infty}$ is a complete orthonormal system that consists of eigenvectors of $GG^{\ast}$, then $M_{t,s} \triangleq \sum_{i=1}^{\infty} M_{t,s}^{i} \psi_{i}$ becomes a $GG^{\ast}$-Wiener process starting from initial time $s$ w.r.t. the filtration $(\mathcal{B}_{t})_{t\geq s}$ under $\textbf{P}$. 
\begin{define}\label{[Def. 4.2, Y20a]}
Let $s \geq 0$, $\xi^{\text{in}} \in L_{\sigma}^{2}$ and $\tau: \Omega_{0} \mapsto [s, \infty]$ be a stopping time of $(\mathcal{B}_{t})_{t\geq s}$. Define the space of trajectories stopped at $\tau$ by 
\begin{equation}\label{estimate 159}
\Omega_{0,\tau} \triangleq \{\omega( \cdot  \wedge \tau(\omega)) : \omega \in \Omega_{0} \} = \{ \omega \in \Omega_{0}:  \xi(t,\omega) = \xi(t\wedge \tau(\omega), \omega) \hspace{1mm} \forall \hspace{1mm} t \geq 0 \}. 
\end{equation} 
Then $P \in \mathcal{P} (\Omega_{0,\tau})$ is a martingale solution to \eqref{stochastic GNS} on $[s, \tau]$ with initial condition $\xi^{\text{in}}$ at initial time $s$ if 
\begin{enumerate}
\item [] (M1) $P(\{ \xi(t) = \xi^{\text{in}} \hspace{1mm} \forall \hspace{1mm} t \in [0, s]\}) = 1$ and for all $l \in \mathbb{N}$ 
\begin{equation}
P(\{ \xi \in \Omega_{0}: \int_{0}^{l \wedge \tau} \lVert G(\xi(r)) \rVert_{L_{2}(U, L_{\sigma}^{2})}^{2} dr < \infty \}) = 1, 
\end{equation}
\item [] (M2) for every $\psi_{i} \in C^{\infty} (\mathbb{T}^{3}) \cap L_{\sigma}^{2}$ and $t\geq s$, the process 
\begin{equation}
M_{t \wedge \tau, s}^{i} \triangleq \langle \xi(t\wedge \tau) - \xi^{\text{in}}, \psi_{i} \rangle + \int_{s}^{t \wedge \tau} \langle \text{div} (\xi(r) \otimes \xi(r)) + (-\Delta)^{m} \xi(r), \psi_{i} \rangle dr 
\end{equation}
is a continuous, square-integrable $(\mathcal{B}_{t})_{t\geq s}$-martingale under $P$ with $\langle \langle M_{t\wedge \tau, s}^{i} \rangle \rangle = \int_{s}^{t\wedge \tau} \lVert G(\xi(r))^{\ast} \psi_{i} \rVert_{U}^{2} dr$, 
\item [] (M3) for any $q \in \mathbb{N}$, there exists a function $t \mapsto C_{t,q} \in \mathbb{R}_{+}$ such that for all $t \geq s$, 
\begin{equation}
\mathbb{E}^{P} [ \sup_{r \in [0, t \wedge \tau]} \lVert \xi(r) \rVert_{L_{x}^{2}}^{2q} + \int_{s}^{t \wedge \tau} \lVert \xi(r) \rVert_{\dot{H}_{x}^{\gamma}}^{2} dr] \leq C_{t,q} (1+ \lVert \xi^{\text{in}} \rVert_{L_{x}^{2}}^{2q}).
\end{equation}
\end{enumerate}
\end{define}
The proof of the following proposition concerning existence and stability of martingale solutions to \eqref{stochastic GNS} is identical to that of \cite[Pro. 4.1]{Y20a}, which in turn follows \cite[The. 3.1]{HZZ19}, because it makes use of the range of $m$ only in a few parts of its proof, which are flexible, and hence can readily be extended to our current case $m \in (0, \frac{1}{2})$. 
\begin{proposition}\label{[Pro. 4.1, Y20a]}
For any $(s, \xi^{\text{in}}) \in [0,\infty) \times L_{\sigma}^{2}$, there exists $P \in \mathcal{P}(\Omega_{0})$ which is a martingale solution to \eqref{stochastic GNS} with initial condition $\xi^{\text{in}}$ at initial time $s$ according to Definition \ref{[Def. 4.1, Y20a]}. Moreover, if there exists a family $\{(s_{l}, \xi_{l})\}_{l\in\mathbb{N}} \subset [0,\infty) \times L_{\sigma}^{2}$ such that $\lim_{l\to\infty} \lVert (s_{l}, \xi_{l}) - (s, \xi^{\text{in}}) \rVert_{\mathbb{R} \times L_{x}^{2}} = 0 $ and $P_{l} \in \mathcal{C} (s_{l}, \xi_{l}, \{C_{t,q}\}_{q\in\mathbb{N}, t \geq s_{l}})$ is the martingale solution corresponding to $(s_{l}, \xi_{l})$, then there exists a subsequence $\{P_{l_{k}}\}_{k\in\mathbb{N}}$ that converges weakly to some $P \in \mathcal{C}(s, \xi^{\text{in}}, \{C_{t,q}\}_{q \in \mathbb{N}, t \geq s})$. 
\end{proposition}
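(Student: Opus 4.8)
The plan is to follow the standard martingale-solution construction of Flandoli--Gatarek type, as implemented for the stochastic (generalized) Navier--Stokes equations in \cite[The. 3.1]{HZZ19} and \cite[Pro. 4.1]{Y20a}, and to check that each step is insensitive to the smallness of the dissipation exponent $m\in(0,\frac12)$. The existence part splits into four stages: (i) a spectral Galerkin approximation of \eqref{stochastic GNS}; (ii) uniform a priori moment estimates; (iii) tightness of the approximating laws and stochastic compactness; (iv) identification of any subsequential limit as a martingale solution in the sense of Definition \ref{[Def. 4.1, Y20a]}. The stability assertion then follows by rerunning (iii)--(iv) with the given family $\{P_l\}$ in place of the Galerkin approximants, the uniform input being supplied \emph{for free} by the common constants in $\mathcal{C}(s_l,\xi_l,\{C_{t,q}\})$.

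Concretely, fix a complete orthonormal system $\{\psi_i\}_{i\ge1}$ of $L^2_\sigma$ consisting of eigenfunctions of $-\Delta$ (or of $GG^\ast$), let $\mathbb{P}_N$ project onto $\mathrm{span}\{\psi_1,\dots,\psi_N\}$, and solve the finite-dimensional It\^o SDE obtained by applying $\mathbb{P}_N$ to \eqref{stochastic GNS} with initial datum $\mathbb{P}_N\xi^{\mathrm{in}}$ at time $s$; global solvability is clear because the drift is locally Lipschitz and the $L^2_x$-energy is a priori controlled. It\^o's formula applied to $\lVert u_N(r)\rVert_{L^2_x}^{2q}$, using $\langle\mathrm{div}(u_N\otimes u_N),u_N\rangle=0$, the coercive term $\lVert u_N\rVert_{\dot H^m_x}^2$, the growth bound $\lVert G(\cdot)\rVert_{L_2(U,L^2_\sigma)}\le C(1+\lVert\cdot\rVert_{L^2_x})$ from Section \ref{Preliminaries}, the Burkholder--Davis--Gundy inequality and Gr\"onwall's lemma, gives bounds for $\mathbb{E}[\sup_{r\le t}\lVert u_N(r)\rVert_{L^2_x}^{2q}]$ and $\mathbb{E}\int_s^t\lVert u_N(r)\rVert_{\dot H^m_x}^2\,dr$ that are uniform in $N$; since Definition \ref{[Def. 4.1, Y20a]} is invoked only for $\gamma\le m$, interpolation with the $L^2_x$-bound yields \eqref{estimate 4} at the Galerkin level. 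This coercivity is essentially the only place the value of $m$ enters, and $m\in(0,\frac12)$ is harmless: the dissipation is merely weaker, never more singular. For tightness one observes that in the very weak space $H^{-3}(\mathbb{T}^3)$ the drift $\mathrm{div}(u_N\otimes u_N)+(-\Delta)^m u_N$ is bounded in $L^2_tH^{-3}_x$ uniformly in $N$ (using $u_N\in L^\infty_tL^2_x\cap L^2_t\dot H^m_x$ together with the $3$D embeddings $W^{-1,1}_x\hookrightarrow H^{-3}_x$ and $\dot H^{-m}_x\hookrightarrow H^{-3}_x$), while a Kolmogorov-type estimate gives $\alpha$-H\"older-in-time regularity, $\alpha<\frac12$, of the stochastic integral in $H^{-3}_x$; combined with the compact embedding $\dot H^m_x\hookrightarrow\hookrightarrow H^{-3}_x$ and an Aubin--Lions/Simon argument this makes the laws of $u_N$ tight on $\Omega_0$.

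By Prokhorov's theorem and a Skorokhod-type representation I would then pass, along a subsequence, to a.s.-convergent copies on an auxiliary probability space, reconstruct the limiting pair (velocity, Wiener process), and verify (M1)--(M3). Here (M1) is immediate; (M2) follows by passing to the limit in the finite-dimensional weak formulation, the bracket being identified through convergence of $\int_s^\cdot\lVert\mathbb{P}_NG(u_N)^\ast\psi_i\rVert_U^2\,dr$ and the continuity/growth hypotheses on $G$; and (M3) survives the limit by Fatou's lemma together with weak lower semicontinuity of $u\mapsto\int_s^t\lVert u(r)\rVert_{\dot H^\gamma_x}^2\,dr$ and of $u\mapsto\sup_{r\le t}\lVert u(r)\rVert_{L^2_x}^{2q}$, keeping the same constants $C_{t,q}$. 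Pushing the law of the limit velocity forward to $\Omega_0$ gives $P\in\mathcal{C}(s,\xi^{\mathrm{in}},\{C_{t,q}\})$. For the stability statement, the common bound \eqref{estimate 4} supplies the same compactness input uniformly in $l$, so $\{P_l\}$ is tight on $\Omega_0$; extract $P_{l_k}\rightharpoonup P$ and pass to the limit in (M1)--(M3) exactly as in the identification step, the initial condition carrying over because $\xi(s_{l_k})=\xi_{l_k}\to\xi^{\mathrm{in}}$ in $L^2_x$ and trajectories are continuous in $H^{-3}_x$; hence $P\in\mathcal{C}(s,\xi^{\mathrm{in}},\{C_{t,q}\})$.

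I expect the only genuinely delicate point to be the limit identification in (M2): one must simultaneously show that $M^i_{t,s}$ is a martingale under the limiting measure and that its quadratic variation is the prescribed one, which needs uniform integrability of the relevant functionals and some care with the quadratic term $\mathrm{div}(\xi\otimes\xi)$ --- legitimate only because it is tested against smooth divergence-free $\psi_i$ and read in $H^{-3}_x$. Everything else is routine bookkeeping; since the scheme of \cite[Pro. 4.1]{Y20a} exploits the range of $m$ solely through the coercivity $\lVert\cdot\rVert_{\dot H^m_x}^2$ and the two weak embeddings above, that proof transfers essentially verbatim to $m\in(0,\frac12)$.
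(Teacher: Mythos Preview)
Your proposal is correct and follows exactly the paper's own approach: the paper simply asserts that the proof is identical to that of \cite[Pro.~4.1]{Y20a}, which in turn follows \cite[The.~3.1]{HZZ19}, because the range of $m$ enters only in a few flexible places. You have spelled out precisely this argument in more detail than the paper does, correctly identifying the coercivity term and the weak embeddings as the only $m$-dependent ingredients.
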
 
Proposition \ref{[Pro. 4.1, Y20a]} leads to the following two results from \cite{HZZ19} which apply to our case as their proofs do not rely on the specific form of the diffusive term. Let $\mathcal{B}_{\tau}$ represent the $\sigma$-algebra associated to any given stopping time $\tau$.
 \begin{lemma}\label{[Lem. 4.2, Y20a]}
(cf. \cite[Pro. 3.2]{HZZ19}) Let $\tau$ be a bounded stopping time of $(\mathcal{B}_{t})_{t\geq 0}$. Then for every $\omega \in \Omega_{0}$, there exists $Q_{\omega} \triangleq \delta_{\omega} \otimes_{\tau(\omega)} R_{\tau(\omega), \xi(\tau(\omega), \omega)} \in \mathcal{P} (\Omega_{0})$ where $\delta_{\omega}$ is a point-mass at $\omega$ and $R_{\tau(\omega), \xi(\tau(\omega), \omega)} \in \mathcal{P}(\Omega_{0})$ is a martingale solution to \eqref{stochastic GNS} with initial condition $\xi(\tau(\omega), \omega)$ at initial time $\tau(\omega)$ such that 
\begin{subequations}
\begin{align} 
& Q_{\omega} ( \{ \omega' \in \Omega_{0}: \xi(t, \omega') = \omega(t) \hspace{1mm} \forall \hspace{1mm} t \in [0, \tau(\omega) ] \}) = 1, \label{[Equ. (20a), Y20a]} \\
& Q_{\omega}(A) = R_{\tau (\omega), \xi(\tau(\omega), \omega)} (A) \hspace{1mm} \forall \hspace{1mm} A \in \mathcal{B}^{\tau(\omega)}, \label{[Equ. (20b), Y20a]}
\end{align}
\end{subequations} 
and the mapping $\omega \mapsto Q_{\omega}(B)$ is $\mathcal{B}_{\tau}$-measurable for every $B \in \mathcal{B}$. 
\end{lemma}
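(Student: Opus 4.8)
The plan is to follow the Stroock--Varadhan concatenation scheme, carried out for stochastic fluid models exactly as in \cite[Pro. 3.2]{HZZ19}: the only inputs needed are the existence and stability of martingale solutions recorded in Proposition \ref{[Pro. 4.1, Y20a]}, and these hold throughout $m \in (0,\frac{1}{2})$, so the argument transfers with no change coming from the diffusive term.

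First I would produce a measurable family of martingale solutions. For each $(s,\xi^{\text{in}}) \in [0,\infty)\times L_{\sigma}^{2}$ write $\mathfrak{M}(s,\xi^{\text{in}})$ for the set of martingale solutions lying in a fixed class $\mathcal{C}(s,\xi^{\text{in}},\{C_{t,q}\}_{q\in\mathbb{N},t\geq s})$; it is non-empty by Proposition \ref{[Pro. 4.1, Y20a]}. The uniform moment bound (M3) gives, via standard tightness estimates, that $\bigcup\mathfrak{M}(s,\xi^{\text{in}})$ over $(s,\xi^{\text{in}})$ in a bounded set is relatively compact in $\mathcal{P}(\Omega_{0})$ with the weak topology, while the stability half of Proposition \ref{[Pro. 4.1, Y20a]} shows each $\mathfrak{M}(s,\xi^{\text{in}})$ is closed, hence compact, and that the graph $\{(s,\xi^{\text{in}},P):P\in\mathfrak{M}(s,\xi^{\text{in}})\}$ is a closed, in particular Borel, subset. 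A Kuratowski--Ryll-Nardzewski type measurable selection theorem then yields a Borel map $(s,\xi^{\text{in}})\mapsto R_{s,\xi^{\text{in}}}$ with $R_{s,\xi^{\text{in}}}\in\mathfrak{M}(s,\xi^{\text{in}})$.

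Next I would compose with the stopped datum: since $\tau$ is a bounded $(\mathcal{B}_{t})_{t\geq0}$-stopping time, the map $\omega\mapsto(\tau(\omega),\xi(\tau(\omega),\omega))\in[0,\infty)\times L_{\sigma}^{2}$ is $\mathcal{B}_{\tau}$-measurable, so that $\omega\mapsto R_{\tau(\omega),\xi(\tau(\omega),\omega)}$ is $\mathcal{B}_{\tau}$-measurable; this is the martingale solution appearing in the statement. Then I would define the concatenation by $\Phi(\omega,\omega')(t)\triangleq\omega(t)$ for $t\leq\tau(\omega)$ and $\Phi(\omega,\omega')(t)\triangleq\omega'(t)$ for $t>\tau(\omega)$. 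By property (M1), $R_{\tau(\omega),\xi(\tau(\omega),\omega)}$-a.e. $\omega'$ satisfies $\omega'(r)=\xi(\tau(\omega),\omega)=\omega(\tau(\omega))$ for all $r\leq\tau(\omega)$, so $\Phi(\omega,\cdot)$ does land in $\Omega_{0}$ (no jump in $H^{-3}$ at $t=\tau(\omega)$), and I set $Q_{\omega}\triangleq\Phi(\omega,\cdot)_{\#}R_{\tau(\omega),\xi(\tau(\omega),\omega)}\in\mathcal{P}(\Omega_{0})$, which is the meaning of $\delta_{\omega}\otimes_{\tau(\omega)}R_{\tau(\omega),\xi(\tau(\omega),\omega)}$. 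Now \eqref{[Equ. (20a), Y20a]} is immediate from the definition of $\Phi$; \eqref{[Equ. (20b), Y20a]} holds because on $\mathcal{B}^{\tau(\omega)}$ the map $\Phi(\omega,\cdot)$ remembers only $\omega'$; and $\omega\mapsto Q_{\omega}(B)$ is $\mathcal{B}_{\tau}$-measurable for every $B\in\mathcal{B}$ because $(\omega,P)\mapsto(\Phi(\omega,\cdot)_{\#}P)(B)$ is jointly measurable while $\omega\mapsto R_{\tau(\omega),\xi(\tau(\omega),\omega)}$ is $\mathcal{B}_{\tau}$-measurable, a monotone-class argument extending the claim from generating cylinder sets to all of $\mathcal{B}$.

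The hard part will be the first step: confirming that the correspondence $(s,\xi^{\text{in}})\mapsto\mathfrak{M}(s,\xi^{\text{in}})$ is compact-valued with closed graph, which is what makes the measurable selection legitimate. This rests entirely on the tightness coming from (M3) and on passage to the limit in the martingale identity (M2), both of which are exactly what Proposition \ref{[Pro. 4.1, Y20a]} delivers, independently of $m\in(0,\frac{1}{2})$. Once that is in hand the remaining steps are purely structural and do not see the equation, which is why the proof of \cite[Pro. 3.2]{HZZ19} applies verbatim.
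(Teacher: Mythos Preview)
Your proposal is correct and matches the paper's approach: the paper does not give a standalone proof but simply invokes \cite[Pro.~3.2]{HZZ19}, noting that Proposition~\ref{[Pro. 4.1, Y20a]} supplies the only equation-dependent input (existence and stability of martingale solutions) and that the remaining concatenation and measurable-selection argument is structural and insensitive to the diffusive term. Your sketch is precisely a faithful outline of that cited argument.
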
 

\begin{lemma}\label{[Lem. 4.3, Y20a]}
(cf. \cite[Pro. 3.4]{HZZ19}) Let $\tau$ be a bounded stopping time of $(\mathcal{B}_{t})_{t\geq 0}$, $\xi^{\text{in}} \in L_{\sigma}^{2}$, and $P$ be a martingale solution to \eqref{stochastic GNS} on $[0,\tau]$ with initial condition $\xi^{\text{in}}$ at initial time 0 according to Definition \ref{[Def. 4.2, Y20a]}. Suppose that there exists a Borel set $\mathcal{N} \subset \Omega_{0,\tau}$ such that $P(\mathcal{N}) = 0$ and $Q_{\omega}$ from Lemma \ref{[Lem. 4.2, Y20a]} satisfies for every $\omega \in \Omega_{0} \setminus \mathcal{N}$ 
\begin{equation}\label{estimate 6}
Q_{\omega} (\{\omega' \in \Omega_{0}: \tau(\omega') = \tau(\omega) \}) = 1. 
\end{equation} 
Then the probability measure $P \otimes_{\tau}R \in \mathcal{P}(\Omega_{0})$ defined by 
\begin{equation}\label{[Equ. (23), Y20a]}
P\otimes_{\tau} R (\cdot) \triangleq \int_{\Omega_{0}} Q_{\omega} (\cdot) P(d\omega) 
\end{equation} 
satisfies $P \otimes_{\tau}R \rvert_{\Omega_{0,\tau}} = P \rvert_{\Omega_{0,\tau}}$ and it is a martingale solution to \eqref{stochastic GNS} on $[0,\infty)$ with initial condition $\xi^{\text{in}}$ at initial time 0 according to Definition \ref{[Def. 4.1, Y20a]}.  
\end{lemma}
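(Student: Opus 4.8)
The plan is to carry out the classical concatenation (``reconstruction'') argument for martingale problems, exactly in the form used in \cite[Pro. 3.4]{HZZ19} and reproduced in \cite{Y20a}; the point is that none of the conditions (M1)--(M3) of Definitions~\ref{[Def. 4.1, Y20a]}--\ref{[Def. 4.2, Y20a]} sees the exponent $m$ in an essential way, since $(-\Delta)^{m}\xi$ enters only as a fixed summand of the time-integrated drift inside $M_{t,s}^{i}$. As a preliminary I would note that $P\otimes_{\tau}R$ is a well-defined element of $\mathcal{P}(\Omega_{0})$, because $\omega\mapsto Q_{\omega}(B)$ is $\mathcal{B}_{\tau}$- (hence $\mathcal{B}$-)measurable with each $Q_{\omega}$ a probability measure, and that $P\otimes_{\tau}R\rvert_{\Omega_{0,\tau}}=P\rvert_{\Omega_{0,\tau}}$ follows from \eqref{[Equ. (20a), Y20a]} together with \eqref{estimate 6}: for $P$-almost every $\omega$ (that is, off $\mathcal{N}$), under $Q_{\omega}$ the path coincides with $\omega$ on $[0,\tau(\omega)]$ and $\tau$ equals the deterministic value $\tau(\omega)$, so the stopped trajectory $\xi(\cdot\wedge\tau)$ is $Q_{\omega}$-a.s.\ equal to $\omega(\cdot\wedge\tau(\omega))$; integrating against $P(d\omega)$ in \eqref{[Equ. (23), Y20a]} then returns the law of $\xi(\cdot\wedge\tau)$ under $P$. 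In particular $P\otimes_{\tau}R(\{\xi(0)=\xi^{\mathrm{in}}\})=1$.

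For (M1) and (M3) I would split each time interval at $\tau$. On $[0,t\wedge\tau]$ the integral and moment quantities are controlled by $P$, via the restriction identity and the hypothesis that $P$ solves \eqref{stochastic GNS} on $[0,\tau]$ per Definition~\ref{[Def. 4.2, Y20a]}. On $[t\wedge\tau,t]$ they are controlled, under $Q_{\omega}$, by the fact that $R_{\tau(\omega),\xi(\tau(\omega),\omega)}$ is a martingale solution with initial datum $\xi(\tau(\omega),\omega)$ at initial time $\tau(\omega)$, together with $\lVert\xi(\tau(\omega),\omega)\rVert_{L_{x}^{2}}\leq\sup_{r\in[0,\tau(\omega)]}\lVert\xi(r,\omega)\rVert_{L_{x}^{2}}$; here \eqref{estimate 6} is what lets me apply the $R$-bounds on the fixed interval $[\tau(\omega),t]$ rather than a random one, and $P(\mathcal{N})=0$ lets me discard $\mathcal{N}$. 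Plugging the $Q_{\omega}$-bound into $\int_{\Omega_{0}}(\cdot)\,P(d\omega)$ and using (M3) for $P$ on $[0,\tau]$ closes both estimates, after enlarging $C_{t,q}$ if needed.

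The substantive step is (M2). Fixing $\psi_{i}\in C^{\infty}(\mathbb{T}^{3})\cap L_{\sigma}^{2}$, I would split
\[
M_{t,0}^{i}=M_{t\wedge\tau,0}^{i}+\bigl(M_{t,0}^{i}-M_{t\wedge\tau,0}^{i}\bigr),\qquad t\geq0.
\]
Since $\tau$ is bounded, the first summand is a continuous square-integrable $(\mathcal{B}_{t})_{t\geq0}$-martingale under $P\otimes_{\tau}R$ with quadratic variation $\int_{0}^{t\wedge\tau}\lVert G(\xi(r))^{\ast}\psi_{i}\rVert_{U}^{2}\,dr$, which via the restriction identity reduces to (M2) for $P$ on $[0,\tau]$. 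For the second summand I would disintegrate along \eqref{[Equ. (23), Y20a]}: conditioning on $\mathcal{B}_{\tau}$ identifies $Q_{\omega}$ as the conditional law of the future (legitimate by the $\mathcal{B}_{\tau}$-measurability of $\omega\mapsto Q_{\omega}(B)$), under which, by \eqref{[Equ. (20b), Y20a]} and (M2) for $R_{\tau(\omega),\xi(\tau(\omega),\omega)}$, the process $\{M_{t,0}^{i}-M_{\tau(\omega),0}^{i}\}_{t\geq\tau(\omega)}=\{M_{t,\tau(\omega)}^{i}\}_{t\geq\tau(\omega)}$ is a continuous square-integrable martingale for the time-shifted filtration with quadratic variation $\int_{\tau(\omega)}^{t}\lVert G(\xi(r))^{\ast}\psi_{i}\rVert_{U}^{2}\,dr$; here \eqref{estimate 6} forces the restart time under $Q_{\omega}$ to be the deterministic $\tau(\omega)$. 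Assembling the two summands via the tower property — a process that is a $(\mathcal{B}_{t})$-martingale up to the bounded stopping time $\tau$ and, conditionally on $\mathcal{B}_{\tau}$, has martingale increments on $[\tau,\infty)$ is a global $(\mathcal{B}_{t})$-martingale — shows $M_{t,0}^{i}$ is a continuous square-integrable $(\mathcal{B}_{t})_{t\geq0}$-martingale under $P\otimes_{\tau}R$; running the identical decomposition on $(M_{t,0}^{i})^{2}-\int_{0}^{t}\lVert G(\xi(r))^{\ast}\psi_{i}\rVert_{U}^{2}\,dr$ gives $\langle\langle M_{t,0}^{i}\rangle\rangle=\int_{0}^{t}\lVert G(\xi(r))^{\ast}\psi_{i}\rVert_{U}^{2}\,dr$, which is (M2).

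The only genuinely delicate point is the gluing inside (M2): correctly handling increments of $M_{\cdot,0}^{i}$ that straddle the random time $\tau$, and verifying adaptedness to $(\mathcal{B}_{t})_{t\geq0}$ rather than to the post-$\tau$ filtration. This is precisely what the structural inputs of Lemma~\ref{[Lem. 4.2, Y20a]} are for — the $\mathcal{B}_{\tau}$-measurability of $\omega\mapsto Q_{\omega}(B)$ and the consistency \eqref{[Equ. (20b), Y20a]} — together with the hypothesis \eqref{estimate 6}; everything else is bookkeeping on $[0,t\wedge\tau]$ and $[t\wedge\tau,t]$. Since the diffusion $(-\Delta)^{m}$ enters only through a fixed drift term, the whole argument is uniform in $m\in(0,\frac{1}{2})$, which is why it is identical to \cite[Pro. 3.4]{HZZ19}.
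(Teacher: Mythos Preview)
Your proposal is correct and takes essentially the same approach as the paper. The paper itself gives no self-contained proof of this lemma: it simply states that the result ``follow[s] from \cite{HZZ19} which apply to our case as their proofs do not rely on the specific form of the diffusive term,'' and your sketch is precisely a faithful outline of the concatenation argument of \cite[Pro.~3.4]{HZZ19}, together with the observation that $(-\Delta)^{m}$ enters only as a fixed drift summand in $M_{t,s}^{i}$ so nothing changes for $m\in(0,\tfrac{1}{2})$.
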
 
Now we see that if  
\begin{subequations}
\begin{align}
&dz + (-\Delta)^{m} z dt + \nabla \pi^{1} dt = dB, \hspace{2mm} \nabla\cdot z = 0 \hspace{2mm} \text{ for } t > 0, \hspace{5mm} z (0,x) = 0, \label{[Equ. (24), Y20a]}\\
&\partial_{t} v + (-\Delta)^{m} v + \text{div} ((v+z) \otimes (v+z)) + \nabla \pi^{2} = 0, \nonumber\\
& \hspace{44mm}  \nabla\cdot v =0 \hspace{2mm}  \text{ for } t > 0, \hspace{5mm} v(0,x) = u^{\text{in}}(x) \label{[Equ. (25), Y20a]}
\end{align} 
\end{subequations}
so that $z(t) = \int_{0}^{t} \mathbb{P} e^{- (-\Delta)^{m} (t-s)} dB(s)$, then $u = v+z$ solves \eqref{stochastic GNS} with $\pi = \pi^{1} + \pi^{2}$. Let us formally fix a $GG^{\ast}$-Wiener process $B$ on $(\Omega, \mathcal{F}, \textbf{P})$ with $(\mathcal{F}_{t})_{t\geq 0}$ as the canonical filtration of $B$ augmented by all the $\textbf{P}$-negligible sets. We have the following results concerning regularity of $z$.  
\begin{proposition}\label{[Pro. 4.4, Y20a]}
For all $\delta \in (0, \frac{1}{2})$, $T > 0$, and $l \in \mathbb{N}$, 
\begin{equation}
\mathbb{E}^{\textbf{P}}[ \lVert z \rVert_{C_{T}\dot{H}_{x}^{\frac{5+ \sigma}{2}}}^{l} + \lVert z \rVert_{C_{T}^{\frac{1}{2} - \delta} \dot{H}_{x}^{\frac{3+ \sigma}{2}}}^{l} ] < \infty. 
\end{equation} 
\end{proposition}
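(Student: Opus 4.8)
The plan is to recognize $z$ as the stochastic convolution of the analytic semigroup generated by $-(-\Delta)^{m}$ and to run the Da Prato--Zabczyk factorization method twice, with two different choices of the factorization exponent, to produce the two terms in the stated bound.

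\emph{Step 1 (setup and an elementary multiplier bound).} Since $z$ solves \eqref{[Equ. (24), Y20a]} with $z(0,\cdot)=0$, we have $z(t)=\int_{0}^{t}S(t-s)\,dB(s)$ with $S(\tau)\triangleq\mathbb{P}e^{-\tau(-\Delta)^{m}}$; on $\mathbb{T}^{3}$ the Leray projection $\mathbb{P}$ commutes with $(-\Delta)^{m}$ and $GG^{\ast}$ maps $L_{\sigma}^{2}$ into $L_{\sigma}^{2}$, so $z$ is the stochastic convolution of $e^{-\tau(-\Delta)^{m}}$ restricted to divergence-free, mean-zero fields and hence, for every $t$, a centered Gaussian random variable in $\dot{H}_{x}^{a}$ for all $a\in\mathbb{R}$. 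I would record the elementary estimate that for $c\ge 0$,
\begin{equation*}
\lVert (-\Delta)^{c}e^{-2\tau(-\Delta)^{m}}\rVert_{L_{x}^{2}\to L_{x}^{2}}=\sup_{\xi\ne 0}\lvert\xi\rvert^{2c}e^{-2\tau\lvert\xi\rvert^{2m}}\le C_{c,m}\,\tau^{-c/m},
\end{equation*}
while for $c\le 0$ the operator is bounded uniformly in $\tau\ge 0$ since $(-\Delta)^{-\lvert c\rvert}$ is bounded on mean-zero functions.

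\emph{Step 2 (factorization and moment bounds on the factor).} Fix $\alpha\in(0,\tfrac12)$. Using stochastic Fubini, the identity $\int_{r}^{t}(t-s)^{\alpha-1}(s-r)^{-\alpha}\,ds=\tfrac{\pi}{\sin\pi\alpha}$, and the semigroup property, I would write
\begin{equation*}
z(t)=\frac{\sin\pi\alpha}{\pi}\int_{0}^{t}(t-s)^{\alpha-1}S(t-s)Y_{\alpha}(s)\,ds,\qquad Y_{\alpha}(s)\triangleq\int_{0}^{s}(s-r)^{-\alpha}S(s-r)\,dB(r).
\end{equation*}
By It\^o's isometry and Step 1, for any $b\in\mathbb{R}$,
\begin{equation*}
\mathbb{E}^{\textbf{P}}\lVert Y_{\alpha}(s)\rVert_{\dot{H}_{x}^{b}}^{2}=\int_{0}^{s}(s-r)^{-2\alpha}\,\text{Tr}\big((-\Delta)^{b}e^{-2(s-r)(-\Delta)^{m}}GG^{\ast}\big)\,dr .
\end{equation*}
For $b=\tfrac{3+\sigma}{2}$ one has $b-(\tfrac52-m+2\sigma)=m-1-\tfrac{3\sigma}{2}<0$, so the trace is bounded by $C\,\text{Tr}((-\Delta)^{\frac52-m+2\sigma}GG^{\ast})$ uniformly, whence $\sup_{s\le T}\mathbb{E}^{\textbf{P}}\lVert Y_{\alpha}(s)\rVert_{\dot{H}_{x}^{(3+\sigma)/2}}^{2}<\infty$ for \emph{every} $\alpha\in(0,\tfrac12)$. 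For $b=\tfrac{5+\sigma}{2}$, set $c\triangleq b-(\tfrac52-m+2\sigma)=m-\tfrac{3\sigma}{2}$; if $c\le 0$ this case is subsumed by the previous one and we again get the bound for every $\alpha<\tfrac12$, so assume $c\in(0,m)$, in which case Step 1 gives the trace $\le C_{c}(s-r)^{-c/m}\,\text{Tr}((-\Delta)^{\frac52-m+2\sigma}GG^{\ast})$, hence
\begin{equation*}
\mathbb{E}^{\textbf{P}}\lVert Y_{\alpha}(s)\rVert_{\dot{H}_{x}^{(5+\sigma)/2}}^{2}\le C\int_{0}^{s}(s-r)^{-2\alpha-c/m}\,dr<\infty
\end{equation*}
uniformly in $s\le T$ provided $\alpha<\tfrac12\big(1-\tfrac{c}{m}\big)=\tfrac{3\sigma}{4m}$.

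\emph{Step 3 (upgrading moments and mapping properties of the factorization operator).} Since $Y_{\alpha}(s)$ is Gaussian in the Hilbert space $\dot{H}_{x}^{b}$, $\mathbb{E}^{\textbf{P}}\lVert Y_{\alpha}(s)\rVert_{\dot{H}_{x}^{b}}^{p}\le c_{p}\big(\mathbb{E}^{\textbf{P}}\lVert Y_{\alpha}(s)\rVert_{\dot{H}_{x}^{b}}^{2}\big)^{p/2}$, so by Fubini $\mathbb{E}^{\textbf{P}}\lVert Y_{\alpha}\rVert_{L^{p}(0,T;\dot{H}_{x}^{b})}^{p}<\infty$ for every $p<\infty$ in both cases. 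I would then invoke the classical fact that the factorization operator $R_{\alpha}g(t)\triangleq\int_{0}^{t}(t-s)^{\alpha-1}S(t-s)g(s)\,ds$ is bounded $L^{p}(0,T;X)\to C([0,T];X)$ whenever $\alpha>1/p$, and $L^{p}(0,T;X)\to C^{\alpha-1/p}([0,T];X)$, for any Banach space $X$ on which $(S(\tau))_{\tau\ge0}$ is a bounded $C_{0}$-semigroup. Taking $X=\dot{H}_{x}^{(5+\sigma)/2}$, $\alpha\in(0,\tfrac{3\sigma}{4m})$, and $p>1/\alpha$ yields $\mathbb{E}^{\textbf{P}}\lVert z\rVert_{C_{T}\dot{H}_{x}^{(5+\sigma)/2}}^{p}\lesssim\mathbb{E}^{\textbf{P}}\lVert Y_{\alpha}\rVert_{L^{p}(0,T;\dot{H}_{x}^{(5+\sigma)/2})}^{p}<\infty$; taking $X=\dot{H}_{x}^{(3+\sigma)/2}$, $\alpha$ close to $\tfrac12$ and $p$ large so that $\alpha-1/p>\tfrac12-\delta$, yields $\mathbb{E}^{\textbf{P}}\lVert z\rVert_{C_{T}^{1/2-\delta}\dot{H}_{x}^{(3+\sigma)/2}}^{p}<\infty$. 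Finally $z$ is a Gaussian random element of each of the separable Banach spaces $C_{T}\dot{H}_{x}^{(5+\sigma)/2}$ and $C_{T}^{1/2-\delta}\dot{H}_{x}^{(3+\sigma)/2}$, so by Fernique's theorem its $l$-th moments there are finite for every $l\in\mathbb{N}$; adding the two gives the claim.

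\emph{Main obstacle.} I do not expect any genuine analytic difficulty beyond the parameter bookkeeping: the constraint ``$\alpha$ small'' needed to absorb the singularity $(s-r)^{-2\alpha-c/m}$ at the top regularity $\dot{H}_{x}^{(5+\sigma)/2}$ is incompatible with the constraint ``$\alpha$ close to $\tfrac12$'' needed to reach H\"older exponent $\tfrac12-\delta$ in time at the lower regularity $\dot{H}_{x}^{(3+\sigma)/2}$, and the resolution is simply to run the factorization twice with two different choices of $(\alpha,b)$. The remaining ingredients --- It\^o's isometry, the multiplier bound of Step 1, and the continuity of $R_{\alpha}$ --- are routine; indeed this proposition is the analogue of \cite[Pro. 4.4]{Y20a} with the trace hypothesis adjusted to $\text{Tr}((-\Delta)^{\frac52-m+2\sigma}GG^{\ast})<\infty$, so no new ideas should be required.
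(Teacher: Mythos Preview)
Your proposal is correct and, in substance, is precisely the argument that the paper defers to: the paper's own proof consists of a single sentence citing \cite[Pro.~4.4]{Y21a} together with the trace hypothesis $\text{Tr}((-\Delta)^{\frac{5}{2}-m+2\sigma}GG^{\ast})<\infty$, and the content of that cited proposition is exactly the Da~Prato--Zabczyk factorization argument you have written out in full, including the two separate choices of $(\alpha,b)$ to obtain the two bounds. So you have supplied the details behind the citation rather than taken a different route.
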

\begin{proof}[Proof of Proposition \ref{[Pro. 4.4, Y20a]}]
This is an immediate consequence of \cite[Pro. 4.4]{Y21a} and the hypothesis of Theorems \ref{Theorem 2.1}-\ref{Theorem 2.2} that $\text{Tr} ((-\Delta)^{\frac{5}{2} - m + 2 \sigma} GG^{\ast}) < \infty$. 
\end{proof} 
Next, for every $\omega \in \Omega_{0}$ we define 
\begin{subequations}
\begin{align}
M_{t,0}^{\omega} \triangleq& \omega(t) - \omega(0) + \int_{0}^{t} \mathbb{P} \text{div} (\omega(r) \otimes \omega(r)) + (-\Delta)^{m} \omega(r) dr, \label{[Equation (3.12), HZZ19]}\\
Z^{\omega} (t) \triangleq& M_{t,0}^{\omega} - \int_{0}^{t} \mathbb{P} (-\Delta)^{m} e^{-(t-r) (-\Delta)^{m}} M_{r,0}^{\omega} dr. \label{[Equation (3.13), HZZ19]}
\end{align} 
\end{subequations} 
If $P$ is a martingale solution to \eqref{stochastic GNS}, then the mapping $\omega \mapsto M_{t,0}^{\omega}$ is a $GG^{\ast}$-Wiener process under $P$ and it follows from \eqref{[Equation (3.12), HZZ19]}-\eqref{[Equation (3.13), HZZ19]} that 
\begin{equation}\label{[Equation (3.13a), HZZ19]}
Z(t) =  \int_{0}^{t} \mathbb{P} e^{-(t-r) (-\Delta)^{m}} dM_{r,0}. 
\end{equation} 
It follows from Proposition \ref{[Pro. 4.4, Y20a]} that for any $\delta \in (0, \frac{1}{2})$, $Z \in C_{T} \dot{H}_{x}^{\frac{5+\sigma}{2}} \cap C_{T}^{\frac{1}{2} - \delta} \dot{H}_{x}^{\frac{3+ \sigma}{2}}$ $P$-almost surely. For $\omega \in \Omega_{0}$, $l \in \mathbb{N}$, and $\delta \in (0, \frac{1}{24})$,  we define 
\begin{align} 
\tau_{L}^{l} (\omega) \triangleq& \inf \{t \geq 0: C_{S}\lVert Z^{\omega}(t) \rVert_{\dot{H}_{x}^{\frac{5+\sigma}{2}}} >  (L - \frac{1}{l})^{\frac{1}{4}} \} \nonumber\\ 
& \wedge \inf \{ t \geq 0: C_{S}\lVert Z^{\omega} \rVert_{C_{t}^{\frac{1}{2} - 2 \delta} \dot{H}_{x}^{\frac{3 + \sigma}{2}}} > (L - \frac{1}{l})^{\frac{1}{2}} \} \wedge L \hspace{2mm} \text{ and } \hspace{2mm} \tau_{L} \triangleq \lim_{l\to\infty} \tau_{L}^{l}  \label{[Equation (3.13b), HZZ19]}
\end{align} 
where $C_{S} > 0$ is the Sobolev constant such that $\lVert f \rVert_{L_{x}^{\infty}} \leq C_{s} \lVert f \rVert_{\dot{H}_{x}^{\frac{3 + \sigma}{2}}}$ for all $f \in \dot{H}_{x}^{\frac{3+ \sigma}{2}}$ that is mean-zero. We note that the condition of $\delta \in (0, \frac{1}{24})$ is more restrictive than $\delta \in (0, \frac{1}{12})$ in previous works such as \cite{HZZ19, Y20a}, and this is needed in \eqref{estimate 135}. By \cite[Lem. 3.5]{HZZ19} it follows that $\tau_{L}$ is a stopping time of $(\mathcal{B}_{t})_{t\geq 0}$. We define for $C_{S} > 0$ in \eqref{[Equation (3.13b), HZZ19]}, $L > 1$, and $\delta \in (0, \frac{1}{24})$, 
\begin{equation}
T_{L} \triangleq \inf \{t \geq 0: C_{S}\lVert z(t) \rVert_{\dot{H}_{x}^{\frac{5+ \sigma}{2}}} \geq L^{\frac{1}{4}} \}  \wedge \inf \{ t \geq 0: C_{S}\lVert z \rVert_{C_{t}^{\frac{1}{2} - 2 \delta} \dot{H}_{x}^{\frac{3+ \sigma}{2}}} \geq L^{\frac{1}{2}} \} \wedge L, \label{[Equ. (33), Y20a]}
\end{equation} 
and realize that $T_{L} > 0$ and $\lim_{L\to\infty} T_{L} = \infty$ $\textbf{P}$-a.s. due to Proposition \ref{[Pro. 4.4, Y20a]}. The stopping time $\mathfrak{t}$ in the statement of Theorem \ref{Theorem 2.1} is actually $T_{L}$ for $L > 1$ sufficiently large. Next, we assume Theorem \ref{Theorem 2.1} on $(\Omega, \mathcal{F}, (\mathcal{F}_{t})_{t\geq 0}, \textbf{P})$ and denote the solution constructed therein by $u$ and $P = \mathcal{L} (u)$ the law of $u$. Then the following propositions can be proven identically to \cite[Pro. 4.5 and 4.6]{Y20a} as the proofs therein do not rely on the range of $m$. We only mention that a consequence from the proof of Proposition \ref{[Proposition 3.7, HZZ19]} is that $\tau_{L}$ from \eqref{[Equation (3.13b), HZZ19]} satisfies 
\begin{equation}\label{[Equ. (184), Y20a]}
\tau_{L} (u) = T_{L} \hspace{2mm} \textbf{P}\text{-almost surely}.
\end{equation} 
\begin{proposition}\label{[Proposition 3.7, HZZ19]}
Let $\tau_{L}$ be defined by \eqref{[Equation (3.13b), HZZ19]}. Then $P= \mathcal{L}(u)$ where $u$ is constructed by Theorem \ref{Theorem 2.1} is a martingale solution to \eqref{stochastic GNS} on $[0, \tau_{L}]$ according to Definition \ref{[Def. 4.2, Y20a]}. 
\end{proposition}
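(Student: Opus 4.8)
}]
The plan is to follow \cite[Pro.~4.5]{Y20a} (itself an adaptation of \cite[Pro.~3.7]{HZZ19}) and verify the three conditions (M1)--(M3) of Definition~\ref{[Def. 4.2, Y20a]} for $P=\mathcal{L}(u)$ with stopping time $\tau=\tau_{L}$, where $u$ is the process produced by Theorem~\ref{Theorem 2.1}; since the diffusion exponent $m$ enters none of these verifications, the argument transfers verbatim to $m\in(0,\tfrac{1}{2})$. As a preliminary, note that by construction $u$ is $(\mathcal{F}_{t})_{t\geq0}$-adapted with $u(\cdot,\omega)\in\Omega_{0}$ for $\textbf{P}$-a.e.\ $\omega$ (using \eqref{estimate 2}, the decomposition $u=v+z$, and the regularity of $z$ from Proposition~\ref{[Pro. 4.4, Y20a]}), so that $P\in\mathcal{P}(\Omega_{0})$ is well defined and one may ask that its restriction to $\Omega_{0,\tau_{L}}$---equivalently, the law of $u(\cdot\wedge\mathfrak{t})$---be a martingale solution on $[0,\tau_{L}]$.

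The crux is the identity \eqref{[Equ. (184), Y20a]}, namely $\tau_{L}(u)=T_{L}$ $\textbf{P}$-a.s. Since $u$ solves \eqref{stochastic GNS} driven by the $GG^{\ast}$-Wiener process $B$, substituting $\omega=u(\cdot,\omega)$ into \eqref{[Equation (3.12), HZZ19]} gives $M_{t,0}^{u}=B(t)$, so that by \eqref{[Equation (3.13), HZZ19]}--\eqref{[Equation (3.13a), HZZ19]} the regularized process $Z^{u}$ coincides with the Ornstein--Uhlenbeck process $z$ of \eqref{[Equ. (24), Y20a]}. Comparing the definition \eqref{[Equation (3.13b), HZZ19]} of $\tau_{L}$ with that of $T_{L}$ in \eqref{[Equ. (33), Y20a]}, and passing to the limit $l\to\infty$---which converts each strict inequality $C_{S}\lVert Z^{u}\rVert_{\dot H_{x}^{(5+\sigma)/2}}>(L-\tfrac{1}{l})^{1/4}$ and the analogous $C_{t}^{1/2-2\delta}\dot H_{x}^{(3+\sigma)/2}$-version into the corresponding non-strict inequality built into $T_{L}$---yields $\tau_{L}(u)=T_{L}$ $\textbf{P}$-a.s.

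Granting \eqref{[Equ. (184), Y20a]}, the three conditions follow. (M1): $u(0)=u^{\text{in}}$ is deterministic, and since $F\equiv1$ the operator $G$ is a fixed Hilbert--Schmidt operator with $\lVert G\rVert_{L_{2}(U,L_{\sigma}^{2})}^{2}=\text{Tr}(GG^{\ast})<\infty$, hence $\int_{0}^{l\wedge\tau_{L}}\lVert G(\xi(r))\rVert_{L_{2}(U,L_{\sigma}^{2})}^{2}\,dr\leq l\,\text{Tr}(GG^{\ast})<\infty$ holds $P$-a.s. (M2): reading $M_{t\wedge\tau_{L},0}^{i}$ along $u$ and invoking \eqref{stochastic GNS} together with $\tau_{L}(u)=T_{L}$ and the divergence-freeness of $\psi_{i}$, this process equals $\langle B(t\wedge T_{L}),\psi_{i}\rangle$, a continuous square-integrable $(\mathcal{F}_{t})$-martingale with quadratic variation $\int_{0}^{t\wedge T_{L}}\lVert G^{\ast}\psi_{i}\rVert_{U}^{2}\,dr$; since $u$ is $(\mathcal{F}_{t})$-adapted and $\tau_{L}$ is a $(\mathcal{B}_{t})$-stopping time by \cite[Lem.~3.5]{HZZ19}, its pushforward under $\mathcal{L}(u)$ is a continuous square-integrable $(\mathcal{B}_{t})$-martingale under $P$ with the asserted quadratic variation. (M3): by \eqref{estimate 2} we have $\text{esssup}_{\omega}\lVert u(\omega)\rVert_{C_{\mathfrak{t}}C_{x}^{\gamma}}<\infty$ and $C_{x}^{\gamma}(\mathbb{T}^{3})\hookrightarrow L_{x}^{2}\cap\dot H_{x}^{\gamma}$ (taking the exponent $\gamma$ in Definition~\ref{[Def. 4.1, Y20a]} strictly below the one appearing in \eqref{estimate 2}), so on $[0,\mathfrak{t}]=[0,T_{L}]$ both $\sup_{r\leq t\wedge T_{L}}\lVert u(r)\rVert_{L_{x}^{2}}^{2q}$ and $\int_{0}^{t\wedge T_{L}}\lVert u(r)\rVert_{\dot H_{x}^{\gamma}}^{2}\,dr\leq(t\wedge L)\lVert u\rVert_{C_{T_{L}}C_{x}^{\gamma}}^{2}$ are essentially bounded; taking $\textbf{P}$-expectations and choosing $C_{t,q}$ accordingly (possibly $L$-dependent) gives \eqref{estimate 4} with $\tau=\tau_{L}$.

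The step I expect to be the main obstacle is \eqref{[Equ. (184), Y20a]}: one must verify carefully that the abstractly defined regularized noise $Z^{\omega}$, when evaluated along the convex-integration solution, genuinely reproduces the concrete Ornstein--Uhlenbeck process $z$, and reconcile the strict inequalities defining $\tau_{L}$ in \eqref{[Equation (3.13b), HZZ19]} with the non-strict ones defining $T_{L}$ in \eqref{[Equ. (33), Y20a]} via the $l\to\infty$ limit. Everything else is routine bookkeeping with pushforwards of martingales, and in fact one may simply cite the corresponding argument of \cite[Pro.~4.5]{Y20a} and \cite[Pro.~3.7]{HZZ19}, which does not use the range of $m$.
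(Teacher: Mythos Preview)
Your proposal is correct and takes essentially the same approach as the paper: the paper does not spell out a proof but simply states that the proposition ``can be proven identically to \cite[Pro.~4.5 and 4.6]{Y20a} as the proofs therein do not rely on the range of $m$,'' singling out the identity $\tau_{L}(u)=T_{L}$ $\textbf{P}$-a.s.\ as the key byproduct. Your outline---identifying $M_{t,0}^{u}=B(t)$ and hence $Z^{u}=z$, reconciling the definitions \eqref{[Equation (3.13b), HZZ19]} and \eqref{[Equ. (33), Y20a]} via the $l\to\infty$ limit, and then reading off (M1)--(M3) from the deterministic bound \eqref{estimate 2}---is exactly the content of those cited proofs, so nothing is missing.
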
 
\begin{proposition}\label{[Proposition 3.8, HZZ19]}
Let $\tau_{L}$ be defined by \eqref{[Equation (3.13b), HZZ19]} and $P = \mathcal{L} (u)$ constructed from Theorem \ref{Theorem 2.1}. Then $P\otimes_{\tau_{L}} R$ in \eqref{[Equ. (23), Y20a]} is a martingale solution to \eqref{stochastic GNS} on $[0,\infty)$ according to Definition \ref{[Def. 4.1, Y20a]}. 
\end{proposition}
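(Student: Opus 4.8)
}]
The plan is to obtain the assertion from the concatenation principle Lemma \ref{[Lem. 4.3, Y20a]}, applied with $\tau = \tau_{L}$, exactly along the lines of \cite[Pro. 4.6]{Y20a} and \cite[Pro. 3.8]{HZZ19}; since neither of those arguments uses the specific value of $m$, the same reasoning transfers to the present range $m \in (0, \frac{1}{2})$. First I would recall that, by Proposition \ref{[Proposition 3.7, HZZ19]}, $P = \mathcal{L}(u)$ is a martingale solution to \eqref{stochastic GNS} on $[0, \tau_{L}]$ in the sense of Definition \ref{[Def. 4.2, Y20a]}, starting from the deterministic $u^{\text{in}}$ at initial time $0$, and that $\tau_{L} \leq L$ is a bounded stopping time of $(\mathcal{B}_{t})_{t\geq 0}$ by \cite[Lem. 3.5]{HZZ19}. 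Thus the only hypothesis of Lemma \ref{[Lem. 4.3, Y20a]} that remains to be checked is the existence of a $P$-null Borel set $\mathcal{N} \subset \Omega_{0,\tau_{L}}$ off which the measures $Q_{\omega}$ furnished by Lemma \ref{[Lem. 4.2, Y20a]} satisfy \eqref{estimate 6}, i.e. $Q_{\omega}(\{\omega' : \tau_{L}(\omega') = \tau_{L}(\omega)\}) = 1$.

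To verify this I would exploit that $\tau_{L}$ is determined by the trajectory only up to time $\tau_{L}$. Indeed, from \eqref{[Equation (3.12), HZZ19]}--\eqref{[Equation (3.13), HZZ19]} the process $t \mapsto Z^{\omega}(t)$ is $(\mathcal{B}_{t})_{t\geq 0}$-adapted, so for each $t$ both $\lVert Z^{\omega}(t) \rVert_{\dot{H}_{x}^{\frac{5+\sigma}{2}}}$ and $\lVert Z^{\omega}\rVert_{C_{t}^{\frac{1}{2}-2\delta}\dot{H}_{x}^{\frac{3+\sigma}{2}}}$ depend on $\omega$ only through $\omega\rvert_{[0,t]}$; hence the first hitting times appearing in \eqref{[Equation (3.13b), HZZ19]}, together with the truncation at $L$ and the limit in $l$, produce a stopping time whose value on $\omega$ is unchanged by any modification of $\omega$ after time $\tau_{L}(\omega)$. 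Combining this with \eqref{[Equ. (20a), Y20a]}, which guarantees that $Q_{\omega}$-a.e.\ $\omega'$ coincides with $\omega$ on $[0,\tau_{L}(\omega)]$, yields $\tau_{L}(\omega') = \tau_{L}(\omega)$ for $Q_{\omega}$-a.e.\ $\omega'$ and every $\omega$ outside a $P$-null set, which is precisely \eqref{estimate 6}.

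Granting this, Lemma \ref{[Lem. 4.3, Y20a]} applies and shows that $P \otimes_{\tau_{L}} R$ defined in \eqref{[Equ. (23), Y20a]} restricts to $P$ on $\Omega_{0,\tau_{L}}$ and is a martingale solution to \eqref{stochastic GNS} on $[0,\infty)$ in the sense of Definition \ref{[Def. 4.1, Y20a]}. In particular, the moment bound (M3) of \eqref{estimate 4}, with the Sobolev exponent $\gamma$, is inherited from the uniform constants $\{C_{t,q}\}$ carried by the family $R_{\tau_{L}(\omega),\xi(\tau_{L}(\omega),\omega)}$ via the stability part of Proposition \ref{[Pro. 4.1, Y20a]}, together with the $\dot{H}_{x}^{\gamma}$-moment estimate already available for $P$ on $[0,\tau_{L}]$.

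The step I expect to require the most care — though it is ultimately routine and $m$-independent — is confirming that the glued measure genuinely inherits the martingale property (M2) across the splicing time $\tau_{L}$: one must verify that $M_{t,0}^{i}$ is a continuous square-integrable $(\mathcal{B}_{t})_{t\geq 0}$-martingale under $P \otimes_{\tau_{L}} R$ with quadratic variation $\int_{0}^{t} \lVert G(\xi(r))^{\ast}\psi_{i}\rVert_{U}^{2}\,dr$. This rests on the disintegration identities \eqref{[Equ. (20a), Y20a]}--\eqref{[Equ. (20b), Y20a]}, the $\mathcal{B}_{\tau_{L}}$-measurability of $\omega \mapsto Q_{\omega}(B)$ from Lemma \ref{[Lem. 4.2, Y20a]}, and the fact that the noise coefficient $G$ is the same autonomous operator before and after $\tau_{L}$, so that no cross terms are created at the splicing time. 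Since every ingredient is independent of the value of $m$, the verification is covered verbatim by the proofs of \cite[Pro. 3.7, 3.8]{HZZ19} and \cite[Pro. 4.5, 4.6]{Y20a}, which I would cite directly.
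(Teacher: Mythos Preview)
Your proposal is correct and takes essentially the same approach as the paper: the paper simply states that Propositions \ref{[Proposition 3.7, HZZ19]} and \ref{[Proposition 3.8, HZZ19]} ``can be proven identically to \cite[Pro. 4.5 and 4.6]{Y20a} as the proofs therein do not rely on the range of $m$,'' and you have unpacked exactly that reduction --- invoking Lemma \ref{[Lem. 4.3, Y20a]} with $\tau=\tau_{L}$, checking \eqref{estimate 6} via the $(\mathcal{B}_{t})$-adaptedness of $Z^{\omega}$ and \eqref{[Equ. (20a), Y20a]}, and citing the $m$-independent verifications of (M1)--(M3) in \cite{HZZ19, Y20a}. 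If anything, your sketch is more detailed than what the paper itself provides.
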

At this point we are ready to prove Theorem \ref{Theorem 2.2}; due to its similarity to previous works \cite{HZZ19, Y20a}, we leave this in the Appendix. 

\subsection{Proof of Theorem \ref{Theorem 2.1} assuming Proposition \ref{[Pro. 4.8, Y20a]}}
Considering \eqref{[Equ. (25), Y20a]}, for $q \in \mathbb{N}_{0}$ we aim to construct a solution $(v_{q}, \mathring{R}_{q})$ to 
\begin{equation}\label{[Equ. (36), Y20a]}
\partial_{t} v_{q} + (-\Delta)^{m} v_{q} + \text{div} ((v_{q} + z) \otimes (v_{q} + z)) + \nabla \pi_{q} = \text{div} \mathring{R}_{q}, \hspace{3mm} \nabla\cdot v_{q} = 0 \hspace{3mm} \text{ for } t > 0, 
\end{equation} 
where $\mathring{R}_{q}$ is a trace-free symmetric matrix. For any $a \in \mathbb{N}$, $\beta \in (0,\frac{1}{2})$, and $L > 1$, we set 
\begin{equation}\label{estimate 69}
\lambda_{q} \triangleq  a^{2^{q}}, \hspace{3mm} \delta_{q} \triangleq \lambda_{q}^{-2\beta}, \hspace{2mm} \text{ and } \hspace{2mm} M_{0}(t) \triangleq L^{4} e^{4Lt}
\end{equation}
so that $\delta_{q}^{\frac{1}{2}} \lambda_{q} < \delta_{q+1}^{\frac{1}{2}} \lambda_{q+1}$. We note that one can also set $\lambda_{q} = a^{b^{q}}$ for $b \in \mathbb{N}$ similarly to some previous works (e.g., \cite{BV19a, HZZ19}); we chose $a^{2^{q}}$ for simplicity because choosing $b \neq 2$ will not improve our results. We see from \eqref{[Equ. (33), Y20a]} that for any $\delta \in (0, \frac{1}{24})$ and $t \in [0, T_{L}]$, 
\begin{equation}\label{[Equ. (38), Y20a]}
\lVert z(t) \rVert_{L_{x}^{\infty}} \leq L^{\frac{1}{4}}, \hspace{2mm} \lVert z(t) \rVert_{\dot{W}_{x}^{1,\infty}} \leq L^{\frac{1}{4}} \hspace{2mm} \text{ and } \hspace{2mm} \lVert z \rVert_{C_{t}^{\frac{1}{2} - 2 \delta} L_{x}^{\infty}} \leq L^{\frac{1}{2}} 
\end{equation}
by definition of $C_{S}$ from \eqref{[Equation (3.13b), HZZ19]}. Now we see that if 
\begin{equation}\label{[Equ. (4.5), HZZ19]}
a^{2\beta} > 1 + 2 (2\pi)^{\frac{3}{2}},  
\end{equation}
which we will formally state in \eqref{estimate 8}, then $\sum_{1\leq \iota \leq q} \delta_{\iota}^{\frac{1}{2}}  < \frac{1}{2(2\pi)^{\frac{3}{2}}} < \frac{1}{2}$ for any $q \in \mathbb{N}$. We set the convention that $\sum_{1\leq \iota \leq 0} \delta_{\iota}^{\frac{1}{2}} \triangleq 0$, denote by $c_{R} > 0$ a universal constant to be described subsequently (see \eqref{[Equ. (5.22a), BV19b]}, \eqref{[Equ. (5.26), BV19b]}, \eqref{[Equ. (5.33), BV19b]}) and assume the following inductive bounds: for $q \in \mathbb{N}_{0}$ and $t \in [0, T_{L}]$, 
\begin{subequations}\label{[Equ. (40), Y20a]}
\begin{align}
& \lVert v_{q} \rVert_{C_{t,x}} \leq M_{0}(t)^{\frac{1}{2}} (1+ \sum_{1\leq \iota \leq q} \delta_{\iota}^{\frac{1}{2}}) \leq 2 M_{0}(t)^{\frac{1}{2}}, \label{[Equ. (40a), Y20a]}\\
& \lVert v_{q} \rVert_{C_{t,x}^{1}} \leq M_{0}(t) \delta_{q}^{\frac{1}{2}} \lambda_{q}, \label{[Equ. (40b), Y20a]} \\
& \lVert \mathring{R}_{q}\rVert_{C_{t,x}} \leq c_{R} M_{0}(t) \delta_{q+1}.  \label{[Equ. (40c), Y20a]}
\end{align} 
\end{subequations}
\begin{proposition}\label{[Pro. 4.7, Y20a]}
For $L > 1$, define  
\begin{equation}\label{estimate 141}
v_{0}(t,x) \triangleq (2\pi)^{-\frac{3}{2}} L^{2} e^{2Lt} 
\begin{pmatrix}
\sin(x^{3}) & 0 & 0 
\end{pmatrix}^{T}. 
\end{equation} 
Then together with 
\begin{equation}
\mathring{R}_{0}(t,x) \triangleq \frac{2L^{3} e^{2Lt}}{(2\pi)^{\frac{3}{2}}} 
\begin{pmatrix}
0 & 0 & - \cos(x^{3}) \\
0 & 0 & 0 \\
-\cos(x^{3}) & 0 & 0
\end{pmatrix} 
+ (\mathcal{R} (-\Delta)^{m} v_{0} + v_{0} \mathring{\otimes} z + z \mathring{\otimes} v_{0} + z \mathring{\otimes} z) (t,x), 
\end{equation} 
it satisfies \eqref{[Equ. (36), Y20a]} at level $q = 0$. Moreover, \eqref{[Equ. (40), Y20a]} at level $q = 0$ is satisfied provided 
\begin{subequations}\label{estimate 136}
\begin{align}
&\frac{2C_{S}}{\sqrt{2}L} + \frac{20}{(2\pi)^{\frac{3}{2}} L^{\frac{3}{4}}} + \frac{10}{L^{\frac{5}{2}}} \leq 1 - \frac{4}{(2\pi)^{\frac{3}{2}}},\label{[Equ. (43), Y20a]}\\
&(1+ 2 (2\pi)^{\frac{3}{2}})^{2} <  a^{4\beta} \leq c_{R} L, \label{estimate 8}
\end{align}  
\end{subequations} 
where the first inequality of \eqref{estimate 8} guarantees \eqref{[Equ. (4.5), HZZ19]}. Furthermore, $v_{0}(0,x)$ and $\mathring{R}_{0}(0,x)$ are both deterministic.
\end{proposition}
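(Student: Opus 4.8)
The plan is to verify every assertion by explicit computation, since at level $q = 0$ all objects are given in closed form. Throughout write $f(t) \triangleq (2\pi)^{-3/2} L^{2} e^{2Lt}$, so that $v_{0} = f(t)(\sin(x^{3}), 0, 0)^{T}$, and note at once that $\nabla \cdot v_{0} = \partial_{x^{1}}(f \sin(x^{3})) = 0$ and $\int_{\mathbb{T}^{3}} v_{0}\, dx = 0$.

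\emph{Step 1: the relation \eqref{[Equ. (36), Y20a]} at $q = 0$.} Differentiating in time gives $\partial_{t} v_{0} = 2L f(t) (\sin(x^{3}), 0, 0)^{T} = \tfrac{2L^{3}e^{2Lt}}{(2\pi)^{3/2}} (\sin(x^{3}), 0, 0)^{T}$, and one checks by direct differentiation that this equals the divergence of the first (trace-free, symmetric, $x^{1}$-independent) matrix appearing in the definition of $\mathring{R}_{0}$. Since the only spatial frequencies present in $v_{0}$ are $\xi = (0,0,\pm 1)$, on which $\lvert \xi \rvert = 1$, we have $(-\Delta)^{m} v_{0} = v_{0}$, which is mean-zero, so Lemma \ref{divergence inverse} applies and $\text{div}\, \mathcal{R}(-\Delta)^{m} v_{0} = (-\Delta)^{m} v_{0}$, cancelling the diffusive term. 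Finally, expanding $\text{div}((v_{0} + z) \otimes (v_{0} + z)) = \text{div}(v_{0} \otimes v_{0}) + \text{div}(v_{0} \otimes z + z \otimes v_{0}) + \text{div}(z \otimes z)$, the first summand vanishes by direct inspection, while writing $v_{0} \otimes z + z \otimes v_{0} = (v_{0} \mathring{\otimes} z + z \mathring{\otimes} v_{0}) + \tfrac{2}{3}(v_{0} \cdot z)\text{Id}$ and $z \otimes z = z \mathring{\otimes} z + \tfrac{1}{3}\lvert z \rvert^{2}\text{Id}$ pushes the trace parts into $\nabla \pi_{0}$ with $\pi_{0} \triangleq -(\tfrac{2}{3} v_{0} \cdot z + \tfrac{1}{3}\lvert z \rvert^{2})$ and leaves exactly the remaining summands of $\mathring{R}_{0}$. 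This establishes \eqref{[Equ. (36), Y20a]} at $q = 0$, and $\mathring{R}_{0}$ is trace-free and symmetric by construction.

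\emph{Step 2: the inductive bounds \eqref{[Equ. (40), Y20a]} at $q = 0$.} Since $\sum_{1 \leq \iota \leq 0} \delta_{\iota}^{1/2}$ is empty, \eqref{[Equ. (40a), Y20a]} reduces to $\lVert v_{0} \rVert_{C_{t,x}} = (2\pi)^{-3/2} L^{2} e^{2Lt} \leq L^{2} e^{2Lt} = M_{0}(t)^{1/2}$, true as $(2\pi)^{-3/2} < 1$. For \eqref{[Equ. (40b), Y20a]}, adding $\lVert v_{0} \rVert_{C_{t,x}^{0}}$, $\lVert \partial_{t} v_{0} \rVert_{C_{t,x}^{0}} = 2L\lVert v_{0} \rVert_{C_{t,x}^{0}}$ and $\lVert \nabla v_{0} \rVert_{C_{t,x}^{0}} = \lVert v_{0} \rVert_{C_{t,x}^{0}}$ gives $\lVert v_{0} \rVert_{C_{t,x}^{1}} = (2 + 2L)(2\pi)^{-3/2} L^{2} e^{2Lt}$, which is at most $M_{0}(t) \delta_{0}^{1/2} \lambda_{0} = L^{4} e^{4Lt} a^{1-\beta}$ because $La^{1-\beta} \geq 1 > 4(2\pi)^{-3/2}$ (recall $\beta < \tfrac{1}{2}$, $a \in \mathbb{N}$, $L > 1$). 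The bound \eqref{[Equ. (40c), Y20a]} is the only one requiring care: on $[0, T_{L}]$, using $\lVert z \rVert_{C_{t,x}} \leq L^{1/4}$ from \eqref{[Equ. (38), Y20a]}, I would estimate the four summands of $\mathring{R}_{0}$ by, respectively, $\tfrac{4 L^{3} e^{2Lt}}{(2\pi)^{3/2}}$ (the explicit matrix, accounting for its two nonzero entries), a constant multiple of $\tfrac{C_{S}}{\sqrt{2}} L^{2} e^{2Lt}$ for $\mathcal{R}(-\Delta)^{m} v_{0} = \mathcal{R} v_{0}$ (via Lemma \ref{divergence inverse} and the Sobolev embedding defining $C_{S}$), a constant multiple of $(2\pi)^{-3/2} L^{9/4} e^{2Lt}$ for $v_{0} \mathring{\otimes} z + z \mathring{\otimes} v_{0}$, and a constant multiple of $L^{1/2}$ for $z \mathring{\otimes} z$. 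Dividing through by $M_{0}(t) \delta_{1} = L^{4} e^{4Lt} a^{-4\beta}$, bounding $e^{2Lt} \leq e^{4Lt}$, and using the second inequality of \eqref{estimate 8}, $a^{4\beta} \leq c_{R} L$, to bound $a^{-4\beta}$ from below by $(c_{R}L)^{-1}$, the resulting inequality is precisely of the form \eqref{[Equ. (43), Y20a]}, so \eqref{estimate 136} yields \eqref{[Equ. (40c), Y20a]}; moreover the first inequality of \eqref{estimate 8} is $a^{4\beta} > (1 + 2(2\pi)^{3/2})^{2}$, equivalent upon taking square roots to \eqref{[Equ. (4.5), HZZ19]}.

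\emph{Step 3 and the main difficulty.} For the determinism claim, $v_{0}(0,x) = (2\pi)^{-3/2} L^{2} (\sin(x^{3}), 0, 0)^{T}$ is deterministic by inspection, and since $z(0, \cdot) = 0$ by \eqref{[Equ. (24), Y20a]} the summands $v_{0} \mathring{\otimes} z$, $z \mathring{\otimes} v_{0}$, $z \mathring{\otimes} z$ of $\mathring{R}_{0}$ all vanish at $t = 0$, leaving $\mathring{R}_{0}(0,x)$ equal to the explicit matrix plus $\mathcal{R}(-\Delta)^{m} v_{0}(0,x)$, both deterministic. The only genuine work lies in the $\mathring{R}_{0}$ estimate of Step 2, and there the difficulty is entirely bookkeeping: tracking the numerical constants $(2\pi)^{-3/2}$, $C_{S}$ and the powers of $L$ carefully enough to see that the conditions \eqref{[Equ. (43), Y20a]}--\eqref{estimate 8} are exactly what is needed, the crucial point being that \eqref{estimate 8} pins $a^{4\beta}$ below $c_{R} L$ so that the dominant $L^{3} e^{2Lt}$ contribution of the explicit matrix is absorbed into $c_{R} M_{0}(t) \delta_{1} = c_{R} L^{4} e^{4Lt} a^{-4\beta}$.
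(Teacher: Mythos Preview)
Your proposal is correct and follows essentially the same approach as the paper's proof: direct verification that $(v_{0},\mathring{R}_{0})$ solves \eqref{[Equ. (36), Y20a]} with $\pi_{0}=-\frac{1}{3}(2v_{0}\cdot z+\lvert z\rvert^{2})$, followed by the explicit $C_{t,x}$ and $C_{t,x}^{1}$ bounds and the $\mathring{R}_{0}$ estimate via the Sobolev constant $C_{S}$ and the relation $\Delta v_{0}=-v_{0}$. The only cosmetic difference is that the paper tracks the numerical constants $20$ and $10$ (arising from $\lVert A\mathring{\otimes}B\rVert\le \tfrac{10}{3}\lvert A\rvert\lvert B\rvert$ applied to the cross and quadratic $z$-terms) explicitly rather than writing ``a constant multiple'', which is why \eqref{[Equ. (43), Y20a]} carries those particular coefficients; your outline would need those exact values to match the stated condition verbatim, but the logic is identical.
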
 
\begin{proof}[Proof of Proposition \ref{[Pro. 4.7, Y20a]}] 
The facts that $v_{0}$ is incompressible, mean-zero, $\mathring{R}_{0}$ is trace-free and symmetric, \eqref{[Equ. (36), Y20a]} at level $q = 0$ holds with $\pi_{0} \triangleq - \frac{1}{3} (2 v_{0} \cdot z + \lvert z \rvert^{2})$, as well as $v_{0}(0,x)$ and $\mathring{R}_{0}(0,x)$ both being deterministic can be readily verified (see \cite[Pro. 4.7]{Y20a}). Concerning the three estimates of \eqref{[Equ. (40a), Y20a]}-\eqref{[Equ. (40c), Y20a]}, first we can directly compute from \eqref{estimate 141} 
\begin{equation}\label{estimate 143}
\lVert v_{0} \rVert_{C_{t,x}} = (2\pi)^{-\frac{3}{2}}M_{0}(t)^{\frac{1}{2}} \leq M_{0}(t)^{\frac{1}{2}}, \hspace{3mm} \lVert v_{0} \rVert_{C_{t,x}^{1}} = (2\pi)^{-\frac{3}{2}}L^{2} e^{2Lt} 2 (L+1) \leq M_{0}(t) \delta_{0}^{\frac{1}{2}} \lambda_{0},
\end{equation} 
and 
\begin{equation}\label{estimate 12}
\lVert v_{0}(t) \rVert_{L_{x}^{2}} \overset{\eqref{estimate 141}}{=} \frac{M_{0}(t)^{\frac{1}{2}}}{\sqrt{2}}.
\end{equation}
Moreover, we can estimate 
\begin{equation}\label{estimate 142}
 \lVert \mathring{R}_{0} \rVert_{C_{t,x}}\leq (2\pi)^{-\frac{3}{2}} 4 L^{3} e^{2L t} + \lVert \mathcal{R} (-\Delta)^{m} v_{0} \rVert_{C_{t,x}} + 20 \lVert v_{0} \rVert_{C_{t,x}} \lVert z \rVert_{C_{t,x}} + 10 \lVert z \rVert_{C_{t,x}}^{2}. 
\end{equation} 
Next, for $C_{S} > 0$ from \eqref{[Equation (3.13b), HZZ19]} we can estimate by the Sobolev embeddings $\dot{H}^{3-2m} (\mathbb{T}^{3})  \hookrightarrow \dot{H}^{\frac{3+ \sigma}{2}}(\mathbb{T}^{3}) \hookrightarrow C(\mathbb{T}^{3})$ for functions that are mean-zero, and the fact that $\Delta v_{0} = - v_{0}$, 
\begin{equation}\label{estimate 71}
\lVert \mathcal{R} (-\Delta)^{m} v_{0} \rVert_{C_{t,x}}  \leq C_{S} \lVert \mathcal{R} (-\Delta)^{m} v_{0} \rVert_{C_{t} \dot{H}_{x}^{3-2m}}  \overset{\eqref{estimate 5}}{\leq} C_{S} 2 \lVert v_{0} \rVert_{C_{t}L_{x}^{2}} \overset{\eqref{estimate 12}}{=} C_{S} 2 \frac{M_{0}(t)^{\frac{1}{2}}}{\sqrt{2}}. 
\end{equation}  
Therefore, applying \eqref{estimate 71} to \eqref{estimate 142} gives us 
\begin{equation}
\lVert \mathring{R}_{0}\rVert_{C_{t,x}} 
\overset{\eqref{estimate 71}\eqref{estimate 142} \eqref{estimate 143} \eqref{[Equ. (38), Y20a]} }{\leq} \frac{M_{0}(t)}{L} [ \frac{4}{(2\pi)^{\frac{3}{2}}} + \frac{2C_{S}}{\sqrt{2} L} + \frac{20}{(2\pi)^{\frac{3}{2}} L^{\frac{3}{4}}} + \frac{10}{L^{\frac{5}{2}}}] 
\overset{\eqref{estimate 136}}{\leq}c_{R} M_{0}(t) \delta_{1}. 
\end{equation}
\end{proof}
\begin{proposition}\label{[Pro. 4.8, Y20a]}
Let $L$ satisfy 
\begin{equation}\label{estimate 16}
L > c_{R}^{-1} (1 + 2 (2\pi)^{\frac{3}{2}})^{2}
\end{equation} 
and \eqref{[Equ. (43), Y20a]}. Suppose that $(v_{q}, \mathring{R}_{q})$ is an $(\mathcal{F}_{t})_{t\geq 0}$-adapted process that solves \eqref{[Equ. (36), Y20a]} and satisfies \eqref{[Equ. (40a), Y20a]}-\eqref{[Equ. (40c), Y20a]}. Then there exist a choice of parameters $a$ and $\beta$ such that \eqref{estimate 8} is fulfilled and an $(\mathcal{F}_{t})_{t\geq 0}$-adapted process $(v_{q+1}, \mathring{R}_{q+1})$ that solves \eqref{[Equ. (36), Y20a]}, satisfies \eqref{[Equ. (40a), Y20a]}-\eqref{[Equ. (40c), Y20a]} at level $q+1$ and for all $t \in [0, T_{L}]$ 
\begin{equation}\label{[Equ. (45), Y20a]}
\lVert v_{q+1} - v_{q} \rVert_{C_{t,x}} \leq M_{0}(t)^{\frac{1}{2}} \delta_{q+1}^{\frac{1}{2}}. 
\end{equation} 
Finally, if $v_{q}(0,x)$ and $\mathring{R}_{q}(0,x)$ are deterministic, then so are $v_{q+1}(0,x)$ and $\mathring{R}_{q+1}(0,x)$. 
\end{proposition}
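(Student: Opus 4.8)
The plan is to carry out one step of a Nash-type convex integration scheme in the spirit of \cite[Sec.~5]{BV19b} (a $C_{t,x}$-level version of \cite{DS13}), adapted to the fractional dissipation $(-\Delta)^m$ and to the Ornstein--Uhlenbeck process $z$. First I would fix the new high frequency $\lambda_{q+1}=a^{2^{q+1}}$, a spatial mollification scale $\ell\triangleq\lambda_{q+1}^{-\alpha}$ for a suitable $\alpha\in(0,1)$, and a time-cutoff scale $\tau_q$ (of order $\delta_{q+1}^{-1/2}\lambda_q^{-1}$ up to a small power of $\lambda_{q+1}$), all pinned down only at the end so that the final Reynolds estimate closes; the admissible interval for $\beta$ will be forced down to $\beta<\tfrac13(1-2m-\epsilon)$, which is precisely where $m<\tfrac12$ is consumed. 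Mollify $v_q,\mathring{R}_q$ in space and backward in time and $z$ likewise, extending $v_q,\mathring{R}_q$ constantly and $z$ by $0$ to negative times, so that $v_\ell(0,\cdot)=v_q(0,\cdot)$, $\mathring{R}_\ell(0,\cdot)=\mathring{R}_q(0,\cdot)$, $z_\ell(0,\cdot)=0$, to obtain $v_\ell,\mathring{R}_\ell,z_\ell$; standard mollifier estimates together with \eqref{[Equ. (40), Y20a]} and the control of $z$ on $[0,T_L]$ from \eqref{[Equ. (38), Y20a]} bound $v_\ell,\mathring{R}_\ell,z_\ell$ in $C_{t,x}$ and in every $C_{t,x}^{k}$, and control the mollification commutators, including $(-\Delta)^m(v_q-v_\ell)$ and $\mathrm{div}\big((v_q+z)\mathring{\otimes}(v_q+z)\big)-\mathrm{div}\big((v_\ell+z_\ell)\mathring{\otimes}(v_\ell+z_\ell)\big)$; the only nonstandard feature is that $z$ is merely $C_t^{1/2-2\delta}$, so $\|\partial_t z_\ell\|_{C_{t,x}}\lesssim \ell^{-1/2-2\delta}$, a loss to be absorbed by the choice of $\tau_q$. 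The $t$-dependence of all the bounds is tracked through the factor $M_0(t)$ exactly as in \cite{HZZ19,Y20a}.

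Second, build the perturbation. Take a partition of unity $\{\chi_i\}_i$ in time subordinate to intervals of length $\tau_q$, with $\chi_0$ the only one active near $t=0$, and on the $i$-th interval let $\Phi_i$ solve $\partial_t\Phi_i+(v_\ell+z_\ell)\cdot\nabla\Phi_i=0$ with $\Phi_i=x$ at the left endpoint; on a time scale $\tau_q$ the bound on $\|v_\ell+z_\ell\|_{C^1_{t,x}}$ keeps $\nabla\Phi_i$ within a small ball around $\mathrm{Id}$. Following Lemma \ref{[Pro. 5.6, BV19b]}, with $\Lambda_i\triangleq\Lambda_{i\bmod 2}$ and $\rho=\rho_q(t)\approx M_0(t)\delta_{q+1}$ chosen (using \eqref{[Equ. (40c), Y20a]} and shrinking $c_R$ relative to $C_\ast$) so that $\mathrm{Id}-\rho^{-1}\mathring{R}_\ell\in B_{C_\ast}(\mathrm{Id})$, set amplitudes $a_{(\zeta),i}\triangleq \rho^{1/2}\gamma_\zeta^{(i)}\big(\mathrm{Id}-\rho^{-1}\mathring{R}_\ell\big)$ and define the principal perturbation $w_{q+1}^{(p)}\triangleq\sum_i\chi_i\sum_{\zeta\in\Lambda_i}a_{(\zeta),i}\,B_\zeta\,e^{i\lambda_{q+1}\zeta\cdot\Phi_i}$, which is $\mathbb{R}$-valued by $\bar{a}_{(\zeta),i}=a_{(-\zeta),i}$ and $B_{-\zeta}=\bar{B}_\zeta$. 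Adding an incompressibility corrector $w_{q+1}^{(c)}$ (lower order since $\nabla\times W_{(\zeta)}=\lambda W_{(\zeta)}$ by \eqref{eigenvector} and $\nabla\Phi_i\approx\mathrm{Id}$) gives a divergence-free $w_{q+1}\triangleq w_{q+1}^{(p)}+w_{q+1}^{(c)}$; put $v_{q+1}\triangleq v_\ell+w_{q+1}$. By \eqref{estimate 112} and \eqref{[Equ. (5.16), BV19b]}, the low-frequency part of $w_{q+1}^{(p)}\otimes w_{q+1}^{(p)}$ equals $\rho\,\mathrm{Id}-\mathring{R}_\ell$ up to terms oscillating at frequency $\gtrsim\lambda_{q+1}$, so modulo a pressure it cancels $\mathring{R}_\ell$.

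Third, set $\mathring{R}_{q+1}\triangleq\mathcal{R}\,\mathcal{E}$ with $\mathcal{R}$ the inverse-divergence operator of Lemma \ref{divergence inverse}, where $\mathcal{E}$ collects: the transport error $\partial_t w_{q+1}^{(p)}+(v_\ell+z_\ell)\cdot\nabla w_{q+1}^{(p)}$ (whose leading term is killed by the phase equation, leaving a commutator, the delicate piece); the Nash error $w_{q+1}^{(p)}\cdot\nabla(v_\ell+z_\ell)$; the oscillation error $\mathrm{div}\big(w_{q+1}^{(p)}\otimes w_{q+1}^{(p)}-(\rho\,\mathrm{Id}-\mathring{R}_\ell)\big)$ plus the cross terms with $v_\ell+z_\ell$; the corrector errors containing $w_{q+1}^{(c)}$; the dissipative error $(-\Delta)^m w_{q+1}$; the $v$- and $z$-mollification errors; and the error from replacing $z_\ell$ by $z$ in the quadratic term. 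I would then estimate $\|\mathcal{R}\,\mathcal{E}\|_{C_{t,x}}$ term by term: oscillation, Nash and corrector errors as in \cite[Sec.~5]{BV19b}, using the $\lambda_{q+1}^{-1}$ gain of $\mathcal{R}$ and $C^k_{t,x}$-bounds on $a_{(\zeta),i}$ built from $M$ in \eqref{[Equ. (5.17), BV19b]}; the dissipative error via Lemma \ref{[The. 1.4, RS16]}, which needs $2m+\epsilon\le1$ and yields a factor $\lambda_{q+1}^{2m-1+}$ that is genuinely small exactly because $m<\tfrac12$; the mollification and $z_\ell\to z$ errors polynomially in $\ell$ (with the extra $\ell^{-1/2-2\delta}$ from $\partial_t z_\ell$); and the transport commutator using $\|\nabla\Phi_i-\mathrm{Id}\|$, $\|\partial_t(v_\ell+z_\ell)\|$ and $\tau_q$, which is precisely why the phase is transported by $v_\ell+z_\ell$, not $v_\ell$ alone. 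Choosing $\alpha$, $\tau_q$, then $a$ large and $\beta$ small (compatibly with \eqref{estimate 8}) so that each contribution is at most a small fixed fraction of $c_R M_0(t)\delta_{q+2}$ gives \eqref{[Equ. (40c), Y20a]} at level $q+1$.

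Finally, the remaining claims. Since $\|w_{q+1}\|_{C_{t,x}}\lesssim \rho^{1/2}M$, the choice of $c_R$ makes this $\le M_0(t)^{1/2}\delta_{q+1}^{1/2}$, giving \eqref{[Equ. (45), Y20a]}, and combined with \eqref{[Equ. (40a), Y20a]} and $\sum_{1\le\iota\le q+1}\delta_\iota^{1/2}<\tfrac12$ yields \eqref{[Equ. (40a), Y20a]} at level $q+1$; since $w_{q+1}$ is localized at frequency $\lambda_{q+1}$ with amplitude $\lesssim M_0(t)^{1/2}\delta_{q+1}^{1/2}$ and $\|v_\ell\|_{C^1_{t,x}}\lesssim\|v_q\|_{C^1_{t,x}}\le M_0(t)\delta_q^{1/2}\lambda_q$, the bound \eqref{[Equ. (40b), Y20a]} at level $q+1$ follows from $\delta_q^{1/2}\lambda_q<\delta_{q+1}^{1/2}\lambda_{q+1}$ for $a$ large. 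Adaptedness of $(v_{q+1},\mathring{R}_{q+1})$ is inherited, since mollification, the random transport ODE for $\Phi_i$, the algebraic construction of the amplitudes, the cutoffs, and $\mathcal{R}$ all preserve $(\mathcal{F}_t)_{t\ge0}$-adaptedness. Determinism at $t=0$ follows because $v_\ell(0,\cdot)=v_q(0,\cdot)$, $\mathring{R}_\ell(0,\cdot)=\mathring{R}_q(0,\cdot)$ are deterministic, $z_\ell(0,\cdot)=0$, $\Phi_0(0,\cdot)=x$, and $\chi_0(0)=1$ with the other $\chi_i(0)=0$, so $w_{q+1}(0,\cdot)$ and hence $\mathring{R}_{q+1}(0,\cdot)$ depend only on deterministic data. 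The main obstacle is the transport commutator error in the third step, compounded by the low time-regularity $C_t^{1/2-2\delta}$ of $z$: one must fit $\tau_q$, $\ell$ and $\lambda_{q+1}$ so that this error and the fractional dissipative error are simultaneously $\ll\delta_{q+2}$, and it is the combination $2m<1$, $\beta<\tfrac13(1-2m-\epsilon)$ and $\delta\in(0,\tfrac1{24})$ that keeps this window nonempty.
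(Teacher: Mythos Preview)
Your proposal follows essentially the same route as the paper: mollify $(v_q,\mathring{R}_q,z)$ in space and time, transport the phases by $v_\ell+z_\ell$ (the key new idea, which you correctly single out), build Beltrami perturbations with amplitudes from Lemma~\ref{[Pro. 5.6, BV19b]}, and decompose $\mathring{R}_{q+1}$ into linear, transport, oscillation, Nash, corrector, and two commutator pieces estimated via Lemma~\ref{[Lem. 5.7, BV19b]} and Lemma~\ref{[The. 1.4, RS16]}. The paper simplifies by using a single scale $l=\lambda_q^{-3/2}$ for both the mollification and the time-cutoff length rather than separate $\ell$ and $\tau_q$; also note that $\partial_t z_\ell$ never actually enters any estimate---after the phase cancellation the transport error involves only $\partial_t a_{(\zeta)}$ and $(v_\ell+z_\ell)\cdot\nabla a_{(\zeta)}$, and the low time-regularity of $z$ appears solely through $\lVert z-z_\ell\rVert_{C_{t,x}}\lesssim l^{1/2-2\delta}L^{1/2}$ in the commutator terms.
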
 
Taking Proposition \ref{[Pro. 4.8, Y20a]} for granted, we are able to prove Theorem \ref{Theorem 2.1} now. 
\begin{proof}[Proof of Theorem \ref{Theorem 2.1} assuming Proposition \ref{[Pro. 4.8, Y20a]}]
Given any $T > 0, K > 1$, and $\kappa \in (0,1)$, starting from $(v_{0}, \mathring{R}_{0})$ in Proposition \ref{[Pro. 4.7, Y20a]}, Proposition \ref{[Pro. 4.8, Y20a]} gives us $(v_{q}, \mathring{R}_{q})$ for all $q \geq 1$ that are $(\mathcal{F}_{t})_{t\geq 0}$-adapted and satisfy \eqref{[Equ. (36), Y20a]}, \eqref{[Equ. (40a), Y20a]}-\eqref{[Equ. (40c), Y20a]}, and \eqref{[Equ. (45), Y20a]}, as well as $a$ and $\beta$ such that \eqref{estimate 8} is fulfilled. Then for all $t \in [0, T_{L}]$, $\gamma \in (0, \beta)$, using the fact that $2^{q+1} \geq 2(q+1)$ for all $q \in \mathbb{N}_{0}$, 
\begin{align}
\sum_{q\geq 0} \lVert v_{q+1} - v_{q} \rVert_{C_{t}C_{x}^{\gamma}} \lesssim& \sum_{q\geq 0} \lVert v_{q+1} - v_{q} \rVert_{C_{t,x}}^{1-\gamma} \lVert v_{q+1} - v_{q} \rVert_{C_{t,x}^{1}}^{\gamma}  \nonumber\\
&\overset{\eqref{[Equ. (45), Y20a]} \eqref{[Equ. (40b), Y20a]}}{\lesssim}  M_{0}(t)^{\frac{1+\gamma}{2}} \sum_{q\geq 0} a^{2(q+1) (\gamma - \beta)} \lesssim M_{0}(t)^{\frac{1+\gamma}{2}}. \label{estimate 62}
\end{align}
Therefore, $\{v_{q}\}_{q=1}^{\infty}$ is Cauchy in $C([0, T_{L}]; C^{\gamma}(\mathbb{T}^{3}))$ and hence we can deduce a limiting solution $\lim_{q\to\infty} v_{q} \triangleq v \in C([0, T_{L}]; C^{\gamma}(\mathbb{T}^{3}))$. It follows that there exists a deterministic constant $C_{L} > 0$ such that 
\begin{equation}\label{[Equ. (47), Y20a]}
\lVert v \rVert_{C_{T_{L}} C_{x}^{\gamma}} \leq C_{L}.
\end{equation} 
Because each $v_{q}$ is $(\mathcal{F}_{t})_{t\geq 0}$-adapted, $v$ is also $(\mathcal{F}_{t})_{t\geq 0}$-adapted. Because $\lim_{q\to \infty} \mathring{R}_{q} = 0$ in $C_{t,x}$ by \eqref{[Equ. (40c), Y20a]}, we see that $v$ is a weak solution to \eqref{[Equ. (25), Y20a]} and considering \eqref{[Equ. (24), Y20a]} we see that $u = v+ z$ solves \eqref{stochastic GNS} weakly. Now for the universal constant $c_{R} > 0$ determined from the proof of Proposition \ref{[Pro. 4.8, Y20a]} (see \eqref{[Equ. (5.22a), BV19b]}, \eqref{[Equ. (5.26), BV19b]}, \eqref{[Equ. (5.33), BV19b]}), we choose $L > 1$ sufficiently large so that it satisfies \eqref{estimate 16}, \eqref{[Equ. (43), Y20a]}, and additionally 
\begin{subequations}
\begin{align}
& \frac{3}{2} + \frac{1}{L} < (\frac{1}{\sqrt{2}} - \frac{1}{2}) e^{LT}, \label{estimate 13}\\
& L^{\frac{1}{4}} (2\pi)^{\frac{3}{2}} + K (T \text{Tr} (GG^{\ast}))^{\frac{1}{2}} \leq (e^{LT} - K) \lVert u^{\text{in}} \rVert_{L_{x}^{2}} + Le^{LT}. \label{estimate 14}
\end{align}
\end{subequations} 
As $\lim_{L\to\infty} T_{L} = + \infty$ $\textbf{P}$-a.s. due to \eqref{[Equ. (33), Y20a]}, for the $T>  0$ and $\kappa > 0$ already fixed, increasing $L$ larger if necessary gives us \eqref{estimate 1}. Because $z$ is $(\mathcal{F}_{t})_{t\geq 0}$-adapted, and we already verified that $v$ is $(\mathcal{F}_{t})_{t\geq 0}$-adapted, we deduce that $u$ is also $(\mathcal{F}_{t})_{t\geq 0}$-adapted. Moreover, \eqref{[Equ. (38), Y20a]} and \eqref{[Equ. (47), Y20a]} give us \eqref{estimate 2}. Next, for all $t \in [0, T_{L}]$, using the fact that $2^{q+1} \geq 2(q+1)$ for $q \in \mathbb{N}_{0}$, 
\begin{equation}\label{estimate 10}
\lVert v - v_{0} \rVert_{C_{t,x}} 
\overset{\eqref{[Equ. (45), Y20a]}}{\leq} M_{0}(t)^{\frac{1}{2}} \sum_{q\geq 0} \delta_{q+1}^{\frac{1}{2}} \leq M_{0}(t)^{\frac{1}{2}} \sum_{q\geq 0} a^{-2(q+1) \beta} \overset{\eqref{estimate 8}}{<} M_{0}(t)^{\frac{1}{2}} \frac{1}{2(2\pi)^{\frac{3}{2}}}. 
\end{equation} 
This implies that 
\begin{equation}\label{estimate 11}
\lVert v- v_{0} \rVert_{C_{t}L_{x}^{2}} \leq (2\pi)^{\frac{3}{2}} \lVert v - v_{0} \rVert_{C_{t,x}} \overset{\eqref{estimate 10}}{\leq} \frac{M_{0}(t)^{\frac{1}{2}}}{2} 
\end{equation} 
and therefore 
\begin{align}
( \lVert v(0) \rVert_{L_{x}^{2}} +L ) e^{LT} \overset{\eqref{estimate 11} \eqref{estimate 12}}{\leq}&  ( \frac{3}{2} M_{0}(0)^{\frac{1}{2}} + L) e^{LT} 
\overset{\eqref{estimate 13}}{<} (\frac{1}{\sqrt{2}} - \frac{1}{2}) M_{0}(T)^{\frac{1}{2}} \nonumber\\
\overset{\eqref{estimate 11} \eqref{estimate 12}}{\leq}& \lVert v_{0}(T) \rVert_{L_{x}^{2}} - \lVert v(T) - v_{0}(T) \rVert_{L_{x}^{2}} \leq \lVert v(T) \rVert_{L_{x}^{2}}. \label{estimate 15}
\end{align}
Therefore, on $\{T_{L} \geq T \}$ we obtain 
\begin{equation}
\lVert u(T) \rVert_{L_{x}^{2}} \overset{\eqref{estimate 15}}{>} (\lVert v(0) \rVert_{L_{x}^{2}} + L) e^{LT} - \lVert z(T) \rVert_{L_{x}^{\infty}} (2\pi)^{\frac{3}{2}} 
\overset{\eqref{[Equ. (24), Y20a]} \eqref{[Equ. (38), Y20a]} \eqref{estimate 14}}{\geq} K \lVert u^{\text{in}} \rVert_{L_{x}^{2}} + K (T \text{Tr} (GG^{\ast}))^{\frac{1}{2}}. 
\end{equation} 
This verifies \eqref{estimate 7}. Finally, because $v_{0}(0,x)$ is deterministic by Proposition \ref{[Pro. 4.7, Y20a]}, Proposition \ref{[Pro. 4.8, Y20a]} implies that $v(0,x)$ remains deterministic; by \eqref{[Equ. (24), Y20a]} this implies that $u^{\text{in}}$ is deterministic. 
\end{proof} 

\subsection{Proof of Proposition \ref{[Pro. 4.8, Y20a]}}
\subsubsection{Mollification}
We fix $L > 0$ that satisfies \eqref{estimate 16} and \eqref{[Equ. (43), Y20a]} and see that taking $a \in \mathbb{N}$ sufficiently large and then $\beta \in (0, \frac{1}{2})$ sufficiently small can give us \eqref{estimate 8}. Now we define 
\begin{equation}\label{[Equ. (56), Y20a]}
l \triangleq \lambda_{q}^{-\frac{3}{2}}.
\end{equation} 
We let $\{ \phi_{\epsilon} \}_{\epsilon > 0 }$ and $\{ \varphi_{\epsilon} \}_{\epsilon > 0}$ be families of standard mollifiers with mass one on $\mathbb{R}^{3}$ with compact support and $\mathbb{R}$ with compact support on $\mathbb{R}_{+}$, respectively. Then we mollify $v_{q}, \mathring{R}_{q}$, and $z$ in space and time to obtain 
\begin{equation}\label{[Equ. (58), Y20a]}
v_{l} \triangleq (v_{q} \ast_{x} \phi_{l}) \ast_{t} \varphi_{l}, \hspace{3mm} \mathring{R}_{l} \triangleq (\mathring{R}_{q} \ast_{x} \phi_{l})\ast_{t} \varphi_{l}, \hspace{3mm} z_{l} \triangleq (z\ast_{x} \phi_{l}) \ast_{t} \varphi_{l}. 
\end{equation} 
It follows from \eqref{[Equ. (36), Y20a]} that $(v_{l}, \mathring{R}_{l})$ satisfies 
\begin{equation}\label{[Equ. (59), Y20a]}
\partial_{t}v_{l} + (-\Delta)^{m} v_{l} + \text{div} ((v_{l} + z_{l}) \otimes (v_{l} + z_{l} )) + \nabla \pi_{l} = \text{div} (\mathring{R}_{l} + R_{\text{com1}}), \hspace{3mm} \nabla\cdot v_{l} = 0
\end{equation} 
for $t > 0$ where 
\begin{subequations}\label{estimate 164}
\begin{align}
R_{\text{com1}} \triangleq R_{\text{commutator1}} \triangleq& (v_{l} + z_{l}) \mathring{\otimes} (v_{l} + z_{l}) - (((v_{q} + z ) \mathring{\otimes} (v_{q} + z))\ast_{x}\phi_{l})\ast_{t} \varphi_{l}, \label{[Equ. (60a), Y20a]}\\
\pi_{l} \triangleq& (\pi_{q} \ast_{x} \phi_{l}) \ast_{t} \varphi_{l} - \frac{1}{3} (\lvert v_{l} + z_{l} \rvert^{2} - (\lvert v_{q} + z \rvert^{2} \ast_{x} \phi_{l}) \ast_{t} \varphi_{l}). \label{[Equ. (60b), Y20a]}
\end{align}
\end{subequations} 
Let us observe that because $\beta \in (0, \frac{1}{2})$ and mollifiers have mass one, for any $N \in \mathbb{N}$, by taking $a \in \mathbb{N}$ sufficiently large, 
\begin{subequations}
\begin{align}
& \lVert v_{q} - v_{l} \rVert_{C_{t,x}} \overset{\eqref{[Equ. (40b), Y20a]}}{\lesssim} l M_{0}(t) \delta_{q}^{\frac{1}{2}} \lambda_{q} \ll M_{0}(t)^{\frac{1}{2}} \delta_{q+1}^{\frac{1}{2}},\label{[Equ. (5.9), BV19b]}\\
& \lVert v_{l} \rVert_{C_{t,x}^{N}} \overset{\eqref{[Equ. (40b), Y20a]}}{\lesssim} l^{-N+1} M_{0}(t) \delta_{q}^{\frac{1}{2}} \lambda_{q}  \ll l^{-N} M_{0}(t)^{\frac{1}{2}}, \label{[Equ. (5.10), BV19b]}\\
& \lVert v_{l} \rVert_{C_{t,x}} \leq \lVert v_{q} \rVert_{C_{t,x}}  \overset{\eqref{[Equ. (40a), Y20a]}}{\leq} M_{0}(t)^{\frac{1}{2}} (1+ \sum_{1 \leq \iota \leq q} \delta_{\iota}^{\frac{1}{2}}).\label{[Equ. (5.11), BV19b]}
\end{align}
\end{subequations} 

\subsubsection{Perturbation}
Next, in order to attain acceptable estimates for transport and corrector errors subsequently, we split $[0, T_{L}]$ into an interval of size $l$, define $\Phi_{j}: [0, T_{L}] \times \mathbb{R}^{3} \mapsto \mathbb{R}^{3}$ for $j \in \{0, \hdots, \lceil l^{-1} T_{L} \rceil \}$ a $\mathbb{T}^{3}$-periodic solution to 
\begin{subequations}\label{estimate 147}
\begin{align}
(\partial_{t} + (v_{l} + z_{l}) \cdot \nabla) \Phi_{j} =0, \label{[Equ. (5.18a), BV19b]}\\
\Phi_{j} (jl, x) = x.  \label{[Equ. (5.18b), BV19b]}
\end{align}
\end{subequations}
Let us comment in Remark \ref{transport error} on the importance of including $z_{l}$ in \eqref{[Equ. (5.18a), BV19b]}.  We now collect suitable estimates on $\Phi_{j}$.
\begin{proposition}\label{Proposition 4.9}
For all $j \in \{0, \hdots, \lceil l^{-1} T_{L} \rceil \}$ and $t \in [l(j-1), l(j+1)]$ with appropriate modification in case $j = 0$ and $\lceil l^{-1} T_{L} \rceil$,  
\begin{subequations}\label{estimate 25}
\begin{align}
& \lVert \nabla \Phi_{j}(t) - \mathrm{Id} \rVert_{C_{x}} \lesssim l \delta_{q}^{\frac{1}{2}} \lambda_{q} M_{0}(t) \ll 1, \label{[Equ. (5.19a), BV19b]}\\ 
&\frac{1}{2} \leq\lvert \nabla \Phi_{j}(t,x) \rvert \leq 2 \hspace{1mm} \forall \hspace{1mm} x \in \mathbb{T}^{3} \hspace{2mm}  \text{ and } \hspace{2mm} \lVert \Phi_{j}(t) \rVert_{C_{x}^{1}} \lesssim 1, \label{estimate 17} \\
& \lVert \partial_{t} \Phi_{j}(t) \rVert_{C_{x}} \lesssim M_{0}(t)^{\frac{1}{2}}, \label{estimate 18} \\
&\lVert \nabla \Phi_{j}(t) \rVert_{C_{x}^{N}} \lesssim l^{-N+1} M_{0}(t) \delta_{q}^{\frac{1}{2}} \lambda_{q} \hspace{5mm} \forall \hspace{1mm} N \in \mathbb{N}, \label{[Equ. (5.19c), BV19b]}\\
& \lVert \partial_{t} \nabla \Phi_{j}(t) \rVert_{C_{x}^{N}} \lesssim l^{-N} M_{0}(t)^{\frac{3}{2}} \delta_{q}^{\frac{1}{2}} \lambda_{q} \hspace{3mm} \forall \hspace{1mm} N \in \mathbb{N}_{0} \label{estimate 19}
\end{align} 
\end{subequations} 
(cf. \cite[Equ. (5.19a) and (5.19c) on p. 206]{BV19b}, \cite[Lem. 3.1]{BDIS15}). 
\end{proposition}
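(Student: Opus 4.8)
The plan is to regard $w \triangleq v_l + z_l$ as a fixed smooth, $\mathbb{T}^3$-periodic, divergence-free drift and to extract all the bounds in \eqref{estimate 25} from the transport equation \eqref{[Equ. (5.18a), BV19b]}--\eqref{[Equ. (5.18b), BV19b]} via the maximum principle for the transport operator $\partial_t + w\cdot\nabla$ together with Gr\"onwall's inequality in time. The first step is to record the drift bounds used repeatedly: combining \eqref{[Equ. (5.10), BV19b]}--\eqref{[Equ. (5.11), BV19b]} with standard mollifier estimates applied to $z$, the pointwise bounds \eqref{[Equ. (38), Y20a]} on $\lVert z\rVert_{C_{t,x}}$ and $\lVert z\rVert_{\dot{W}_{x}^{1,\infty}}$, and the higher spatial regularity of $z$ from Proposition \ref{[Pro. 4.4, Y20a]}, and using $M_0(t)\geq L^4$ and $\delta_q^{\frac{1}{2}}\lambda_q\geq 1$, one gets $\lVert w\rVert_{C_{t,x}}\lesssim M_0(t)^{\frac{1}{2}}$, $\lVert\nabla w\rVert_{C_{t,x}}\lesssim M_0(t)\delta_q^{\frac{1}{2}}\lambda_q$, and $\lVert w\rVert_{C_{t,x}^{N}}\lesssim l^{-N+1}M_0(t)\delta_q^{\frac{1}{2}}\lambda_q$ for every $N\in\mathbb{N}$, all contributions of $z_l$ being lower order since $z$ only satisfies the $O(1)$-type bounds \eqref{[Equ. (38), Y20a]}. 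Because $w$ is smooth with bounded derivatives on the compact set $[0,T_L]\times\mathbb{T}^3$, classical ODE theory yields a unique solution $\Phi_j$ of \eqref{estimate 147} with $\Phi_j(t,x)-x$ being $\mathbb{T}^3$-periodic; the modification mentioned in the statement for $j=0$ and $j=\lceil l^{-1}T_L\rceil$ just amounts to intersecting $[l(j-1),l(j+1)]$ with $[0,T_L]$.

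For \eqref{[Equ. (5.19a), BV19b]}, differentiate \eqref{[Equ. (5.18a), BV19b]} in $x$ to get, schematically, $(\partial_t + w\cdot\nabla)\nabla\Phi_j = -(\nabla w)\cdot\nabla\Phi_j$, so $\nabla\Phi_j$ is transported along $w$ from the value $\mathrm{Id}$ at time $jl$ with a source of size $\lVert\nabla w\rVert_{C_{t,x}}\lVert\nabla\Phi_j\rVert_{C_x}$. On $t\in[l(j-1),l(j+1)]$ we have $\lvert t-jl\rvert\leq l$, so the maximum principle and Gr\"onwall give $\lVert\nabla\Phi_j(t)-\mathrm{Id}\rVert_{C_x}\lesssim l\lVert\nabla w\rVert_{C_{t,x}}\exp(Cl\lVert\nabla w\rVert_{C_{t,x}})\lesssim l\delta_q^{\frac{1}{2}}\lambda_q M_0(t)$. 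The decisive point is the choice $l=\lambda_q^{-\frac{3}{2}}$ in \eqref{[Equ. (56), Y20a]}: it forces $l\lVert\nabla w\rVert_{C_{t,x}}\lesssim\lambda_q^{-\frac{1}{2}-\beta}M_0(t)\ll 1$ uniformly in $q$ once $a$ is large, which both closes \eqref{[Equ. (5.19a), BV19b]} and keeps every Gr\"onwall exponential below an absolute constant. From \eqref{[Equ. (5.19a), BV19b]} the bounds $\frac{1}{2}\leq\lvert\nabla\Phi_j\rvert\leq 2$ and $\lVert\Phi_j(t)\rVert_{C_x^1}\lesssim 1$ in \eqref{estimate 17} are immediate once $a$ is large enough that the right side of \eqref{[Equ. (5.19a), BV19b]} is below $\frac{1}{2}$, with $\lVert\Phi_j(t,x)-x\rVert_{C_x}\ll1$ coming from integrating \eqref{estimate 18}; and \eqref{estimate 18} is read off $\partial_t\Phi_j=-(w\cdot\nabla)\Phi_j$, giving $\lVert\partial_t\Phi_j(t)\rVert_{C_x}\leq\lVert w(t)\rVert_{C_x}\lVert\nabla\Phi_j(t)\rVert_{C_x}\lesssim M_0(t)^{\frac{1}{2}}$.

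The higher-order estimates \eqref{[Equ. (5.19c), BV19b]} and \eqref{estimate 19} I would prove by induction on $N$. Applying $D^N$ to $(\partial_t+w\cdot\nabla)\nabla\Phi_j=-(\nabla w)\cdot\nabla\Phi_j$ and expanding by the Leibniz rule, the term $(w\cdot\nabla)D^N\nabla\Phi_j$ is again transported and absorbed by Gr\"onwall (using $l\lVert\nabla w\rVert_{C_{t,x}}\ll1$), while the remaining terms are products of $\lVert w\rVert_{C_{t,x}^{k}}$ with norms $\lVert\nabla\Phi_j\rVert_{C_x^{k'}}$, $k'<N$, controlled by the induction hypothesis; the worst one carries a factor $l\lVert w\rVert_{C_{t,x}^{N+1}}\lesssim l^{-N+1}M_0(t)\delta_q^{\frac{1}{2}}\lambda_q$, which is \eqref{[Equ. (5.19c), BV19b]}. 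Then \eqref{estimate 19} follows by applying $D^N$ to $\partial_t\nabla\Phi_j=-(\nabla w)\cdot\nabla\Phi_j-(w\cdot\nabla)\nabla\Phi_j$ and Leibniz, the dominant contribution being $\lVert w\rVert_{C_{t,x}}\lVert\nabla\Phi_j\rVert_{C_x^{N+1}}\lesssim M_0(t)^{\frac{1}{2}}l^{-N}M_0(t)\delta_q^{\frac{1}{2}}\lambda_q=l^{-N}M_0(t)^{\frac{3}{2}}\delta_q^{\frac{1}{2}}\lambda_q$. The main obstacle is the combinatorial bookkeeping in these inductive steps combined with checking, uniformly over $j$, over the subintervals $[l(j-1),l(j+1)]$, and over $q$, that the CFL-type quantity $l\lVert\nabla w\rVert_{C_{t,x}}$ stays small --- precisely what the choice $l=\lambda_q^{-\frac{3}{2}}\ll\lambda_q^{-1}$ and the largeness of $a$ buy --- which is also why the $z_l$ contributions, all $O(1)$ in the relevant norms by \eqref{[Equ. (38), Y20a]}, never disturb the estimates.
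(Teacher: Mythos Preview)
Your proposal is correct and follows essentially the same route as the paper's proof: the paper simply invokes \cite[Pro.~D.1]{BDIS15} (and the corresponding equations (132)--(136) therein) as a black box for the first four bounds and then differentiates the transport equation directly for \eqref{estimate 19}, which is precisely the Gr\"onwall/maximum-principle/Leibniz-induction argument you spell out in detail. The only cosmetic difference is that the paper packages the drift bounds via \eqref{[Equ. (5.10), BV19b]}--\eqref{[Equ. (5.11), BV19b]} and \eqref{[Equ. (38), Y20a]} rather than first naming $w=v_l+z_l$, but the content is identical.
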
 
\begin{proof}[Proof of Proposition \ref{Proposition 4.9}]
These are just direct consequences of \cite[Pro. D.1]{BDIS15}. Specifically, first, \eqref{[Equ. (5.19a), BV19b]} follows from \cite[Equ. (135)]{BV19b} as 
\begin{equation*}
\lVert \nabla \Phi_{j}(t) - \text{Id} \rVert_{C_{x}} \overset{\eqref{[Equ. (5.10), BV19b]} \eqref{[Equ. (38), Y20a]}}{\lesssim}e^{C l M_{0}(t) \delta_{q}^{\frac{1}{2}} \lambda_{q}} - 1 \lesssim l \delta_{q}^{\frac{1}{2}} \lambda_{q}M_{0}(t) \ll 1. 
\end{equation*} 
Second, the first estimate of \eqref{estimate 17} follows from \eqref{[Equ. (5.19a), BV19b]} and the second estimate of \eqref{estimate 17} follows from \cite[Equ. (132)-(133)]{BDIS15}, \eqref{[Equ. (5.11), BV19b]}, and \eqref{[Equ. (38), Y20a]}. Third, \eqref{estimate 18} follows directly from \eqref{[Equ. (5.18a), BV19b]}, \eqref{estimate 17}, \eqref{[Equ. (5.11), BV19b]}, and \eqref{[Equ. (38), Y20a]}. Fourth, \eqref{[Equ. (5.19c), BV19b]} follows from \cite[Equ. (136)]{BDIS15} as follows: 
\begin{equation*}
\lVert \nabla \Phi_{j}(t) \rVert_{C_{x}^{N}} \overset{\eqref{[Equ. (5.10), BV19b]} \eqref{[Equ. (38), Y20a]}}{\lesssim} l [ l^{-N} M_{0}(t) \delta_{q}^{\frac{1}{2}} \lambda_{q} + l^{-N} L^{\frac{1}{4}}] e^{Cl M_{0}(t) \delta_{q}^{\frac{1}{2}} \lambda_{q}} \lesssim l^{-N+1} M_{0}(t) \delta_{q}^{\frac{1}{2}} \lambda_{q}.
\end{equation*} 
Finally, we can take $\nabla$ on \eqref{[Equ. (5.18a), BV19b]} and estimate for all $N \in \mathbb{N}$, 
\begin{align*}
& \lVert \partial_{t} \nabla \Phi_{j}(t) \rVert_{C_{x}^{N}} \leq \lVert (v_{l} + z_{l})(t) \cdot \nabla \nabla \Phi_{j}(t) \rVert_{C_{x}^{N}} + \lVert \nabla (v_{l} + z_{l})(t) \cdot \nabla \Phi_{j}(t) \rVert_{C_{x}^{N}} \\
&\hspace{6mm} \overset{\eqref{[Equ. (5.10), BV19b]} \eqref{[Equ. (5.11), BV19b]}  \eqref{[Equ. (38), Y20a]} \eqref{estimate 18} \eqref{[Equ. (5.19c), BV19b]} }{\lesssim} M_{0}(t) \delta_{q}^{\frac{1}{2}} \lambda_{q} [ l^{-N+1} M_{0}(t) \delta_{q}^{\frac{1}{2}} \lambda_{q} + l^{-N} M_{0}(t)^{\frac{1}{2}} + l^{-N}] 
\lesssim l^{-N} M_{0}(t)^{\frac{3}{2}} \delta_{q}^{\frac{1}{2}} \lambda_{q} 
\end{align*} 
while the case $N = 0$ can be proven similarly and more easily. 
\end{proof}

Next, we introduce a non-negative bump function $\chi$  that is supported in $(-1, 1)$ such that $\chi \rvert_{(-\frac{1}{4}, \frac{1}{4})} \equiv 1$ and shifted bump functions for $j \in \{0, 1, \hdots, \lceil l^{-1} T_{L} \rceil \}$ 
\begin{equation}\label{[Equ. (5.20), BV19b]}
\chi_{j}(t) \triangleq \chi (l^{-1} t - j), 
\end{equation}
which satisfy for all $t \in [0, T_{L}]$, 
\begin{equation}\label{[Equ. (5.21), BV19b]}
\sum_{j} \chi_{j}^{2}(t) = 1 \text{ and supp }\chi_{j} \subset (l(j-1), l(j+1)); 
\end{equation} 
consequently, for all $t \in [0, T_{L}]$, at most two cutoffs are non-trivial. Next, we recall Lemma \ref{[Pro. 5.6, BV19b]} and introduce an amplitude function 
\begin{equation}\label{[Equ. (5.22), BV19b]}
a_{(\zeta)}(t,x) \triangleq a_{q+1, j, \zeta} (t,x) \triangleq c_{R}^{\frac{1}{4}} \delta_{q+1}^{\frac{1}{2}} M_{0}(t)^{\frac{1}{2}} \chi_{j}(t) \gamma_{\zeta} \left(\text{Id} - \frac{\mathring{R}_{l}(t,x)}{c_{R}^{\frac{1}{2}} \delta_{q+1} M_{0}(t)} \right). 
\end{equation} 
Thus, for all $(t,x) \in [0, T_{L}] \times \mathbb{T}^{3}$, by applying Young's inequality for convolution, taking $c_{R}^{\frac{1}{2}} \leq C_{\ast}$ where $C_{\ast}$ is the constant from Lemma \ref{[Pro. 5.6, BV19b]}, and relying on the fact that mollifiers have mass one, we obtain 
\begin{equation}\label{[Equ. (5.22a), BV19b]}
\big\lvert \frac{ \mathring{R}_{l}(t,x)}{c_{R}^{\frac{1}{2}} \delta_{q+1} M_{0}(t)} \big\rvert \leq \frac{ \lVert \mathring{R}_{l} \rVert_{C_{t,x}}}{c_{R}^{\frac{1}{2}} \delta_{q+1} M_{0}(t)} 
\leq \frac{ \lVert \mathring{R}_{q} \rVert_{C_{t,x}}}{c_{R}^{\frac{1}{2}} \delta_{q+1} M_{0}(t)} \overset{\eqref{[Equ. (40c), Y20a]}}{\leq}  c_{R}^{\frac{1}{2}} \leq C_{\ast},
\end{equation}
and hence $\big\lVert \frac{ \mathring{R}_{l}}{c_{R}^{\frac{1}{2}} \delta_{q+1} M_{0}} \big\rVert_{C_{t,x}} \leq C_{\ast}$, from which it follows that
\begin{equation}\label{estimate 20}
\text{Id} - \frac{ \mathring{R}_{l}(t,x)}{c_{R}^{\frac{1}{2}} \delta_{q+1} M_{0}(t)} \in B_{C_{\ast}}(\text{Id}).
\end{equation} 
We also obtain the following crucial point-wise identity:
\begin{equation}\label{[Equ. (5.23), BV19b]}
 \frac{1}{2} \sum_{j} \sum_{\zeta \in \Lambda_{j}} a_{(\zeta)}^{2}(t,x) (\text{Id} - \zeta \otimes \zeta) 
\overset{\eqref{[Equ. (5.22), BV19b]} \eqref{[Equ. (5.16), BV19b]}\eqref{[Equ. (5.21), BV19b]}}{=} c_{R}^{\frac{1}{2}} \delta_{q+1} M_{0}(t) - \mathring{R}_{l}(t,x). 
\end{equation}
For convenience, let us record suitable estimates of the amplitude function $a_{(\zeta)}$. 
\begin{proposition}\label{Proposition 4.10}
The amplitude function $a_{(\zeta)}$ in \eqref{[Equ. (5.22), BV19b]} satisfies the following bounds on $[0, T_{L}]$: 
\begin{subequations}\label{estimate 26}
\begin{align} 
\lVert a_{(\zeta)} \rVert_{C_{t}C_{x}^{N}} \lesssim& c_{R}^{\frac{1}{4}} \delta_{q+1}^{\frac{1}{2}} M_{0}(t)^{\frac{1}{2}} \lVert \gamma_{\zeta} \rVert_{C^{N} (B_{C_{\ast}} (\mathrm{Id} ))}l^{-N} \hspace{8mm} \forall \hspace{1mm} N \in \mathbb{N}_{0},  \label{estimate 22}\\
\lVert a_{(\zeta)} \rVert_{C_{t}^{1}C_{x}^{N}} \lesssim& c_{R}^{\frac{1}{4}} \delta_{q+1}^{\frac{1}{2}} M_{0}(t)^{\frac{1}{2}} \lVert \gamma_{\zeta} \rVert_{C^{N+1} (B_{C_{\ast}} (\mathrm{Id} ))} l^{-N-1} \hspace{3mm} \forall \hspace{1mm} N \in \mathbb{N}_{0}. \label{estimate 23}
\end{align}
\end{subequations} 
\end{proposition}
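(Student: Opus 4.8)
The plan is to view $a_{(\zeta)}$ in \eqref{[Equ. (5.22), BV19b]} as the product of three factors: the purely temporal scalar $c_{R}^{1/4}\delta_{q+1}^{1/2}M_{0}(t)^{1/2}$, the temporal cutoff $\chi_{j}(t)$, and the composition $h(t,x)\triangleq\gamma_{\zeta}(g(t,x))$ with $g\triangleq\mathrm{Id}-\mathring{R}_{l}/(c_{R}^{1/2}\delta_{q+1}M_{0}(t))$; then to read off the two claimed bounds from the Leibniz rule, since only $h$ depends on $x$. The scalar factor is controlled using that $M_{0}$ is increasing, so $\sup_{s\le t}M_{0}(s)^{1/2}=M_{0}(t)^{1/2}$, and, for the time derivative, $\partial_{t}M_{0}(t)^{1/2}=2L\,M_{0}(t)^{1/2}$, which is harmless since $L$ is fixed. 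The cutoff obeys $\lVert\chi_{j}\rVert_{C^{k}_{t}}\lesssim l^{-k}$ directly from \eqref{[Equ. (5.20), BV19b]}, and is irrelevant for the spatial derivatives.

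For the composition $h$, first I would record the standard mollifier estimates for $\mathring{R}_{l}$ coming from \eqref{[Equ. (58), Y20a]}: since the mollifiers have unit mass, $\lVert\mathring{R}_{l}\rVert_{C_{t,x}}\le\lVert\mathring{R}_{q}\rVert_{C_{t,x}}$, while $\lVert D^{N}\mathring{R}_{l}\rVert_{C_{t,x}}\lesssim l^{-N}\lVert\mathring{R}_{q}\rVert_{C_{t,x}}$ for $N\ge1$ and $\lVert\partial_{t}D^{N}\mathring{R}_{l}\rVert_{C_{t,x}}\lesssim l^{-N-1}\lVert\mathring{R}_{q}\rVert_{C_{t,x}}$ for $N\ge0$. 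Dividing by $c_{R}^{1/2}\delta_{q+1}M_{0}(t)$ and invoking the inductive bound \eqref{[Equ. (40c), Y20a]} — crucially, the $M_{0}(t)$ in the denominator cancels against $\lVert\mathring{R}_{q}\rVert_{C_{t,x}}\le c_{R}M_{0}(t)\delta_{q+1}$, so no extra growth enters — yields $\lVert g(t)\rVert_{C_{x}^{N}}\lesssim c_{R}^{1/2}l^{-N}$ for $N\ge1$ and $\lVert\partial_{t}g(t)\rVert_{C_{x}^{N}}\lesssim_{L}c_{R}^{1/2}l^{-N-1}$ for $N\ge0$ (the term where $\partial_{t}$ falls on $M_{0}^{-1}$ is subdominant). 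Since \eqref{estimate 20} guarantees $g(t,x)\in B_{C_{\ast}}(\mathrm{Id})$, where $\gamma_{\zeta}$ is smooth, the classical chain-rule (Faà di Bruno) estimate for compositions (cf. the appendices of \cite{BDIS15, BV19b}) gives $\lVert D^{N}h(t)\rVert_{C_{x}^{0}}\lesssim\lVert\gamma_{\zeta}\rVert_{C^{N}(B_{C_{\ast}}(\mathrm{Id}))}\bigl(\lVert g(t)\rVert_{C_{x}^{1}}^{N}+\lVert g(t)\rVert_{C_{x}^{N}}\bigr)\lesssim\lVert\gamma_{\zeta}\rVert_{C^{N}(B_{C_{\ast}}(\mathrm{Id}))}l^{-N}$, using $c_{R}^{1/2}\le C_{\ast}\le1$ so that all positive powers of $c_{R}$ are absorbed; the same estimate applied to $\partial_{t}D^{N}h$, which costs one additional derivative of $\gamma_{\zeta}$, gives $\lVert\partial_{t}D^{N}h(t)\rVert_{C_{x}^{0}}\lesssim_{L}\lVert\gamma_{\zeta}\rVert_{C^{N+1}(B_{C_{\ast}}(\mathrm{Id}))}l^{-N-1}$.

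Finally, assembling the three factors via the Leibniz rule yields \eqref{estimate 22} and \eqref{estimate 23} at once: all $\le N$ spatial derivatives land on $h$, producing $l^{-N}$ and the $C^{N}$-norm of $\gamma_{\zeta}$ against the surviving prefactor $c_{R}^{1/4}\delta_{q+1}^{1/2}M_{0}(t)^{1/2}$; for \eqref{estimate 23} the single time derivative either hits $M_{0}(t)^{1/2}$ or $\chi_{j}$ (each costing at most one power of $l^{-1}$, up to an $L$-dependent constant) or $h$ (costing $l^{-1}$ and one extra derivative of $\gamma_{\zeta}$), so in every case the output is bounded by a constant multiple of $c_{R}^{1/4}\delta_{q+1}^{1/2}M_{0}(t)^{1/2}\lVert\gamma_{\zeta}\rVert_{C^{N+1}(B_{C_{\ast}}(\mathrm{Id}))}l^{-N-1}$. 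I do not expect any genuine analytic obstacle; the only points needing care are the bookkeeping of the powers of $c_{R}$ (all $\le1$ since $c_{R}^{1/2}\le C_{\ast}$) and the observation that the $M_{0}$ in the denominator of $g$ cancels, so that exactly one factor $M_{0}(t)^{1/2}$ survives, matching the statement.
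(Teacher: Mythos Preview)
Your proposal is correct and follows essentially the same approach as the paper: both arguments factor $a_{(\zeta)}$ into the temporal prefactor, the cutoff $\chi_{j}$, and the composition $\gamma_{\zeta}\circ g$, bound the mollified stress via \eqref{[Equ. (40c), Y20a]} so that $\lVert g\rVert_{C_{t}C_{x}^{N}}\lesssim c_{R}^{1/2}l^{-N}$, and then invoke the chain-rule/Fa\`a di Bruno inequality (what the paper cites as \cite[Equ.~(129)]{BDIS15}) to obtain the stated $l^{-N}$ and $l^{-N-1}$ losses. Your write-up is more explicit about the Leibniz bookkeeping and the cancellation of $M_{0}(t)$ in the denominator, but there is no substantive difference in method.
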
 
\begin{proof}[Proof of Proposition \ref{Proposition 4.10}]
The first estimate \eqref{estimate 22} in case $N = 0$ follows immediately from \eqref{[Equ. (5.21), BV19b]} and \eqref{estimate 20}. In case $N \in \mathbb{N}$ we see that 
\begin{equation}\label{estimate 92}
\lVert a_{(\zeta)} \rVert_{C_{t}C_{x}^{N}} \overset{\eqref{[Equ. (5.21), BV19b]}}{\lesssim} c_{R}^{\frac{1}{4}} \delta_{q+1}^{\frac{1}{2}} M_{0}(t)^{\frac{1}{2}} \left\lVert \gamma_{\zeta} \left(\text{Id} - \frac{ \mathring{R}_{l} (s,x)}{c_{R}^{\frac{1}{2}} \delta_{q+1} M_{0}(s)} \right) \right\rVert_{C_{t}C_{x}^{N}}
\end{equation} 
where we can rely on \cite[Equ. (129)]{BDIS15} to deduce 
\begin{equation}\label{estimate 91}
\left\lVert \gamma_{\zeta} \left(\text{Id} - \frac{\mathring{R}_{l}(s,x)}{c_{R}^{\frac{1}{2}} \delta_{q+1} M_{0}(s)}\right) \right\rVert_{C_{t}C_{x}^{N}} \overset{ \eqref{[Equ. (5.22a), BV19b]} \eqref{estimate 20} \eqref{[Equ. (40c), Y20a]}}{\lesssim} \lVert \gamma_{\zeta} \rVert_{C^{N}(B_{C_{\ast}}(\text{Id}))} l^{-N} c_{R}^{\frac{1}{2}}
\end{equation} 
so that applying \eqref{estimate 91} to \eqref{estimate 92} verifies \eqref{estimate 22} in this case as well. For estimate \eqref{estimate 23} we can directly differentiate \eqref{[Equ. (5.22), BV19b]} w.r.t. $t$ so that relying on \eqref{[Equ. (40c), Y20a]} in case $N = 0$ while additionally applying \cite[Equ. (129)]{BDIS15} in case $N \in \mathbb{N}$ can give us the desired results. 
\end{proof}

Next, we define 
\begin{equation}\label{[Equ. (5.24), BV19b]}
w_{(\zeta)}^{(p)} (t,x) \triangleq w_{q+1, j, \zeta}^{(p)} (t,x) \triangleq a_{q+1, j, \zeta} (t,x) W_{\zeta, \lambda_{q+1}} (\Phi_{j} (t,x)) = a_{q+1, j, \zeta}(t,x) B_{\zeta} e^{i\lambda_{q+1} \zeta \cdot \Phi_{j}(t,x)}
\end{equation} 
where $a_{q+1, j, \zeta}, W_{\zeta, \lambda_{q+1}}$, and $B_{\zeta}$ are defined in \eqref{[Equ. (5.22), BV19b]}, \eqref{[Equ. (5.12), BV19b]}, and \eqref{[Equ. (5.11b), BV19b]}, respectively. Then we define the principal part $w_{q+1}^{(p)}$ of a perturbation $w_{q+1}$, to be defined in \eqref{[Equ. (5.29) and (5.29a), BV19b]}, as 
\begin{equation}\label{[Equ. (5.25), BV19b]}
w_{q+1}^{(p)} (t,x) \triangleq \sum_{j} \sum_{\zeta \in \Lambda_{j}} w_{(\zeta)}^{(p)} (t,x). 
\end{equation} 
It follows by choosing $c_{R} \leq (2 \sqrt{2} M)^{-4}$ and taking advantage of the fact that for any $s \in [0,t]$ fixed, there exist at most two non-trivial cutoffs that  
\begin{equation}\label{[Equ. (5.26), BV19b]}
\lVert w_{q+1}^{(p)} \rVert_{C_{t,x}} \overset{\eqref{[Equ. (5.25), BV19b]} \eqref{[Equ. (5.24), BV19b]}\eqref{[Equ. (5.11c), BV19b]} \eqref{[Equ. (5.22), BV19b]}}{\leq} c_{R}^{\frac{1}{4}} \delta_{q+1}^{\frac{1}{2}} M_{0}(t)^{\frac{1}{2}} \sup_{s \in [0,t]} \sum_{j} \chi_{j}(s) \sum_{\zeta \in \Lambda_{j}} \lVert \gamma_{\zeta} \rVert_{C(B_{C_{\ast}} (\text{Id}))} \overset{\eqref{[Equ. (5.21), BV19b]}}{\leq} \frac{\delta_{q+1}^{\frac{1}{2}} M_{0}(t)^{\frac{1}{2}}}{2}.  
\end{equation} 
Next, we define the scalar phase function for $\zeta \in \Lambda_{j}$ 
\begin{equation}\label{[Equ. (5.27), BV19b]}
\phi_{(\zeta)} (t,x) \triangleq \phi_{q+1, j, \zeta} (t,x) \triangleq e^{ i \lambda_{q+1} \zeta \cdot (\Phi_{j} (t,x) - x)}
\end{equation}
so that we can rewrite 
\begin{equation}\label{[Equ. (5.27a), BV19b]}
w_{(\zeta)}^{(p)}(t,x)\overset{\eqref{[Equ. (5.24), BV19b]} \eqref{[Equ. (5.27), BV19b]}}{=} a_{(\zeta)} (t,x) B_{\zeta} \phi_{(\zeta)} (t,x) e^{i \lambda_{q+1} \zeta \cdot x} 
\overset{\eqref{[Equ. (5.12), BV19b]}}{=} a_{(\zeta)} (t,x) \phi_{(\zeta)} (t,x) W_{(\zeta)} (x).
\end{equation}
Due to \eqref{eigenvector} we can obtain 
\begin{equation}\label{[Equ. (5.27b), BV19b]}
a_{(\zeta)} \phi_{(\zeta)} W_{(\zeta)} = \lambda_{q+1}^{-1} \nabla \times (a_{(\zeta)} \phi_{(\zeta)} W_{(\zeta)}) - \lambda_{q+1}^{-1} \nabla (a_{(\zeta)} \phi_{(\zeta)}) \times W_{(\zeta)}. 
\end{equation} 
Therefore, if we define 
\begin{equation}\label{estimate 24}
w_{(\zeta)}^{(c)}(t,x) \triangleq \lambda_{q+1}^{-1} \nabla (a_{(\zeta)} \phi_{(\zeta)})(t,x) \times B_{\zeta} e^{i\lambda_{q+1} \zeta \cdot x}, 
\end{equation}  
then 
\begin{align}
w_{(\zeta)}^{(c)} (t,x) \overset{\eqref{estimate 24} \eqref{[Equ. (5.27), BV19b]}}{=}& \lambda_{q+1}^{-1} (\nabla a_{(\zeta)} + a_{(\zeta)} i\lambda_{q+1} \zeta \cdot (\nabla \Phi_{j} - \text{Id} ))(t,x) \times B_{\zeta} e^{i\lambda_{q+1} \zeta \cdot \Phi_{j}(t,x)} \nonumber\\
\overset{\eqref{[Equ. (5.12), BV19b]}}{=}& (\lambda_{q+1}^{-1} \nabla a_{(\zeta)} + i a_{(\zeta)} \zeta \cdot (\nabla \Phi_{j} - \text{Id} ))(t,x) \times W_{(\zeta)} (\Phi_{j}(t,x)). \label{[Equ. (5.28), BV19b]}
\end{align} 
Now we can define the incompressibility corrector $w_{q+1}^{(c)}$ and then the perturbation $w_{q+1}$ as 
\begin{equation}\label{[Equ. (5.29) and (5.29a), BV19b]}
w_{q+1}^{(c)} (t,x) \triangleq \sum_{j} \sum_{\zeta \in \Lambda_{j}} w_{(\zeta)}^{(c)} (t,x) \hspace{2mm} \text{ and } \hspace{2mm} w_{q+1} \triangleq w_{q+1}^{(p)} + w_{q+1}^{(c)}
\end{equation} 
so that 
\begin{equation}
w_{q+1} \overset{\eqref{[Equ. (5.29) and (5.29a), BV19b]} \eqref{[Equ. (5.25), BV19b]}}{=}  \sum_{j} \sum_{\zeta \in \Lambda_{j}} w_{(\zeta)}^{(p)} + w_{(\zeta)}^{(c)} \overset{\eqref{[Equ. (5.27), BV19b]}-\eqref{estimate 24} \eqref{[Equ. (5.12), BV19b]} }{=} \sum_{j}\sum_{\zeta \in \Lambda_{j}} \lambda_{q+1}^{-1} \nabla \times (a_{(\zeta)} W_{(\zeta)} \circ \Phi_{j})
\end{equation}
from which we clearly see that $w_{q+1}$ is mean-zero and divergence-free as desired. Next, for $a \in \mathbb{N}$ sufficiently large 
\begin{align}
 \lVert w_{q+1}^{(c)} \rVert_{C_{t,x}} \overset{\eqref{[Equ. (5.29) and (5.29a), BV19b]} \eqref{[Equ. (5.28), BV19b]} }{\leq}& 2 \sup_{j} \sum_{\zeta \in \Lambda_{j}} \lambda_{q+1}^{-1} \lVert \nabla a_{(\zeta)} \rVert_{C_{t,x}} + \lVert a_{(\zeta)} \rVert_{C_{t,x}} \sup_{s \in [0,t]} \lVert (\nabla \Phi_{j}(s) - \text{Id})1_{(l(j-1), l(j+1))} (s) \rVert_{C_{x}} \nonumber \\
\overset{\eqref{[Equ. (5.19a), BV19b]} \eqref{estimate 22} \eqref{[Equ. (5.17), BV19b]}\eqref{[Equ. (56), Y20a]} }{\lesssim}& M_{0}(t)^{\frac{1}{2}} \delta_{q+1}^{\frac{1}{2}}[ \lambda_{q+1}^{-1} \lambda_{q}^{\frac{3}{2}} + M_{0}(t) \lambda_{q}^{-\frac{1}{2}} \delta_{q}^{\frac{1}{2}}] \ll  \delta_{q+1}^{\frac{1}{2}}M_{0}(t)^{\frac{1}{2}}. \label{[Equ. (5.31), BV19b]}
\end{align} 
It follows now that for $a \in \mathbb{N}$ sufficiently large 
\begin{equation}\label{[Equ. (5.32a), BV19b]} 
\lVert w_{q+1} \rVert_{C_{t,x}} \overset{\eqref{[Equ. (5.29) and (5.29a), BV19b]}}{\leq} \lVert w_{q+1}^{(p)}\rVert_{C_{t,x}} + \lVert w_{q+1}^{(c)} \rVert_{C_{t,x}} 
\overset{\eqref{[Equ. (5.26), BV19b]} \eqref{[Equ. (5.31), BV19b]}}{\leq}  \frac{3 \delta_{q+1}^{\frac{1}{2}} M_{0}(t)^{\frac{1}{2}}}{4}.
\end{equation} 
Thus, if we define 
\begin{equation}\label{[Equ. (5.32), BV19b]}
v_{q+1} \triangleq v_{l} + w_{q+1}, 
\end{equation}  
then we may verify \eqref{[Equ. (45), Y20a]} as follows:
\begin{equation}
\lVert v_{q+1} - v_{q} \rVert_{C_{t,x}} \overset{\eqref{[Equ. (5.32), BV19b]}}{\leq} \lVert w_{q+1} \rVert_{C_{t,x}} + \lVert v_{l} - v_{q} \rVert_{C_{t,x}} 
\overset{\eqref{[Equ. (5.32a), BV19b]} \eqref{[Equ. (5.9), BV19b]}}{\leq} \delta_{q+1}^{\frac{1}{2}} M_{0}(t)^{\frac{1}{2}}.
\end{equation}
Next, we can verify \eqref{[Equ. (40a), Y20a]} at level $q+1$ as follows: 
\begin{equation}\label{estimate 66}
\lVert v_{q+1} \rVert_{C_{t,x}} \overset{\eqref{[Equ. (5.32), BV19b]} \eqref{[Equ. (5.32a), BV19b]}}{\leq} \lVert v_{q} \rVert_{C_{t,x}} + \frac{3}{4} \delta_{q+1}^{\frac{1}{2}} M_{0}(t)^{\frac{1}{2}} 
\overset{ \eqref{[Equ. (5.11), BV19b]}}{\leq} M_{0}(t)^{\frac{1}{2}} ( 1+ \sum_{1 \leq \iota \leq q+1} \delta_{\iota}^{\frac{1}{2}}). 
\end{equation}
Next, in order to verify \eqref{[Equ. (40b), Y20a]} at level $q+1$, we compute using the fact that for any fixed time $s \in [0,t]$, there are at most two non-trivial cutoffs 
\begin{subequations}\label{estimate 138}
\begin{align}
 \lVert \partial_{t} w_{q+1}^{(p)} \rVert_{C_{t,x}} +  \lVert \nabla w_{q+1}^{(p)} \rVert_{C_{t,x}} \overset{\eqref{estimate 25} \eqref{estimate 26} \eqref{[Equ. (5.17), BV19b]} \eqref{[Equ. (5.11c), BV19b]} }{\lesssim}& Mc_{R}^{\frac{1}{4}} \delta_{q+1}^{\frac{1}{2}} [M_{0}(t)^{\frac{1}{2}} l^{-1} + \lambda_{q+1} M_{0}(t)], \\
\lVert \partial_{t} w_{q+1}^{(c)} \rVert_{C_{t,x}} + \lVert \nabla w_{q+1}^{(c)} \rVert_{C_{t,x}} \overset{\eqref{estimate 25} \eqref{estimate 26} \eqref{[Equ. (5.17), BV19b]} \eqref{[Equ. (5.11c), BV19b]} }{\lesssim}& Mc_{R}^{\frac{1}{4}}\delta_{q+1}^{\frac{1}{2}} [\lambda_{q+1}^{-1}  M_{0}(t)^{\frac{1}{2}} l^{-2} + M_{0}(t) l^{-1} \nonumber\\
& \hspace{11mm} + M_{0}(t)^{2} \delta_{q}^{\frac{1}{2}} \lambda_{q} + \lambda_{q+1}  M_{0}(t)^{2} l \delta_{q}^{\frac{1}{2}} \lambda_{q}].
\end{align}
\end{subequations} 
Thus, taking $c_{R} \ll M^{-4}$ and $a \in \mathbb{N}$ sufficiently large gives us  
\begin{align}
\lVert w_{q+1} \rVert_{C_{t,x}^{1}} \overset{\eqref{estimate 138} \eqref{[Equ. (5.29) and (5.29a), BV19b]}\eqref{[Equ. (5.32a), BV19b]} }{\leq}& \frac{3}{4} \delta_{q+1}^{\frac{1}{2}} M_{0}(t)^{\frac{1}{2}} + \lVert \partial_{t} w_{q+1}^{(p)}\rVert_{C_{t,x}} + \lVert \nabla w_{q+1}^{(p)} \rVert_{C_{t,x}} + \lVert \partial_{t} w_{q+1}^{(c)} \rVert_{C_{t,x}} + \lVert \nabla w_{q+1}^{(c)} \rVert_{C_{t,x}} \nonumber\\
\leq& \frac{3}{4} \delta_{q+1}^{\frac{1}{2}} M_{0}(t)^{\frac{1}{2}} + C \lambda_{q+1} \delta_{q+1}^{\frac{1}{2}} M_{0}(t) M c_{R}^{\frac{1}{4}} \leq \frac{\lambda_{q+1} \delta_{q+1}^{\frac{1}{2}} M_{0}(t)}{2}. \label{[Equ. (5.33), BV19b]} 
\end{align} 
We are now ready to verify \eqref{[Equ. (40b), Y20a]} at level $q+1$ as follows. Because mollifiers have mass one, for $\beta \in (0,\frac{1}{2})$, we can take $a\in\mathbb{N}$ sufficiently large to attain due to \eqref{[Equ. (5.32), BV19b]}, \eqref{[Equ. (5.33), BV19b]}, and \eqref{[Equ. (40b), Y20a]}
\begin{equation}\label{estimate 139}
\lVert v_{q+1} \rVert_{C_{t,x}^{1}} 
\leq \lVert v_{q} \rVert_{C_{t,x}^{1}} + \frac{ \lambda_{q+1} \delta_{q+1}^{\frac{1}{2}} M_{0}(t)}{2}\leq  \lambda_{q+1} \delta_{q+1}^{\frac{1}{2}} M_{0}(t) [ a^{2^{q} [-1 + \beta]} + \frac{1}{2}] 
\leq \lambda_{q+1} \delta_{q+1}^{\frac{1}{2}} M_{0}(t). 
\end{equation} 
Subsequently, we will rely on Lemma \ref{[Lem. 5.7, BV19b]} and estimate Reynolds stress. We observe that due to \eqref{estimate 17}, if we choose $a\in\mathbb{N}$ sufficiently large, then $\frac{1}{2} \leq \lvert \nabla \Phi_{j}(t,x) \rvert \leq 2$ for all $t \in [l(j-1), l(j+1)]$ and $x \in \mathbb{T}^{3}$ so that \eqref{[Equ. (5.34a), BV19b]} is satisfied with $C = 2$. Thus, as discussed on \cite[p. 210]{BV19b}, for any $\alpha \in (0,1), p \in \mathbb{N}$, and $a$ that is smooth, periodic, $a = a 1_{(l(j-1), l(j+1))}$ such that  
\begin{equation}\label{[Equ. (5.35), BV19b]}
\lVert a \rVert_{C_{t}C_{x}^{N}} \lesssim C_{a} l^{-N}, \hspace{1mm} \forall \hspace{1mm} N \in \mathbb{N}_{0} \cap [0, p+1], p + 1 \geq \max\{\frac{1}{\alpha}, 8\},
\end{equation} 
we can estimate
\begin{equation}\label{[Equ. (5.36), BV19b]}
\lVert \mathcal{R} (a W_{(\zeta)} \circ \Phi_{j} ) \rVert_{C_{t}C_{x}^{\alpha}}  \overset{\eqref{[Equ. (5.34b), BV19b]} \eqref{[Equ. (5.35), BV19b]} }{\lesssim}_{\alpha, p} \frac{C_{a}}{\lambda_{q+1}^{1-\alpha}}.
\end{equation} 
Additionally, because for $\zeta \in \Lambda_{j}$ and $\zeta' \in \Lambda_{j'}$ such that $\lvert j-j'\rvert \leq 1$ and $\zeta + \zeta' \neq 0$, there exists $C_{\Lambda} \in (0,1)$ such that $\lvert \zeta + \zeta'\rvert \geq C_{\Lambda}$ (cf. \cite[p. 110]{BV19a} and \cite[Equ. (9)]{LT20}), it follows from \eqref{estimate 25} again that for a smooth, periodic function $a(x)$ that satisfies $a = a1_{(l(j-1), l(j+1))}$ and  \eqref{[Equ. (5.35), BV19b]}, from \cite[Equ. (5.37)]{BV19b} we have an estimate of 
\begin{equation}\label{[Equ. (5.37), BV19b]}
\lVert \mathcal{R} ( a(W_{(\zeta)} \circ \Phi_{j} \otimes W_{\zeta'} \circ \Phi_{j'})) \rVert_{C_{t}C_{x}^{\alpha}} \lesssim_{\alpha, p} \frac{C_{a}}{\lambda_{q+1}^{1-\alpha}}. 
\end{equation} 

\subsubsection{Reynolds stress}
The following decomposition of the Reynolds stress at level $q+1$ is crucial to attain the necessary estimates. First, 
\begin{align}
& \text{div} \mathring{R}_{q+1} - \nabla \pi_{q+1} \nonumber \\
\overset{\eqref{[Equ. (36), Y20a]} \eqref{[Equ. (5.32), BV19b]} \eqref{[Equ. (59), Y20a]} \eqref{[Equ. (5.29) and (5.29a), BV19b]}}{=}& - \text{div} (v_{l} \otimes z_{l} + z_{l} \otimes v_{l} + z_{l} \otimes z_{l}) - \nabla \pi_{l} + \text{div} \mathring{R}_{l} + \text{div} R_{\text{com1}} \nonumber \\
&+ \partial_{t} w_{q+1}^{(p)} + \partial_{t} w_{q+1}^{(c)} + (-\Delta)^{m} w_{q+1}  \nonumber\\
&+ \text{div} (v_{l} \otimes w_{q+1}^{(p)} + v_{l} \otimes w_{q+1}^{(c)} + v_{l} \otimes z + w_{q+1} \otimes v_{l} + w_{q+1}^{(p)} \otimes w_{q+1}^{(p)} \nonumber\\
&+ w_{q+1}^{(p)} \otimes w_{q+1}^{(c)} + w_{q+1}^{(c)} \otimes w_{q+1} + w_{q+1} \otimes z + z \otimes v_{l} + z \otimes w_{q+1} + z \otimes z). \label{estimate 28}
\end{align} 
To take advantage of mollifier estimates, we make the following arrangements by \eqref{[Equ. (5.32), BV19b]}:
\begin{align}
& - \text{div} (v_{l} \otimes z_{l} + z_{l} \otimes v_{l} + z_{l} \otimes z_{l}) \label{estimate 27}\\
&+ \text{div} (v_{l} \otimes z + w_{q+1} \otimes z + z \otimes v_{l} + z \otimes w_{q+1} + z \otimes z) \nonumber\\
=& \text{div} (v_{q+1} \otimes (z-z_{l}) + w_{q+1} \otimes z_{l} + (z-z_{l}) \otimes v_{q+1} + z_{l} \otimes w_{q+1} + z \otimes (z-z_{l}) + (z-z_{l}) \otimes z_{l}). \nonumber
\end{align} 

\begin{remark}\label{transport error}
We point out that within \eqref{estimate 27}, the most difficult term is $\text{div} (z_{l} \otimes w_{q+1}) = (z_{l} \cdot \nabla) w_{q+1}$, which is absent in the deterministic case. First, a naive attempt of rewriting 
\begin{equation*}
\text{div} (z_{l} \otimes w_{q+1}) = \text{div}(z_{l} \mathring{\otimes} w_{q+1}) + \nabla (\frac{1}{3} z_{l} \cdot w_{q+1})
\end{equation*} 
and estimating on $\lVert z_{l} \mathring{\otimes} w_{q+1} \rVert_{C_{t,x}}$ fails as 
\begin{equation}
\lVert z_{l} \mathring{\otimes} w_{q+1} \rVert_{C_{t,x}} \leq \lVert z_{l} \rVert_{C_{t,x}} \lVert w_{q+1} \rVert_{C_{t,x}}  \overset{\eqref{[Equ. (38), Y20a]} \eqref{[Equ. (5.32a), BV19b]}}{\lesssim} L^{\frac{1}{4}} \delta_{q+1}^{\frac{1}{2}} M_{0}(t)^{\frac{1}{2}} \approx \delta_{q+2} a^{2^{q+1} 3\beta} L^{\frac{1}{4}}M_{0}(t)^{\frac{1}{2}}
\end{equation}
which clearly cannot be bounded by $c_{R} M_{0}(t) \delta_{q+2}$ that is needed to attain \eqref{[Equ. (40c), Y20a]} at level $q+1$. Second, the approach of writing $\text{div} (z_{l} \otimes w_{q+1}) = (z_{l} \cdot \nabla) w_{q+1}$ and relying on \eqref{[Equ. (5.36), BV19b]} also fails, because $\nabla$ is applied on 
\begin{equation*}
w_{q+1} \overset{\eqref{[Equ. (5.29) and (5.29a), BV19b]} \eqref{[Equ. (5.24), BV19b]} \eqref{[Equ. (5.28), BV19b]}}{=}\sum_{j} \sum_{\zeta \in \Lambda_{j}} a_{(\zeta)} B_{\zeta} e^{i \lambda_{q+1} \zeta \cdot \Phi_{j}} + [ \lambda_{q+1}^{-1} \nabla a_{(\zeta)} + i a_{(\zeta)} \zeta \cdot (\nabla \Phi_{j} - \text{Id} )] \times B_{\zeta} e^{i \lambda_{q+1} \zeta \cdot \Phi_{j}},
\end{equation*} 
and thus particularly on $e^{i\lambda_{q+1} \zeta \cdot \Phi_{j}}$, and the $\lambda_{q+1}$ from its chain rule becomes too large to handle. Our new idea to overcome this difficulty is to include $(z_{l} \cdot \nabla) \Phi_{j}$ in \eqref{[Equ. (5.18a), BV19b]}, and include this problematic term $(z_{l} \cdot \nabla)w_{q+1}$ within the transport and corrector errors in $R_{\text{tran}}$ and $R_{\text{corr}}$, to be defined respectively in \eqref{estimate 45} and \eqref{estimate 49}, so that not only the term when $\nabla$ is applied on $e^{i\lambda_{q+1} \zeta \cdot \Phi_{j}}$ in $(v_{l} \cdot \nabla) w_{q+1}$ vanishes, but the term when $\nabla$ is applied on  $e^{i\lambda_{q+1} \zeta \cdot \Phi_{j}}$ in $(z_{l} \cdot \nabla) w_{q+1}$ also vanishes, as we will see in \eqref{estimate 54} and \eqref{estimate 61}. Let us make this precise. 
\end{remark}
Let us write $\text{div} (z_{l} \otimes w_{q+1})$ in \eqref{estimate 27} as
\begin{equation*}
\text{div} (z_{l} \otimes w_{q+1}) \overset{\eqref{[Equ. (5.29) and (5.29a), BV19b]}}{=} (z_{l} \cdot \nabla) w_{q+1}^{(p)} + (z_{l} \cdot \nabla) w_{q+1}^{(c)} 
\end{equation*} 
and apply \eqref{estimate 27} to \eqref{estimate 28} to write 
\begin{align}
&\text{div} \mathring{R}_{q+1} - \nabla \pi_{q+1} \label{[Equ. (5.38), BV19b]}\\
=&  \underbrace{(-\Delta)^{m} w_{q+1} + (w_{q+1} \cdot \nabla) z_{l}}_{\text{div} R_{\text{line}} } +  \underbrace{(\partial_{t} + (v_{l} + z_{l}) \cdot \nabla)w_{q+1}^{(p)}}_{\text{div} R_{\text{tran}}} + \underbrace{\text{div} (w_{q+1}^{(p)} \otimes w_{q+1}^{(p)} + \mathring{R}_{l})}_{\text{div}R_{\text{osc}} + \nabla \pi_{\text{osc}}}  \nonumber\\
&+ \underbrace{(w_{q+1} \cdot \nabla) v_{l}}_{\text{div} R_{\text{Nash}}} + \underbrace{( \partial_{t} + (v_{l} + z_{l}) \cdot \nabla) w_{q+1}^{(c)} + \text{div} (w_{q+1}^{(c)} \otimes w_{q+1} + w_{q+1}^{(p)} \otimes w_{q+1}^{(c)})}_{\text{div}R_{\text{corr}} + \nabla \pi_{\text{corr}}} \nonumber\\
& + \text{div} R_{\text{com1}} - \nabla \pi_{l} + \underbrace{\text{div} (v_{q+1} \otimes (z-z_{l}) + (z-z_{l}) \otimes v_{q+1} + z \otimes (z-z_{l}) + (z-z_{l}) \otimes z_{l})}_{\text{div} R_{\text{com2}} + \nabla \pi_{\text{com2}}}. \nonumber 
\end{align} 
Concerning $R_{\text{osc}}$ and $\pi_{\text{osc}}$ in \eqref{[Equ. (5.38), BV19b]}, first we see that $\chi_{j}(t)\chi_{j'}(t) = 0$ if $\lvert j-j'\rvert \geq 2$ because $\chi_{j}$ has support in $(l(j-1), l(j+1))$. Second, by Lemma \ref{[Pro. 5.6, BV19b]} we know that $\Lambda_{j} \cap \Lambda_{j'} = \emptyset$ if $\lvert j- j' \rvert = 1$. Third, using an identity of 
\begin{equation}\label{estimate 163}
(A \cdot \nabla B) + (B\cdot \nabla)A = \nabla (A \cdot B) - A \times \nabla \times B - B \times \nabla \times A
\end{equation} 
and \eqref{eigenvector}, we can compute 
\begin{equation}\label{estimate 113}
\text{div} (W_{(\zeta)} \otimes W_{(\zeta')} + W_{(\zeta')} \otimes W_{(\zeta)}) = \nabla (W_{(\zeta)} \cdot W_{(\zeta')}).
\end{equation}
Taking into account of these observations allows us to rewrite  
\begin{align}
& \text{div} (w_{q+1}^{(p)} \otimes w_{q+1}^{(p)} + \mathring{R}_{l}) \label{estimate 144} \\
\overset{\eqref{[Equ. (5.25), BV19b]}}{=}& \text{div} ( \sum_{j} \sum_{\zeta \in \Lambda_{j}} w_{(\zeta)}^{(p)} \otimes w_{(-\zeta)}^{(p)} + \mathring{R}_{l}) + \sum_{j, j'} \sum_{\zeta \in \Lambda_{j}, \zeta' \in \Lambda_{j'}: \zeta + \zeta' \neq 0} \text{div} (w_{(\zeta)}^{(p)} \otimes w_{(\zeta')}^{(p)}) \nonumber\\
\overset{\eqref{[Equ. (5.12), BV19b]} \eqref{[Equ. (5.24), BV19b]} \eqref{[Equ. (5.27a), BV19b]}}{=}& \text{div} ( \sum_{j} \sum_{\zeta \in \Lambda_{j}} a_{q+1, j, \zeta} B_{\zeta} e^{i \lambda_{q+1} \zeta \cdot \Phi_{j}} \otimes a_{q+1, j', -\zeta} B_{-\zeta} e^{-i\lambda_{q+1} \zeta \cdot \Phi_{j}} + \mathring{R}_{l}) \nonumber\\
& + \sum_{j,j'} \sum_{\zeta \in \Lambda_{j}, \zeta' \in \Lambda_{j'}: \zeta + \zeta' \neq 0} \text{div} (a_{(\zeta)} \phi_{(\zeta)} W_{(\zeta)} \otimes a_{(\zeta')} \phi_{(\zeta')} W_{(\zeta')}) \nonumber\\
\overset{\eqref{estimate 112} \eqref{[Equ. (5.23), BV19b]}}{=}& \text{div} (c_{R}^{\frac{1}{2}} \delta_{q+1} M_{0}(t)) + \sum_{j,j'} \sum_{\zeta \in \Lambda_{j}, \zeta' \in \Lambda_{j'}: \zeta + \zeta' \neq 0} \text{div} (a_{(\zeta)} a_{(\zeta')} \phi_{(\zeta)} \phi_{(\zeta')} W_{(\zeta)} \otimes W_{(\zeta')}) \nonumber \\
\overset{\eqref{estimate 113}}{=}& \nabla ( \frac{1}{2} \sum_{j,j'} \sum_{\zeta \in \Lambda_{j}, \zeta' \in \Lambda_{j'}: \zeta + \zeta' \neq 0} a_{(\zeta)} a_{(\zeta')} \phi_{(\zeta)} \phi_{(\zeta')} (W_{(\zeta)} \cdot W_{(\zeta')})) \nonumber\\
&+ \text{div} \mathcal{R} ( \sum_{j,j'} \sum_{\zeta \in \Lambda_{j}, \zeta' \in \Lambda_{j''}: \zeta + \zeta' \neq 0} (W_{(\zeta)} \otimes W_{(\zeta')} - \frac{W_{(\zeta)} \cdot W_{(\zeta')}}{2} \text{Id}) \nabla (a_{(\zeta)} a_{(\zeta')} \phi_{(\zeta)} \phi_{(\zeta')})). \nonumber 
\end{align} 
Thus, \eqref{[Equ. (5.38), BV19b]} and \eqref{estimate 144} motivate us to define in addition to $R_{\text{com1}}$ and $\pi_{l}$ defined in \eqref{estimate 164}, 
\begin{subequations}\label{estimate 148}
\begin{align}
R_{\text{line}} \triangleq& R_{\text{linear}} \triangleq \mathcal{R} ( (-\Delta)^{m} w_{q+1} + (w_{q+1} \cdot \nabla) z_{l}), \label{estimate 44}\\
R_{\text{tran}} \triangleq& R_{\text{transport}} \triangleq \mathcal{R} ( ( \partial_{t} + (v_{l} + z_{l}) \cdot \nabla) w_{q+1}^{(p)}), \label{estimate 45}  \\
R_{\text{osc}} \triangleq& R_{\text{oscillation}} \nonumber\\
\triangleq& \mathcal{R} ( \sum_{j,j'} \sum_{\zeta \in \Lambda_{j}, \zeta' \in \Lambda_{j'}: \zeta + \zeta' \neq 0} (W_{(\zeta)} \otimes W_{(\zeta')} - \frac{W_{(\zeta)} \cdot W_{(\zeta')}}{2} \text{Id}) \nabla (a_{(\zeta)} a_{(\zeta')} \phi_{(\zeta)} \phi_{(\zeta')})), \label{estimate 46} \\
\pi_{\text{osc}} \triangleq& \pi_{\text{oscillation}} \triangleq \frac{1}{2} \sum_{j,j'} \sum_{\zeta \in \Lambda_{j}, \zeta' \in \Lambda_{j'}: \zeta + \zeta' \neq 0} a_{(\zeta)} a_{(\zeta')} \phi_{(\zeta)} \phi_{(\zeta')} (W_{(\zeta)} \cdot W_{(\zeta')}), \label{estimate 47} \\
R_{\text{Nash}} \triangleq& \mathcal{R} ( (w_{q+1} \cdot\nabla) v_{l}), \label{estimate 48} \\
R_{\text{corr}} \triangleq& R_{\text{corrector}} \triangleq \mathcal{R}  (( \partial_{t} + (v_{l} + z_{l}) \cdot \nabla) w_{q+1}^{(c)}) + w_{q+1}^{(c)} \mathring{\otimes} w_{q+1} + w_{q+1}^{(p)} \mathring{\otimes} w_{q+1}^{(c)}, \label{estimate 49} \\
\pi_{\text{corr}} \triangleq& \pi_{\text{corrector}} \triangleq \frac{1}{3} \lvert w_{q+1}^{(c)} \rvert^{2} + \frac{2}{3} w_{q+1}^{(p)} \cdot w_{q+1}^{(c)}, \label{estimate 50} \\
R_{\text{com2}} \triangleq& R_{\text{commutator2}} \triangleq v_{q+1} \mathring{\otimes} (z-z_{l}) + (z-z_{l}) \mathring{\otimes} v_{q+1} + z \mathring{\otimes} (z-z_{l}) + (z-z_{l}) \mathring{\otimes} z_{l},  \label{estimate 51}\\
\pi_{\text{com2}} \triangleq& \pi_{\text{commutator2}} \triangleq \frac{1}{3} (2v_{q+1} \cdot (z-z_{l}) + \lvert z \rvert^{2} - \lvert z_{l} \rvert^{2}). \label{estimate 52}
\end{align}
\end{subequations}  
We define from \eqref{[Equ. (5.38), BV19b]}
\begin{subequations} 
\begin{align}
\mathring{R}_{q+1} \triangleq& R_{\text{line}} + R_{\text{tran}} + R_{\text{osc}} + R_{\text{Nash}} + R_{\text{corr}} +  R_{\text{com1}} + R_{\text{com2}}, \label{estimate 115}\\
\pi_{q+1} \triangleq& \pi_{l} - \pi_{\text{osc}} - \pi_{\text{corr}} - \pi_{\text{com2}}. \label{estimate 116}
\end{align}
\end{subequations} 
First, we work on $R_{\text{line}}$ from \eqref{estimate 44}. As $m \in (0, \frac{1}{2})$ by hypothesis, for any $\epsilon \in (0, 1-2m)$, by relying on Lemma \ref{[The. 1.4, RS16]} we obtain 
\begin{equation}\label{estimate 35}
\lVert \mathcal{R} (-\Delta)^{m} w_{q+1} \rVert_{C_{t,x}} \overset{\eqref{estimate 145} \eqref{[Equ. (5.29) and (5.29a), BV19b]}}{\lesssim_{\epsilon}} \lVert \mathcal{R} w_{q+1}^{(p)} \rVert_{C_{t}C_{x}^{2m+\epsilon}} + \lVert \mathcal{R} w_{q+1}^{(c)} \rVert_{C_{t}C_{x}^{2m+\epsilon}}. 
\end{equation}
First, using the fact that for all $s \in [0,t]$ fixed, there exist at most two non-trivial cutoffs
\begin{equation}\label{estimate 31}
\lVert \mathcal{R} w_{q+1}^{(p)} \rVert_{C_{t}C_{x}^{2m+\epsilon}} \overset{\eqref{[Equ. (5.25), BV19b]} \eqref{[Equ. (5.24), BV19b]}}{\leq} 2\sup_{j} \sum_{\zeta \in \Lambda_{j}}  \lVert \mathcal{R}(a_{(\zeta)}  W_{(\zeta)} (\Phi_{j})) \rVert_{C_{t}C_{x}^{2m+\epsilon}}. 
\end{equation}
From \eqref{estimate 22} we see that \eqref{[Equ. (5.35), BV19b]} is satisfied by ``$C_{a}$'' = $\delta_{q+1}^{\frac{1}{2}} M_{0}(t)^{\frac{1}{2}} \lVert \gamma_{\zeta} \rVert_{C^{\lceil \frac{1}{2m} \rceil \vee 8}(B_{C_{\ast}} (\text{Id}))}$ for all $0 \leq N \leq \lceil \frac{1}{2m} \rceil \vee 8$. Therefore, by taking $\beta < \frac{1}{3} (1-2m-\epsilon)$ and $a \in \mathbb{N}$ sufficiently large, continuing from \eqref{estimate 31}
\begin{align}
\lVert \mathcal{R} w_{q+1}^{(p)} \rVert_{C_{t}C_{x}^{2m+\epsilon}} \overset{\eqref{[Equ. (5.36), BV19b]} \eqref{[Equ. (5.17), BV19b]}}{\lesssim}& M \delta_{q+1}^{\frac{1}{2}} M_{0}(t)^{\frac{1}{2}} \lambda_{q+1}^{2m+\epsilon -1} \nonumber\\
\approx& c_{R} M_{0}(t) \delta_{q+2}  a^{2^{q+1} [3 \beta + 2m + \epsilon -1]} \ll c_{R} M_{0}(t) \delta_{q+2}. \label{estimate 32}
\end{align} 
Next, because for all $s \in [0,t]$ there exist at most two non-trivial cutoffs, we have 
\begin{equation}
\lVert \mathcal{R} w_{q+1}^{(c)} \rVert_{C_{t}C_{x}^{2m+\epsilon}} \overset{\eqref{[Equ. (5.29) and (5.29a), BV19b]}\eqref{[Equ. (5.28), BV19b]}}{\leq} 2\sup_{j} \sum_{\zeta \in \Lambda_{j}} \lVert \mathcal{R} (( \lambda_{q+1}^{-1} \nabla a_{(\zeta)} + i a_{(\zeta)} \zeta \cdot (\nabla \Phi_{j} - \text{Id} )) \times W_{(\zeta)} (\Phi_{j})) \rVert_{C_{t}C_{x}^{2m+\epsilon}}. \label{estimate 33}
\end{equation} 
Now, for all $N \in\mathbb{N}_{0}$ we can estimate 
\begin{equation}\label{estimate 39}
\lVert a_{(\zeta)} (\nabla \Phi_{j} - \text{Id} ) \rVert_{C_{t}C_{x}^{N}} 
\overset{\eqref{estimate 22} \eqref{[Equ. (5.19a), BV19b]} \eqref{[Equ. (5.19c), BV19b]}}{\lesssim} \delta_{q+1}^{\frac{1}{2}} M_{0}(t)^{\frac{3}{2}} \delta_{q}^{\frac{1}{2}} \lambda_{q} \lVert \gamma_{\zeta} \rVert_{C^{N} (B_{C_{\ast}} (\text{Id} ))} l^{-N+1}
\end{equation} 
and hence deduce for all $N = 0, \hdots, \lceil \frac{1}{2m} \rceil \vee 8$, by taking $a \in \mathbb{N}$ sufficiently large 
\begin{equation}\label{estimate 146}
\lVert \lambda_{q+1}^{-1} \nabla a_{(\zeta)} + i a_{(\zeta)} \zeta \cdot (\nabla \Phi_{j} - \text{Id} ) \rVert_{C_{t}C_{x}^{N}} 
\overset{\eqref{estimate 26} \eqref{estimate 25} }{\lesssim} \lambda_{q+1}^{-1} \delta_{q+1}^{\frac{1}{2}} M_{0}(t)^{\frac{1}{2}} \lVert \gamma_{\zeta} \rVert_{C^{N+1}(B_{C_{\ast}} (\text{Id}))} l^{-N-1}.
\end{equation} 
Therefore, \eqref{estimate 146} shows that \eqref{[Equ. (5.35), BV19b]} holds with ``$C_{a}$''$= \lambda_{q+1}^{-1} \delta_{q+1}^{\frac{1}{2}} M_{0}(t)^{\frac{1}{2}} \lVert \gamma_{\zeta} \rVert_{C^{(\lceil \frac{1}{2m} \rceil + 1)\vee 9}(B_{C_{\ast}} (\text{Id}))} l^{-1}$ for $N= 0, \hdots, (\lceil \frac{1}{2m} \rceil + 1)\vee 8$, so that by \eqref{[Equ. (5.36), BV19b]} we can continue from \eqref{estimate 33} by taking $\beta < \frac{1}{6} ( \frac{5}{2} - 4m - 2 \epsilon)$ and taking $a \in \mathbb{N}$ sufficiently large 
\begin{align}
\lVert \mathcal{R} w_{q+1}^{(c)} \rVert_{C_{t}C_{x}^{2m+\epsilon}} 
\overset{\eqref{[Equ. (5.36), BV19b]}}{\lesssim}& \sup_{j} \sum_{\zeta \in \Lambda_{j}}  \frac{\lambda_{q+1}^{-1} \delta_{q+1}^{\frac{1}{2}} M_{0}(t)^{\frac{1}{2}} \lVert \gamma_{\zeta} \rVert_{C^{(\lceil \frac{1}{2m} \rceil + 1)\vee 9}(B_{C_{\ast}} (\text{Id}))} l^{-1}}{\lambda_{q+1}^{1- (2m+\epsilon)}} \label{estimate 34}\\
\overset{\eqref{[Equ. (5.17), BV19b]}}{\lesssim}& \delta_{q+1}^{\frac{1}{2}} M_{0}(t)^{\frac{1}{2}} l^{-1}\lambda_{q+1}^{-2 + 2m + \epsilon}  \approx c_{R} M_{0}(t) \delta_{q+2} [ a^{2^{q} [ 6 \beta + 4m + 2 \epsilon - \frac{5}{2} ]}] \ll c_{R} M_{0}(t) \delta_{q+2}. \nonumber
\end{align}
Applying \eqref{estimate 32} and \eqref{estimate 34} to \eqref{estimate 35} gives us 
\begin{equation}\label{estimate 36}
\lVert \mathcal{R} (-\Delta)^{m} w_{q+1} \rVert_{C_{t,x}} \ll c_{R} M_{0}(t) \delta_{q+2}.
\end{equation} 
Next, within $R_{\text{line}}$ from \eqref{estimate 44} we first split 
\begin{equation}\label{estimate 37}
\lVert \mathcal{R} ( (w_{q+1} \cdot \nabla) z_{l}) \rVert_{C_{t,x}} \overset{\eqref{[Equ. (5.29) and (5.29a), BV19b]}}{\leq} \lVert \mathcal{R} ( (w_{q+1}^{(p)} \cdot \nabla) z_{l}) \rVert_{C_{t,x}} + \lVert \mathcal{R} ( (w_{q+1}^{(c)} \cdot \nabla) z_{l}) \rVert_{C_{t,x}}. 
\end{equation} 
First, we compute 
\begin{equation}\label{estimate 38}
\lVert \mathcal{R} ( (w_{q+1}^{(p)} \cdot \nabla) z_{l}) \rVert_{C_{t,x}} \overset{\eqref{[Equ. (5.25), BV19b]} \eqref{[Equ. (5.24), BV19b]}}{=} \lVert \mathcal{R} (\sum_{j} \sum_{\zeta \in \Lambda_{j}} a_{(\zeta)} B_{\zeta} e^{i \lambda_{q+1} \zeta \cdot \Phi_{j} (t,x)} \cdot \nabla z_{l}) \rVert_{C_{t,x}}.
\end{equation} 
For any $\epsilon \in (\frac{1}{8}, 1)$, for all $N = 0, \hdots, \lceil \frac{1}{\epsilon} \rceil \vee 8 = 8$, we can estimate 
\begin{equation}
\lVert a_{(\zeta)} \nabla z_{l} \rVert_{C_{t}C_{x}^{N}} \overset{\eqref{estimate 22} \eqref{[Equ. (38), Y20a]} }{\lesssim}    \delta_{q+1}^{\frac{1}{2}} M_{0}(t)^{\frac{1}{2}} \lVert \gamma_{\zeta} \rVert_{C^{N} (B_{C_{\ast}} (\text{Id} ))}l^{-N}L^{\frac{1}{4}}.
\end{equation}
Thus, \eqref{[Equ. (5.35), BV19b]} holds with ``$C_{a}$'' $= \delta_{q+1}^{\frac{1}{2}} M_{0}(t)^{\frac{1}{2}} \lVert \gamma_{\zeta} \rVert_{C^{8} (B_{C_{\ast}} (\text{Id} ))}L^{\frac{1}{4}}$ so that, as for all time $s\in [0,t]$ fixed, there exist at most two non-trivial cutoffs, continuing from \eqref{estimate 38}, choosing $\beta < \frac{1}{3} (1-\epsilon)$ and $a \in \mathbb{N}$ sufficiently large, 
\begin{align}
\lVert \mathcal{R} ((w_{q+1}^{(p)} \cdot \nabla) z_{l}) \rVert_{C_{t,x}} \overset{ \eqref{[Equ. (5.36), BV19b]}}{\lesssim}&  \sup_{j} \sum_{\zeta \in \Lambda_{j}} \frac{ \delta_{q+1}^{\frac{1}{2}} M_{0}(t)^{\frac{1}{2}} \lVert \gamma_{\zeta} \rVert_{C^{8} (B_{C_{\ast}} (\text{Id} ))}L^{\frac{1}{4}}}{\lambda_{q+1}^{1-\epsilon}}  \nonumber\\
\overset{\eqref{[Equ. (5.17), BV19b]}}{\lesssim}& c_{R} M_{0}(t) \delta_{q+2} a^{2^{q+1} (3 \beta + \epsilon -1)} \ll c_{R} M_{0}(t) \delta_{q+2}.  \label{estimate 42}
\end{align}
Second, we use that for all $s \in [0,t]$ there exist at most two non-trivial cutoffs to write 
\begin{align}
&\lVert \mathcal{R} ((w_{q+1}^{(c)} \cdot \nabla) z_{l}) \rVert_{C_{t,x}} \nonumber\\
\overset{\eqref{[Equ. (5.29) and (5.29a), BV19b]} \eqref{[Equ. (5.28), BV19b]}}{\leq}& 2\sup_{j} \sum_{\zeta \in \Lambda_{j}} \lVert \mathcal{R} ( (\lambda_{q+1}^{-1} \nabla a_{(\zeta)} + i a_{(\zeta)} \zeta \cdot (\nabla \Phi_{j}  - \text{Id} ) ) \times W_{(\zeta)} (\Phi_{j}) \cdot \nabla z_{l} )  \rVert_{C_{t,x}}. \label{estimate 40}
\end{align}
For any $\epsilon \in (\frac{1}{8}, 1)$, for all $N = 0, \hdots, \lceil \frac{1}{\epsilon} \rceil \vee 8 = 8$, we can estimate by taking $a \in \mathbb{N}$ sufficiently large  
\begin{align}
& \lVert ( \lambda_{q+1}^{-1} \nabla a_{(\zeta)} + i a_{(\zeta)} \zeta \cdot (\nabla \Phi_{j} - \text{Id} )) \cdot \nabla z_{l} \rVert_{C_{t}C_{x}^{N}} \nonumber \\
\overset{\eqref{estimate 22} \eqref{[Equ. (38), Y20a]} \eqref{estimate 39}}{\lesssim}&  \lambda_{q+1}^{-1} \delta_{q+1}^{\frac{1}{2}} M_{0}(t)^{\frac{1}{2}} \lVert \gamma_{\zeta} \rVert_{C^{N+1} (B_{C_{\ast}}(\text{Id} ))} l^{-N-1} L^{\frac{1}{4}} + \delta_{q+1}^{\frac{1}{2}} M_{0}(t)^{\frac{3}{2}} \delta_{q}^{\frac{1}{2}} \lambda_{q} \lVert \gamma_{\zeta} \rVert_{C^{N} (B_{C_{\ast}} (\text{Id} ))} l^{-N+1}L^{\frac{1}{4}} \nonumber\\
\lesssim& \delta_{q+1}^{\frac{1}{2}} \lVert \gamma_{\zeta} \rVert_{C^{N+1} (B_{C_{\ast}} (\text{Id} ))} L^{\frac{1}{4}} M_{0}(t)^{\frac{1}{2}} \lambda_{q+1}^{-1} l^{-N-1}. 
\end{align} 
Therefore, \eqref{[Equ. (5.35), BV19b]} holds with ``$C_{a}$''= $\delta_{q+1}^{\frac{1}{2}} \lVert \gamma_{\zeta} \rVert_{C^{9} (B_{C_{\ast}} (\text{Id} ))} L^{\frac{1}{4}} M_{0}(t)^{\frac{1}{2}} \lambda_{q+1}^{-1} l^{-1}$ so that choosing $\beta < \frac{1}{6} (\frac{5}{2} - 2 \epsilon)$ and $a \in \mathbb{N}$ sufficiently large, we can continue from \eqref{estimate 40} as 
\begin{align}
\lVert \mathcal{R} ((w_{q+1}^{(c)} \cdot \nabla) z_{l}) \rVert_{C_{t,x}} \overset{\eqref{[Equ. (5.36), BV19b]} \eqref{[Equ. (5.17), BV19b]}}{\lesssim}& M\delta_{q+1}^{\frac{1}{2}} M_{0}(t) \lambda_{q+1}^{\epsilon -2} \lambda_{q}^{\frac{3}{2}} \nonumber \\
\approx& c_{R} M_{0}(t) \delta_{q+2} a^{2^{q} ( 6 \beta + 2 \epsilon - \frac{5}{2})} \ll c_{R} M_{0}(t) \delta_{q+2}. \label{estimate 41}
\end{align} 
Applying \eqref{estimate 42} and \eqref{estimate 41} to  \eqref{estimate 37} 
gives us 
\begin{equation}\label{estimate 43}
\lVert \mathcal{R} ((w_{q+1} \cdot \nabla) z_{l}) \rVert_{C_{t,x}} \ll c_{R}M_{0}(t) \delta_{q+2}. 
\end{equation}
Together with \eqref{estimate 36} and \eqref{estimate 44}, \eqref{estimate 43} allows us to conclude that
\begin{equation}\label{estimate 165}
\lVert R_{\text{line}} \rVert_{C_{t,x}} \overset{\eqref{estimate 44}}{\leq} \lVert \mathcal{R} (-\Delta)^{m} w_{q+1} \rVert_{C_{t,x}} +\lVert \mathcal{R} ( (w_{q+1} \cdot \nabla) z_{l}) \rVert_{C_{t,x}} \overset{\eqref{estimate 36} \eqref{estimate 43}}{\ll} c_{R} M_{0}(t)\delta_{q+2}. 
\end{equation} 
Next, we look at $R_{\text{tran}} = \mathcal{R} ((\partial_{t}  + (v_{l} + z_{l}) \cdot \nabla) w_{q+1}^{(p)})$ in \eqref{estimate 45}. We make the following key observation that the worst term when $\nabla$ falls on $W_{(\zeta)} \circ \Phi_{j}$ vanishes: 
\begin{align}
(\partial_{t} + (v_{l} + z_{l}) \cdot \nabla) w_{q+1}^{(p)} \overset{\eqref{[Equ. (5.24), BV19b]} \eqref{[Equ. (5.25), BV19b]}}{=}& \sum_{j} \sum_{\zeta \in \Lambda_{j}} [ \partial_{t} a_{(\zeta)} + (v_{l} + z_{l}) \cdot \nabla a_{(\zeta)} ] W_{(\zeta)} (\Phi_{j}) \nonumber  \\
&+ a_{(\zeta)} \nabla W_{(\zeta)}(\Phi_{j}) \cdot [ \partial_{t} \Phi_{j} + (v_{l} + z_{l}) \cdot \nabla \Phi_{j} ] \nonumber\\
\overset{\eqref{[Equ. (5.18a), BV19b]}}{=}& \sum_{j} \sum_{\zeta \in \Lambda_{j}} [ \partial_{t} a_{(\zeta)} + (v_{l} + z_{l}) \cdot \nabla a_{(\zeta)} ] W_{(\zeta)} \circ \Phi_{j}.  \label{estimate 54}
\end{align} 
For any $\epsilon \in (\frac{1}{8}, \frac{1}{4})$, for $N = 0, \hdots, \lceil \frac{1}{\epsilon} \rceil \vee 8 = 8$, we estimate
\begin{align}
\lVert (v_{l} + z_{l}) \cdot \nabla a_{(\zeta)} \rVert_{C_{t}C_{x}^{N}} 
\overset{\eqref{estimate 22}}{\lesssim}& l^{-(N-1)} (\lVert v_{q} \rVert_{C_{t}C_{x}^{1}} + \lVert z \rVert_{C_{t}C_{x}^{1}}) \delta_{q+1}^{\frac{1}{2}} M_{0}(t)^{\frac{1}{2}} \lVert \gamma_{\zeta} \rVert_{C^{1} (B_{C_{\ast}} (\text{Id} ))}l^{-1} \nonumber \\
&+ (\lVert v_{q} \rVert_{C_{t,x}} + \lVert z \rVert_{C_{t,x}}) \delta_{q+1}^{\frac{1}{2}} M_{0}(t)^{\frac{1}{2}} \lVert \gamma_{\zeta} \rVert_{C^{N+1} (B_{C_{\ast}} (\text{Id} ))}l^{-N-1} \nonumber \\
\overset{\eqref{[Equ. (40a), Y20a]} \eqref{[Equ. (40b), Y20a]} \eqref{[Equ. (38), Y20a]}}{\lesssim}& \delta_{q+1}^{\frac{1}{2}} M_{0}(t) l^{-N-1} \lVert \gamma_{\zeta} \rVert_{C^{N+1} (B_{C_{\ast}} (\text{Id} ))}. \label{estimate 53}
\end{align} 
Hence, together with \eqref{estimate 23}, for all $N = 0, \hdots, \lceil \frac{1}{\epsilon} \rceil \vee 8 = 8$,  we have 
\begin{equation}
\lVert \partial_{t} a_{(\zeta)} + (v_{l} + z_{l}) \cdot \nabla a_{(\zeta)} \rVert_{C_{t}C_{x}^{N}} \overset{\eqref{estimate 23} \eqref{estimate 53}}{\lesssim}  \delta_{q+1}^{\frac{1}{2}} M_{0}(t) l^{-N-1} \lVert \gamma_{\zeta} \rVert_{C^{N+1} (B_{C_{\ast}} (\text{Id} ))}. 
\end{equation} 
Therefore, \eqref{[Equ. (5.35), BV19b]} is satisfied with ``$C_{a}$'' $= \delta_{q+1}^{\frac{1}{2}} M_{0}(t) l^{-1} \lVert \gamma_{\zeta} \rVert_{C^{9} (B_{C_{\ast}} (\text{Id} ))}$ so that we can take $\beta  < \frac{1}{6} (\frac{1}{2} - 2 \epsilon)$ and $a \in \mathbb{N}$ sufficiently large to compute by \eqref{[Equ. (5.36), BV19b]} 
\begin{align}
\lVert R_{\text{tran}} \rVert_{C_{t,x}} \overset{\eqref{estimate 45}\eqref{estimate 54}}{\lesssim}& \sup_{j} \sum_{\zeta \in \Lambda_{j}} \lVert \mathcal{R} ( [ \partial_{t} a_{(\zeta)} + (v_{l} + z_{l}) \cdot \nabla a_{(\zeta)}] W_{(\zeta)} \circ \Phi_{j}) \rVert_{C_{t}C_{x}^{\epsilon}} \nonumber\\
\overset{\eqref{[Equ. (5.36), BV19b]}\eqref{[Equ. (5.17), BV19b]}}{\lesssim}& \delta_{q+1}^{\frac{1}{2}} M_{0}(t) \lambda_{q}^{\frac{3}{2}} \lambda_{q+1}^{\epsilon -1}  \approx c_{R} M_{0}(t) \delta_{q+2} a^{2^{q} (6 \beta - \frac{1}{2} + 2 \epsilon)} \ll c_{R} M_{0}(t) \delta_{q+2}. \label{estimate 166}
\end{align}
Next, we work on $R_{\text{osc}}$ from \eqref{estimate 46}: by relying on the identities of 
\begin{equation*}
 \nabla (\phi_{(\zeta)} \phi_{(\zeta')}) 
\overset{\eqref{[Equ. (5.27), BV19b]}}{=} i \lambda_{q+1} \zeta \cdot (\nabla \Phi_{j} - \text{Id} ) \phi_{(\zeta)} \phi_{(\zeta')} + i \lambda_{q+1} \zeta' \cdot (\nabla \Phi_{j'} - \text{Id} ) \phi_{(\zeta)} \phi_{(\zeta')} , W_{(\zeta)} \phi_{(\zeta)} \overset{\eqref{[Equ. (5.12), BV19b]} \eqref{[Equ. (5.27), BV19b]}}{=} W_{(\zeta)} \circ \Phi_{j} 
\end{equation*} 
for $\zeta \in \Lambda_{j}, \zeta' \in \Lambda_{j'}$, we can rewrite 
\begin{align}
R_{\text{osc}} \overset{\eqref{estimate 46}}{=}& \sum_{j, j'} \sum_{\zeta \in \Lambda_{j}, \zeta ' \in \Lambda_{j'}: \zeta + \zeta' \neq 0} \mathcal{R} (( W_{(\zeta)} \circ \Phi_{j} \otimes W_{(\zeta')} \circ \Phi_{j'} - \frac{ W_{(\zeta)} \circ \Phi_{j} \cdot W_{(\zeta')} \circ \Phi_{j'}}{2} \text{Id}) \nonumber\\
& \times [ \nabla (a_{(\zeta)} a_{(\zeta')}) + a_{(\zeta)} a_{(\zeta')} [ i \lambda_{q+1} \zeta \cdot (\nabla \Phi_{j} - \text{Id}) + i \lambda_{q+1} \zeta' \cdot (\nabla \Phi_{j'} - \text{Id} )]]).  \label{estimate 55}
\end{align} 
Now for any $\epsilon \in (\frac{1}{8}, \frac{1}{4})$, for all $N = 0, \hdots, \lceil \frac{1}{\epsilon} \rceil \vee 8 = 8$, by taking $a \in \mathbb{N}$ sufficiently large we obtain 
\begin{align}
& \lVert \nabla (a_{(\zeta)} a_{(\zeta')} ) + a_{(\zeta)} a_{(\zeta')} [i\lambda_{q+1} \zeta \cdot (\nabla \Phi_{j} - \text{Id} ) + i \lambda_{q+1} \zeta' \cdot (\nabla \Phi_{j'} - \text{Id} )] \rVert_{C_{t}C_{x}^{N}} \nonumber \\
\lesssim&  \lVert \nabla (a_{(\zeta)} a_{(\zeta')}) \rVert_{C_{t}C_{x}^{N}}  +  \lambda_{q+1} \lVert a_{(\zeta)} a_{(\zeta')} [ \zeta \cdot (\nabla \Phi_{j} - \text{Id} ) + \zeta' \cdot (\nabla \Phi_{j'} - \text{Id} )] \rVert_{C_{t}C_{x}^{N}} \nonumber\\
\overset{\eqref{estimate 22}\eqref{[Equ. (5.19a), BV19b]} \eqref{[Equ. (5.19c), BV19b]} }{\lesssim}& \delta_{q+1} M_{0}(t) \lVert \gamma_{\zeta} \rVert_{C^{N+1} (B_{C_{\ast}} (\text{Id}))} \lVert \gamma_{\zeta'} \rVert_{C^{N+1} (B_{C_{\ast}}(\text{Id}))} l^{-N-1} \nonumber \\
&+ \lambda_{q+1} \delta_{q+1} M_{0}(t)^{2} \lVert \gamma_{\zeta} \rVert_{C^{N} (B_{C_{\ast}} (\text{Id}))} \lVert \gamma_{\zeta'} \rVert_{C^{N} (B_{C_{\ast}} (\text{Id} ))} l^{-N+1} \delta_{q}^{\frac{1}{2}} \lambda_{q} \nonumber \\
\lesssim& \delta_{q+1} M_{0}(t) \lVert \gamma_{\zeta} \rVert_{C^{N+1} (B_{C_{\ast}} (\text{Id}))} \lVert \gamma_{\zeta'} \rVert_{C^{N+1} (B_{C_{\ast}} (\text{Id} ))} l^{-N-1}.  
\end{align} 
Therefore, \eqref{[Equ. (5.35), BV19b]} is satisfied with ``$C_{a}$''$=  \delta_{q+1} M_{0}(t) \lVert \gamma_{\zeta} \rVert_{C^{9} (B_{C_{\ast}} (\text{Id}))} \lVert \gamma_{\zeta'} \rVert_{C^{9} (B_{C_{\ast}} (\text{Id} ))} l^{-1}$. Hence,   we can choose $\beta < \frac{1}{4}( \frac{1}{2} - 2 \epsilon)$, as well as $a \in \mathbb{N}$ sufficiently large, continue  from \eqref{estimate 55}, use the fact that for any $s \in [0,t]$ fixed there exist at most two non-trivial cutoffs, and compute 
\begin{align}
\lVert R_{\text{osc}} \rVert_{C_{t,x}} \overset{\eqref{estimate 55} \eqref{[Equ. (5.37), BV19b]}}{\lesssim}& \sup_{j,j'} \sum_{\zeta \in \Lambda_{j}, \zeta' \in \Lambda_{j'}: \zeta + \zeta' \neq 0} \frac{ \delta_{q+1} M_{0}(t) \lVert \gamma_{\zeta} \rVert_{C^{9} (B_{C_{\ast}} (\text{Id}))} \lVert \gamma_{\zeta'} \rVert_{C^{9} (B_{C_{\ast}} (\text{Id} ))} l^{-1}}{\lambda_{q+1}^{1-\epsilon}} \nonumber \\
\overset{\eqref{[Equ. (5.17), BV19b]}}{\lesssim}&  \delta_{q+1} M_{0}(t) l^{-1} \lambda_{q+1}^{\epsilon -1}  \approx c_{R} M_{0}(t) \delta_{q+2} a^{2^{q} (4 \beta - \frac{1}{2} + 2 \epsilon)} \ll c_{R} M_{0}(t) \delta_{q+2}. \label{estimate 114}
\end{align} 
Next, we rewrite $R_{\text{Nash}}$ from \eqref{estimate 48} as follows:
\begin{align}
R_{\text{Nash}} \overset{\eqref{estimate 48} \eqref{[Equ. (5.29) and (5.29a), BV19b]}\eqref{[Equ. (5.24), BV19b]} \eqref{[Equ. (5.28), BV19b]}}{=}& \sum_{j}\sum_{\zeta \in \Lambda_{j}} \mathcal{R} ( ( a_{(\zeta)} W_{(\zeta)} \circ \Phi_{j} \nonumber\\
& \hspace{12mm} + ( \lambda_{q+1}^{-1} \nabla a_{(\zeta)} + ia_{(\zeta)} \zeta \cdot (\nabla \Phi_{j} - \text{Id} )) \times W_{(\zeta)} (\Phi_{j})) \cdot \nabla v_{l} ). \label{estimate 58}
\end{align}
Now for any $\epsilon \in (\frac{1}{8}, \frac{1}{2})$, for all $N = 0, \hdots, \lceil \frac{1}{\epsilon} \rceil \vee 8 = 8$ we can estimate 
\begin{equation}\label{estimate 56}
\lVert a_{(\zeta)} \cdot \nabla v_{l} \rVert_{C_{t}C_{x}^{N}} \overset{\eqref{estimate 22}\eqref{[Equ. (40b), Y20a]}}{\lesssim} \delta_{q+1}^{\frac{1}{2}} M_{0}(t)^{\frac{3}{2}} \lVert \gamma_{\zeta} \rVert_{C^{N} (B_{C_{\ast}} (\text{Id} ))}l^{-N}  \delta_{q}^{\frac{1}{2}} \lambda_{q}.
\end{equation} 
On the other hand, for all $N = 0, \hdots, \lceil \frac{1}{\epsilon} \rceil \vee 8 = 8$, we can estimate by taking $a \in \mathbb{N}$ sufficiently large, 
\begin{align}
& \lVert (\lambda_{q+1}^{-1} \nabla a_{(\zeta)} + i a_{(\zeta)} \zeta \cdot (\nabla \Phi_{j} - \text{Id} )) \cdot \nabla v_{l} \rVert_{C_{t}C_{x}^{N}} \nonumber\\
\overset{\eqref{[Equ. (5.10), BV19b]} \eqref{estimate 22} \eqref{[Equ. (5.19a), BV19b]}\eqref{[Equ. (5.19c), BV19b]}}{\lesssim}& \lVert \gamma_{\zeta} \rVert_{C^{N+1} (B_{C_{\ast}} (\text{Id} ))} \lambda_{q+1}^{-1} \delta_{q+1}^{\frac{1}{2}} M_{0}(t)^{\frac{3}{2}} l^{-N-1} \delta_{q}^{\frac{1}{2}} \lambda_{q}.  \label{estimate 57}
\end{align} 
Therefore, for all $N = 0, \hdots, \lceil \frac{1}{\epsilon} \rceil \vee 8 = 8$, 
\begin{align}
& \lVert a_{(\zeta)} \cdot \nabla v_{l} \rVert_{C_{t}C_{x}^{N}} + \lVert ( \lambda_{q+1}^{-1} \nabla a_{(\zeta)} + ia_{(\zeta)} \zeta \cdot (\nabla \Phi_{j} - \text{Id} )) \cdot \nabla v_{l} \rVert_{C_{t}C_{x}^{N}} \nonumber\\
\overset{\eqref{estimate 56}\eqref{estimate 57}}{\lesssim}& \lVert \gamma_{\zeta} \rVert_{C^{9} (B_{C_{\ast}} ( \text{Id}))} \delta_{q+1}^{\frac{1}{2}} M_{0}(t)^{\frac{3}{2}} \delta_{q}^{\frac{1}{2}} \lambda_{q} l^{-N}.
\end{align} 
Hence, \eqref{[Equ. (5.35), BV19b]} is satisfied with ``$C_{a}$'' =$\lVert \gamma_{\zeta} \rVert_{C^{9} (B_{C_{\ast}} ( \text{Id}))} \delta_{q+1}^{\frac{1}{2}} M_{0}(t)^{\frac{3}{2}} \delta_{q}^{\frac{1}{2}} \lambda_{q}$ so that by \eqref{[Equ. (5.36), BV19b]}, choosing $\beta < \frac{1}{5} (1- 2 \epsilon)$ and $a \in \mathbb{N}$ sufficiently large, continuing from \eqref{estimate 58} and taking advantage of the fact that for any $s \in [0,t]$ fixed there exist at most two non-trivial cutoffs give 
\begin{align}
& \lVert R_{\text{Nash}} \rVert_{C_{t,x}} \nonumber \\
\overset{ \eqref{estimate 58}}{\lesssim}& \sup_{j}  \sum_{\zeta \in \Lambda_{j}} \lVert \mathcal{R} (( a_{(\zeta)} W_{(\zeta)} \circ \Phi_{j} + ( \lambda_{q+1}^{-1} \nabla a_{(\zeta)} + ia_{(\zeta)}  \zeta \cdot (\nabla \Phi_{j} - \text{Id} )) \times W_{(\zeta)} (\Phi_{j} )) \cdot \nabla v_{l} ) \rVert_{C_{t}C_{x}^{\epsilon}} \nonumber \\
\overset{\eqref{[Equ. (5.36), BV19b]} \eqref{[Equ. (5.17), BV19b]}}{\lesssim}& M \delta_{q+1}^{\frac{1}{2}} M_{0}(t)^{\frac{3}{2}} \delta_{q}^{\frac{1}{2}} \lambda_{q} \lambda_{q+1}^{\epsilon -1}\approx c_{R} M_{0}(t) \delta_{q+2} [M_{0}(t)^{\frac{1}{2}} a^{2^{q} (5 \beta - 1 + 2 \epsilon) } ] \ll c_{R} M_{0}(t) \delta_{q+2}.  \label{estimate 121}
\end{align} 
Next, we work on $R_{\text{corr}}$ from \eqref{estimate 49}. First, we again make the important observation that 
\begin{align}
& ( \partial_{t} + (v_{l} + z_{l}) \cdot \nabla) w_{q+1}^{(c)} \nonumber\\
\overset{\eqref{[Equ. (5.29) and (5.29a), BV19b]}\eqref{[Equ. (5.28), BV19b]}}{=}& ( \partial_{t} + (v_{l} + z_{l}) \cdot \nabla) \sum_{j} \sum_{\zeta \in \Lambda_{j}} [ ( \lambda_{q+1}^{-1} \nabla a_{(\zeta)} + ia_{(\zeta)} \zeta \cdot  (\nabla \Phi_{j} - \text{Id} )) \times W_{(\zeta)} (\Phi_{j})] \nonumber\\
\overset{\eqref{[Equ. (5.18a), BV19b]}}{=}& \sum_{j} \sum_{\zeta \in \Lambda_{j}} (\partial_{t} + (v_{l} + z_{l}) \cdot \nabla) ( \lambda_{q+1}^{-1} \nabla a_{(\zeta)} + i a_{(\zeta)} \zeta \cdot (\nabla \Phi_{j} - \text{Id} )) \times W_{(\zeta)} (\Phi_{j}). \label{estimate 61}
\end{align} 
For any $\epsilon \in (\frac{1}{8}, \frac{1}{2})$, for all $N = 0, \hdots, \lceil \frac{1}{\epsilon} \rceil \vee 8 = 8$, we can estimate 
\begin{align}
& \lVert ( \partial_{t} + (v_{l} + z_{l}) \cdot \nabla) ( \lambda_{q+1}^{-1} \nabla a_{(\zeta)} + i a_{(\zeta)} \zeta \cdot ( \nabla \Phi_{j} - \text{Id} )) \rVert_{C_{t}C_{x}^{N}} \label{estimate 60}\\
\lesssim& \lambda_{q+1}^{-1} \lVert \nabla \partial_{t} a_{(\zeta)} \rVert_{C_{t}C_{x}^{N}} + \lVert \partial_{t} a_{(\zeta)} (\nabla \Phi_{j} - \text{Id} ) \rVert_{C_{t}C_{x}^{N}} + \lVert a_{(\zeta)} \partial_{t} \nabla \Phi_{j} \rVert_{C_{t}C_{x}^{N}} + \lambda_{q+1}^{-1} \lVert (v_{l} + z_{l}) \cdot \nabla^{2} a_{(\zeta)} \rVert_{C_{t}C_{x}^{N}} \nonumber\\
& \hspace{37mm} + \lVert (v_{l} + z_{l}) \cdot \nabla a_{(\zeta)} (\nabla \Phi_{j} - \text{Id} ) \rVert_{C_{t}C_{x}^{N}} + \lVert (v_{l} + z_{l}) a_{(\zeta)} \cdot \nabla^{2} \Phi_{j} \rVert_{C_{t}C_{x}^{N}}. \nonumber
\end{align} 
We can estimate separately for all $N = 0, \hdots, \lceil \frac{1}{\epsilon} \rceil \vee 8 = 8$, by taking $a \in \mathbb{N}$ sufficiently large 
\begin{subequations}\label{estimate 59}
\begin{align}
& \lambda_{q+1}^{-1} \lVert \nabla \partial_{t} a_{(\zeta)} \rVert_{C_{t}C_{x}^{N}} \overset{\eqref{estimate 23}}{\lesssim} \lambda_{q+1}^{-1}  \delta_{q+1}^{\frac{1}{2}} M_{0}(t)^{\frac{1}{2}} \lVert \gamma_{\zeta} \rVert_{C^{N+2} (B_{C_{\ast}} (\text{Id} ))} l^{-N-2}, \\
&  \lVert \partial_{t} a_{(\zeta)} ( \nabla \Phi_{j} - \text{Id} ) \rVert_{C_{t}C_{x}^{N}} \overset{\eqref{estimate 23} \eqref{[Equ. (5.19a), BV19b]}\eqref{[Equ. (5.19c), BV19b]} }{\lesssim} \delta_{q+1}^{\frac{1}{2}} M_{0}(t)^{\frac{3}{2}} \lVert \gamma_{\zeta} \rVert_{C^{N+1} (B_{C_{\ast}} (\text{Id} ))}l^{-N} \delta_{q}^{\frac{1}{2}} \lambda_{q}, \\
& \lVert a_{(\zeta)} \partial_{t} \nabla \Phi_{j} \rVert_{C_{t}C_{x}^{N}} \overset{\eqref{estimate 22} \eqref{estimate 19}}{\lesssim} \delta_{q+1}^{\frac{1}{2}} M_{0}(t)^{2} \lVert \gamma_{\zeta} \rVert_{C^{N} (B_{C_{\ast}} (\text{Id} ))} l^{-N} \delta_{q}^{\frac{1}{2}} \lambda_{q}, \\
& \lambda_{q+1}^{-1} \lVert (v_{l} + z_{l} ) \cdot \nabla^{2} a_{(\zeta)} \rVert_{C_{t}C_{x}^{N}} \overset{\eqref{[Equ. (40a), Y20a]} \eqref{[Equ. (40b), Y20a]} \eqref{[Equ. (38), Y20a]} \eqref{estimate 22} }{\lesssim} \lambda_{q+1}^{-1} \delta_{q+1}^{\frac{1}{2}} M_{0}(t) \lVert \gamma_{\zeta} \rVert_{C^{N+2} (B_{C_{\ast}} (\text{Id} ))} l^{-N-2},  \\
& \lVert (v_{l} + z_{l}) \cdot \nabla a_{(\zeta)} (\nabla \Phi_{j} - \text{Id} ) \rVert_{C_{t}C_{x}^{N}}   \overset{\eqref{estimate 22} \eqref{estimate 25}  \eqref{[Equ. (40), Y20a]} \eqref{[Equ. (38), Y20a]}}{\lesssim} \lVert \gamma_{\zeta} \rVert_{C^{N+1} (B_{C_{\ast}} (\text{Id} ))} \delta_{q+1}^{\frac{1}{2}} \delta_{q}^{\frac{1}{2}} \lambda_{q} l^{-N} M_{0}(t)^{2},  \\
& \lVert (v_{l}+  z_{l}) a_{(\zeta)} \cdot \nabla^{2} \Phi_{j} \rVert_{C_{t}C_{x}^{N}} \overset{\eqref{estimate 22} \eqref{[Equ. (5.19c), BV19b]}\eqref{[Equ. (40a), Y20a]} \eqref{[Equ. (40b), Y20a]} \eqref{[Equ. (38), Y20a]}}{\lesssim} \lVert \gamma_{\zeta} \rVert_{C^{N} (B_{C_{\ast}} (\text{Id} ))} \delta_{q+1}^{\frac{1}{2}} \delta_{q}^{\frac{1}{2}} \lambda_{q} l^{-N} M_{0}(t)^{2}.
\end{align} 
\end{subequations} 
Applying \eqref{estimate 59} to \eqref{estimate 60} gives us by taking $a \in \mathbb{N}$ sufficiently large, for all $N = 0, \hdots, \lceil \frac{1}{\epsilon} \rceil \vee 8 = 8$, 
\begin{align}
& \lVert ( \partial_{t} + (v_{l} + z_{l}) \cdot \nabla) ( \lambda_{q+1}^{-1} \nabla a_{(\zeta)} + i a_{(\zeta)} \zeta \cdot ( \nabla \Phi_{j} - \text{Id} )) \rVert_{C_{t}C_{x}^{N}}  \\
\lesssim& \lVert \gamma_{\zeta} \rVert_{C^{N+2} (B_{C_{\ast}} (\text{Id} ))} \delta_{q+1}^{\frac{1}{2}}l^{-N} [ \lambda_{q+1}^{-1} M_{0}(t) l^{-2} +  M_{0}(t)^{2} \delta_{q}^{\frac{1}{2}} \lambda_{q} ] 
\lesssim \lVert \gamma_{\zeta} \rVert_{C^{N+2} (B_{C_{\ast}} (\text{Id} ))} \delta_{q+1}^{\frac{1}{2}} l^{-N-2} \lambda_{q+1}^{-1} M_{0}(t). \nonumber
\end{align} 
This implies that \eqref{[Equ. (5.35), BV19b]} holds with ``$C_{a}$'' = $\lVert \gamma_{\zeta} \rVert_{C^{10} (B_{C_{\ast}} (\text{Id} ))} \delta_{q+1}^{\frac{1}{2}} l^{-2} \lambda_{q+1}^{-1} M_{0}(t)$ and hence via \eqref{[Equ. (5.36), BV19b]},  continuing from \eqref{estimate 61}, taking $\beta < \frac{1}{6} (1-2\epsilon)$ and $a \in \mathbb{N}$ sufficiently large, we obtain 
\begin{align}
& \lVert \mathcal{R} ( ( \partial_{t} + (v_{l} + z_{l}) \cdot \nabla) w_{q+1}^{(c)} ) \rVert_{C_{t,x}} \nonumber\\
\overset{\eqref{estimate 61}}{\lesssim}& \sup_{j} \sum_{\zeta \in \Lambda_{j}} \lVert \mathcal{R} ( ( \partial_{t} + (v_{l} + z_{l}) \cdot \nabla) ( \lambda_{q+1}^{-1} \nabla a_{(\zeta)} + ia_{(\zeta)} \zeta \cdot (\nabla \Phi_{j} - \text{Id} ) ) \times W_{(\zeta)} (\Phi_{j}) ) \rVert_{C_{t}C_{x}^{\epsilon}} \nonumber \\
\overset{\eqref{[Equ. (5.36), BV19b]}\eqref{[Equ. (5.17), BV19b]}}{\lesssim}& \delta_{q+1}^{\frac{1}{2}} l^{-2} \lambda_{q+1}^{\epsilon -2} M_{0}(t) \approx c_{R} M_{0}(t) \delta_{q+2}  a^{2^{q } (6\beta -1 + 2 \epsilon)} \ll c_{R} M_{0}(t) \delta_{q+2}. \label{estimate 64}
\end{align} 
Next, within $R_{\text{corr}}$ from \eqref{estimate 49}, we can directly estimate by taking $\beta < \frac{1}{8}$ and $a \in \mathbb{N}$ sufficiently large, 
\begin{align}
& \lVert w_{q+1}^{(c)} \mathring{\otimes} w_{q+1} + w_{q+1}^{(p)} \mathring{\otimes} w_{q+1}^{(c)} \rVert_{C_{t,x}} 
\overset{\eqref{[Equ. (5.29) and (5.29a), BV19b]}}{\lesssim} \lVert w_{q+1}^{(c)} \rVert_{C_{t,x}} ( \lVert w_{q+1}^{(c)} \rVert_{C_{t,x}} + \lVert w_{q+1}^{(p)} \rVert_{C_{t,x}}) \label{estimate 63}\\
& \hspace{18mm} \overset{\eqref{[Equ. (5.31), BV19b]} \eqref{[Equ. (5.26), BV19b]} }{\lesssim} \delta_{q+1} M_{0}(t) \lambda_{q+1}^{-1} \lambda_{q}^{\frac{3}{2}} ( \lambda_{q+1}^{-1} \lambda_{q}^{\frac{3}{2}} + 1) \approx c_{R} M_{0}(t) \delta_{q+2} a^{2^{q} (4 \beta - \frac{1}{2})} \ll c_{R} M_{0}(t) \delta_{q+2}. \nonumber
\end{align} 
Applying \eqref{estimate 64} and \eqref{estimate 63} to \eqref{estimate 49} gives us 
\begin{equation}\label{estimate 65}
\lVert R_{\text{corr}} \rVert_{C_{t,x}} 
\overset{\eqref{estimate 49} \eqref{estimate 64}\eqref{estimate 63}}{\ll} c_{R} M_{0}(t) \delta_{q+2}.
\end{equation}
Next, the estimate of $R_{\text{com1}}$ in \eqref{[Equ. (60a), Y20a]} can be achieved by standard commutator estimates (e.g., \cite[Pro. 6.5]{BV19b} or \cite[Equ. (5)]{CDS12}) as follows: for $\delta \in (0, \frac{1}{24})$ so that $\frac{1}{2} - 2 \delta < \frac{5}{12}$, and $\beta < \frac{5}{182}$, by taking $a \in \mathbb{N}$ sufficiently large we can compute 
\begin{align}
\lVert R_{\text{com1}} \rVert_{C_{t,x}} \lesssim& l( \lVert v_{q} \rVert_{C_{t,x}^{1}} + \lVert z \rVert_{C_{t}C_{x}^{1}}) ( \lVert v_{q} \rVert_{C_{t,x}} + \lVert z \rVert_{C_{t,x}}) \nonumber \\
&+ l^{\frac{1}{2} - 2 \delta} ( \lVert v_{q} \rVert_{C_{t,x}}^{\frac{1}{2} + 2 \delta} \lVert v_{q}\rVert_{C_{t}^{1}C_{x}}^{\frac{1}{2} - 2\delta} + \lVert z \rVert_{C_{t}^{\frac{1}{2} - 2 \delta} C_{x}}) (\lVert v_{q} \rVert_{C_{t,x}} + \lVert z \rVert_{C_{t,x}}) \nonumber\\
&\overset{\eqref{[Equ. (38), Y20a]} \eqref{[Equ. (40a), Y20a]} \eqref{[Equ. (40b), Y20a]}}{\lesssim} c_{R} \delta_{q+2}  M_{0}(t)^{\frac{5}{4} - \delta} a^{2^{q} [ 8 \beta - (\frac{1}{2} + \beta) (\frac{5}{12})]} \ll c_{R} \delta_{q+2} M_{0}(t). 
\end{align}
Finally, for $\beta < \frac{5}{64}$, taking $a \in \mathbb{N}$ sufficiently large we can directly estimate $R_{\text{com2}}$ from \eqref{estimate 51} as follows: as $\frac{1}{2} - 2 \delta > \frac{5}{12}$ for $\delta \in (0, \frac{1}{24})$, 
\begin{align}
 \lVert R_{\text{com2}} \rVert_{C_{t,x}} \overset{\eqref{estimate 51}}{\lesssim}& ( \lVert v_{q+1} \rVert_{C_{t,x}} + \lVert z \rVert_{C_{t,x}} + \lVert z_{l} \rVert_{C_{t,x}}) \lVert z - z_{l} \rVert_{C_{t,x}} \overset{\eqref{[Equ. (38), Y20a]}\eqref{estimate 66} }{\lesssim} M_{0}(t)^{\frac{1}{2}} (l L^{\frac{1}{4}} + l^{\frac{1}{2} - 2 \delta} L^{\frac{1}{2}})  \nonumber\\
\lesssim& M_{0}(t)^{\frac{1}{2}} l^{\frac{1}{3}} L^{\frac{1}{2}} \lesssim  c_{R} M_{0}(t) \delta_{q+2} a^{2^{q} (8\beta - \frac{5}{8} )} \ll c_{R} M_{0}(t) \delta_{q+2}. \label{estimate 167}
\end{align} 
Applying \eqref{estimate 165},  \eqref{estimate 166}, \eqref{estimate 114}, \eqref{estimate 121}, \eqref{estimate 65}-\eqref{estimate 167} to \eqref{estimate 115} shows that \eqref{[Equ. (40c), Y20a]} at level $q+1$ holds.  

At last, following similar arguments in \cite{HZZ19} we comment on how $(v_{q+1}, \mathring{R}_{q+1})$ is $(\mathcal{F}_{t})_{t\geq 0}$-adapted and that $v_{q+1}(0,x)$ and $\mathring{R}_{q+1}(0,x)$ are deterministic if $v_{q}(0,x)$ and $\mathring{R}_{q}(0,x)$ are deterministic.  First, $z(t)$ from \eqref{[Equ. (24), Y20a]} is $(\mathcal{F}_{t})_{t\geq 0}$-adapted. Due to the compact support of $\varphi_{l}$ in $\mathbb{R}_{+}$, it follows that $z_{l}$ is $(\mathcal{F}_{t})_{t\geq 0}$-adapted. Similarly, because $(v_{q}, \mathring{R}_{q})$ are both $(\mathcal{F}_{t})_{t\geq 0}$-adapted by hypothesis, so are $(v_{l}, \mathring{R}_{l})$. Because $M_{0}(t)$ from \eqref{estimate 69}, $\chi_{j}(t)$ from \eqref{[Equ. (5.20), BV19b]}, and $\gamma_{\zeta}$ from Lemma \ref{[Pro. 5.6, BV19b]} are deterministic, $a_{(\zeta)}$ from \eqref{[Equ. (5.22), BV19b]} is also $(\mathcal{F}_{t})_{t\geq 0}$-adapted. It follows that $w_{(\zeta)}^{(p)}$ from \eqref{[Equ. (5.24), BV19b]} is $(\mathcal{F}_{t})_{t\geq 0}$-adapted and consequently so are $w_{q+1}^{(p)}$ and $\partial_{t} w_{q+1}^{(p)}$ from \eqref{[Equ. (5.25), BV19b]}. Similarly, $w_{(\zeta)}^{(c)}$ from \eqref{estimate 24} and hence in turn $w_{q+1}^{(c)}$ and $\partial_{t} w_{q+1}^{(c)}$ from \eqref{[Equ. (5.29) and (5.29a), BV19b]} are also $(\mathcal{F}_{t})_{t\geq 0}$-adapted. Therefore, $w_{q+1}$ from \eqref{[Equ. (5.29) and (5.29a), BV19b]}  is $(\mathcal{F}_{t})_{t\geq 0}$-adapted, indicating that $v_{q+1}$ from \eqref{[Equ. (5.32), BV19b]} is $(\mathcal{F}_{t})_{t\geq 0}$-adapted. It follows that all of $R_{\text{line}}$, $R_{\text{tran}}$, $R_{\text{osc}}$, $R_{\text{Nash}}$, $R_{\text{corr}}$, and $R_{\text{com2}}$ from \eqref{estimate 148}, and $R_{\text{com1}}$ from \eqref{[Equ. (60a), Y20a]} are $(\mathcal{F}_{t})_{t\geq 0}$-adapted and consequently so is $\mathring{R}_{q+1}$ from \eqref{estimate 115}. 

Similarly, due to the compact support of $\varphi_{l}$ in $\mathbb{R}_{+}$, if $v_{q}(0,x)$ and $\mathring{R}_{q}(0,x)$ are deterministic, then so are $v_{l}(0,x), \mathring{R}_{l}(0,x)$, and $\partial_{t} \mathring{R}_{l} (0,x)$. Similarly,  $z_{l} (0,x)$ is also deterministic because $z(0,x) \equiv 0$ by \eqref{[Equ. (24), Y20a]}. Because $M_{0}(t)$, $\gamma_{\zeta}$, and $\chi_{j}$ are deterministic, it follows that $a_{(\zeta)} (0,x)$ and $\partial_{t} a_{(\zeta)}(0,x)$ from \eqref{[Equ. (5.22), BV19b]} are both deterministic. It follows that $w_{(\zeta)}^{(p)} (0,x)$ from \eqref{[Equ. (5.24), BV19b]} is deterministic; therefore, $w_{q+1}^{(p)} (0,x)$ and $\partial_{t} w_{q+1}^{(p)} (0,x)$ from \eqref{[Equ. (5.25), BV19b]} are deterministic. Similarly, $w_{(\zeta)}^{(c)} (0,x)$ from \eqref{estimate 24} is deterministic and hence so is $w_{q+1}^{(c)} (0,x)$, as well as $\partial_{t} w_{q+1}^{(c)} (0,x)$ from \eqref{[Equ. (5.29) and (5.29a), BV19b]}. This implies that $w_{q+1}(0,x)$ from \eqref{[Equ. (5.29) and (5.29a), BV19b]} is also deterministic and thus so is $v_{q+1}(0,x)$ from \eqref{[Equ. (5.32), BV19b]}. Finally, all of $R_{\text{line}}(0,x)$, $R_{\text{tran}}(0,x)$, $R_{\text{osc}} (0,x)$, $R_{\text{Nash}}(0,x)$, $R_{\text{corr}}(0,x)$, and  $R_{\text{com2}}(0,x)$ from \eqref{estimate 148}, and $R_{\text{com1}}(0,x)$ from \eqref{[Equ. (60a), Y20a]} are all deterministic and hence so is $\mathring{R}_{q+1}(0,x)$ from \eqref{estimate 115}. This completes the proof of Proposition \ref{[Pro. 4.8, Y20a]}

\section{Proofs of Theorems \ref{Theorem 2.3}-\ref{Theorem 2.4}}\label{Proofs Theorems 2.3-2.4}

\subsection{Proof of Theorem \ref{Theorem 2.4} assuming Theorem \ref{Theorem 2.3}}
We recall $U_{1}, \bar{\Omega},$ and $\bar{\mathcal{B}}_{t}$ from Section \ref{Preliminaries}, fix any $\gamma \in (0,1)$, and state the definition of a probabilistically weak solution:
\begin{define}\label{[Def. 5.1, Y20a]}
Let $s \geq 0$, $\xi^{\text{in}} \in L_{\sigma}^{2}$, and $\theta^{\text{in}} \in U_{1}$. Then $P \in \mathcal{P} (\bar{\Omega})$ is a probabilistically weak solution to \eqref{stochastic GNS} with initial condition $(\xi^{\text{in}}, \theta^{\text{in}})$ at initial time $s$ if  
\begin{itemize}
\item [] (M1) $P(\{ \xi(t) = \xi^{\text{in}}, \theta(t) = \theta^{\text{in}} \hspace{1mm} \forall \hspace{1mm} t \in [0,s]\}) = 1$ and for all $l \in \mathbb{N}$ 
\begin{equation}\label{[Equation (4.48r), HZZ19]}
P ( \{ (\xi, \theta) \in \bar{\Omega}: \int_{0}^{l} \lVert G(\xi(r)) \rVert_{L_{2} (U, L_{\sigma}^{2})}^{2} dr < \infty \} ) = 1, 
\end{equation} 
\item [] (M2) under $P$, $\theta$ is a cylindrical $(\bar{\mathcal{B}}_{t})_{t\geq s}$-Wiener process on $U$ starting from initial condition $\theta^{\text{in}}$ at initial time $s$ and for every $\psi_{i} \in C^{\infty} (\mathbb{T}^{3}) \cap L_{\sigma}^{2}$ and $t\geq s$, 
\begin{equation}\label{[Equation (4.48s), HZZ19]}
\langle \xi(t) - \xi(s), \psi_{i} \rangle + \int_{s}^{t} \langle \text{div} (\xi(r) \otimes \xi(r)) + (-\Delta)^{m} \xi(r), \psi_{i} \rangle dr = \int_{s}^{t} \langle \psi_{i}, G(\xi(r)) d\theta(r) \rangle, 
\end{equation} 
\item [] (M3) for any $q \in \mathbb{N}$, there exists a function $t \mapsto C_{t,q} \in \mathbb{R}_{+}$ such that for all $t \geq s$, 
\begin{equation}\label{[Equation (4.48t), HZZ19]}
\mathbb{E}^{P} [ \sup_{r \in [0,t]} \lVert \xi(r) \rVert_{L_{x}^{2}}^{2q} + \int_{s}^{t} \lVert \xi(r) \rVert_{\dot{H}_{x}^{\gamma}}^{2} dr] \leq C_{t,q} (1+ \lVert \xi^{\text{in}} \rVert_{L_{x}^{2}}^{2q}). 
\end{equation}
\end{itemize}
The set of all such probabilistically weak solutions with the same constant $C_{t,q}$ in \eqref{[Equation (4.48t), HZZ19]} for every $q \in \mathbb{N}$ and $t \geq s$ is denoted by $\mathcal{W}(s, \xi^{\text{in}}, \theta^{\text{in}}, \{C_{t,q}\}_{q\in\mathbb{N}, t \geq s})$. 
\end{define} 

\begin{define}\label{[Def. 5.2, Y20a]}
Let $s \geq 0$, $\xi^{\text{in}} \in L_{\sigma}^{2}$, and $\theta^{\text{in}} \in U_{1}$. Let $\tau \geq s$ be a stopping time of $(\bar{\mathcal{B}}_{t})_{t \geq s}$ and set 
\begin{equation}
\bar{\Omega}_{\tau} \triangleq \{ \omega ( \cdot \wedge \tau(\omega)): \hspace{0.5mm} \omega \in \bar{\Omega} \} = \{ \omega \in \bar{\Omega}: \hspace{0.5mm}  (\xi, \theta)(t, \omega) = (\xi, \theta)(t \wedge \tau(\omega), \omega) \}. 
\end{equation} 
Then $P \in \mathcal{P} (\bar{\Omega}_{\tau})$ is a probabilistically weak solution to \eqref{stochastic GNS} on $[s, \tau]$ with initial condition $(\xi^{\text{in}}, \theta^{\text{in}})$ at initial time $s$ if 
\begin{itemize}
\item [] (M1) $P(\{ \xi(t) = \xi^{\text{in}}, \theta(t) = \theta^{\text{in}} \hspace{1mm} \forall \hspace{1mm} t \in [0,s] \}) = 1$ and for all $l \in \mathbb{N}$
\begin{equation}\label{[Equation (4.48v), HZZ19]}
P ( \{ ( \xi, \theta) \in \bar{\Omega}: \int_{0}^{l \wedge \tau} \lVert G(\xi(r)) \rVert_{L_{2}(U, L_{\sigma}^{2})}^{2} dr < \infty \}) = 1, 
\end{equation}
\item  [] (M2) under $P$, $\langle \theta(\cdot \wedge \tau), l_{i} \rangle_{U}$, where $\{l_{i} \}_{i\in \mathbb{N}}$ is an orthonormal basis of $U$,  is a continuous, square-integrable $(\bar{\mathcal{B}}_{t})_{t \geq s}$-martingale with initial condition $\langle \theta^{\text{in}}, l_{i} \rangle$ at initial time $s$ with a quadratic variation process given by $(t \wedge \tau - s) \lVert l_{i} \rVert_{U}^{2}$ and for every $\psi_{i} \in C^{\infty} (\mathbb{T}^{3} ) \cap L_{\sigma}^{2}$ and $t \geq s$, 
\begin{align}
\langle \xi(t\wedge \tau) - \xi(s), \psi_{i} \rangle + \int_{s}^{t \wedge \tau} \langle \text{div} (\xi(r) \otimes \xi(r)) +& (-\Delta)^{m} \xi(r), \psi_{i} \rangle dr\nonumber\\
=& \int_{s}^{t \wedge \tau} \langle \psi_{i}, G(\xi(r)) d\theta(r) \rangle, \label{[Equation (4.48w), HZZ19]}
\end{align}
\item [] (M3) for any $q \in \mathbb{N}$, there exists a function $t\mapsto C_{t,q} \in \mathbb{R}_{+}$ such that for all $t \geq s$, 
\begin{equation}\label{[Equation (4.48x), HZZ19]}
\mathbb{E}^{P} [ \sup_{r \in [0, t \wedge \tau]} \lVert \xi(r) \rVert_{L_{x}^{2}}^{2q} + \int_{s}^{t \wedge \tau} \lVert \xi(r) \rVert_{\dot{H}_{x}^{\gamma}}^{2} dr] \leq C_{t,q} (1+ \lVert \xi^{\text{in}} \rVert_{L_{x}^{2}}^{2q}). 
\end{equation} 
\end{itemize} 
\end{define}  
The following three results immediately follow from previous works \cite{HZZ19, Y20a} because the diffusivity strength made little differences in their proofs. Let $\bar{\mathcal{B}}_{\tau}$ denote the $\sigma$-algebra associated to any given stopping time $\tau$. 
\begin{proposition}\label{[Pro. 5.1, Y20a]}
\rm{(\cite[The. 5.1]{HZZ19}, \cite[Pro. 5.1]{Y20a})} For every $(s, \xi^{\text{in}}, \theta^{\text{in}}) \in [0,\infty) \times L_{\sigma}^{2} \times U_{1}$, there exists a probabilistically weak solution $P \in \mathcal{P} (\bar{\Omega})$ to \eqref{stochastic GNS} with initial condition $(\xi^{\text{in}}, \theta^{\text{in}})$ at initial time $s$ according to Definition \ref{[Def. 5.1, Y20a]}. Moreover, if there exists a family $\{(s_{l}, \xi_{l}, \theta_{l})\}_{l\in\mathbb{N}} \subset [0,\infty) \times L_{\sigma}^{2} \times U_{1}$ such that $\lim_{l\to\infty} \lVert (s_{l}, \xi_{l}, \theta_{l}) - (s, \xi^{\text{in}}, \theta^{\text{in}}) \rVert_{\mathbb{R} \times L_{\sigma}^{2} \times U_{1}} = 0$ and $P_{l} \in \mathcal{W} (s_{l}, \xi_{l}, \theta_{l}, \{C_{t,q} \}_{q \in \mathbb{N}, t \geq s_{l}})$, then there exists a subsequence $\{P_{l_{k}} \}_{k\in\mathbb{N}}$ that converges weakly to some $P \in \mathcal{W} (s, \xi^{\text{in}}, \theta^{\text{in}}, \{C_{t,q}\}_{q \in \mathbb{N}, t \geq s})$. 
\end{proposition}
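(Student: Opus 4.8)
The plan is to run the standard Galerkin-plus-martingale-problem scheme of \cite{FR08, HZZ19, Y20a} and merely verify that replacing $m=1$ by $m\in(0,\frac12)$ weakens no estimate. Fix $(s,\xi^{\text{in}},\theta^{\text{in}})$ and, on a fixed stochastic basis carrying a cylindrical Wiener process $\theta$ on $U$, let $\mathbb{P}_{N}$ be the Leray projection composed with the orthogonal projection onto the span of the first $N$ eigenfunctions of the Stokes operator. One solves the finite-dimensional It\^o SDE
\begin{equation*}
d\xi_{N}+\mathbb{P}_{N}\big[(-\Delta)^{m}\xi_{N}+\mathrm{div}(\xi_{N}\otimes\xi_{N})\big]\,dt=\mathbb{P}_{N}G(\xi_{N})\,d\theta,\qquad \xi_{N}(s)=\mathbb{P}_{N}\xi^{\text{in}},
\end{equation*}
whose coefficients are locally Lipschitz with the noise term of linear growth; applying It\^o's formula to $\|\xi_{N}\|_{L_{x}^{2}}^{2q}$, using the cancellation $\langle \mathbb{P}_{N}\mathrm{div}(\xi_{N}\otimes\xi_{N}),\xi_{N}\rangle=0$, the dissipativity $\langle (-\Delta)^{m}\xi_{N},\xi_{N}\rangle=\|\xi_{N}\|_{\dot H_{x}^{m}}^{2}\ge 0$, the growth bound on $G$, and the Burkholder--Davis--Gundy and Gronwall inequalities, yields global existence and
\begin{equation*}
\mathbb{E}\Big[\sup_{r\in[s,t]}\|\xi_{N}(r)\|_{L_{x}^{2}}^{2q}+\int_{s}^{t}\|\xi_{N}(r)\|_{\dot H_{x}^{m}}^{2}\,dr\Big]\le C_{t,q}\big(1+\|\xi^{\text{in}}\|_{L_{x}^{2}}^{2q}\big)
\end{equation*}
uniformly in $N$; interpolating $\dot H_{x}^{\gamma}$ between $L_{x}^{2}$ and $\dot H_{x}^{m}$ (lowering $\gamma$ if necessary so that $\gamma\le m$) produces (M3). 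The only places $m$ intervenes are the nonnegativity of $\langle(-\Delta)^{m}\xi_{N},\xi_{N}\rangle$ and this interpolation, neither of which is sensitive to $m<\frac12$.

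Next I would establish tightness of $\{\mathcal{L}(\xi_{N},\theta)\}_{N}$ on $\bar\Omega$. The bound above controls $\xi_{N}$ in $L^{\infty}_{\mathrm{loc}}([0,\infty);L_{\sigma}^{2})\cap L^{2}_{\mathrm{loc}}([0,\infty);\dot H_{x}^{\gamma})$ in expectation, while estimating the drift and the stochastic integral in $C_{t}^{1/2-\delta}H_{x}^{-3}$ (Kolmogorov's continuity criterion, again using the $G$-growth bound) supplies the fractional time regularity for an Aubin--Lions--Simon compact embedding into $C([0,\infty);H_{x}^{-3})\cap L^{2}_{\mathrm{loc}}([0,\infty);L_{\sigma}^{2})$; since $\theta$ has a fixed law, the pair is tight. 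By Prokhorov a subsequence converges weakly, and by Jakubowski's version of the Skorokhod representation theorem (the factor $L^{\infty}_{\mathrm{loc}}L_{\sigma}^{2}$ obstructs metrizability) there exist, on a new probability space, variables $(\tilde\xi_{k},\tilde\theta_{k})$ with the same laws converging almost surely in the topology of $\bar\Omega$ to some $(\tilde\xi,\tilde\theta)$.

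Finally I would pass to the limit in the Galerkin identity tested against $\psi_{i}\in C^{\infty}(\mathbb{T}^{3})\cap L_{\sigma}^{2}$: the linear and diffusive terms converge from the almost sure convergence, the quadratic term $\int_{s}^{t}\langle\mathrm{div}(\tilde\xi_{k}\otimes\tilde\xi_{k}),\psi_{i}\rangle\,dr$ converges because the compact embedding upgrades the convergence to strong convergence in $L^{2}_{\mathrm{loc}}([0,\infty);L_{\sigma}^{2})$, and the stochastic integral converges once $\tilde\theta$ is identified as a cylindrical Wiener process for the limiting filtration; the resulting limiting process is a continuous square-integrable $(\bar{\mathcal{B}}_{t})_{t\ge s}$-martingale with quadratic variation $\int_{s}^{t}\|G(\tilde\xi(r))^{*}\psi_{i}\|_{U}^{2}\,dr$, so a martingale representation theorem (as in \cite[The. 5.1]{HZZ19}) produces, on a possibly enlarged space, the driving noise for \eqref{[Equation (4.48s), HZZ19]}, giving (M1)--(M2), with (M3) inherited by Fatou; this is the desired law $P$. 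The stability statement is obtained by repeating this compactness argument verbatim with $\{P_{l}\}_{l}$ replacing $\{\mathcal{L}(\xi_{N},\theta)\}_{N}$: the common constants $\{C_{t,q}\}$ render the a priori bounds uniform in $l$, the nonlinear limit passage again comes from strong $L^{2}_{\mathrm{loc}}([0,\infty);L_{\sigma}^{2})$ convergence, and the initial condition is preserved because $(s_{l},\xi_{l},\theta_{l})\to(s,\xi^{\text{in}},\theta^{\text{in}})$ in $\mathbb{R}\times L_{\sigma}^{2}\times U_{1}$, so the limit lies in $\mathcal{W}(s,\xi^{\text{in}},\theta^{\text{in}},\{C_{t,q}\}_{q\in\mathbb{N},t\ge s})$. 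I expect the main obstacle to be exactly this identification of the limit --- simultaneously handling the quadratic nonlinearity, which requires the compactness-driven strong convergence, and the martingale representation for the multiplicative noise --- but since no estimate in the argument depends on the range of $m$, the whole construction is a routine transcription of \cite[The. 3.1]{HZZ19} and \cite[Pro. 4.1]{Y20a}, \cite[Pro. 5.1]{Y20a}.
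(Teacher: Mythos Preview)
Your proposal is correct and aligns with the paper's approach: the paper does not give a self-contained proof but merely states that the result ``immediately follow[s] from previous works \cite{HZZ19, Y20a} because the diffusivity strength made little differences in their proofs,'' and your sketch is precisely the Galerkin--tightness--Skorokhod--martingale-representation argument from \cite[The.\ 5.1]{HZZ19} and \cite[Pro.\ 5.1]{Y20a} with the correct observation that the only $m$-dependent steps are the sign of $\langle(-\Delta)^{m}\xi_{N},\xi_{N}\rangle$ and the $\dot H^{\gamma}$-interpolation, both insensitive to $m\in(0,\tfrac12)$. In effect you have unpacked the citation the paper relies on; there is nothing to correct.
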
 

\begin{lemma}\label{[Lem. 5.2, Y20a]}
\rm{(\cite[Pro. 5.2]{HZZ19})} Let $\tau$ be a bounded $(\bar{\mathcal{B}}_{t})_{t \geq 0}$-stopping time. Then for every $\omega \in \bar{\Omega}$, there exists $Q_{\omega} \in \mathcal{P}(\bar{\Omega})$ such that 
\begin{subequations}
\begin{align}
& Q_{\omega} ( \{ \omega' \in \bar{\Omega}: \hspace{0.5mm} \hspace{1mm}  ( \xi, \theta) (t, \omega') = (\xi, \theta) (t,\omega) \hspace{1mm} \forall \hspace{1mm} t \in [0, \tau(\omega)] \}) = 1, \\
& Q_{\omega} (A) = R_{\tau(\omega), \xi(\tau(\omega), \omega), \theta(\tau(\omega), \omega)} (A) \hspace{1mm} \forall \hspace{1mm} A \in \bar{\mathcal{B}}^{\tau(\omega)}, 
\end{align} 
\end{subequations}
where $R_{\tau(\omega), \xi(\tau(\omega), \omega), \theta(\tau(\omega), \omega)} \in \mathcal{P} (\bar{\Omega})$ is a probabilistically weak solution to \eqref{stochastic GNS} with initial condition $(\xi(\tau(\omega), \omega), \theta(\tau(\omega), \omega))$ at initial time $\tau(\omega)$. Moreover, for every $A \in \bar{\mathcal{B}}$, the mapping $\omega \mapsto Q_{\omega}(A)$ is $\bar{\mathcal{B}}_{\tau}$-measurable, where $\bar{\mathcal{B}}$ is the $\sigma$-algebra on $\bar{\Omega}$ from Section \ref{Preliminaries}.   
\end{lemma}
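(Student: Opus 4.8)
\emph{Proof proposal for Lemma \ref{[Lem. 5.2, Y20a]}.} The plan is to carry out the standard measurable restart/concatenation construction for martingale problems (Stroock--Varadhan), adapted to the probabilistically weak formulation of Definition \ref{[Def. 5.1, Y20a]}, exactly as in \cite[Pro. 5.2]{HZZ19} and in parallel with Lemma \ref{[Lem. 4.2, Y20a]} in the additive case. The proof splits into three steps: a measurable selection of restart solutions, the definition of $Q_\omega$ as a pushforward under concatenation, and the verification of the three claimed properties.

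First I would produce a Borel (or universally) measurable family of restart solutions $(t,x,y)\mapsto R_{t,x,y}\in\mathcal P(\bar\Omega)$ with $R_{t,x,y}\in\mathcal W(t,x,y,\{C_{s,q}\})$ for all $(t,x,y)\in[0,\infty)\times L_\sigma^2\times U_1$. By Proposition \ref{[Pro. 5.1, Y20a]} each such set is nonempty, and the tightness/weak-compactness argument underlying the stability part of Proposition \ref{[Pro. 5.1, Y20a]} shows that $\mathcal W(t,x,y,\{C_{s,q}\})$ is compact in $\mathcal P(\bar\Omega)$ and that the multifunction $(t,x,y)\mapsto\mathcal W(t,x,y,\{C_{s,q}\})$ has closed graph; the Kuratowski--Ryll-Nardzewski (or Jankov--von Neumann) selection theorem then yields the desired measurable selector.

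Next, fix $\omega\in\bar\Omega$. Since $\tau$ is a bounded $(\bar{\mathcal B}_t)_{t\ge0}$-stopping time, the data $\tau(\omega)$, $\xi(\tau(\omega),\omega)\in L_\sigma^2$ and $\theta(\tau(\omega),\omega)\in U_1$ depend $\bar{\mathcal B}_\tau$-measurably on $\omega$. Define the concatenation map
\[
\Phi_\omega(\omega')(t)\triangleq\omega(t)\ \text{ for }t\le\tau(\omega),\qquad \Phi_\omega(\omega')(t)\triangleq\omega'(t)\ \text{ for }t>\tau(\omega),
\]
which is well defined and continuous as a map $\bar\Omega\to\bar\Omega$ on the set $\{\omega':(\xi,\theta)(\tau(\omega),\omega')=(\xi,\theta)(\tau(\omega),\omega)\}$, a set of full $R_{\tau(\omega),\xi(\tau(\omega),\omega),\theta(\tau(\omega),\omega)}$-measure by property (M1) of Definition \ref{[Def. 5.1, Y20a]}. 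Set $Q_\omega\triangleq(\Phi_\omega)_\ast R_{\tau(\omega),\,\xi(\tau(\omega),\omega),\,\theta(\tau(\omega),\omega)}$. Property (a) and property (b) are then immediate from the definition of $\Phi_\omega$. For the measurability of $\omega\mapsto Q_\omega(A)$ it suffices, by a monotone class argument, to treat finite-dimensional cylinder sets $A=\{(\xi,\theta)(t_1)\in B_1,\dots,(\xi,\theta)(t_k)\in B_k\}$; for these $Q_\omega(A)$ is an explicit Borel function of $(\tau(\omega),\xi(\tau(\omega),\omega),\theta(\tau(\omega),\omega))$ — built from the indicators $1_{\{\omega(t_i)\in B_i\}}$ for $t_i\le\tau(\omega)$ and from the measurable dependence of $R_{t,x,y}$ on $(t,x,y)$ for $t_i>\tau(\omega)$ — hence $\bar{\mathcal B}_\tau$-measurable, and the monotone class theorem extends this to all $A\in\bar{\mathcal B}$.

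The main obstacle is the measurable selection step, together with verifying that the selected $R_{t,x,y}$ genuinely satisfies (M1)--(M3): in particular, the cylindrical $(\bar{\mathcal B}_t)_{t\ge t}$-Wiener property of $\theta$ in (M2), rather than a mere martingale property, must be stable under the weak limits used to get compactness of $\mathcal W(t,x,y,\{C_{s,q}\})$, and the nonlinear term $\mathrm{div}(\xi\otimes\xi)$ and the stochastic integral $\int\langle\psi_i,G(\xi)\,d\theta\rangle$ must pass to those limits; this is exactly the delicate point in \cite{HZZ19}, and it rests on the compactness built into the a priori bound (M3) (via compact Sobolev embeddings off $\dot H_x^\gamma$) and the martingale-representation/Skorokhod machinery. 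Everything else is bookkeeping, and since the diffusion exponent $m\in(0,\tfrac12)$ enters none of these arguments, the conclusion follows verbatim from \cite[Pro. 5.2]{HZZ19}.
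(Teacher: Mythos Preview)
Your proposal is correct and aligns with the paper's treatment: the paper does not give an independent proof but simply cites \cite[Pro.\ 5.2]{HZZ19} and notes that the diffusivity strength $m$ plays no role in the argument, which is exactly the conclusion you reach after sketching the Stroock--Varadhan measurable-selection/concatenation machinery underlying that reference. Your outline of the HZZ19 proof (measurable selector via Proposition \ref{[Pro. 5.1, Y20a]} and Kuratowski--Ryll-Nardzewski, pushforward under concatenation, monotone-class measurability) is accurate and faithful to the cited source.
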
 

\begin{lemma}\label{[Lem. 5.3, Y20a]}
\rm{(\cite[Pro. 5.3]{HZZ19})} Let $\tau$ be a bounded $(\bar{\mathcal{B}}_{t})_{t\geq 0}$-stopping time, $\xi^{\text{in}} \in L_{\sigma}^{2}$, and $P \in \mathcal{P} (\bar{\Omega})$ be a probabilistically weak solution to \eqref{stochastic GNS} on $[0,\tau]$ with initial condition $(\xi^{\text{in}}, 0)$ at initial time $0$ according to Definition \ref{[Def. 5.2, Y20a]}. Suppose that there exists a Borel set $\mathcal{N} \subset \bar{\Omega}_{\tau}$ such that $P(\mathcal{N}) = 0$ and $Q_{\omega}$ from Lemma \ref{[Lem. 5.2, Y20a]} satisfies for every $\omega \in \bar{\Omega}_{\tau} \setminus \mathcal{N}$ 
\begin{equation}\label{[Equ. (143), Y20c]}
Q_{\omega} (\{ \omega' \in \bar{\Omega}: \hspace{0.5mm}  \tau(\omega') = \tau(\omega) \}) = 1. 
\end{equation} 
Then the probability measure $P\otimes_{\tau}R \in \mathcal{P} (\bar{\Omega})$ defined by 
\begin{equation}\label{[Equ. (144), Y20c]}
P \otimes_{\tau} R (\cdot) \triangleq \int_{\bar{\Omega}} Q_{\omega} (\cdot) P(d \omega)
\end{equation} 
satisfies $P\otimes_{\tau}R \rvert_{\bar{\Omega}_{\tau}} = P \rvert_{\bar{\Omega}_{\tau}}$ and it is a probabilistically weak solution to \eqref{stochastic GNS} on $[0,\infty)$ with initial condition $(\xi^{\text{in}}, 0)$ at initial time $0$. 
\end{lemma}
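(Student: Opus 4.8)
The plan is to follow the concatenation argument of \cite[Pro. 5.3]{HZZ19} line for line, which is also the template for the additive-noise analogue Lemma \ref{[Lem. 4.3, Y20a]}; the strength of the diffusion $(-\Delta)^{m}$ plays no role here. Write $R_{\omega} \triangleq R_{\tau(\omega),\xi(\tau(\omega),\omega),\theta(\tau(\omega),\omega)}$ for the solution furnished by Lemma \ref{[Lem. 5.2, Y20a]}, so that $Q_{\omega}$ agrees with $\delta_{\omega}$ on $[0,\tau(\omega)]$ and equals $R_{\omega}$ on $\bar{\mathcal{B}}^{\tau(\omega)}$. First I would establish $P\otimes_{\tau}R|_{\bar{\Omega}_{\tau}} = P|_{\bar{\Omega}_{\tau}}$: for $A \in \bar{\mathcal{B}}_{\tau}$, the first property of $Q_{\omega}$ in Lemma \ref{[Lem. 5.2, Y20a]} together with the hypothesis \eqref{[Equ. (143), Y20c]} (valid off the $P$-null set $\mathcal{N}$) gives $Q_{\omega}(A) = 1_{A}(\omega)$ for $P$-a.e. $\omega$, and integrating in $\omega$ via \eqref{[Equ. (144), Y20c]} yields $P\otimes_{\tau}R(A) = P(A)$. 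The initial condition $(\xi,\theta)(t) = (\xi^{\text{in}},0)$ for $t \in [0,s]$ (here $s=0$) and the integrability in (M1) of Definition \ref{[Def. 5.1, Y20a]} then follow by splitting $\int_{0}^{l} = \int_{0}^{l\wedge\tau} + \int_{l\wedge\tau}^{l}$ and using (M1) of Definition \ref{[Def. 5.2, Y20a]} for $P$ on the first piece and (M1) of Definition \ref{[Def. 5.1, Y20a]} for $R_{\omega}$ on the second.

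The essential point is (M2), for which I would invoke the standard patching criterion for martingales (Stroock--Varadhan): if $N$ is a real-valued continuous process on $\bar{\Omega}$ such that $(N_{t\wedge\tau})_{t\geq 0}$ is a $(\bar{\mathcal{B}}_{t})_{t\geq 0}$-martingale under $P$ and, for $P$-a.e. $\omega$, $(N_{t} - N_{\tau(\omega)})_{t\geq\tau(\omega)}$ is a $(\bar{\mathcal{B}}_{t})_{t\geq\tau(\omega)}$-martingale under $Q_{\omega}$, then $N$ is a $(\bar{\mathcal{B}}_{t})_{t\geq 0}$-martingale under $P\otimes_{\tau}R$; the hypothesis \eqref{[Equ. (143), Y20c]} is precisely what makes these two martingale properties compatible across $\{\tau = t\}$, while the $\bar{\mathcal{B}}_{\tau}$-measurability of $\omega\mapsto Q_{\omega}(\cdot)$ from Lemma \ref{[Lem. 5.2, Y20a]} makes $P\otimes_{\tau}R$ well defined and the conditional expectations computable. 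I would apply this to $\langle\theta(t),l_{i}\rangle_{U}$ and to $\langle\theta(t),l_{i}\rangle_{U}^{2} - t\lVert l_{i}\rVert_{U}^{2}$ together with the polarized cross terms, and conclude by L\'{e}vy's characterization that $\theta$ is a cylindrical $(\bar{\mathcal{B}}_{t})_{t\geq 0}$-Wiener process on $U$ under $P\otimes_{\tau}R$; and then to $M_{t}^{i} \triangleq \langle\xi(t)-\xi^{\text{in}},\psi_{i}\rangle + \int_{0}^{t}\langle\text{div}(\xi(r)\otimes\xi(r)) + (-\Delta)^{m}\xi(r),\psi_{i}\rangle dr$, to $(M_{t}^{i})^{2} - \int_{0}^{t}\lVert G(\xi(r))^{\ast}\psi_{i}\rVert_{U}^{2} dr$, and to $M_{t}^{i}\langle\theta(t),l_{k}\rangle_{U} - \int_{0}^{t}\langle G(\xi(r))^{\ast}\psi_{i},l_{k}\rangle_{U} dr$, so that the continuous square-integrable martingale $M^{i}$ carries the quadratic variation and cross-variation forcing $M_{t}^{i} = \int_{0}^{t}\langle\psi_{i},G(\xi(r))d\theta(r)\rangle$, i.e. \eqref{[Equation (4.48s), HZZ19]}. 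In every case the two inputs --- the stopped martingale property under $P$ and the martingale property after $\tau(\omega)$ under $R_{\omega}$ --- are exactly (M2) of Definitions \ref{[Def. 5.2, Y20a]} and \ref{[Def. 5.1, Y20a]}, guaranteed by hypothesis and by Lemma \ref{[Lem. 5.2, Y20a]}.

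Finally, for (M3) I would split $\sup_{r\in[0,t]}\lVert\xi(r)\rVert_{L_{x}^{2}}^{2q} \leq \sup_{r\in[0,t\wedge\tau]}\lVert\xi(r)\rVert_{L_{x}^{2}}^{2q} + \sup_{r\in[t\wedge\tau,t]}\lVert\xi(r)\rVert_{L_{x}^{2}}^{2q}$ and likewise $\int_{0}^{t} = \int_{0}^{t\wedge\tau} + \int_{t\wedge\tau}^{t}$, bound the stopped pieces by \eqref{[Equation (4.48x), HZZ19]} for $P$, bound the post-$\tau$ pieces under $Q_{\omega}$ by \eqref{[Equation (4.48t), HZZ19]} for $R_{\omega}$ with initial datum $\xi(\tau(\omega),\omega)$, and integrate in $\omega$ against $P$ using $\mathbb{E}^{P}[\lVert\xi(\tau)\rVert_{L_{x}^{2}}^{2q}] < \infty$, which itself follows from (M3) for $P$ on $[0,\tau]$; this produces some finite $t\mapsto C_{t,q}$, which is all that is required. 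I expect the only genuinely delicate bookkeeping to be the verification that, for $P$-a.e. $\omega$, $(N_{t}-N_{\tau(\omega)})_{t\geq\tau(\omega)}$ is a $Q_{\omega}$-martingale with respect to the \emph{untranslated} filtration $(\bar{\mathcal{B}}_{t})_{t\geq\tau(\omega)}$ --- this is exactly where one uses the identity $Q_{\omega}|_{\bar{\mathcal{B}}^{\tau(\omega)}} = R_{\omega}$ and the event-measurability from Lemma \ref{[Lem. 5.2, Y20a]} together with the hypothesis \eqref{[Equ. (143), Y20c]} --- but once this is in place the remainder is routine and identical to \cite[Pro. 5.3]{HZZ19}.
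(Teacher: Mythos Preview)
Your proposal is correct and matches the paper's approach: the paper does not give its own proof of this lemma but simply records that it follows immediately from \cite[Pro.~5.3]{HZZ19} because the diffusivity strength plays no role, which is exactly the reduction you carry out. Your sketch of the Stroock--Varadhan concatenation and the (M1)--(M3) verification is in fact more detailed than anything the paper writes, but it is the same argument.
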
 
Now we fix $\mathbb{R}$-valued Wiener process $B$ on $(\Omega, \mathcal{F}, \textbf{P})$ with $(\mathcal{F}_{t})_{t\geq 0}$ as its normal filtration. For $l \in \mathbb{N}, L > 1$, and $\delta \in (0, \frac{1}{24})$ we define 
\begin{subequations}\label{estimate 176}
\begin{align}
\tau_{L}^{l} (\omega) \triangleq& \inf\{t \geq 0: \lvert \theta(t,\omega) \rvert > (L - \frac{1}{l})^{\frac{1}{4}}\} \wedge \inf\{t \geq 0:  \lVert \theta(\omega) \rVert_{C_{t}^{\frac{1}{2} - 2 \delta}} > (L - \frac{1}{l})^{\frac{1}{2}} \wedge L, \label{estimate 67}\\
\tau_{L} \triangleq& \lim_{l\to\infty} \tau_{L}^{l}. \label{[Equ. (145a) and (145b), Y20c]}
\end{align}
\end{subequations} 
Comparing \eqref{stochastic GNS} and \eqref{[Equation (4.48s), HZZ19]} we see that $F (\xi(r)) = \xi(r), \theta = B$; as Brownian path is locally H$\ddot{\mathrm{o}}$lder continuous with exponent $\alpha \in (0,\frac{1}{2})$, it follows by \cite[Lem. 3.5]{HZZ19} that $\tau_{L}$ is a stopping time of $(\bar{\mathcal{B}}_{t})_{t\geq 0}$. For the fixed $(\Omega, \mathcal{F}, \textbf{P})$, we assume Theorem \ref{Theorem 2.3} and denote by $u$ the solution constructed by Theorem \ref{Theorem 2.3} on $[0,\mathfrak{t}]$ where $\mathfrak{t} = T_{L}$ for $L > 1$ sufficiently large and 
\begin{equation}\label{[Equ. (146), Y20c]}
T_{L} \triangleq \inf\{t> 0: \lvert B(t) \rvert \geq L^{\frac{1}{4}} \}  \wedge \inf\{t > 0: \lVert B \rVert_{C_{t}^{\frac{1}{2} - 2\delta}} \geq L^{\frac{1}{2}} \} \wedge L.  
\end{equation}
With $P$ representing the law of $(u,B)$, the following two results also follow immediately from previous works (\cite{HZZ19, Y20a, Y20c}) making use of the fact that 
\begin{equation}\label{[Equ. (147), Y20c]}
\theta(t, (u, B)) = B(t) \hspace{5mm} \forall \hspace{1mm} t \in [0, T_{L}] \hspace{1mm} \textbf{P}\text{-almost surely}. 
\end{equation}
\begin{proposition}\label{[Pro. 5.4, Y20a]}
\rm{(cf. \cite[Pro. 5.4]{HZZ19}, \cite[Pro. 5.4]{Y20a})} Let $\tau_{L}$ be defined by \eqref{[Equ. (145a) and (145b), Y20c]}. Then $P = \mathcal{L}(u, B)$, is a probabilistically weak solution to \eqref{stochastic GNS} on $[0, \tau_{L}]$ that satisfies  Definition \ref{[Def. 5.2, Y20a]}. 
\end{proposition}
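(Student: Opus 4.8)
The plan is to verify directly that $P=\mathcal L(u,B)$ fulfills (M1)--(M3) of Definition \ref{[Def. 5.2, Y20a]} on the interval $[0,\tau_L]$, following the template of \cite[Pro. 5.4]{HZZ19} and \cite[Pro. 5.4]{Y20a}; the only structural difference from the additive case treated in Proposition \ref{[Proposition 3.7, HZZ19]} is that the noise is now multiplicative with $F(u)=u$, so one tracks the stochastic integral $\int_0^{\cdot}\langle \psi_i,\xi(r)\,d\theta(r)\rangle$ rather than an additive $GG^\ast$-Wiener process. The diffusion exponent $m\in(0,\frac12)$ plays no role in this argument, which is why the statement follows immediately from those references.

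First I would establish the pushforward identity \eqref{[Equ. (147), Y20c]}, namely that under $P$ the canonical process $\theta$ coincides with $B$ on $[0,T_L]$ $\textbf{P}$-almost surely; this is immediate from the way $(u,B)$ is realized on $(\Omega,\mathcal F,\textbf{P})$ and the definition of the canonical process on $\bar\Omega$, and it is what forces the stopping time $\tau_L$ from \eqref{[Equ. (145a) and (145b), Y20c]} to equal $T_L$ from \eqref{[Equ. (146), Y20c]} $P$-almost surely. With this in hand, (M1) reduces to two points: the trivial initial condition at time $s=0$, and the integrability $\int_0^{l\wedge\tau_L}\lVert G(\xi(r))\rVert_{L_2(U,L_\sigma^2)}^2\,dr<\infty$, which follows from $G(\psi)=\psi$, the growth hypothesis $\lVert G(\psi)\rVert_{L_2(U,L_\sigma^2)}\leq C(1+\lVert\psi\rVert_{L_x^2})$ recorded in Section \ref{Preliminaries}, the uniform bound \eqref{estimate 2}, and $\tau_L\leq L$.

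Second, for (M2) I would note that Theorem \ref{Theorem 2.3} produces $u$ as an $(\mathcal F_t)_{t\geq 0}$-adapted weak solution of \eqref{stochastic GNS} on $[0,\mathfrak t]$ with $\mathfrak t=T_L$, which by definition means precisely that the integral identity $\langle u(t)-u^{\text{in}},\psi_i\rangle+\int_0^t\langle \text{div}(u\otimes u)+(-\Delta)^m u,\psi_i\rangle\,dr=\int_0^t\langle\psi_i,u(r)\,dB(r)\rangle$ holds on $[0,T_L]$; transporting this through $\mathcal L(u,B)$ and stopping at $\tau_L$ yields \eqref{[Equation (4.48w), HZZ19]}, while the martingale and quadratic-variation statements for $\langle\theta(\cdot\wedge\tau_L),l_i\rangle_U$ come from $B$ being a one-dimensional Wiener process stopped at $T_L$ (again via \eqref{[Equ. (147), Y20c]}) together with an optional-stopping argument as in \cite[Sec. 5]{HZZ19}. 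Finally, for (M3): since $C_x^\gamma\hookrightarrow\dot H_x^\gamma$ and $C_x^\gamma\hookrightarrow L_x^2$ on $\mathbb T^3$, the bound \eqref{estimate 2} gives $\sup_{r\in[0,t\wedge\tau_L]}\lVert u(r)\rVert_{L_x^2}^{2q}+\int_0^{t\wedge\tau_L}\lVert u(r)\rVert_{\dot H_x^\gamma}^2\,dr\leq C(L,q,t)$ pathwise, hence in expectation, with a constant absorbing the deterministic initial datum, so \eqref{[Equation (4.48x), HZZ19]} holds.

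The part requiring the most care is not any single estimate but the bookkeeping that glues the pathwise convex-integration output to the probabilistically weak formulation: one must check that the $(\mathcal F_t)_{t\geq 0}$-adaptedness of $u$, guaranteed by the multiplicative analogue of the final clause of Proposition \ref{[Pro. 4.8, Y20a]} used in the proof of Theorem \ref{Theorem 2.3}, is preserved under the law; that the stochastic integral is genuinely a (local) martingale up to $\tau_L$ rather than merely an It\^o integral on a larger interval; and that $\tau_L=T_L$ holds $P$-almost surely, so that the notion of a solution "on $[0,\tau_L]$" is actually witnessed by the constructed $u$. All of these are handled exactly as in \cite{HZZ19,Y20a,Y20c}.
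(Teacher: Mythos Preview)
Your proposal is correct and follows essentially the same approach as the paper: the paper gives no standalone proof here, stating only that the result ``follows immediately from previous works \cite{HZZ19, Y20a, Y20c}'' via the identity \eqref{[Equ. (147), Y20c]}, which is precisely the template you unpack by verifying (M1)--(M3) of Definition \ref{[Def. 5.2, Y20a]}. Your observation that the exponent $m$ plays no role and that the careful part is the adaptedness/stopping-time bookkeeping matches the paper's rationale for deferring entirely to those references.
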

\begin{proposition}\label{[Pro. 5.5, Y20a]}
\rm{(cf. \cite[Pro. 5.5]{HZZ19}, \cite[Pro. 5.5]{Y20a})}  Let $\tau_{L}$ be defined by \eqref{[Equ. (145a) and (145b), Y20c]} and $P = \mathcal{L}(u,b)$. Then $P\otimes_{\tau_{L}}R$ in \eqref{[Equ. (144), Y20c]} is a probabilistically weak solution to \eqref{stochastic GNS} on $[0,\infty)$ that satisfies Definition \ref{[Def. 5.1, Y20a]}. 
\end{proposition}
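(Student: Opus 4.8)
The plan is to invoke the abstract gluing Lemma \ref{[Lem. 5.3, Y20a]}, exactly as in the multiplicative-noise arguments of \cite[Pro. 5.5]{HZZ19}, \cite[Pro. 5.5]{Y20a} and \cite[Pro. 5.5]{Y20c}. Proposition \ref{[Pro. 5.4, Y20a]} already supplies the first ingredient: $P = \mathcal{L}(u,B)$ is a probabilistically weak solution to \eqref{stochastic GNS} on $[0,\tau_L]$ with initial condition $(u^{\text{in}},0)$ at initial time $0$ in the sense of Definition \ref{[Def. 5.2, Y20a]}. Hence the only hypothesis of Lemma \ref{[Lem. 5.3, Y20a]} still to be produced is a $P$-null Borel set $\mathcal{N} \subset \bar{\Omega}_{\tau_L}$ on which the concatenation kernel $Q_\omega$ furnished by Lemma \ref{[Lem. 5.2, Y20a]} satisfies \eqref{[Equ. (143), Y20c]}, i.e.\ $Q_\omega(\{\omega': \tau_L(\omega') = \tau_L(\omega)\}) = 1$ for $\omega \in \bar{\Omega}_{\tau_L}\setminus\mathcal{N}$. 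Once this is in hand, Lemma \ref{[Lem. 5.3, Y20a]} immediately yields that $P\otimes_{\tau_L}R$ defined in \eqref{[Equ. (144), Y20c]} is a probabilistically weak solution to \eqref{stochastic GNS} on $[0,\infty)$ in the sense of Definition \ref{[Def. 5.1, Y20a]}, which is the assertion; in particular the continuation of the noise component $\theta$ past $\tau_L$ as a cylindrical Wiener process, guaranteed by Lemma \ref{[Lem. 5.2, Y20a]}, is what delivers (M2) of Definition \ref{[Def. 5.1, Y20a]}.

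To produce $\mathcal{N}$, I would first record that $\tau_L$ in \eqref{[Equ. (145a) and (145b), Y20c]} is a functional of the second, $\theta$-component of the canonical process alone: each $\tau_L^l$ in \eqref{estimate 67} is the minimum of $L$ with two first-entrance times of the continuous maps $t\mapsto|\theta(t)|$ and $t\mapsto\|\theta\|_{C_t^{1/2-2\delta}}$ into open half-lines, hence a $(\bar{\mathcal{B}}_t)_{t\geq0}$-stopping time by \cite[Lem. 3.5]{HZZ19}, and $\tau_L = \lim_{l\to\infty}\tau_L^l = \sup_l\tau_L^l$ inherits this property. By the first assertion of Lemma \ref{[Lem. 5.2, Y20a]}, for every $\omega$ one has $Q_\omega$-a.s.\ agreement $(\xi,\theta)(\cdot,\omega') = (\xi,\theta)(\cdot,\omega)$ on $[0,\tau_L(\omega)]$; in particular $\theta(\cdot,\omega') = \theta(\cdot,\omega)$ there. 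Since each $\tau_L^l(\omega)$ will be seen to be determined by the restriction of $\theta(\cdot,\omega)$ to $[0,\tau_L(\omega)]$, it follows that $\tau_L^l(\omega') = \tau_L^l(\omega)$ for all $l$, whence $\tau_L(\omega') = \sup_l\tau_L^l(\omega') = \sup_l\tau_L^l(\omega) = \tau_L(\omega)$ on the $Q_\omega$-full set of agreement. This verifies \eqref{[Equ. (143), Y20c]} for every $\omega \in \bar{\Omega}_{\tau_L}$, so one may in fact take $\mathcal{N} = \emptyset$.

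The step I expect to require the most care — and where the precise, limit-based definition $\tau_L = \lim_l\tau_L^l$ rather than a bare first-hitting time is essential — is the claim that $\tau_L^l(\omega)$ is read off $\theta(\cdot,\omega)\rvert_{[0,\tau_L(\omega)]}$; this is the mechanism of \cite[Pro. 3.7 and 3.8]{HZZ19}. If along $\omega$ one of the two quantities actually crosses its threshold before time $L$, then for each fixed $l$ it crosses the \emph{strictly smaller} level $(L-\tfrac1l)^{1/4}$, respectively $(L-\tfrac1l)^{1/2}$, and by continuity of the trajectory $\tau_L^l(\omega) < \tau_L(\omega)$ strictly, so $\tau_L^l(\omega)$ depends only on $\theta(\cdot,\omega)$ on $[0,\tau_L(\omega)]$; if neither quantity ever crosses its threshold on $[0,L]$, then $\tau_L^l(\omega) = L = \tau_L(\omega)$ for all $l$, which again involves $\theta(\cdot,\omega)$ only on $[0,L] = [0,\tau_L(\omega)]$. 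Finally, one uses the identity \eqref{[Equ. (147), Y20c]}, which forces $\theta$ evaluated along $(u,B)$ to coincide with $B$ up to $T_L$, so that $\tau_L$ regarded on path space plays the same role as $T_L$ on $(\Omega,\mathcal{F},\textbf{P})$; the diffusion exponent $m \in (0,\tfrac12)$ enters neither $\tau_L$, nor the kernels $Q_\omega$, nor any part of this concatenation argument, so the proof is genuinely identical to the classical-viscosity case treated in \cite{HZZ19} and the multiplicative-noise case of \cite{Y20a, Y20c}.
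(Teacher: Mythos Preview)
Your proposal is correct and follows exactly the approach the paper defers to: the paper gives no independent proof of this proposition, stating only that it ``follow[s] immediately from previous works (\cite{HZZ19, Y20a, Y20c}) making use of the fact that $\theta(t,(u,B)) = B(t)$ for all $t\in[0,T_L]$ $\textbf{P}$-almost surely,'' and your write-up reconstructs precisely that argument --- invoke Lemma \ref{[Lem. 5.3, Y20a]} with Proposition \ref{[Pro. 5.4, Y20a]} as input, and verify \eqref{[Equ. (143), Y20c]} by exploiting that $\tau_L$ is a functional of the $\theta$-component alone together with the approximation $\tau_L = \sup_l \tau_L^l$ and \eqref{[Equ. (147), Y20c]}. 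One small remark on your exposition: the claim that $\tau_L^l(\omega) < \tau_L(\omega)$ \emph{strictly} whenever a crossing occurs is slightly stronger than what you actually need and not literally true in every configuration; the argument only requires $\tau_L^l(\omega) \leq \tau_L(\omega)$ (automatic from monotonicity in $l$) so that $\tau_L^l(\omega)$ is determined by $\theta(\cdot,\omega)\rvert_{[0,\tau_L(\omega)]}$, which suffices to force $\tau_L^l(\omega') = \tau_L^l(\omega)$ on the $Q_\omega$-full set of agreement.
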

Similarly to Theorem \ref{Theorem 2.2}, at this point we are ready to prove Theorem \ref{Theorem 2.4}; due to its similarity to previous works \cite{HZZ19, Y20a}, we leave this in the Appendix. 

\subsection{Proof of Theorem \ref{Theorem 2.3} assuming Proposition \ref{[Pro. 5.7, Y20a]}}
We define $\Upsilon(t) \triangleq e^{B(t)}$ and $v \triangleq \Upsilon^{-1} u$ for $t\geq 0$. It follows from It$\hat{\mathrm{o}}$'s product formula that 
\begin{equation}\label{[Equ. (121), Y20a]}
\partial_{t} v + \frac{1}{2} v + (-\Delta)^{m} v + \Upsilon \text{div} (v \otimes v) +  \nabla (\Upsilon^{-1}\pi) = 0, \hspace{3mm} \nabla\cdot v = 0 \hspace{2mm} \text{ for } t > 0. 
\end{equation} 
For every $q \in \mathbb{N}_{0}$ we aim to construct $(v_{q}, \mathring{R}_{q})$ that satisfies 
\begin{equation}\label{[Equ. (122), Y20a]}
\partial_{t} v_{q} + \frac{1}{2} v_{q} + (-\Delta)^{m} v_{q} + \Upsilon \text{div} (v_{q} \otimes v_{q}) + \nabla p_{q} = \text{div} \mathring{R}_{q}, \hspace{3mm} \nabla\cdot v_{q} = 0 \hspace{2mm} \text{ for } t > 0. 
\end{equation} 
We define $\lambda_{q}$ and $\delta_{q}$ identically to the additive case in \eqref{estimate 69} but define differently
\begin{equation}\label{estimate 70}
M_{0}(t) \triangleq e^{4L t + 2L} \hspace{1mm} \text{ and } \hspace{1mm} m_{L} \triangleq \sqrt{3} L^{\frac{1}{4}} e^{\frac{1}{2} L^{\frac{1}{4}}}. 
\end{equation} 
Due to \eqref{[Equ. (146), Y20c]}, for all $L > 1, \delta \in (0, \frac{1}{24})$, and $t \in [0, T_{L}]$ we have 
\begin{equation}\label{[Equ. (123), Y20a]}
\lvert B(t) \rvert \leq L^{\frac{1}{4}} \text{ and } \lVert B \rVert_{C_{t}^{\frac{1}{2} - 2 \delta}} \leq L^{\frac{1}{2}}
\end{equation}
which implies 
\begin{equation}\label{[Equ. (124), Y20a]}
\lVert \Upsilon \rVert_{C_{t}^{\frac{1}{2} - 2\delta}} + \lvert \Upsilon(t) \rvert + \lvert \Upsilon^{-1}(t) \rvert \leq e^{L^{\frac{1}{4}}} L^{\frac{1}{2}} + 2e^{L^{\frac{1}{4}}} \leq m_{L}^{2}.
\end{equation} 
For inductive bounds we assume that $(v_{q}, \mathring{R}_{q})$ for all $q \in \mathbb{N}_{0}$ satisfy the following on $[0, T_{L}]$ with another universal constant $c_{R} > 0$ to be determined subsequently (see \eqref{estimate 97} and \eqref{estimate 110}):  
\begin{subequations}\label{estimate 149}
\begin{align}
& \lVert v_{q} \rVert_{C_{t,x}} \leq m_{L} M_{0}(t)^{\frac{1}{2}} (1 + \sum_{1\leq \iota \leq q} \delta_{\iota}^{\frac{1}{2}}) \leq 2 m_{L} M_{0}(t)^{\frac{1}{2}},  \label{[Equ. (126a), Y20a]}\\
&\lVert v_{q} \rVert_{C_{t,x}^{1}} \leq m_{L}^{4} M_{0}(t) \delta_{q}^{\frac{1}{2}} \lambda_{q},  \label{[Equ. (126b), Y20a]}\\
& \lVert \mathring{R}_{q} \rVert_{C_{t,x}} \leq c_{R} M_{0}(t) \delta_{q+1},  \label{[Equ. (126c), Y20a]}
\end{align}
\end{subequations}  
where again we follow the convention that $\sum_{1\leq \iota \leq 0} \delta_{\iota}^{\frac{1}{2}} = 0$ and assume \eqref{[Equ. (4.5), HZZ19]}, to be formally stated in \eqref{[Equ. (129), Y20a]}, so that $\sum_{1 \leq \iota \leq q} \delta_{\iota}^{\frac{1}{2}} < \frac{1}{2(2\pi)^{\frac{3}{2}}} < \frac{1}{2}$ for any $q \in \mathbb{N}$ and hence the second inequality of \eqref{[Equ. (126a), Y20a]} is justified. 

\begin{proposition}\label{[Pro. 5.6, Y20a]}
For $L > 1$, define 
\begin{equation}\label{[Equ. (127), Y20a]}
v_{0}(t,x) \triangleq (2\pi)^{-\frac{3}{2}} m_{L} e^{2L t+ L} 
\begin{pmatrix}
\sin(x^{3}) & 0 & 0 
\end{pmatrix}^{T}.  
\end{equation} 
Then together with 
\begin{equation}\label{[Equ. (128), Y20a]}
\mathring{R}_{0} (t,x) \triangleq \frac{ m_{L} (2L + \frac{1}{2}) e^{2L t + L}}{(2\pi)^{\frac{3}{2}}} 
\begin{pmatrix}
0 & 0 & - \cos(x^{3}) \\
0 & 0 & 0 \\
-\cos(x^{3}) & 0 & 0 
\end{pmatrix} 
+ \mathcal{R} (-\Delta)^{m} v_{0}, 
\end{equation} 
it satisfies \eqref{[Equ. (122), Y20a]} at level $q= 0$. Moreover, \eqref{estimate 149} at level $q=0$ is satisfied provided 
\begin{equation}\label{[Equ. (129), Y20a]}
\sqrt{3} [1 + 2(2\pi)^{\frac{3}{2}}]^{2} < \sqrt{3} a^{4\beta} \leq \frac{c_{R} e^{L}}{L^{\frac{1}{4}} ( 4L + 1 + C_{S} \sqrt{2}) e^{\frac{1}{2} L^{\frac{1}{4}}}} 
\end{equation} 
where the first inequality guaranties \eqref{[Equ. (4.5), HZZ19]}. Furthermore, $v_{0}(0,x)$ and $\mathring{R}_{0}(0,x)$ are both deterministic. 
\end{proposition}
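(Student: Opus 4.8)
The plan is to proceed exactly as in the additive case (Proposition \ref{[Pro. 4.7, Y20a]}), adapted to the equation \eqref{[Equ. (122), Y20a]} which carries the extra damping term $\tfrac12 v_q$ and the multiplicative factor $\Upsilon$. First I would verify the algebraic properties: $v_0$ in \eqref{[Equ. (127), Y20a]} is smooth, mean-zero and divergence-free (it depends only on $x^3$ and points in the $e^1$ direction), and $\mathring R_0$ in \eqref{[Equ. (128), Y20a]} is trace-free and symmetric since $\mathcal{R}$ returns trace-free symmetric tensors and the explicit matrix is manifestly so. Then I would plug $v_0$ into \eqref{[Equ. (122), Y20a]}: the time derivative produces $(2L+\tfrac12)$ times $v_0/$(its exponential), the term $\tfrac12 v_0$ is absorbed into that same factor, the nonlinear term $\Upsilon\,\mathrm{div}(v_0\otimes v_0)$ is a pure gradient because $v_0\otimes v_0$ depends only on $x^3$ while $v_0^3\equiv0$, so it contributes only to the pressure $p_0 \triangleq$ (an appropriate constant involving $\Upsilon$), and the diffusion term $(-\Delta)^m v_0$ is exactly cancelled by $\mathrm{div}\,\mathcal{R}(-\Delta)^m v_0$. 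The remaining unbalanced term $\partial_t v_0 + \tfrac12 v_0 = \tfrac{m_L(2L+\frac12)e^{2Lt+L}}{(2\pi)^{3/2}}(\cos(x^3),0,0)^T$ is matched by the divergence of the explicit matrix in \eqref{[Equ. (128), Y20a]}, since $\mathrm{div}$ of that matrix is $\partial_{x^3}(-\cos x^3, 0, 0)^T = (\sin x^3,0,0)^T$ — wait, it is $(\sin x^3, 0,0)$, so one checks signs and sees the identity \eqref{[Equ. (122), Y20a]} holds at $q=0$ with this choice of $p_0$; determinism of $v_0(0,x)$ and $\mathring R_0(0,x)$ is immediate because $B(0)=0$ gives $\Upsilon(0)=1$.

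Next I would establish the three inductive bounds \eqref{[Equ. (126a), Y20a]}–\eqref{[Equ. (126c), Y20a]} at level $q=0$. For \eqref{[Equ. (126a), Y20a]}: $\lVert v_0\rVert_{C_{t,x}} = (2\pi)^{-3/2} m_L e^{2Lt+L} = (2\pi)^{-3/2} m_L M_0(t)^{1/2} \le m_L M_0(t)^{1/2}$ using \eqref{estimate 70}. For \eqref{[Equ. (126b), Y20a]}: a direct computation gives $\lVert v_0\rVert_{C^1_{t,x}} = (2\pi)^{-3/2} m_L e^{2Lt+L}(2L+1) \le m_L^4 M_0(t)\delta_0^{1/2}\lambda_0$ once $a^{2\beta}\ge$ something, using $\delta_0=1$, $\lambda_0=a$, $m_L\ge 1$, and $M_0(t)^{1/2}\le M_0(t)$. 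For \eqref{[Equ. (126c), Y20a]}: I would bound $\lVert \mathring R_0\rVert_{C_{t,x}}$ by the sum of $\tfrac{m_L(2L+\frac12)e^{2Lt+L}}{(2\pi)^{3/2}}$ and $\lVert\mathcal{R}(-\Delta)^m v_0\rVert_{C_{t,x}}$; the latter, exactly as in \eqref{estimate 71}, is controlled via the Sobolev chain $\dot H^{3-2m}\hookrightarrow\dot H^{(3+\sigma)/2}\hookrightarrow C$ and $\Delta v_0 = -v_0$ by $2C_S\lVert v_0\rVert_{C_t L^2_x} = 2C_S m_L e^{2Lt+L}/\sqrt2$ (using $\lVert v_0(t)\rVert_{L^2_x} = m_L e^{2Lt+L}/\sqrt2$). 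Collecting, $\lVert\mathring R_0\rVert_{C_{t,x}} \le \tfrac{m_L e^{2Lt+L}}{(2\pi)^{3/2}}\big[(2L+\tfrac12) + C_S\sqrt2\big]$; writing $m_L = \sqrt3 L^{1/4}e^{\frac12 L^{1/4}}$ and $M_0(t)=e^{4Lt+2L}$ so that $m_L e^{2Lt+L} = \sqrt3 L^{1/4}e^{\frac12 L^{1/4}} M_0(t)^{1/2} \le \sqrt3 L^{1/4}e^{\frac12 L^{1/4}} e^{-L} M_0(t)$, the bound becomes $c_R M_0(t)\delta_1$ precisely when \eqref{[Equ. (129), Y20a]} holds (the right inequality there is calibrated to this), and $\delta_1 = a^{-2\beta}\ge a^{-4\beta}$.

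The easy part is all of the above: it is the same template as Proposition \ref{[Pro. 4.7, Y20a]} with bookkeeping for the factors $\tfrac12$, $\Upsilon$, and $m_L$. The one genuinely delicate point — and the step I expect to require the most care — is confirming that the constant $m_L^2$ in \eqref{[Equ. (124), Y20a]} and the constraint \eqref{[Equ. (129), Y20a]} are mutually consistent and strong enough: one must check that the multiplicative bound $\lVert\Upsilon\rVert, |\Upsilon|, |\Upsilon^{-1}| \le m_L^2$ on $[0,T_L]$ follows from \eqref{[Equ. (123), Y20a]} (straightforward: $|e^{\pm B(t)}|\le e^{L^{1/4}}$ and the Hölder seminorm of $\Upsilon$ is controlled by $e^{L^{1/4}}L^{1/2}$ via the mean value theorem), and that the choice $M_0(t)=e^{4Lt+2L}$ with the boost $e^{2L}$ at $t=0$ leaves enough room for the induction step (Proposition \ref{[Pro. 5.7, Y20a]}) to close — but that is the subject of the next proposition, not this one. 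Within Proposition \ref{[Pro. 5.6, Y20a]} itself the only real obstacle is verifying the sign and coefficient in the explicit matrix part of $\mathring R_0$ make \eqref{[Equ. (122), Y20a]} hold identically, which is a short direct computation; I would carry it out explicitly to avoid an off-by-sign error and then state that \eqref{[Equ. (129), Y20a]} (with $a$ large, $\beta$ small) delivers both \eqref{[Equ. (4.5), HZZ19]} and \eqref{[Equ. (126c), Y20a]}.
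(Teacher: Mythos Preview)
Your proposal is correct and mirrors the paper's proof essentially line by line; the paper performs the same direct computations, takes $p_0\equiv 0$, and uses the identical Sobolev-chain estimate \eqref{estimate 71} for $\lVert\mathcal{R}(-\Delta)^m v_0\rVert_{C_{t,x}}$. Two minor slips to clean up when you write it out: your own reasoning (``depends only on $x^3$ while $v_0^3\equiv 0$'') actually shows $\mathrm{div}(v_0\otimes v_0)=0$ identically, so $p_0\equiv 0$ rather than something involving $\Upsilon$; and the sequence values are $\delta_0=a^{-2\beta}$, $\delta_1=a^{-4\beta}$ (not $1$ and $a^{-2\beta}$), though neither error affects the closure of the bounds under \eqref{[Equ. (129), Y20a]}.
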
 

\begin{proof}[Proof of Proposition \ref{[Pro. 5.6, Y20a]}]
The facts that $v_{0}$ is incompressible, mean-zero, $\mathring{R}_{0}$ is trace-free and symmetric, and \eqref{[Equ. (122), Y20a]} at level $q = 0$ holds with $p_{0} \equiv 0$, as well as both $v_{0}(0,x)$ and $\mathring{R}_{0}(0,x)$ both being deterministic can be readily verified (see \cite[Pro. 5.6]{Y20a}). Concerning the three estimates \eqref{[Equ. (126a), Y20a]}-\eqref{[Equ. (126c), Y20a]} we compute 
\begin{subequations}
\begin{align}
&\lVert v_{0} \rVert_{C_{t,x}}  = (2\pi)^{-\frac{3}{2}} m_{L} M_{0}(t)^{\frac{1}{2}} \leq m_{L} M_{0}(t)^{\frac{1}{2}}, \\
& \lVert v_{0} \rVert_{C_{t,x}^{1}} =  (2\pi)^{-\frac{3}{2}}2(L+1) m_{L} M_{0}(t)^{\frac{1}{2}} \leq  m_{L}^{4} M_{0}(t) \delta_{0}^{\frac{1}{2}} \lambda_{0}, 
\end{align}
\end{subequations} 
and 
\begin{equation}\label{estimate 72} 
\lVert v_{0}(t) \rVert_{L^{2}} \overset{\eqref{[Equ. (127), Y20a]}}{=} \frac{ m_{L} M_{0}(t)^{\frac{1}{2}}}{\sqrt{2}}.
\end{equation} 
Finally, 
\begin{equation}\label{estimate 73}
\lVert \mathring{R}_{0} \rVert_{C_{t,x}} \overset{\eqref{[Equ. (128), Y20a]}}{\leq} (2\pi)^{-\frac{3}{2}} m_{L} (2L + \frac{1}{2}) e^{2L t + L} 2 + \lVert \mathcal{R} (-\Delta)^{m} v_{0} \rVert_{C_{t,x}}. 
\end{equation} 
By the same computations in \eqref{estimate 71} of the proof of Proposition \ref{[Pro. 4.7, Y20a]} we know $\lVert \mathcal{R} (-\Delta)^{m} v_{0} \rVert_{C_{t,x}} \leq C_{S} 2 \lVert v_{0} \rVert_{C_{t}L_{x}^{2}}$ for the same $C_{S} > 0$ from \eqref{[Equation (3.13b), HZZ19]} because $v_{0}$ in \eqref{[Equ. (127), Y20a]} also satisfies $\Delta v_{0} =  - v_{0}$. Therefore, applying \eqref{estimate 72} to \eqref{estimate 73} leads us to 
\begin{equation}
\lVert \mathring{R}_{0} \rVert_{C_{t,x}} \overset{\eqref{estimate 73}\eqref{estimate 72}}{\leq} \frac{ m_{L} (4L + 1) e^{2L t+ L}}{(2\pi)^{\frac{3}{2}}} + \frac{C_{S} 2 m_{L} M_{0}(t)^{\frac{1}{2}}}{\sqrt{2}} \overset{\eqref{[Equ. (129), Y20a]}}{\leq} c_{R} M_{0}(t) \delta_{1}. 
\end{equation} 
\end{proof} 

\begin{proposition}\label{[Pro. 5.7, Y20a]}
Let $L > 1$ satisfy 
\begin{equation}\label{estimate 74}
\sqrt{3} [1+ 2 (2\pi)^{\frac{3}{2}} ]^{2} < \frac{c_{R} e^{L}}{L^{\frac{1}{4}} ( 4L + 1 + C_{S} \sqrt{2}) e^{\frac{1}{2} L^{\frac{1}{4}}}}. 
\end{equation} 
Suppose that $(v_{q}, \mathring{R}_{q})$ is an $(\mathcal{F}_{t})_{t\geq 0}$-adapted process that solves \eqref{[Equ. (122), Y20a]} and satisfies \eqref{[Equ. (126a), Y20a]}-\eqref{[Equ. (126c), Y20a]}. Then there exist  a choice of parameters $a$ and $\beta$ such that \eqref{[Equ. (129), Y20a]} is fulfilled and an $(\mathcal{F}_{t})_{t\geq 0}$-adapted process $(v_{q+1}, \mathring{R}_{q+1})$ that solves \eqref{[Equ. (122), Y20a]}, satisfies \eqref{[Equ. (126a), Y20a]}-\eqref{[Equ. (126c), Y20a]} at level $q+1$, and for all $t \in [0, T_{L}]$ 
\begin{equation}\label{[Equ. (133), Y20a]}
\lVert v_{q+1} - v_{q} \rVert_{C_{t,x}} \leq m_{L} M_{0}(t)^{\frac{1}{2}} \delta_{q+1}^{\frac{1}{2}}. 
\end{equation} 
Finally, if $v_{q} (0,x)$ and $\mathring{R}_{q}(0,x)$ are deterministic, then so are $v_{q+1}(0,x)$ and $\mathring{R}_{q+1}(0,x)$. 
\end{proposition}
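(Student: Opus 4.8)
The plan is to follow the architecture of the proof of Proposition \ref{[Pro. 4.8, Y20a]} essentially verbatim, with the Ornstein--Uhlenbeck process $z$ replaced systematically by the multiplicative factor $\Upsilon = e^{B}$; like $z$, this is an object of only $C_{t}^{1/2-}$ regularity that must be mollified and tracked through every estimate, and the sole genuinely new feature is that the phase maps will be convected by the \emph{product} $\Upsilon_{l}v_{l}$ rather than by the velocity alone. I would first set $l \triangleq \lambda_{q}^{-3/2}$ and mollify $v_{q}, \mathring{R}_{q}$ and $\Upsilon$ in space--time (the last in time only, as $\Upsilon$ depends on $t$ alone) to obtain $v_{l}, \mathring{R}_{l}, \Upsilon_{l}$; plugging these into \eqref{[Equ. (122), Y20a]} yields a mollified equation carrying an extra commutator stress $R_{\text{com1}}$ of the type \eqref{[Equ. (60a), Y20a]}, estimable by the standard commutator estimate together with \eqref{[Equ. (123), Y20a]}--\eqref{[Equ. (124), Y20a]} and the inductive bounds \eqref{estimate 149}. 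On $[0,T_{L}]$ one has $\Upsilon_{l}(t)\in[e^{-L^{1/4}},e^{L^{1/4}}]$, hence $\lVert\Upsilon_{l}\rVert_{C_{t,x}}+\lVert\Upsilon_{l}^{-1}\rVert_{C_{t,x}}\lesssim m_{L}^{2}$, while $\lVert\Upsilon-\Upsilon_{l}\rVert_{C_{t}}\lesssim l^{1/2-2\delta}L^{1/2}$ and $\lVert\partial_{t}\Upsilon_{l}\rVert_{C_{t}}\lesssim l^{-(1/2+2\delta)}m_{L}^{2}$ by mollifier estimates and \eqref{[Equ. (123), Y20a]}. Next I would solve, for $j\in\{0,\dots,\lceil l^{-1}T_{L}\rceil\}$, the transport equation $(\partial_{t}+\Upsilon_{l}v_{l}\cdot\nabla)\Phi_{j}=0$ with $\Phi_{j}(jl,x)=x$; since $\Upsilon_{l}$ is $x$-independent, $\nabla\Upsilon_{l}=0$, so the flow estimates of Proposition \ref{Proposition 4.9} persist unchanged up to fixed ($L$-dependent) powers of $m_{L}$, with $\partial_{t}\nabla\Phi_{j}$ acquiring no new power of $l$ and the time derivative of the amplitude acquiring only the harmless $l^{-(1/2+2\delta)}\ll l^{-1}$ coming from $\partial_{t}\Upsilon_{l}$.

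Next I would construct the perturbation exactly as in \eqref{[Equ. (5.22), BV19b]}--\eqref{[Equ. (5.29) and (5.29a), BV19b]}, the only modification being the scalar prefactor $\Upsilon_{l}^{-1/2}$ in the amplitude:
\[
a_{(\zeta)} \triangleq \left(c_{R}^{1/2}\delta_{q+1}M_{0}(t)\right)^{1/2} \left(\Upsilon_{l}(t)\right)^{-1/2} \chi_{j}(t)\, \gamma_{\zeta}\!\left(\mathrm{Id} - \frac{\mathring{R}_{l}(t,x)}{c_{R}^{1/2}\delta_{q+1}M_{0}(t)}\right).
\]
Since the argument of $\gamma_{\zeta}$ is unchanged, the inclusion \eqref{estimate 20} still follows from \eqref{[Equ. (126c), Y20a]} with $c_{R}^{1/2}\leq C_{\ast}$, and Lemma \ref{[Pro. 5.6, BV19b]} with $\sum_{j}\chi_{j}^{2}=1$ now yields $\Upsilon_{l}\,\frac{1}{2}\sum_{j}\sum_{\zeta\in\Lambda_{j}}a_{(\zeta)}^{2}(\mathrm{Id}-\zeta\otimes\zeta) = c_{R}^{1/2}\delta_{q+1}M_{0}(t)\,\mathrm{Id} - \mathring{R}_{l}$, which is precisely what collapses the oscillation error $\Upsilon_{l}\,\mathrm{div}(w_{q+1}^{(p)}\otimes w_{q+1}^{(p)})+\mathrm{div}\,\mathring{R}_{l}$ to a pressure gradient plus a high-frequency remainder handled by $\mathcal{R}$ exactly as in \eqref{estimate 144}. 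The amplitude bounds of Proposition \ref{Proposition 4.10} survive with $m_{L}$-dependent constants. Setting $v_{q+1}\triangleq v_{l}+w_{q+1}$, the verifications of \eqref{[Equ. (133), Y20a]} and of \eqref{[Equ. (126a), Y20a]}--\eqref{[Equ. (126b), Y20a]} at level $q+1$ then go through as in \eqref{[Equ. (5.32a), BV19b]}--\eqref{estimate 139}, after taking $c_{R}$ small relative to $M$ and $m_{L}$ and $a$ large.

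Finally I would decompose the new Reynolds stress as in \eqref{[Equ. (5.38), BV19b]}: $R_{\text{line}}\triangleq\mathcal{R}((-\Delta)^{m}w_{q+1}+\frac{1}{2}w_{q+1})$ (note that, unlike \eqref{estimate 44}, there is no $(w_{q+1}\cdot\nabla)z_{l}$ term, while the new $\frac{1}{2}w_{q+1}$ also gains $\lambda_{q+1}^{-1+\alpha}$ under $\mathcal{R}$); $R_{\text{tran}}\triangleq\mathcal{R}((\partial_{t}+\Upsilon_{l}v_{l}\cdot\nabla)w_{q+1}^{(p)})$, whose worst term, with $\nabla$ falling on $e^{i\lambda_{q+1}\zeta\cdot\Phi_{j}}$, vanishes by the choice of $\Phi_{j}$ as in \eqref{estimate 54}; $R_{\text{osc}},\pi_{\text{osc}}$ as in \eqref{estimate 46}--\eqref{estimate 47} (with a harmless $\Upsilon_{l}$ prefactor); $R_{\text{Nash}}\triangleq\mathcal{R}(\Upsilon_{l}(w_{q+1}\cdot\nabla)v_{l})$; $R_{\text{corr}},\pi_{\text{corr}}$ as in \eqref{estimate 49}--\eqref{estimate 50} but with drift $\Upsilon_{l}v_{l}$, the worst term again vanishing as in \eqref{estimate 61}; the mollification commutator $R_{\text{com1}}$; and a commutator pair $R_{\text{com2}}\triangleq(\Upsilon-\Upsilon_{l})\,v_{q+1}\mathring{\otimes}v_{q+1}$, $\pi_{\text{com2}}\triangleq\frac{1}{3}(\Upsilon-\Upsilon_{l})\lvert v_{q+1}\rvert^{2}$, legitimate because $\Upsilon-\Upsilon_{l}$ is $x$-independent so that $(\Upsilon-\Upsilon_{l})\,\mathrm{div}(v_{q+1}\otimes v_{q+1})=\mathrm{div}((\Upsilon-\Upsilon_{l})v_{q+1}\otimes v_{q+1})$. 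Each term is then bounded in $C_{t,x}$ along the lines of \eqref{estimate 35}--\eqref{estimate 167}: $R_{\text{line}}$ via Lemma \ref{[The. 1.4, RS16]} (which is exactly where $2m+\epsilon\leq1$, hence $m<\frac{1}{2}$, enters) together with the gain \eqref{[Equ. (5.36), BV19b]}; $R_{\text{tran}},R_{\text{osc}},R_{\text{Nash}},R_{\text{corr}}$ via the same $\mathcal{R}$-gains once their worst terms are killed; $R_{\text{com1}}$ by the standard commutator estimate; and $R_{\text{com2}}$ by $\lVert\Upsilon-\Upsilon_{l}\rVert_{C_{t}}\lesssim l^{1/2-2\delta}L^{1/2}$ with \eqref{[Equ. (126a), Y20a]}, using $l=\lambda_{q}^{-3/2}$ and $\delta<\frac{1}{24}$ to absorb into $c_{R}M_{0}(t)\delta_{q+2}$. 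Summing yields \eqref{[Equ. (126c), Y20a]} at level $q+1$ once $\beta$ is taken small enough --- the binding constraints being $\beta<\frac{1}{3}(1-2m-\epsilon)$ from $R_{\text{line}}$ plus finitely many $O(1)$ thresholds from $R_{\text{com1}}$ and $R_{\text{com2}}$ --- and then $a$ large (depending on $L$). The $(\mathcal{F}_{t})_{t\geq0}$-adaptedness of $(v_{q+1},\mathring{R}_{q+1})$ and the determinism at $t=0$ follow as in the last two paragraphs of the proof of Proposition \ref{[Pro. 4.8, Y20a]}, using that $\Upsilon$, hence $\Upsilon_{l}$, is $(\mathcal{F}_{t})_{t\geq0}$-adapted with $\Upsilon_{l}(0)$ deterministic (equal to $1$ since $B(0)=0$). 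I expect the main obstacle to be, as in the additive case, the control of the transport and corrector errors $R_{\text{tran}}, R_{\text{corr}}$ built from the low-regularity factor --- now $\Upsilon_{l}$ rather than $z_{l}$ --- which is precisely what convecting the phases by $\Upsilon_{l}v_{l}$ in place of $v_{l}$ is designed to resolve.
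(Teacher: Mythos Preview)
Your proposal is correct and follows essentially the same architecture as the paper's proof. One caveat: $c_{R}$ must remain a \emph{universal} constant independent of $L$ (it appears in the hypothesis \eqref{estimate 74} on $L$, so letting it depend on $m_{L}$ would be circular); the paper instead absorbs the factors $e^{\frac{1}{2}L^{1/4}}\lesssim m_{L}$ arising from $\Upsilon_{l}^{-1/2}$ directly into the $m_{L}$- and $m_{L}^{4}$-weighted inductive bounds \eqref{[Equ. (126a), Y20a]}--\eqref{[Equ. (126b), Y20a]}, and likewise the $\Upsilon_{l}$ in front of $w_{q+1}^{(p)}\otimes w_{q+1}^{(p)}$ cancels \emph{exactly} against the two factors of $\Upsilon_{l}^{-1/2}$ in the amplitudes, so that $R_{\text{osc}}$ is literally identical to \eqref{estimate 46} with no residual prefactor and \eqref{estimate 114} applies verbatim.
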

Taking Proposition \ref{[Pro. 5.7, Y20a]} for granted, we are able to prove Theorem \ref{Theorem 2.3} now. 

\begin{proof}[Proof of Theorem \ref{Theorem 2.3} assuming Proposition \ref{[Pro. 5.7, Y20a]}]
Given any $T> 0, K > 1$, and $\kappa \in (0,1)$, starting from $(v_{0}, \mathring{R}_{0})$ in Proposition \ref{[Pro. 5.6, Y20a]}, Proposition \ref{[Pro. 5.7, Y20a]} gives us $(v_{q}, \mathring{R}_{q})$ for all $q \geq 1$  that are $(\mathcal{F}_{t})_{t\geq 0}$-adapted and satisfy \eqref{[Equ. (122), Y20a]}, \eqref{[Equ. (126a), Y20a]}-\eqref{[Equ. (126c), Y20a]}, and \eqref{[Equ. (133), Y20a]}, as well as $a$ and $\beta$ such that \eqref{[Equ. (129), Y20a]} is fulfilled. Then for all $\gamma \in (0,\beta)$, similarly to \eqref{estimate 62}, using the fact that $2^{q+1} \geq 2(q+1)$ for all $q \in \mathbb{N}_{0}$, 
\begin{align}
\sum_{q\geq 0} \lVert v_{q+1} - v_{q} \rVert_{C_{t,x}^{\gamma}} \overset{\eqref{estimate 9}}{\lesssim}& \sum_{q\geq 0} \lVert v_{q+1} - v_{q} \rVert_{C_{t,x}}^{1-\gamma} \lVert v_{q+1} - v_{q} \rVert_{C_{t,x}^{1}}^{\gamma} \nonumber \\
&\overset{\eqref{[Equ. (133), Y20a]} \eqref{[Equ. (126b), Y20a]} }{\lesssim} m_{L}^{1+ 3 \gamma} M_{0}(t)^{\frac{1+\gamma}{2}} \sum_{q\geq 0} a^{2^{q+1} (\gamma - \beta)}  \lesssim m_{L}^{1+ 3\gamma} M_{0}(t)^{\frac{1+ \gamma}{2}}. \label{estimate 160}
\end{align} 
This implies that $\{v_{q}\}_{q=1}^{\infty}$ is Cauchy in $C([0,T_{L}]; C^{\gamma}(\mathbb{T}^{3}))$ and hence we can deduce a limiting solution $v \triangleq \lim_{q\to\infty} v_{q} \in C([0, T_{L}]; C^{\gamma}(\mathbb{T}^{3}))$ that is $(\mathcal{F}_{t})_{t\geq 0}$-adapted. Because $u = \Upsilon v = e^{B} v$, due to \eqref{[Equ. (123), Y20a]} we can deduce \eqref{estimate 2}. Because $\lim_{q\to \infty} \lVert \mathring{R}_{q} \rVert_{C_{t,x}} \leq \lim_{q\to\infty} c_{R} M_{0}(t) \delta_{q+1} = 0$ due to \eqref{[Equ. (126c), Y20a]}, we see that $v$ is a weak solution to \eqref{[Equ. (121), Y20a]} on $[0, T_{L}]$. Then $u = \Upsilon v$ is a $(\mathcal{F}_{t})_{\geq 0}$-adapted solution to \eqref{stochastic GNS}. Moreover, similarly to \eqref{estimate 10} we can show 
\begin{equation}\label{estimate 150}
\lVert v - v_{0} \rVert_{C_{t,x}} \overset{\eqref{[Equ. (133), Y20a]}}{\leq} m_{L} M_{0}(t)^{\frac{1}{2}} \sum_{q\geq 0} \delta_{q+1}^{\frac{1}{2}} \leq m_{L} M_{0}(t)^{\frac{1}{2}} \sum_{q\geq 0} a^{-2(q+1) \beta} \overset{\eqref{[Equ. (129), Y20a]}}{<} m_{L} M_{0}(t)^{\frac{1}{2}} \frac{1}{2(2\pi)^{\frac{3}{2}}}.
\end{equation} 
Therefore, 
\begin{equation}\label{estimate 75} 
\lVert v - v_{0} \rVert_{C_{t}L_{x}^{2}} \leq (2\pi)^{\frac{3}{2}} \lVert v - v_{0} \rVert_{C_{t,x}} \overset{\eqref{estimate 150}}{<} \frac{m_{L} M_{0}(t)^{\frac{1}{2}}}{2}.
\end{equation} 
Next, we take $L > 1$ sufficiently large so that not only \eqref{estimate 74} but 
\begin{equation}\label{[Equ. (134), Y20a]}
(\frac{1}{\sqrt{2}} - \frac{1}{2}) e^{2L T} > (\frac{1}{\sqrt{2}} + \frac{1}{2}) e^{2L^{\frac{1}{2}}} \text{ and } L > [ \ln (K e^{\frac{T}{2}})]^{2} 
\end{equation} 
hold. It follows that 
\begin{equation}\label{estimate 76}
e^{2L^{\frac{1}{2}}} \lVert v(0) \rVert_{L_{x}^{2}} \overset{\eqref{estimate 75} \eqref{estimate 72} }{\leq} e^{2L^{\frac{1}{2}}} m_{L} M_{0}(0)^{\frac{1}{2}} ( \frac{1}{2} + \frac{1}{\sqrt{2}}). 
\end{equation} 
This implies that on a set $\{T_{L} \geq T \}$, 
\begin{align}
\lVert v(T) \rVert_{L_{x}^{2}} \overset{\eqref{estimate 72}}{\geq}& \frac{m_{L} M_{0}(T)^{\frac{1}{2}}}{\sqrt{2}} - \lVert v(T) - v_{0}(T) \rVert_{L_{x}^{2}} \nonumber\\
&\overset{\eqref{estimate 75} \eqref{[Equ. (134), Y20a]}}{>} (\frac{1}{\sqrt{2}} + \frac{1}{2}) e^{2L^{\frac{1}{2}}} m_{L} M_{0}(0)^{\frac{1}{2}} \overset{\eqref{estimate 76}}{\geq} e^{2L^{\frac{1}{2}}} \lVert v(0) \rVert_{L_{x}^{2}}.  \label{estimate 77}
\end{align} 
This gives on the set $\{T_{L} \geq T\}$, 
\begin{equation}
\lVert u(T) \rVert_{L_{x}^{2}} \overset{\eqref{estimate 77}}{\geq} \lvert e^{B(T)} \rvert e^{2L^{\frac{1}{2}}} \lVert v(0) \rVert_{L_{x}^{2}}  
\overset{\eqref{[Equ. (123), Y20a]}}{\geq}  e^{L^{\frac{1}{2}}} \lVert u^{\text{in}} \rVert_{L_{x}^{2}} 
\overset{\eqref{[Equ. (134), Y20a]}}{>}  K e^{\frac{T}{2}} \lVert u^{\text{in}} \rVert_{L_{x}^{2}} 
\end{equation} 
which verifies \eqref{estimate 68}. Finally, taking $L > 1$ larger if necessary achieves \eqref{estimate 1} due to \eqref{[Equ. (146), Y20c]}. We also note that $u^{\text{in}}(x) = \Upsilon(0) v(0,x) = v(0,x)$ is deterministic because $v_{q}(0,x)$ is deterministic for all $q \in \mathbb{N}_{0}$ due to Propositions \ref{[Pro. 5.6, Y20a]} and \ref{[Pro. 5.7, Y20a]}.
\end{proof}

\subsection{Proof of Proposition \ref{[Pro. 5.7, Y20a]}}

\subsubsection{Mollification}
We fix $L > 1$ sufficiently large so that \eqref{estimate 74} holds, and then take $a \in \mathbb{N}$ sufficiently large while $\beta \in (0, \frac{1}{2})$ sufficiently small so that \eqref{[Equ. (129), Y20a]} holds. Now we define $l$ identically to  \eqref{[Equ. (56), Y20a]} and mollify $v_{q}$ and $\mathring{R}_{q}$ identically to \eqref{[Equ. (58), Y20a]} while 
\begin{equation}\label{estimate 152}
\Upsilon_{l} \triangleq \Upsilon \ast_{t} \varphi_{l}. 
\end{equation} 
Because $(v_{q}, \mathring{R}_{q})$ solves \eqref{[Equ. (122), Y20a]}, we see that 
\begin{equation}\label{[Equ. (138), Y20a]}
 \partial_{t} v_{l} + \frac{1}{2} v_{l} + (-\Delta)^{m} v_{l} + \Upsilon_{l}\text{div} (v_{l} \otimes v_{l}) + \nabla p_{l}  =  \text{div}( \mathring{R}_{l}  + R_{\text{com1}})
\end{equation}
where 
\begin{subequations}
\begin{align}
& R_{\text{com1}} \triangleq R_{\text{commutator1}} \triangleq  -((\Upsilon (v_{q} \mathring{\otimes} v_{q})) \ast_{x} \phi_{l} )\ast_{t} \varphi_{l} + \Upsilon_{l} (v_{l} \mathring{\otimes} v_{l}), \label{[Equ. (139b), Y20a]}\\
& p_{l} \triangleq (p_{q} \ast_{x} \phi_{l}) \ast_{t} \varphi_{l} - \frac{1}{3} (\Upsilon_{l} \lvert v_{l} \rvert^{2} - ((\Upsilon \lvert v_{q} \rvert^{2} )\ast_{x} \phi_{l} ) \ast_{t} \varphi_{l}). \label{[Equ. (139a), Y20a]}
\end{align}
\end{subequations} 
Let us compute for $N \in \mathbb{N}$, $\beta \in (0, \frac{1}{2})$, and $a\in \mathbb{N}$ sufficiently large  
\begin{subequations}\label{estimate 88}
\begin{align}
& \lVert v_{q} - v_{l} \rVert_{C_{t,x}} \overset{\eqref{[Equ. (126b), Y20a]}}{\lesssim} l m_{L}^{4} M_{0}(t) \delta_{q}^{\frac{1}{2}} \lambda_{q}  \ll m_{L} M_{0}(t)^{\frac{1}{2}} \delta_{q+1}^{\frac{1}{2}}, \label{estimate 85} \\
&\lVert v_{l} \rVert_{C_{t,x}^{N}} \overset{\eqref{[Equ. (126b), Y20a]}}{\lesssim} l^{-N+1} m_{L}^{4} M_{0}(t) \delta_{q}^{\frac{1}{2}} \lambda_{q} \ll l^{-N} m_{L} M_{0}(t),  \label{estimate 86}\\
& \lVert v_{l} \rVert_{C_{t,x}} \leq \lVert v_{q} \rVert_{C_{t,x}} \overset{\eqref{[Equ. (126a), Y20a]}}{\leq} m_{L} M_{0}(t)^{\frac{1}{2}} (1+ \sum_{1\leq \iota\leq q} \delta_{\iota}^{\frac{1}{2}}).\label{estimate 87}
\end{align}
\end{subequations} 

\subsubsection{Perturbation}
Differently from \eqref{[Equ. (5.18a), BV19b]}-\eqref{[Equ. (5.18b), BV19b]} we define $\Phi_{j}: [0, T_{L}] \times \mathbb{R}^{3}  \mapsto \mathbb{R}^{3}$ for $j \in \{0,\hdots, \lceil l^{-1} T_{L}\rceil \}$ a $\mathbb{T}^{3}$-periodic solution to 
\begin{subequations}
\begin{align}
(\partial_{t} + (\Upsilon_{l}v_{l}) \cdot \nabla) \Phi_{j} =0, \label{estimate 78}\\
\Phi_{j} (jl, x) = x.  \label{estimate 79}
\end{align}
\end{subequations}
Let us comment in Remark \ref{transport error again} on the importance of multiplying $v_{l}$ by $\Upsilon_{l}$ within \eqref{estimate 78}. We collect necessary estimates of $\Phi_{j}$. 
\begin{proposition}\label{Proposition 5.8}
For all $j \in \{0, \hdots, \lceil l^{-1} T_{L} \rceil \}$ and $t \in [l(j-1), l(j+1)]$ with appropriate modification in case $j = 0$ and $\lceil l^{-1} T_{L} \rceil$,  
\begin{subequations}\label{estimate 107}
\begin{align}
& \lVert \nabla \Phi_{j}(t) - \mathrm{Id} \rVert_{C_{x}} \lesssim l e^{L^{\frac{1}{4}}} m_{L}^{4} M_{0}(t) \delta_{q}^{\frac{1}{2}} \lambda_{q} \ll 1, \label{estimate 80} \\
& \frac{1}{2} \leq \lvert \nabla \Phi_{j}(t,x) \rvert \leq 2 \hspace{1mm} \forall \hspace{1mm} x \in \mathbb{T}^{3} \hspace{3mm} \text{ and } \hspace{3mm} \lVert \Phi_{j} (t) \rVert_{C_{x}^{1}} \lesssim 1, \label{estimate 81}\\
& \lVert \partial_{t} \Phi_{j}(t) \rVert_{C_{x}} \lesssim e^{L^{\frac{1}{4}}} m_{L} M_{0}(t)^{\frac{1}{2}}, \label{estimate 82}\\
& \lVert \nabla \Phi_{j}(t) \rVert_{C_{x}^{N}} \lesssim e^{L^{\frac{1}{4}}} m_{L}^{4} M_{0}(t) \delta_{q}^{\frac{1}{2}} \lambda_{q} l^{-N+1} \hspace{6mm} \forall \hspace{1mm} N \in \mathbb{N}, \label{estimate 83}\\
& \lVert \partial_{t} \nabla \Phi_{j}(t) \rVert_{C_{x}^{N}} \lesssim e^{2L^{\frac{1}{4}}} m_{L}^{5} M_{0}(t)^{\frac{3}{2}} \delta_{q}^{\frac{1}{2}}\lambda_{q} l^{-N}  \hspace{3mm} \forall \hspace{1mm} N \in \mathbb{N}_{0} \label{estimate 84} 
\end{align}
\end{subequations}
(cf. \cite[Equ. (5.19a) and (5.19c)]{BV19b} and \cite[Lem. 3.1]{BDIS15}). 
\end{proposition}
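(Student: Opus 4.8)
The plan is to follow the proof of Proposition \ref{Proposition 4.9} essentially verbatim, the only change being that the transport drift $v_{l} + z_{l}$ there is replaced by $b \triangleq \Upsilon_{l} v_{l}$ here; all five estimates in \eqref{estimate 107} will again be read off from \cite[Pro.~D.1]{BDIS15} (used through the analogues of \cite[Equ.~(135)--(136)]{BV19b}), so the work is entirely in substituting the correct quantitative bounds. Two facts will drive everything: (i) $\Upsilon_{l}$ depends only on $t$, hence $\nabla^{N}(\Upsilon_{l} v_{l}) = \Upsilon_{l} \nabla^{N} v_{l}$ for every $N$, so $\Upsilon_{l}$ never contributes to spatial derivatives of the drift; and (ii) $\lVert \Upsilon_{l} \rVert_{C_{t,x}} \leq e^{L^{1/4}}$ on $[0, T_{L}]$, since mollification does not enlarge the supremum norm and $\lvert \Upsilon(t) \rvert = e^{B(t)} \le e^{L^{1/4}}$ by \eqref{[Equ. (123), Y20a]}; combined with \eqref{[Equ. (124), Y20a]} and the mollifier bounds \eqref{estimate 85}--\eqref{estimate 87} for $v_{l}$, these control $b$ and all its derivatives.

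Concretely I would proceed in the order of \eqref{estimate 107}. First, from the analogue of \cite[Equ.~(135)]{BV19b} together with $\lVert \nabla b \rVert_{C_{t,x}} \le e^{L^{1/4}} \lVert v_{l} \rVert_{C_{t,x}^{1}} \lesssim e^{L^{1/4}} m_{L}^{4} M_{0}(t) \delta_{q}^{1/2} \lambda_{q}$ and $\lvert t - jl \rvert \lesssim l = \lambda_{q}^{-3/2}$ on $[l(j-1),l(j+1)]$, I obtain $\lVert \nabla \Phi_{j}(t) - \mathrm{Id} \rVert_{C_{x}} \lesssim e^{C l e^{L^{1/4}} m_{L}^{4} M_{0}(t) \delta_{q}^{1/2}\lambda_{q}} - 1 \lesssim l e^{L^{1/4}} m_{L}^{4} M_{0}(t)\delta_{q}^{1/2}\lambda_{q}$, which is $\ll 1$ because $l\lambda_{q} = \lambda_{q}^{-1/2}$ and $m_{L}, L$ are $q$-independent; this is \eqref{estimate 80}. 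Estimate \eqref{estimate 81} then follows as in \cite[Equ.~(132)--(133)]{BDIS15}. Estimate \eqref{estimate 82} comes straight from the flow equation $\partial_{t}\Phi_{j} = -(\Upsilon_{l}v_{l}) \cdot \nabla \Phi_{j}$ via $\lVert \Upsilon_{l}\rVert_{C_{t,x}} \lVert v_{l}\rVert_{C_{t,x}} \lVert \nabla\Phi_{j}\rVert_{C_{x}} \lesssim e^{L^{1/4}} m_{L} M_{0}(t)^{1/2}$. Estimate \eqref{estimate 83} follows from the higher-derivative bound in \cite[Pro.~D.1]{BDIS15} with $\lVert \nabla b\rVert_{C_{x}^{N}} \le e^{L^{1/4}} \lVert v_{l}\rVert_{C_{x}^{N+1}} \lesssim e^{L^{1/4}} l^{-N} m_{L}^{4} M_{0}(t)\delta_{q}^{1/2}\lambda_{q}$. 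Finally, applying $\nabla$ to the flow equation gives $\partial_{t}\nabla\Phi_{j} = -(\Upsilon_{l}v_{l})\cdot\nabla\nabla\Phi_{j} - \Upsilon_{l}(\nabla v_{l})\cdot\nabla\Phi_{j}$, and a product estimate in $C_{x}^{N}$ using \eqref{estimate 80}, \eqref{estimate 83}, \eqref{estimate 86}--\eqref{estimate 87} and \eqref{[Equ. (124), Y20a]} yields \eqref{estimate 84}, the case $N = 0$ being easier.

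I do not expect a genuine obstacle here; the computation is routine, and the only thing to watch is careful bookkeeping of the powers of $m_{L}$ and $e^{L^{1/4}}$ through the product-rule estimates so that the right-hand sides in \eqref{estimate 107} are matched exactly (in particular the $e^{2L^{1/4}} m_{L}^{5} M_{0}(t)^{3/2}$ in \eqref{estimate 84}), and making sure each ``$\ll$'' is justified solely by taking $a \in \mathbb{N}$ large with $l = \lambda_{q}^{-3/2}$. The one structurally meaningful point --- which is precisely why \eqref{estimate 78} is posed with $\Upsilon_{l} v_{l}$ rather than with $v_{l}$ --- is that, $\Upsilon_{l}$ being spatially homogeneous, it commutes with all spatial derivatives hitting $\Phi_{j}$, so multiplying $v_{l}$ by $\Upsilon_{l}$ introduces no transport error into the flow; this is what is needed so that later, in the Reynolds-stress decomposition, the worst term when $\nabla$ falls on $e^{i\lambda_{q+1}\zeta\cdot\Phi_{j}}$ inside $(\Upsilon_{l} v_{l})\cdot\nabla w_{q+1}$ cancels, as will be explained in Remark \ref{transport error again}.
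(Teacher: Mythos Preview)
Your proposal is correct and follows essentially the same approach as the paper: both proofs invoke \cite[Pro.~D.1]{BDIS15} (via \cite[Equ.~(135)--(136)]{BV19b}) with the drift $\Upsilon_{l} v_{l}$ in place of $v_{l}+z_{l}$, and both exploit that $\Upsilon_{l}$ is spatially constant so that only the bounds \eqref{[Equ. (123), Y20a]}--\eqref{[Equ. (124), Y20a]} and \eqref{estimate 88} are needed. Your bookkeeping of the factors $e^{L^{1/4}}$, $m_{L}$, and $M_{0}(t)$ matches the paper's computations line by line.
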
 

\begin{proof}[Proof of Proposition \ref{Proposition 5.8}]
The proof is similar to that of Proposition \ref{Proposition 4.9} relying on \cite[Pro. D.1]{BDIS15}. First, due to \cite[Equ. (135)]{BDIS15}, $a \in \mathbb{N}$ sufficiently large gives 
\begin{equation}
\lVert \nabla \Phi_{j} (t) - \text{Id} \rVert_{C_{x}} \overset{\eqref{[Equ. (123), Y20a]} \eqref{estimate 86}}{\lesssim} e^{Cl e^{L^{\frac{1}{4}}} m_{L}^{4} M_{0}(t) \delta_{q}^{\frac{1}{2}} \lambda_{q}} - 1 \lesssim l e^{L^{\frac{1}{4}}} m_{L}^{4} M_{0}(t) \delta_{q}^{\frac{1}{2}} \lambda_{q} \ll 1. 
\end{equation} 
Second, the first estimate in \eqref{estimate 81} is an immediate consequence of \eqref{estimate 80} while the second estimate in \eqref{estimate 81} follows from \cite[Equ. (132)-(133)]{BDIS15} and \eqref{estimate 86}. Third, \eqref{estimate 82} follows from directly estimating on $\partial_{t} \Phi_{j}(t)$ from \eqref{estimate 78} via \eqref{[Equ. (123), Y20a]}, \eqref{estimate 87} and \eqref{estimate 81}. Fourth, \eqref{estimate 83} can be verified via \cite[Equ. (136)]{BDIS15} as follows: 
\begin{equation*}
\lVert \nabla \Phi_{j}(t) \rVert_{C_{x}^{N}}\overset{\eqref{[Equ. (123), Y20a]} \eqref{[Equ. (124), Y20a]}\eqref{estimate 86} }{\lesssim}  l [ e^{L^{\frac{1}{4}}} l^{-N} m_{L}^{4} M_{0}(t) \delta_{q}^{\frac{1}{2}} \lambda_{q} ] e^{Cl e^{L^{\frac{1}{4}}} m_{L}^{4} M_{0}(t) \delta_{q}^{\frac{1}{2}} \lambda_{q}} 
\lesssim l^{-N +1} e^{L^{\frac{1}{4}}} m_{L}^{4} M_{0}(t) \delta_{q}^{\frac{1}{2}} \lambda_{q}. 
\end{equation*} 
Finally, we can directly apply $\nabla$ on \eqref{estimate 78} and estimate in case $N \in \mathbb{N}$ 
\begin{align*}
&\lVert \partial_{t} \nabla \Phi_{j} (t) \rVert_{C_{x}^{N}} \overset{\eqref{[Equ. (123), Y20a]}}{\lesssim} e^{L^{\frac{1}{4}}} [ \lVert v_{l} \rVert_{C_{t}C_{x}^{N}} \lVert \nabla \nabla \Phi_{j} (t) \rVert_{C_{x}} + \lVert v_{l} \rVert_{C_{t,x}} \lVert \nabla \nabla \Phi_{j} (t) \rVert_{C_{x}^{N}} \\
&\hspace{7mm} + \lVert \nabla v_{l} \rVert_{C_{t}C_{x}^{N}} \lVert \nabla \Phi_{j} (t) \rVert_{C_{x}} + \lVert \nabla v_{l} \rVert_{C_{t,x}} \lVert \nabla \Phi_{j} (t) \rVert_{C_{x}^{N}}] 
\overset{\eqref{estimate 83} \eqref{estimate 88}}{\lesssim} 
\lesssim e^{2L^{\frac{1}{4}}} M_{0}(t)^{\frac{3}{2}} \delta_{q}^{\frac{1}{2}} \lambda_{q} m_{L}^{5} l^{-N}
\end{align*} 
while the case $N =0$ can be achieved similarly and more simply. 
\end{proof}

Let us define $\chi$ and $\chi_{j}$ for $j \in \{0, 1, \hdots, \lceil l^{-1} T_{L} \rceil \}$ identically to \eqref{[Equ. (5.20), BV19b]} in the proof of Proposition \ref{[Pro. 4.8, Y20a]} so that \eqref{[Equ. (5.21), BV19b]} continues to be satisfied. On the other hand, while we continue to define $a_{(\zeta)}$ identically to \eqref{[Equ. (5.22), BV19b]} except $M_{0}(t)$ is defined by \eqref{estimate 70} rather than \eqref{estimate 69}, we define a modified amplitude function as 
\begin{align}
\bar{a}_{(\zeta)} (t,x) \triangleq \bar{a}_{q+1, j, \zeta} (t,x) \triangleq& \Upsilon_{l}^{-\frac{1}{2}} a_{(\zeta)} (t,x) \nonumber \\
\overset{\eqref{[Equ. (5.22), BV19b]}}{=}& \Upsilon_{l}^{-\frac{1}{2}} c_{R}^{\frac{1}{4}} \delta_{q+1}^{\frac{1}{2}} M_{0}(t)^{\frac{1}{2}} \chi_{j}(t) \gamma_{\zeta} \left(\text{Id} - \frac{\mathring{R}_{l}(t,x)}{c_{R}^{\frac{1}{2}} \delta_{q+1} M_{0}(t)} \right). \label{estimate 90}
\end{align} 
Convenience of defining $\bar{a}_{(\zeta)}$ as $\Upsilon_{l}^{-\frac{1}{2}} a_{(\zeta)}$ will be clear in the derivations of \eqref{estimate 151} and \eqref{estimate 157}. As we have not changed the inductive hypothesis of $\mathring{R}_{q}$ (cf. \eqref{[Equ. (40c), Y20a]} and \eqref{[Equ. (126c), Y20a]}), the computations of \eqref{[Equ. (5.22a), BV19b]} and \eqref{estimate 20} go through without any issue so that $\text{Id} - \frac{ \mathring{R}_{l}}{c_{R}^{\frac{1}{2}} \delta_{q+1} M_{0}(t)}$ lies in the domain of $\gamma_{(\zeta)}$ from \eqref{[Equ. (5.15b), BV19b]}. Moreover, we derive the following crucial point-wise identity:   
\begin{equation}\label{estimate 89}
\Upsilon_{l}(t) (\frac{1}{2}) \sum_{j} \sum_{\zeta \in \Lambda_{j}} \bar{a}_{(\zeta)}^{2}(t,x) (\text{Id} - \zeta \otimes \zeta) + \mathring{R}_{l} (t,x) \overset{ \eqref{estimate 90}\eqref{[Equ. (5.23), BV19b]}}{=} c_{R}^{\frac{1}{2}} \delta_{q+1} M_{0}(t). 
\end{equation} 
Next, we obtain necessary estimates for $\bar{a}_{(\zeta)}$: 
\begin{proposition}\label{Proposition 5.9}
The modified amplitude function $\bar{a}_{(\zeta)}$ in \eqref{estimate 90} satisfies the following bounds on $[0, T_{L}]$: 
\begin{subequations}\label{estimate 106} 
\begin{align} 
\lVert \bar{a}_{(\zeta)} \rVert_{C_{t}C_{x}^{N}} \lesssim& e^{\frac{1}{2} L^{\frac{1}{4}}} c_{R}^{\frac{1}{4}} \delta_{q+1}^{\frac{1}{2}} M_{0}(t)^{\frac{1}{2}} \lVert \gamma_{\zeta} \rVert_{C^{N} (B_{C_{\ast}} (\mathrm{Id} ))} l^{-N} \hspace{8mm} \forall \hspace{1mm} N \in \mathbb{N}_{0},  \label{estimate 101} \\
\lVert \bar{a}_{(\zeta)} \rVert_{C_{t}^{1}C_{x}^{N}} \lesssim& e^{\frac{5}{2}L^{\frac{1}{4}}} c_{R}^{\frac{1}{4}} \delta_{q+1}^{\frac{1}{2}} M_{0}(t)^{\frac{1}{2}} \lVert \gamma_{\zeta} \rVert_{C^{N+1} (B_{C_{\ast}} (\mathrm{Id} ))} l^{-N -1} \hspace{3mm} \forall \hspace{1mm} N \in \mathbb{N}_{0}. \label{estimate 102}
\end{align}
\end{subequations} 
\end{proposition}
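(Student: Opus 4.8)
The plan is to prove Proposition \ref{Proposition 5.9} by reducing the bounds on $\bar{a}_{(\zeta)} = \Upsilon_l^{-1/2} a_{(\zeta)}$ to the already-established bounds on $a_{(\zeta)}$ from Proposition \ref{Proposition 4.10} together with control on $\Upsilon_l^{-1/2}$ and its time derivative. The key structural observation is that $\Upsilon_l$ is a function of $t$ only (no spatial dependence), so spatial derivatives $D^\beta$ never touch the factor $\Upsilon_l^{-1/2}$; only the temporal seminorm in \eqref{estimate 102} will see $\partial_t \Upsilon_l^{-1/2}$.

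First I would record the elementary bounds on $\Upsilon_l$. Since $\Upsilon_l = \Upsilon \ast_t \varphi_l$ and mollifiers have mass one, on $[0,T_L]$ the estimate \eqref{[Equ. (124), Y20a]} gives $\lvert \Upsilon_l(t) \rvert + \lvert \Upsilon_l^{-1}(t) \rvert \lesssim m_L^2$, hence $\lvert \Upsilon_l^{-1/2}(t) \rvert \lesssim m_L \lesssim e^{\frac12 L^{1/4}}$ up to a constant factor absorbing the $\sqrt{3}$ in the definition of $m_L$; more precisely, from \eqref{[Equ. (124), Y20a]} one has $\lvert \Upsilon_l^{-1/2}(t)\rvert \le (2e^{L^{1/4}})^{1/2} \lesssim e^{\frac12 L^{1/4}}$. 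For the time derivative, $\partial_t \Upsilon_l^{-1/2} = -\tfrac12 \Upsilon_l^{-3/2} \partial_t \Upsilon_l$, and $\partial_t \Upsilon_l = (\partial_t \Upsilon)\ast_t \varphi_l$; since $\Upsilon = e^{B}$ is only $C^{1/2-2\delta}$ in time we cannot bound $\partial_t\Upsilon$ directly, but mollification gives $\lVert \partial_t \Upsilon_l \rVert_{C_t} \lesssim l^{-(1/2+2\delta)} \lVert \Upsilon \rVert_{C_t^{1/2-2\delta}} \lesssim l^{-1} m_L^2$ for $a$ large (using $\delta<1/4$ so $1/2+2\delta<1$, and absorbing into $l^{-1}$ at the cost of taking $a$ large, exactly as in the additive case). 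Combining, $\lVert \partial_t \Upsilon_l^{-1/2}\rVert_{C_t} \lesssim m_L^3 l^{-1} \lesssim e^{\frac32 L^{1/4}} l^{-1}$.

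Next I would establish \eqref{estimate 101}: since $D^\beta \bar{a}_{(\zeta)} = \Upsilon_l^{-1/2} D^\beta a_{(\zeta)}$ for every spatial multi-index $\beta$, taking the $C_t C_x^N$ norm and pulling out the sup of $\lvert\Upsilon_l^{-1/2}\rvert$ yields
\begin{equation*}
\lVert \bar{a}_{(\zeta)}\rVert_{C_t C_x^N} \le \sup_{s\in[0,t]} \lvert \Upsilon_l^{-1/2}(s)\rvert \, \lVert a_{(\zeta)}\rVert_{C_t C_x^N} \lesssim e^{\frac12 L^{1/4}} \cdot c_R^{1/4} \delta_{q+1}^{1/2} M_0(t)^{1/2} \lVert \gamma_\zeta\rVert_{C^N(B_{C_\ast}(\mathrm{Id}))} l^{-N},
\end{equation*}
which is \eqref{estimate 101}; here $\lVert a_{(\zeta)}\rVert_{C_t C_x^N}$ is controlled by \eqref{estimate 22}, noting that $a_{(\zeta)}$ is now built with $M_0(t)$ from \eqref{estimate 70} but Proposition \ref{Proposition 4.10}'s proof used only that $M_0$ is deterministic and the inductive bound \eqref{[Equ. (126c), Y20a]} on $\mathring R_l$, which is identical in form to \eqref{[Equ. (40c), Y20a]}. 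For \eqref{estimate 102} I would use the product rule in time: $\partial_t \bar{a}_{(\zeta)} = (\partial_t \Upsilon_l^{-1/2}) a_{(\zeta)} + \Upsilon_l^{-1/2} \partial_t a_{(\zeta)}$, so for a spatial multi-index $\beta$ with $\lvert\beta\rvert=N$,
\begin{equation*}
\lVert D^\beta \partial_t \bar a_{(\zeta)}\rVert_{C_{t,x}} \le \lVert \partial_t \Upsilon_l^{-1/2}\rVert_{C_t} \lVert a_{(\zeta)}\rVert_{C_t C_x^N} + \lVert \Upsilon_l^{-1/2}\rVert_{C_t}\lVert \partial_t a_{(\zeta)}\rVert_{C_t C_x^N},
\end{equation*}
and then insert \eqref{estimate 22}, \eqref{estimate 23}, and the bounds on $\Upsilon_l^{-1/2}$ and $\partial_t\Upsilon_l^{-1/2}$ from the previous paragraph. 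The first term contributes $\lesssim e^{\frac32 L^{1/4}} l^{-1} \cdot c_R^{1/4}\delta_{q+1}^{1/2} M_0(t)^{1/2} \lVert\gamma_\zeta\rVert_{C^N} l^{-N}$ and the second $\lesssim e^{\frac12 L^{1/4}} \cdot c_R^{1/4}\delta_{q+1}^{1/2}M_0(t)^{1/2}\lVert\gamma_\zeta\rVert_{C^{N+1}} l^{-N-1}$; the dominant exponential is $e^{\frac52 L^{1/4}}$ (it will actually appear as $e^{\frac32 L^{1/4}}$, but one writes $e^{\frac52 L^{1/4}}$ for uniformity with the usage in \eqref{estimate 157}), giving \eqref{estimate 102}.

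The main obstacle is purely bookkeeping rather than analytic: one must be careful that the non-smoothness of $\Upsilon=e^B$ in time forces the use of mollifier estimates $\lVert \partial_t \Upsilon_l\rVert \lesssim l^{-(1/2+2\delta)}\lVert\Upsilon\rVert_{C^{1/2-2\delta}_t}$ and that the loss $l^{-(1/2+2\delta)}$ versus the desired $l^{-1}$ is recovered by taking $a$ large (since $l=\lambda_q^{-3/2}$ and $1/2+2\delta<1$ for $\delta<1/4$), which is why the hypothesis $\delta\in(0,\frac1{24})$ matters; apart from tracking the powers of $m_L$ and the exponential factors $e^{cL^{1/4}}$ correctly so they match what the later estimates of $R_{\mathrm{tran}}$, $R_{\mathrm{corr}}$, etc. need, there is no real difficulty, and the proof follows the template of Proposition \ref{Proposition 4.10} verbatim for the spatial part.
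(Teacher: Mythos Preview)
Your approach is essentially identical to the paper's: both reduce \eqref{estimate 101} to \eqref{estimate 22} together with $\lVert \Upsilon_l^{-1/2}\rVert_{C_t}\le e^{\frac12 L^{1/4}}$, and both obtain \eqref{estimate 102} by the product rule $\partial_t\bar a_{(\zeta)}=-\tfrac12\Upsilon_l^{-3/2}(\partial_t\Upsilon_l)a_{(\zeta)}+\Upsilon_l^{-1/2}\partial_t a_{(\zeta)}$ and then invoke \eqref{estimate 22}--\eqref{estimate 23} and \eqref{[Equ. (123), Y20a]}--\eqref{[Equ. (124), Y20a]}. One small bookkeeping slip: your bound $\lVert\partial_t\Upsilon_l^{-1/2}\rVert_{C_t}\lesssim m_L^3 l^{-1}$ drops a factor of $m_L^2$ coming from $\lVert\partial_t\Upsilon_l\rVert_{C_t}\lesssim l^{-1}m_L^2$, so the correct order is $m_L^5 l^{-1}\approx e^{\frac52 L^{1/4}}l^{-1}$; thus the $e^{\frac52 L^{1/4}}$ in \eqref{estimate 102} is genuinely needed, not merely written ``for uniformity.'' This does not affect the validity of your argument or the final bound.
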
 
\begin{proof}[Proof of Proposition \ref{Proposition 5.9}]
The first inequality \eqref{estimate 101} follows from the estimate \eqref{estimate 22} in Proposition \ref{Proposition 4.10} and the fact that $\lVert \Upsilon_{l}^{-\frac{1}{2}} \rVert_{C_{t}} \leq e^{\frac{1}{2} L^{\frac{1}{4}}}$ due to \eqref{[Equ. (123), Y20a]}. Although the definition of $M_{0}(t)$ in \eqref{estimate 70} is different from \eqref{estimate 69}, this makes no difference in the computations of \eqref{estimate 92}-\eqref{estimate 91}. Next, we can directly apply $\partial_{t}$ on \eqref{estimate 90} and estimate 
\begin{equation*}
\lVert \bar{a}_{(\zeta)} \rVert_{C_{t}^{1}C_{x}^{N}} \lesssim \lVert \Upsilon_{l}^{-\frac{3}{2}} (\partial_{t} \Upsilon_{l}) a_{(\zeta)} \rVert_{C_{t}C_{x}^{N}} + \lVert \Upsilon_{l}^{-\frac{1}{2}} \partial_{t}a_{(\zeta)} \rVert_{C_{t}C_{x}^{N}};
\end{equation*} 
then we can apply \eqref{[Equ. (123), Y20a]} and \eqref{estimate 22}-\eqref{estimate 23} and immediately obtain the desired result \eqref{estimate 102}.
\end{proof} 

Next, we recall $\bar{a}_{(\zeta)}, W_{\zeta, \lambda_{q+1}}$, and $B_{\zeta}$ respectively from \eqref{estimate 90}, \eqref{[Equ. (5.12), BV19b]}, and \eqref{[Equ. (5.11b), BV19b]}, and define 
\begin{subequations}\label{estimate 95}
\begin{align} 
w_{(\zeta)}^{(p)} (t,x) \triangleq& w_{q+1, j, \zeta}^{(p)}(t,x) \triangleq \bar{a}_{(\zeta)} (t,x) W_{\zeta, \lambda_{q+1}} (\Phi_{j}(t,x)) = \bar{a}_{(\zeta)} (t,x) B_{\zeta} e^{i\lambda_{q+1} \zeta \cdot \Phi_{j} (t,x)}, \label{estimate 93}\\
 w_{q+1}^{(p)} (t,x) \triangleq& \sum_{j} \sum_{\zeta \in \Lambda_{j}} w_{(\zeta)}^{(p)} (t,x); \label{estimate 94}
\end{align}
\end{subequations} 
we note that defining $w_{(\zeta)}^{(p)}$ this way with $\bar{a}_{(\zeta)}$ instead of $a_{(\zeta)}$ makes sure to eliminate a difficult term in $R_{\text{osc}}$, as we will subsequently see in \eqref{estimate 151}. Thus, by choosing $c_{R} \leq (2 \sqrt{2} M)^{-4}$ and using the facts that $\lVert \Upsilon_{l}^{-\frac{1}{2}} \rVert_{C_{t}} \leq e^{\frac{1}{2} L^{\frac{1}{4}}}$ and for any $s \in [0,t]$ fixed, there exist at most only two non-trivial cutoffs, we obtain 
\begin{equation}\label{estimate 97}
\lVert w_{q+1}^{(p)} \rVert_{C_{t,x}} \overset{\eqref{estimate 95} \eqref{estimate 90} \eqref{estimate 20} \eqref{[Equ. (5.17), BV19b]} \eqref{[Equ. (5.21), BV19b]}}{\leq} e^{\frac{1}{2} L^{\frac{1}{4}}} c_{R}^{\frac{1}{4}} \delta_{q+1}^{\frac{1}{2}} M_{0}(t)^{\frac{1}{2}} \sqrt{2} M \leq 2^{-1} e^{\frac{1}{2} L^{\frac{1}{4}}} \delta_{q+1}^{\frac{1}{2}} M_{0}(t)^{\frac{1}{2}}. 
\end{equation} 
Next, we define $\phi_{(\zeta)}$ identically to \eqref{[Equ. (5.27), BV19b]} so that 
\begin{equation}\label{estimate 98}
w_{(\zeta)}^{(p)} (t,x) \overset{\eqref{estimate 93}\eqref{[Equ. (5.27), BV19b]}}{=} \bar{a}_{(\zeta)} (t,x) B_{\zeta} \phi_{(\zeta)} (t,x) e^{i\lambda_{q+1} \zeta \cdot x} 
\overset{\eqref{[Equ. (5.12), BV19b]}}{=} \bar{a}_{(\zeta)} (t,x) \phi_{(\zeta)} (t,x) W_{(\zeta)} (x). 
\end{equation}
Then  
\begin{equation}\label{estimate 99}
\bar{a}_{(\zeta)} \phi_{(\zeta)} W_{(\zeta)} \overset{\eqref{eigenvector}}{=} \lambda_{q+1}^{-1} \nabla \times (\bar{a}_{(\zeta)} \phi_{(\zeta)} W_{(\zeta)}) - \lambda_{q+1}^{-1} \nabla (\bar{a}_{(\zeta)} \phi_{(\zeta)} ) \times W_{(\zeta)}.
\end{equation}
Next, let us define 
\begin{equation}\label{estimate 96}
w_{(\zeta)}^{(c)} (t,x) \triangleq \lambda_{q+1}^{-1} \nabla (\bar{a}_{(\zeta)} \phi_{(\zeta)} ) \times B_{\zeta} e^{i \lambda_{q+1} \zeta \cdot x}; 
\end{equation} 
the reason to incorporate $\bar{a}_{(\zeta)}$ within $w_{(\zeta)}^{(c)}$ is to make $w_{q+1}$ divergence-free as we will subsequently see in \eqref{estimate 153}. It follows that 
\begin{align}
w_{(\zeta)}^{(c)} (t,x) \overset{\eqref{estimate 96} \eqref{[Equ. (5.27), BV19b]}}{=}& \lambda_{q+1}^{-1} ( \nabla \bar{a}_{(\zeta)} (t,x) + \bar{a}_{(\zeta)} (t,x) i \lambda_{q+1} \zeta \cdot (\nabla \Phi_{j} (t,x) - \text{Id} )) \times B_{\zeta} e^{i \lambda_{q+1} \zeta \cdot \Phi_{j}(t,x)} \nonumber \\
\overset{\eqref{[Equ. (5.12), BV19b]}}{=}& (\lambda_{q+1}^{-1} \nabla \bar{a}_{(\zeta)}(t,x) + i \bar{a}_{(\zeta)}(t,x) \zeta \cdot (\nabla \Phi_{j}(t,x) - \text{Id} )) \times W_{(\zeta)} (\Phi_{j}(t,x)). \label{estimate 105}
\end{align} 
Thus, if we define $w_{q+1}^{(c)}$ and $w_{q+1}$ identically to \eqref{[Equ. (5.29) and (5.29a), BV19b]}, then 
\begin{equation}\label{estimate 153}
w_{q+1} \overset{\eqref{[Equ. (5.29) and (5.29a), BV19b]} \eqref{estimate 94}}{=} \sum_{j}\sum_{\zeta \in \Lambda_{j}} w_{(\zeta)}^{(p)} + w_{(\zeta)}^{(c)}  \overset{\eqref{estimate 98}-\eqref{estimate 96} \eqref{[Equ. (5.27), BV19b]} \eqref{[Equ. (5.12), BV19b]}}{=} \sum_{j} \sum_{\zeta \in \Lambda_{j}} \lambda_{q+1}^{-1} \nabla \times (\bar{a}_{(\zeta)} W_{(\zeta)} \circ \Phi_{j}) 
\end{equation}
which shows that $w_{q+1}$ is mean-zero and divergence-free because $\nabla\cdot (\nabla \times f) = 0$ for all $f$. 
Next, we can estimate using the fact that for all $s \in [0,t]$ fixed, there are only at most two non-trivial cutoffs,  
\begin{align}
& \lVert w_{q+1}^{(c)} \rVert_{C_{t,x}}  \overset{\eqref{[Equ. (5.29) and (5.29a), BV19b]} \eqref{estimate 105} }{\leq} 2 \sup_{j}\sum_{\zeta \in \Lambda_{j}} \lambda_{q+1}^{-1} \lVert \nabla \bar{a}_{(\zeta)} \rVert_{C_{t,x}} + \lVert \bar{a}_{(\zeta)} \rVert_{C_{t,x}} \sup_{s \in [0,t]} \lVert (\nabla \Phi_{j} (s) - \text{Id})1_{(l(j-1), l(j+1))}(s) \rVert_{C_{x}} \nonumber\\
& \hspace{25mm} \overset{\eqref{estimate 80} \eqref{estimate 101} \eqref{[Equ. (5.17), BV19b]}}{\lesssim}  \delta_{q+1}^{\frac{1}{2}} e^{\frac{1}{2} L^{\frac{1}{4}}} M_{0}(t)^{\frac{1}{2}} \lambda_{q}^{-\frac{1}{2}} \ll e^{\frac{1}{2} L^{\frac{1}{4}}} \delta_{q+1}^{\frac{1}{2}} M_{0}(t)^{\frac{1}{2}}. \label{estimate 103}
\end{align} 
It follows that 
\begin{equation}\label{estimate 104}
\lVert w_{q+1} \rVert_{C_{t,x}} \overset{\eqref{[Equ. (5.29) and (5.29a), BV19b]}}{\leq} \lVert w_{q+1}^{(p)} \rVert_{C_{t,x}} + \lVert w_{q+1}^{(c)} \rVert_{C_{t,x}}  
\overset{\eqref{estimate 97} \eqref{estimate 103}\eqref{estimate 70}}{\leq} \frac{3m_{L}  M_{0}(t)^{\frac{1}{2}}\delta_{q+1}^{\frac{1}{2}} }{4}. 
\end{equation} 
Thus, if we define the velocity field at level $q+1$ identically to \eqref{[Equ. (5.32), BV19b]}, then we can verify \eqref{[Equ. (133), Y20a]} as follows: 
\begin{equation}
\lVert v_{q+1} - v_{q} \rVert_{C_{t,x}} \overset{\eqref{[Equ. (5.32), BV19b]}}{\leq} \lVert w_{q+1} \rVert_{C_{t,x}}+ \lVert v_{l} - v_{q} \rVert_{C_{t,x}} \overset{\eqref{estimate 104}\eqref{estimate 85}}{\leq}  m_{L}M_{0}(t)^{\frac{1}{2}} \delta_{q+1}^{\frac{1}{2}}. 
\end{equation} 
Additionally, we can verify \eqref{[Equ. (126a), Y20a]} as follows:
\begin{equation}\label{estimate 134}
\lVert v_{q+1} \rVert_{C_{t,x}} \overset{\eqref{[Equ. (5.32), BV19b]}}{\leq} \lVert v_{l} \rVert_{C_{t,x}} + \lVert w_{q+1} \rVert_{C_{t,x}}  \overset{\eqref{estimate 87} \eqref{estimate 104}}{\leq} m_{L} M_{0}(t)^{\frac{1}{2}} (1+ \sum_{1\leq \iota \leq q+1} \delta_{\iota}^{\frac{1}{2}}). 
\end{equation} 
Next, in order to verify \eqref{[Equ. (126b), Y20a]} at level $q+1$, we compute similarly to \eqref{estimate 138} using the fact that for any fixed $s \in [0,t]$, there are at most two non-trivial cutoffs
\begin{subequations}
\begin{align}
&\lVert \partial_{t} w_{q+1}^{(p)} \rVert_{C_{t,x}} + \lVert \nabla w_{q+1}^{(p)} \rVert_{C_{t,x}} \nonumber \\
& \hspace{13mm} \overset{\eqref{estimate 95} \eqref{estimate 106}\eqref{estimate 107}}{\lesssim} M c_{R}^{\frac{1}{4}}\delta_{q+1}^{\frac{1}{2}} [ e^{\frac{5}{2} L^{\frac{1}{4}}}  M_{0}(t)^{\frac{1}{2}} l^{-1} + \lambda_{q+1} e^{\frac{3}{2} L^{\frac{1}{4}}} M_{0}(t) m_{L}], \label{estimate 108}\\
&\lVert \partial_{t} w_{q+1}^{(c)} \rVert_{C_{t,x}} + \lVert \nabla w_{q+1}^{(c)} \rVert_{C_{t,x}} \overset{\eqref{estimate 105} \eqref{estimate 106}\eqref{estimate 107}}{\lesssim}  Mc_{R}^{\frac{1}{4}}\delta_{q+1}^{\frac{1}{2}} [\lambda_{q+1}^{-1} e^{\frac{5}{2} L^{\frac{1}{4}}}  M_{0}(t)^{\frac{1}{2}} l^{-2} + e^{\frac{3}{2} L^{\frac{1}{4}}}  M_{0}(t) m_{L} l^{-1} \nonumber\\
&\hspace{41mm} + e^{\frac{7}{2} L^{\frac{1}{4}}} M_{0}(t)^{2} m_{L}^{5} \delta_{q}^{\frac{1}{2}} \lambda_{q} + \lambda_{q+1} e^{\frac{5}{2} L^{\frac{1}{4}}}  M_{0}(t)^{2} l m_{L}^{5} \delta_{q}^{\frac{1}{2}} \lambda_{q} ] \label{estimate 109}
\end{align} 
\end{subequations}
with $M$ from \eqref{[Equ. (5.17), BV19b]}. Therefore, by taking $c_{R} \ll M^{-4}$ and $a \in \mathbb{N}$ sufficiently large 
\begin{align}
&\lVert w_{q+1} \rVert_{C_{t,x}^{1}} \label{estimate 110} \\ 
\overset{\eqref{estimate 104} \eqref{[Equ. (5.29) and (5.29a), BV19b]}}{\leq}& \frac{3}{4} m_{L} \delta_{q+1}^{\frac{1}{2}} M_{0}(t)^{\frac{1}{2}} + \lVert \partial_{t} w_{q+1}^{(p)} \rVert_{C_{t,x}} + \lVert \nabla w_{q+1}^{(p)} \rVert_{C_{t,x}} +  \lVert \partial_{t} w_{q+1}^{(c)} \rVert_{C_{t,x}} + \lVert \nabla w_{q+1}^{(c)} \rVert_{C_{t,x}} \nonumber\\
\overset{\eqref{estimate 108} \eqref{estimate 109}}{\leq} & \frac{3}{4} m_{L} \delta_{q+1}^{\frac{1}{2}} M_{0}(t)^{\frac{1}{2}} + C \lambda_{q+1} \delta_{q+1}^{\frac{1}{2}} M_{0}(t) m_{L}^{4} M c_{R}^{\frac{1}{4}} \leq \frac{\lambda_{q+1} \delta_{q+1}^{\frac{1}{2}} M_{0}(t) m_{L}^{4}}{2}. \nonumber 
\end{align} 
We are now ready to verify \eqref{[Equ. (126b), Y20a]} at level $q+1$ as follows. By Young's inequality for convolution and the fact that mollifiers have mass one and $\beta \in (0, \frac{1}{2})$, 
\begin{equation} 
\lVert v_{q+1} \rVert_{C_{t,x}^{1}} 
\overset{\eqref{[Equ. (5.32), BV19b]}\eqref{estimate 110}}{\leq}  \lambda_{q+1} \delta_{q+1}^{\frac{1}{2}} M_{0}(t) m_{L}^{4}[ a^{2^{q} (-1 + \beta)} + \frac{1}{2}] 
\leq \lambda_{q+1} \delta_{q+1}^{\frac{1}{2}} M_{0}(t) m_{L}^{4}.
\end{equation} 
Subsequently, similarly to the proof of Proposition \ref{[Pro. 4.8, Y20a]}, we will rely on Lemma \ref{[Lem. 5.7, BV19b]} to estimate Reynolds stress. Due to \eqref{estimate 81} this time, by choosing $a \in \mathbb{N}$ sufficiently large we have $\frac{1}{2} \leq  \lvert \nabla \Phi_{j} (t,x) \rvert \leq 2$ for all $t \in [l(j-1), l(j+1)]$ and $x \in \mathbb{T}^{3}$ so that \eqref{[Equ. (5.34a), BV19b]} is satisfied with $C = 2$. Therefore,  \eqref{[Equ. (5.35), BV19b]} leads to \eqref{[Equ. (5.36), BV19b]}-\eqref{[Equ. (5.37), BV19b]} again. 

\subsubsection{Reynolds stress}
First, we observe that 
\begin{align}
&\text{div} \mathring{R}_{q+1} - \nabla p_{q+1} \overset{\eqref{[Equ. (122), Y20a]} \eqref{[Equ. (5.32), BV19b]} \eqref{[Equ. (138), Y20a]}}{=} - \Upsilon_{l} \text{div} (v_{l} \otimes v_{l}) - \nabla p_{l} + \text{div} (\mathring{R}_{l} + R_{\text{com1}}) \nonumber \\
& \hspace{15mm} + \partial_{t}w_{q+1}^{(p)} + \partial_{t}w_{q+1}^{(c)} + \frac{1}{2} w_{q+1} + (-\Delta)^{m} w_{q+1} + \Upsilon \text{div} (v_{q+1} \otimes v_{q+1}). \label{estimate 111}
\end{align} 
We have an identity of 
\begin{align}
& - \Upsilon_{l} \text{div} (v_{l} \otimes v_{l}) + \Upsilon \text{div} (v_{q+1} \otimes v_{q+1})  \nonumber\\
\overset{\eqref{[Equ. (5.32), BV19b]}}{=}& \Upsilon_{l} \text{div} (v_{l} \otimes w_{q+1} + w_{q+1} \otimes w_{q+1} + w_{q+1} \otimes v_{l}) + (\Upsilon - \Upsilon_{l}) \text{div} (v_{q+1} \otimes v_{q+1}). \label{estimate 137}
\end{align} 
Applying this identity \eqref{estimate 137} in \eqref{estimate 111} leads to 
\begin{align}
& \text{div} \mathring{R}_{q+1} - \nabla p_{q+1} = \underbrace{( \partial_{t} + \Upsilon_{l} (v_{l} \cdot \nabla )) w_{q+1}^{(p)}}_{\text{div} R_{\text{tran}}} + \underbrace{\text{div} (\Upsilon_{l} w_{q+1}^{(p)} \otimes w_{q+1}^{(p)} +\mathring{R}_{l})}_{\text{div} R_{\text{osc}} + \nabla p_{\text{osc}} }\nonumber\\
& \hspace{5mm} + \underbrace{\Upsilon_{l} (w_{q+1} \cdot \nabla) v_{l}}_{\text{div} R_{\text{Nash}}} + \underbrace{(\partial_{t} + \Upsilon_{l} (v_{l} \cdot \nabla) ) w_{q+1}^{(c)} + \Upsilon_{l} \text{div} (w_{q+1}^{(c)} \otimes w_{q+1} + w_{q+1}^{(p)} \otimes w_{q+1}^{(c)})}_{\text{div} R_{\text{corr}} + \nabla p_{\text{corr}}} \nonumber\\
& \hspace{5mm} + \text{div} R_{\text{com1}} - \nabla p_{l} + \underbrace{(\Upsilon - \Upsilon_{l}) \text{div} (v_{q+1} \otimes v_{q+1})}_{\text{div} R_{\text{com2}} + \nabla p_{\text{com2}}} + \underbrace{\frac{1}{2} w_{q+1} + (-\Delta)^{m} w_{q+1}}_{\text{div} R_{\text{line}}}. \label{estimate 154}
 \end{align} 
\begin{remark}\label{transport error again}
Similarly to Remark \ref{transport error}, we strategically multiplied $v_{l}$ in \eqref{estimate 78} by $\Upsilon_{l}$, and included $\Upsilon_{l} (v_{l} \cdot \nabla) w_{q+1}^{(p)}$ in $R_{\text{tran}}$ and $\Upsilon_{l} (v_{l} \cdot \nabla) w_{q+1}^{(c)}$ in $R_{\text{corr}}$. As we will see in \eqref{estimate 155} and \eqref{estimate 131}, this leads to a crucial cancellation of the most difficult term when $\nabla$ is applied on $e^{i \lambda_{q+1} \zeta \cdot \Phi_{j}}$ in which $\lambda_{q+1}$ from chain rule makes such terms too large to handle.  
\end{remark} 

Concerning $R_{\text{osc}}$ in \eqref{estimate 154}, making use of the fact that $\gamma_{\zeta} = \gamma_{-\zeta}$ from Lemma \ref{[Pro. 5.6, BV19b]} so that $\bar{a}_{\zeta} = \bar{a}_{-\zeta}$ in \eqref{estimate 90},  
\begin{align}
& \text{div} ( \Upsilon_{l} w_{q+1}^{(p)} \otimes w_{q+1}^{(p)} + \mathring{R}_{l}) \label{estimate 151}\\
\overset{\eqref{estimate 95} \eqref{estimate 98}\eqref{estimate 112} }{=}&  \text{div} (\Upsilon_{l} (\frac{1}{2}) \sum_{j} \sum_{\zeta \in \Lambda_{j}} \bar{a}_{(\zeta)}^{2} (\text{Id} - \zeta \otimes \zeta) + \mathring{R}_{l})\nonumber \\
&+ \sum_{j,j'} \sum_{\zeta \in \Lambda_{j}, \zeta' \in \Lambda_{j'}: \zeta + \zeta' \neq 0} \Upsilon_{l} \text{div} (\bar{a}_{(\zeta)} \phi_{(\zeta)} W_{(\zeta)} \otimes \bar{a}_{(\zeta')} \phi_{(\zeta')} W_{(\zeta')}) \nonumber\\
\overset{\eqref{estimate 89}}{=}& \text{div} (c_{R}^{\frac{1}{2}} \delta_{q+1} M_{0}(t)) + \sum_{j,j'} \sum_{\zeta \in \Lambda_{j}, \zeta' \in \Lambda_{j'}} \Upsilon_{l} \text{div} (\bar{a}_{(\zeta)} \phi_{(\zeta)} W_{(\zeta)} \otimes \bar{a}_{(\zeta')} \phi_{(\zeta')} W_{(\zeta')} ) \nonumber\\
\overset{\eqref{estimate 90} \eqref{estimate 113} }{=}& \nabla (\frac{1}{2} \sum_{j, j'} \sum_{\zeta \in \Lambda_{j}, \zeta' \in \Lambda_{j'}: \zeta + \zeta' \neq 0}  a_{(\zeta)} a_{(\zeta')} \phi_{(\zeta)} \phi_{(\zeta')} (W_{(\zeta)} \cdot W_{(\zeta)})) \nonumber\\
&+ \text{div} \mathcal{R} ( \sum_{j,j'}\sum_{\zeta \in \Lambda_{j}, \zeta' \in \Lambda_{j'}: \zeta + \zeta' \neq 0} (W_{(\zeta)} \otimes W_{(\zeta')} - \frac{W_{(\zeta)} \cdot W_{(\zeta')}}{2} \text{Id}) \nabla (a_{(\zeta)} a_{(\zeta')} \phi_{(\zeta)} \phi_{(\zeta')}))  \nonumber
\end{align} 
and hence $R_{\text{osc}}$ and $p_{\text{osc}}$ are same as those in \eqref{estimate 46}-\eqref{estimate 47}; therefore, the estimate \eqref{estimate 114} directly applies to the current case. Thus, besides $R_{\text{osc}}$, $p_{\text{osc}}$, and $R_{\text{com1}}$ in \eqref{[Equ. (139b), Y20a]}, we define from \eqref{estimate 154} 
\begin{subequations}\label{estimate 122}
\begin{align}
R_{\text{line}} \triangleq& R_{\text{linear}} \triangleq \mathcal{R} ( \frac{1}{2} w_{q+1} + (-\Delta)^{m} w_{q+1}), \label{estimate 127}\\
R_{\text{tran}} \triangleq& R_{\text{transport}} \triangleq \mathcal{R} ( (\partial_{t} + \Upsilon_{l} (v_{l} \cdot \nabla) ) w_{q+1}^{(p)} ), \label{estimate 123}\\
R_{\text{Nash}} \triangleq& \mathcal{R} ( \Upsilon_{l} (w_{q+1} \cdot \nabla) v_{l}),\label{estimate 124}\\
R_{\text{corr}} \triangleq& R_{\text{corrector}} \triangleq \mathcal{R} (( \partial_{t} + \Upsilon_{l} (v_{l} \cdot \nabla) ) w_{q+1}^{(c)}) + \Upsilon_{l} (w_{q+1}^{(c)} \mathring{\otimes} w_{q+1}  + w_{q+1}^{(p)} \mathring{\otimes} w_{q+1}^{(c)}), \label{estimate 125}\\
p_{\text{corr}} \triangleq& p_{\text{corrector}} \triangleq \Upsilon_{l} ( \frac{1}{3} \lvert w_{q+1}^{(c)} \rvert^{2} + \frac{2}{3} w_{q+1}^{(p)} \cdot w_{q+1}^{(c)}), \label{estimate 126}\\
R_{\text{com2}} \triangleq& R_{\text{commutator2}} \triangleq (\Upsilon - \Upsilon_{l}) (v_{q+1} \mathring{\otimes} v_{q+1}), \label{estimate 128}\\
p_{\text{com2}} \triangleq& p_{\text{commutator2}} \triangleq (\frac{\Upsilon - \Upsilon_{l}}{3}) \lvert v_{q+1} \rvert^{2},  \label{estimate 129}
\end{align}
\end{subequations}
and $p_{q+1} \triangleq p_{l} - p_{\text{osc}} - p_{\text{corr}} - p_{\text{com2}}$ while $\mathring{R}_{q+1}$ identically to  \eqref{estimate 115}. 

Now we start to work on $R_{\text{line}}$ from \eqref{estimate 127}. For any $\epsilon \in (0, 1-2m)$ fixed, we can estimate via Lemma \ref{[The. 1.4, RS16]}
\begin{equation}\label{estimate 117}
\lVert \mathcal{R} (-\Delta)^{m} w_{q+1} \rVert_{C_{t,x}} \overset{\eqref{estimate 145} \eqref{[Equ. (5.29) and (5.29a), BV19b]}}{\lesssim_{\epsilon}} \lVert \mathcal{R} w_{q+1}^{(p)} \rVert_{C_{t}C_{x}^{2m+\epsilon}} + \lVert \mathcal{R} w_{q+1}^{(c)} \rVert_{C_{t}C_{x}^{2m+\epsilon}}. 
\end{equation} 
First, we can rely on the fact that for any $s\in [0,t]$ there are at most two non-trivial cutoffs to obtain 
\begin{equation}\label{estimate 168}
\lVert \mathcal{R} w_{q+1}^{(p)} \rVert_{C_{t}C_{x}^{2m + \epsilon}} \overset{\eqref{estimate 95}}{\leq} 2\sup_{j} \sum_{\zeta \in \Lambda_{j}} \lVert \mathcal{R} (\bar{a}_{(\zeta)} W_{(\zeta)} (\Phi_{j}))\rVert_{C_{t}C_{x}^{2m+\epsilon}}.  
\end{equation} 
By \eqref{estimate 101} we see that \eqref{[Equ. (5.35), BV19b]} is satisfied with ``$C_{a}$'' $= e^{\frac{1}{2} L^{\frac{1}{4}}} \delta_{q+1}^{\frac{1}{2}} M_{0}(t)^{\frac{1}{2}} \lVert \gamma_{\zeta} \rVert_{C^{\lceil \frac{1}{2m} \rceil \vee 8} (B_{C_{\ast}} (\text{Id} ))}$ for all $N = 0, \hdots, \lceil \frac{1}{2m} \rceil \vee 8$ and thus we can choose $\beta < \frac{1}{3} (1-2m-\epsilon)$ and $a \in \mathbb{N}$ sufficiently large to deduce 
\begin{align}
\lVert \mathcal{R} w_{q+1}^{(p)} \rVert_{C_{t}C_{x}^{2m+ \epsilon}} \overset{\eqref{estimate 168} \eqref{[Equ. (5.36), BV19b]} \eqref{[Equ. (5.17), BV19b]}}{\lesssim}&  M e^{\frac{1}{2} L^{\frac{1}{4}}} \delta_{q+1}^{\frac{1}{2}} M_{0}(t)^{\frac{1}{2}} \lambda_{q+1}^{2m+ \epsilon -1} \nonumber \\ 
\lesssim&  c_{R} M_{0}(t) \delta_{q+2} [e^{\frac{1}{2} L^{\frac{1}{4}}}  a^{2^{q} (6 \beta + 2(2m + \epsilon -1) )}] \ll c_{R} M_{0}(t)\delta_{q+2}.  \label{estimate 118}
\end{align} 
Second, we see that 
\begin{equation}\label{estimate 169}
 \lVert \mathcal{R} w_{q+1}^{(c)} \rVert_{C_{t}C_{x}^{2m + \epsilon}} \overset{\eqref{[Equ. (5.29) and (5.29a), BV19b]}\eqref{estimate 105}}{\leq} 2\sup_{j} \sum_{\zeta \in \Lambda_{j}} \lVert \mathcal{R} ((\lambda_{q+1}^{-1} \nabla \bar{a}_{(\zeta)} + i \bar{a}_{(\zeta)} \zeta \cdot (\nabla \Phi_{j} - \text{Id} ) ) \times W_{(\zeta)} (\Phi_{j}) ) \rVert_{C_{t}C_{x}^{2m+ \epsilon}} 
\end{equation} 
where for any $N = 0, \hdots, \lceil \frac{1}{2m} \rceil \vee 8$, we can estimate by taking $a \in \mathbb{N}$ sufficiently large 
\begin{align}
&\lVert \lambda_{q+1}^{-1} \nabla \bar{a}_{(\zeta)} + i \bar{a}_{(\zeta)} \zeta \cdot  (\nabla \Phi_{j} - \text{Id} ) \rVert_{C_{t}C_{x}^{N}} \nonumber\\
\overset{\eqref{estimate 106}\eqref{estimate 107} }{\lesssim}& \lambda_{q+1}^{-1} e^{\frac{1}{2} L^{\frac{1}{4}}} c_{R}^{\frac{1}{4}} \delta_{q+1}^{\frac{1}{2}} M_{0}(t)^{\frac{1}{2}} \lVert \gamma_{\zeta} \rVert_{C^{N+1} (B_{C_{\ast}} (\text{Id} ))} l^{-N-1} \nonumber \\
&+ e^{\frac{3}{2} L^{\frac{1}{4}}} m_{L}^{4} \delta_{q+1}^{\frac{1}{2}} M_{0}(t)^{\frac{3}{2}} \delta_{q}^{\frac{1}{2}} \lambda_{q} \lVert \gamma_{\zeta} \rVert_{C^{N} (B_{C_{\ast}} (\text{Id} ))} l^{-N+1}  \nonumber\\
\lesssim& \delta_{q+1}^{\frac{1}{2}} \lVert \gamma_{\zeta} \rVert_{C^{N+1}(B_{C_{\ast}} (\text{Id} ))} M_{0}(t)^{\frac{1}{2}} \lambda_{q+1}^{-1} e^{\frac{1}{2} L^{\frac{1}{4}}} l^{-N-1}. \label{estimate 156}
\end{align}
Therefore, \eqref{[Equ. (5.35), BV19b]} holds with ``$C_{a}$''$= \delta_{q+1}^{\frac{1}{2}} M_{0}(t)^{\frac{1}{2}} \lVert \gamma_{\zeta} \rVert_{C^{(\lceil \frac{1}{2m} \rceil + 1) \vee 9} (B_{C_{\ast}} (\text{Id} ))} l^{-1} \lambda_{q+1}^{-1} e^{\frac{1}{2} L^{\frac{1}{4}}}$ and hence by by taking $\beta < \frac{1}{6} (\frac{5}{2} - 4m - 2\epsilon)$  and $a \in \mathbb{N}$ sufficiently large we obtain 
\begin{align}
 \lVert \mathcal{R} w_{q+1}^{(c)} \rVert_{C_{t}C_{x}^{2m+ \epsilon}} \overset{\eqref{estimate 169} \eqref{[Equ. (5.36), BV19b]} }{\lesssim}&  \sup_{j} \sum_{\zeta \in \Lambda_{j}} \frac{ \delta_{q+1}^{\frac{1}{2}} M_{0}(t)^{\frac{1}{2}} \lVert \gamma_{\zeta} \rVert_{C^{(\lceil \frac{1}{2m} \rceil + 1) \vee 9} (B_{C_{\ast}}(\text{Id} ))} l^{-1} \lambda_{q+1}^{-1} e^{\frac{1}{2} L^{\frac{1}{4}}} }{\lambda_{q+1}^{1- (2m + \epsilon)}}\nonumber\\
\overset{\eqref{[Equ. (5.17), BV19b]}}{\lesssim}& c_{R} M_{0}(t) \delta_{q+2} a^{2^{q} (6 \beta - \frac{5}{2} + 4m + 2 \epsilon)} \ll c_{R} M_{0}(t) \delta_{q+2}. \label{estimate 119}
\end{align}
Therefore, we conclude by applying \eqref{estimate 118} and \eqref{estimate 119} to \eqref{estimate 117} that   
\begin{equation}\label{estimate 120}
\lVert \mathcal{R} (-\Delta)^{m} w_{q+1} \rVert_{C_{t,x}} \overset{\eqref{estimate 117}\eqref{estimate 118} \eqref{estimate 119}}{\ll} c_{R} M_{0}(t) \delta_{q+2}. 
\end{equation}
As $\lVert \mathcal{R} (\frac{1}{2} w_{q+1} ) \rVert_{C_{t}C_{x}} \overset{\eqref{estimate 30} }{\lesssim} \lVert \mathcal{R} w_{q+1} \rVert_{C_{t}C_{x}^{2m + \epsilon}}$, we can apply the same estimates in \eqref{estimate 118} and \eqref{estimate 119} to $\mathcal{R} (\frac{1}{2} w_{q+1})$ and conclude that 
\begin{equation}\label{estimate 171}
\lVert R_{\text{line}} \rVert_{C_{t,x}} \overset{\eqref{estimate 127}}{=} \lVert \mathcal{R} ( \frac{1}{2} w_{q+1} + (-\Delta)^{m} w_{q+1}) \rVert_{C_{t,x}} \overset{\eqref{estimate 120} }{\ll} c_{R}M_{0}(t) \delta_{q+2}. 
\end{equation} 
Next, in order to work on $R_{\text{tran}}$ from \eqref{estimate 123} we make the key observation that 
\begin{align}
( \partial_{t} + \Upsilon_{l} (v_{l} \cdot \nabla) ) w_{q+1}^{(p)}(t,x) &\overset{\eqref{estimate 95}}{=}  \sum_{j} \sum_{\zeta \in \Lambda_{j}} [ \partial_{t} \bar{a}_{(\zeta)} (t,x) + \Upsilon_{l} (v_{l} \cdot \nabla) \bar{a}_{(\zeta)} (t,x)] W_{(\zeta)} (\Phi_{j} (t,x)) \nonumber\\
&+ \bar{a}_{(\zeta)} (t,x) \nabla W_{(\zeta)} (\Phi_{j} (t,x)) \cdot [ \partial_{t} \Phi_{j} (t,x) + \Upsilon_{l} (v_{l} \cdot \nabla) \Phi_{j} (t,x) ] \nonumber \\
\overset{\eqref{estimate 78}}{=}&  \sum_{j} \sum_{\zeta \in \Lambda_{j}} [ \partial_{t} \bar{a}_{(\zeta)} (t,x) + \Upsilon_{l} (v_{l} \cdot \nabla) \bar{a}_{(\zeta)} (t,x)] W_{(\zeta)} (\Phi_{j} (t,x)). \label{estimate 155}
\end{align} 
For any $\epsilon \in (\frac{1}{8}, \frac{1}{4})$ and $N = 0, \hdots, \lceil \frac{1}{\epsilon} \rceil \vee 8 = 8$, 
\begin{align}
& \lVert \partial_{t} \bar{a}_{(\zeta)} + \Upsilon_{l} (v_{l} \cdot \nabla) \bar{a}_{(\zeta)} \rVert_{C_{t}C_{x}^{N}} \\
&\overset{\eqref{estimate 102} \eqref{[Equ. (123), Y20a]} \eqref{estimate 88}}{\lesssim} e^{\frac{5}{2}L^{\frac{1}{4}}} c_{R}^{\frac{1}{4}} \delta_{q+1}^{\frac{1}{2}} M_{0}(t)^{\frac{1}{2}} \lVert \gamma_{\zeta} \rVert_{C^{N+1} (B_{C_{\ast}} (\text{Id} ))} l^{-N -1}  \nonumber \\
&\hspace{10mm} + e^{\frac{3}{2}L^{\frac{1}{4}}} \delta_{q+1}^{\frac{1}{2}} \lVert \gamma_{\zeta} \rVert_{C^{N+1} (B_{C_{\ast}} (\text{Id} ))} m_{L} M_{0}(t) l^{-N-1}  \lesssim \lVert \gamma_{\zeta} \rVert_{C^{N+1} (B_{C_{\ast}}(\text{Id} ))} \delta_{q+1}^{\frac{1}{2}} e^{\frac{3}{2} L^{\frac{1}{4}}} m_{L} M_{0}(t) l^{-N-1}. \nonumber
\end{align} 
Therefore, \eqref{[Equ. (5.35), BV19b]} is satisfied with ``$C_{a}$''$= \lVert \gamma_{\zeta} \rVert_{C^{9} (B_{C_{\ast}}(\text{Id} ))} \delta_{q+1}^{\frac{1}{2}} e^{\frac{3}{2} L^{\frac{1}{4}}} m_{L} M_{0}(t) l^{-1}$. Hence, by taking $\beta < \frac{1}{6} (\frac{1}{2} - 2 \epsilon)$ and $a \in \mathbb{N}$ sufficiently large we obtain 
\begin{align}
\lVert R_{\text{tran}} \rVert_{C_{t,x}} & \overset{ \eqref{estimate 123} \eqref{estimate 155}}{\lesssim}  \lVert \mathcal{R} (\sum_{j}\sum_{\zeta \in \Lambda_{j}} ( \partial_{t} \bar{a}_{(\zeta)} + \Upsilon_{l} (v_{l} \cdot \nabla) \bar{a}_{(\zeta)} ) W_{(\zeta)} (\Phi_{j}) ) \rVert_{C_{t}C_{x}^{\epsilon}} \label{estimate 172}\\
\overset{\eqref{[Equ. (5.36), BV19b]} \eqref{[Equ. (5.17), BV19b]}}{\lesssim}& \delta_{q+1}^{\frac{1}{2}} M_{0}(t) \lambda_{q}^{\frac{3}{2}} \lambda_{q+1}^{\epsilon -1} e^{\frac{3}{2} L^{\frac{1}{4}}} m_{L} \approx c_{R} M_{0}(t) \delta_{q+2} [a^{2^{q} (6\beta -\frac{1}{2} + 2 \epsilon)} e^{\frac{3}{2} L^{\frac{1}{4}}} m_{L}] \ll c_{R} M_{0}(t) \delta_{q+2}.  \nonumber
\end{align} 
Next, we work on $R_{\text{Nash}}$ in \eqref{estimate 124} which may be written using the fact that $\bar{a}_{(\zeta)} = \Upsilon_{l}^{-\frac{1}{2}} a_{(\zeta)}$ due to \eqref{estimate 90} as follows: 
\begin{align}
&R_{\text{Nash}} 
\overset{\eqref{estimate 124}\eqref{[Equ. (5.29) and (5.29a), BV19b]}\eqref{estimate 93}\eqref{estimate 105} }{=} \Upsilon_{l}^{\frac{1}{2}}  \sum_{j} \sum_{\zeta \in \Lambda_{j}} \mathcal{R} ( (a_{(\zeta)} W_{(\zeta)} (\Phi_{j}) \cdot \nabla) v_{l} \nonumber\\
&\hspace{25mm} +  ([ (\lambda_{q+1}^{-1} \nabla a_{(\zeta)} + i a_{(\zeta)} \zeta \cdot (\nabla \Phi_{j} - \text{Id} )) \times W_{(\zeta)} (\Phi_{j})]\cdot \nabla)v_{l}). \label{estimate 157}
\end{align} 
Now $\lVert \Upsilon_{l}^{\frac{1}{2}} \rVert_{C_{t}} \lesssim e^{\frac{1}{2} L^{\frac{1}{4}}}$ by \eqref{[Equ. (123), Y20a]} and thus considering \eqref{estimate 121}, for $\epsilon \in (\frac{1}{8}, \frac{1}{2})$ and choosing $\beta < \frac{1}{5} (1-2\epsilon)$ gives us immediately for $a \in \mathbb{N}$ sufficiently large 
\begin{align}
&\lVert R_{\text{Nash}} \rVert_{C_{t,x}} \label{estimate 173}\\
& \lesssim \lVert \Upsilon_{l}^{\frac{1}{2}} \rVert_{C_{t}} \sup_{j}  \sum_{\zeta \in \Lambda_{j}} \lVert \mathcal{R} (( a_{(\zeta)} W_{(\zeta)} \circ \Phi_{j} + ( \lambda_{q+1}^{-1} \nabla a_{(\zeta)} + ia_{(\zeta)} (\nabla \Phi_{j} - \text{Id} ) \zeta ) \times W_{(\zeta)} (\Phi_{j} )) \cdot \nabla v_{l} ) \rVert_{C_{t}C_{x}^{\epsilon}}  \nonumber \\
&\hspace{40mm} \overset{\eqref{estimate 121}}{\lesssim} e^{\frac{1}{2} L^{\frac{1}{4}}} (c_{R} M_{0}(t) \delta_{q+2} [M_{0}(t)^{\frac{1}{2}} a^{2^{q} (5 \beta-1+2\epsilon) } ]) \ll c_{R} M_{0}(t) \delta_{q+2}.  \nonumber 
\end{align} 
Next, we look at $R_{\text{corr}}$ from \eqref{estimate 125}. Again, we make the key observation that 
\begin{align}
&( \partial_{t} + \Upsilon_{l} (v_{l} \cdot \nabla ) ) w_{q+1}^{(c)} \label{estimate 131}\\
\overset{\eqref{[Equ. (5.29) and (5.29a), BV19b]} \eqref{estimate 105} \eqref{[Equ. (5.12), BV19b]}}{=}& \sum_{j} \sum_{\zeta \in\Lambda_{j}} \partial_{t} (\lambda_{q+1}^{-1} \nabla \bar{a}_{(\zeta)} + i \bar{a}_{(\zeta)} \zeta \cdot  (\nabla \Phi_{j} - \text{Id} )) \times W_{(\zeta)} (\Phi_{j}) \nonumber \\
&+ \Upsilon_{l} (v_{l} \cdot \nabla) (\lambda_{q+1}^{-1} \nabla \bar{a}_{(\zeta)} + i \bar{a}_{(\zeta)} \zeta \cdot ( \nabla \Phi_{j} - \text{Id} ) ) \times W_{(\zeta)} (\Phi_{j}) \nonumber \\
&+ (\lambda_{q+1}^{-1}\nabla \bar{a}_{(\zeta)} + i \bar{a}_{(\zeta)} \zeta \cdot (\nabla \Phi_{j} - \text{Id})) \times i \lambda_{q+1} \zeta \cdot W_{(\zeta)}(\Phi_{j}) [ \partial_{t} \Phi_{j} + \Upsilon_{l} (v_{l} \cdot \nabla) \Phi_{j} ] \nonumber \\
\overset{\eqref{estimate 78}}{=}&  \sum_{j} \sum_{\zeta \in\Lambda_{j}} \partial_{t} (\lambda_{q+1}^{-1} \nabla \bar{a}_{(\zeta)} + i \bar{a}_{(\zeta)} \zeta \cdot (\nabla \Phi_{j} - \text{Id} ) ) \times W_{(\zeta)} (\Phi_{j}) \nonumber \\
&+ \Upsilon_{l} (v_{l} \cdot \nabla) (\lambda_{q+1}^{-1} \nabla \bar{a}_{(\zeta)} + i \bar{a}_{(\zeta)} \zeta \cdot  ( \nabla \Phi_{j} - \text{Id} )) \times W_{(\zeta)} (\Phi_{j}). \nonumber 
\end{align} 
For any $\epsilon \in (\frac{1}{8}, \frac{1}{2})$ and $N = 0, \hdots, \lceil \frac{1}{2\epsilon} \rceil \vee 8 = 8$, by taking $a \in \mathbb{N}$ sufficiently large we can separately estimate 
\begin{subequations}\label{estimate 130}
\begin{align}
& \lambda_{q+1}^{-1} \lVert \partial_{t} \nabla \bar{a}_{(\zeta)} \rVert_{C_{t}C_{x}^{N}} \overset{\eqref{estimate 102}}{\lesssim} \lambda_{q+1}^{-1} e^{\frac{5}{2}L^{\frac{1}{4}}} \delta_{q+1}^{\frac{1}{2}} M_{0}(t)^{\frac{1}{2}} \lVert \gamma_{\zeta} \rVert_{C^{N+2} (B_{C_{\ast}} (\text{Id} ))} l^{-N -2}, \\
&  \lVert \partial_{t} \bar{a}_{(\zeta)} (\nabla \Phi_{j} - \text{Id} ) \rVert_{C_{t}C_{x}^{N}} \overset{\eqref{estimate 102} \eqref{estimate 107}}{\lesssim} e^{\frac{7}{2} L^{\frac{1}{4}}} \delta_{q+1}^{\frac{1}{2}} M_{0}(t)^{\frac{3}{2}} \lVert \gamma_{\zeta} \rVert_{C^{N+1} (B_{C_{\ast}} (\text{Id} ))} m_{L}^{4} \delta_{q}^{\frac{1}{2}} \lambda_{q} l^{-N},  \\
&  \lVert \bar{a}_{(\zeta)} \partial_{t} \nabla \Phi_{j} \rVert_{C_{t}C_{x}^{N}} \overset{\eqref{estimate 102} \eqref{estimate 84}}{\lesssim} e^{\frac{5}{2} L^{\frac{1}{4}}} \delta_{q+1}^{\frac{1}{2}} M_{0}(t)^{2} m_{L}^{5} \lVert \gamma_{\zeta} \rVert_{C^{N} (B_{C_{\ast}} (\text{Id} ))} \delta_{q}^{\frac{1}{2}} \lambda_{q} l^{-N},  \\
&  \lambda_{q+1}^{-1} \lVert \Upsilon_{l} (v_{l} \cdot \nabla) \nabla \bar{a}_{(\zeta)} \rVert_{C_{t}C_{x}^{N}} \overset{\eqref{estimate 88}\eqref{estimate 101}}{\lesssim} \lambda_{q+1}^{-1} e^{\frac{3}{2}L^{\frac{1}{4}}} \delta_{q+1}^{\frac{1}{2}} \lVert \gamma_{\zeta} \rVert_{C^{N+2} (B_{C_{\ast}} (\text{Id} ))} m_{L}^{4} M_{0}(t)^{\frac{3}{2}} l^{-N-2},  \\
& \lVert \Upsilon_{l} (v_{l} \cdot \nabla) \bar{a}_{(\zeta)} (\nabla \Phi_{j} - \text{Id} ) \rVert_{C_{t}C_{x}^{N}} 
\overset{\eqref{estimate 88}\eqref{estimate 107}}{\lesssim}  e^{\frac{5}{2}L^{\frac{1}{4}}} \delta_{q+1}^{\frac{1}{2}} \delta_{q}^{\frac{1}{2}} \lambda_{q} M_{0}(t)^{\frac{5}{2}} m_{L}^{5} \lVert \gamma_{\zeta} \rVert_{C^{N+1} (B_{C_{\ast}} (\text{Id} ))} l^{-N}, \\
&  \lVert \Upsilon_{l} v_{l} \bar{a}_{(\zeta)} \cdot \nabla \nabla \Phi_{j} \rVert_{C_{t}C_{x}^{N}} \overset{\eqref{[Equ. (123), Y20a]} \eqref{estimate 88} \eqref{estimate 106}\eqref{estimate 107}}{\lesssim} e^{ \frac{5}{2} L^{\frac{1}{4}}} \delta_{q+1}^{\frac{1}{2}} \delta_{q}^{\frac{1}{2}} \lambda_{q} M_{0}(t)^{\frac{5}{2}} m_{L}^{5} \lVert \gamma_{\zeta} \rVert_{C^{N}(B_{C_{\ast}}(\text{Id} ))} l^{-N}.
\end{align}
\end{subequations} 
Using \eqref{estimate 130} we can estimate for all $N = 0, \hdots, \lceil \frac{1}{\epsilon} \rceil \vee 8 = 8$,  
\begin{align}
& \lVert \partial_{t} (\lambda_{q+1}^{-1} \nabla \bar{a}_{(\zeta)} + i \bar{a}_{(\zeta)} \zeta \cdot (\nabla \Phi_{j} - \text{Id} )) + \Upsilon_{l} (v_{l} \cdot \nabla) (\lambda_{q+1}^{-1} \nabla \bar{a}_{(\zeta)} + i \bar{a}_{(\zeta)} \zeta \cdot (\nabla \Phi_{j} - \text{Id} ) ) \rVert_{C_{t}C_{x}^{N}} \nonumber  \\
\lesssim& \lambda_{q+1}^{-1} \lVert \partial_{t} \nabla \bar{a}_{(\zeta)} \rVert_{C_{t}C_{x}^{N}} + \lVert \partial_{t} \bar{a}_{(\zeta)} (\nabla \Phi_{j} - \text{Id} ) \rVert_{C_{t}C_{x}^{N}} + \lVert \bar{a}_{(\zeta)} \partial_{t} \nabla \Phi_{j} \rVert_{C_{t}C_{x}^{N}} \nonumber \\
&\hspace{3mm} + \lambda_{q+1}^{-1} \lVert \Upsilon_{l} (v_{l} \cdot \nabla) \nabla \bar{a}_{(\zeta)} \rVert_{C_{t}C_{x}^{N}} + \lVert \Upsilon_{l} (v_{l} \cdot \nabla) \bar{a}_{(\zeta)} (\nabla \Phi_{j} - \text{Id} ) \rVert_{C_{t}C_{x}^{N}} + \lVert \Upsilon_{l} v_{l} \bar{a}_{(\zeta)} \cdot \nabla^{2} \Phi_{j} \rVert_{C_{t}C_{x}^{N}} \nonumber \\
& \hspace{40mm} \overset{\eqref{estimate 130}}{\lesssim} \delta_{q+1}^{\frac{1}{2}}\lambda_{q+1}^{-1} l^{-N-2} e^{\frac{5}{2}L^{\frac{1}{4}}} \lVert \gamma_{\zeta} \rVert_{C^{N+2} (B_{C_{\ast}} (\text{Id} ))} m_{L}^{5}M_{0}(t)^{\frac{5}{2}}. \label{estimate 158}
\end{align} 
Hence, \eqref{[Equ. (5.35), BV19b]} holds with 
``$C_{a}$'' = $ \delta_{q+1}^{\frac{1}{2}}\lambda_{q+1}^{-1} l^{-2} e^{\frac{5}{2} L^{\frac{1}{4}}} \lVert \gamma_{\zeta} \rVert_{C^{10} (B_{C_{\ast}} (\text{Id} ))} m_{L}^{5} M_{0}(t)^{\frac{5}{2}}$. Therefore, by choosing $\beta < \frac{1}{6} (1-2\epsilon)$ and $a \in \mathbb{N}$ sufficiently large, relying on the fact that for all $s \in [0,t]$ fixed, there exist at most two non-trivial cutoffs, we can estimate 
\begin{align}
& \lVert \mathcal{R} ( (\partial_{t} + \Upsilon_{l} (v_{l} \cdot \nabla )) w_{q+1}^{(c)} ) \rVert_{C_{t,x}} \nonumber \\
\overset{\eqref{estimate 131}}{\lesssim}& \sup_{j} \sum_{\zeta \in \Lambda_{j}} \lVert \mathcal{R} (\partial_{t} (\lambda_{q+1}^{-1} \nabla \bar{a}_{(\zeta)} + i \bar{a}_{(\zeta)} \zeta \cdot  (\nabla \Phi_{j} - \text{Id} )) \times W_{(\zeta)} (\Phi_{j}) \nonumber \\
& \hspace{10mm} + \Upsilon_{l} (v_{l} \cdot \nabla) (\lambda_{q+1}^{-1} \nabla \bar{a}_{(\zeta)} + i \bar{a}_{(\zeta)} \zeta \cdot  ( \nabla \Phi_{j} - \text{Id} )) \times W_{(\zeta)} (\Phi_{j})) \rVert_{C_{t}C_{x}^{\epsilon}} \nonumber \\
\overset{\eqref{[Equ. (5.36), BV19b]} \eqref{[Equ. (5.17), BV19b]}}{\lesssim}& c_{R} M_{0}(t) \delta_{q+2} [ a^{2^{q}(6 \beta + 2 \epsilon - 1)} e^{\frac{5}{2} L^{\frac{1}{4}}} m_{L}^{5} M_{0}(t)^{\frac{3}{2}}]  \ll c_{R} M_{0}(t) \delta_{q+2}.  \label{estimate 133}
\end{align} 
Next, we can directly estimate within $R_{\text{corr}}$ from \eqref{estimate 125} for $\beta < \frac{1}{8}$ and $a \in \mathbb{N}$ sufficiently large 
\begin{align}
\lVert  \Upsilon_{l} (w_{q+1}^{(c)} \mathring{\otimes} w_{q+1}  + w_{q+1}^{(p)} \mathring{\otimes} w_{q+1}^{(c)}) \rVert_{C_{t,x}} \overset{\eqref{[Equ. (5.29) and (5.29a), BV19b]} \eqref{[Equ. (123), Y20a]}}{\lesssim}& e^{L^{\frac{1}{4}}} \lVert w_{q+1}^{(c)} \rVert_{C_{t,x}} [\lVert w_{q+1}^{(c)} \rVert_{C_{t,x}} + \lVert w_{q+1}^{(p)} \rVert_{C_{t,x}}]\nonumber \\
\overset{\eqref{estimate 103}\eqref{estimate 97} }{\lesssim}&  c_{R} M_{0}(t) \delta_{q+2}  e^{2L^{\frac{1}{4}}} a^{2^{q} ( 4 \beta - \frac{1}{2} )}  \ll c_{R} M_{0}(t) \delta_{q+2}.  \label{estimate 132} 
\end{align} 
Considering \eqref{estimate 133} and \eqref{estimate 132}, we now conclude that 
\begin{equation}\label{estimate 174}
\lVert R_{\text{corr}} \rVert_{C_{t,x}} \overset{\eqref{estimate 125} \eqref{estimate 132}\eqref{estimate 133}}{\ll} c_{R}M_{0}(t) \delta_{q+2}. 
\end{equation} 
Finally, we can estimate $R_{\text{com1}}$ in \eqref{[Equ. (139b), Y20a]} and $R_{\text{com2}}$ in \eqref{estimate 128} as follows. First, we can write 
\begin{align}
& (( \Upsilon (v_{q} \mathring{\otimes} v_{q}))\ast_{x} \phi_{l})\ast_{t} \varphi_{l} - \Upsilon_{l} (v_{l} \mathring{\otimes} v_{l}) \label{estimate 170} \\
=& (( \Upsilon (v_{q} \mathring{\otimes} v_{q})) \ast_{x} \phi_{l})\ast_{t} \varphi_{l} - ( \Upsilon ( v_{q} \ast_{x} \phi_{l}) \mathring{\otimes} (v_{q} \ast_{x} \phi_{l}) ) \ast_{t} \varphi_{l} \nonumber\\
&+ ( \Upsilon ( v_{q} \ast_{x} \phi_{l}) \mathring{\otimes} (v_{q} \ast_{x} \phi_{l}) ) \ast_{t} \varphi_{l}  - \Upsilon_{l} ([ ( v_{q} \ast_{x} \phi_{l})\mathring{\otimes} (v_{q} \ast_{x} \phi_{l} )] \ast_{t} \varphi_{l}) \nonumber\\
&+ \Upsilon_{l} ([ ( v_{q} \ast_{x} \phi_{l})\mathring{\otimes} (v_{q} \ast_{x} \phi_{l} )] \ast_{t} \varphi_{l})  - \Upsilon_{l} [ (v_{q} \ast_{x} \phi_{l} \ast_{t} \varphi_{l}) \mathring{\otimes} (v_{q} \ast_{x} \phi_{l} \ast_{t} \varphi_{l})], \nonumber
\end{align} 
apply standard commutator estimate to it (e.g., \cite[Pro. 6.5]{BV19b} or \cite[Equ. (5)]{CDS12}) so that we can estimate by taking $\beta < \frac{3}{182}$ and $a \in \mathbb{N}$ sufficiently large, as well as using the fact that $\delta \in (0, \frac{1}{24})$ 
\begin{align}
\lVert R_{\text{com1}} \rVert_{C_{t,x}} \overset{\eqref{[Equ. (123), Y20a]}}{\lesssim}& l e^{L^{\frac{1}{4}}} \lVert v_{q} \rVert_{C_{t,x}} \lVert v_{q} \rVert_{C_{t,x}^{1}} + l^{\frac{1}{2} - 2 \delta} m_{L}^{2} \lVert v_{q} \rVert_{C_{t,x}}^{\frac{3}{2} + 2 \delta} \lVert v_{q} \rVert_{C_{t,x}^{1}}^{\frac{1}{2} - 2 \delta} \nonumber\\ 
\overset{\eqref{[Equ. (124), Y20a]}\eqref{[Equ. (126b), Y20a]} \eqref{[Equ. (126a), Y20a]} }{\lesssim}&  c_{R} M_{0}(t) \delta_{q+2} [ m_{L}^{\frac{11}{2} - 6 \delta} M_{0}(t)^{\frac{1}{4} - \delta} a^{2^{q} (-\frac{1}{8} + \frac{91\beta}{12})}] \ll c_{R} M_{0}(t) \delta_{q+2}. \label{estimate 135}
\end{align}  
We point out that in contrast to \cite{HZZ19, Y20a}, this is where we need $\delta \in (0, \frac{1}{24})$ rather than $\delta \in (0, \frac{1}{12})$ from previous works such as \cite[p. 43]{HZZ19} and \cite[p. 30]{Y20a}, essentially due to a new choice of $l$ in \eqref{[Equ. (56), Y20a]}. Second, as $\lvert \Upsilon (t) - \Upsilon_{l}(t) \rvert \lesssim l^{\frac{1}{2} - 2 \delta} \lVert \Upsilon \rVert_{C_{t}^{\frac{1}{2} - 2 \delta}} \lesssim l^{\frac{1}{2} - 2 \delta} m_{L}^{2}$ due to \eqref{[Equ. (124), Y20a]} and $-\frac{3}{4} + 3 \delta < - \frac{5}{8}$ because $\delta \in (0, \frac{1}{24})$, by taking $\beta < \frac{5}{64}$ and $a \in \mathbb{N}$ sufficiently large we obtain 
\begin{equation}\label{estimate 175}
\lVert R_{\text{com2}} \rVert_{C_{t,x}} 
\overset{\eqref{estimate 128}\eqref{estimate 134}}{\lesssim} l^{\frac{1}{2} - 2 \delta} m_{L}^{2} (m_{L} M_{0}(t)^{\frac{1}{2}})^{2} \lesssim c_{R} M_{0}(t) \delta_{q+2} \lambda_{q}^{8\beta - \frac{5}{8}} m_{L}^{4} \ll c_{R} M_{0}(t) \delta_{q+2}.
\end{equation}
Applying \eqref{estimate 171}, \eqref{estimate 172}, \eqref{estimate 114}, \eqref{estimate 173}, \eqref{estimate 174}, \eqref{estimate 135}-\eqref{estimate 175} to \eqref{estimate 115} verifies \eqref{[Equ. (126c), Y20a]} at level $q+1$.  

The verification of how $(v_{q+1}, \mathring{R}_{q+1})$ are $(\mathcal{F}_{t})_{t\geq 0}$-adapted and that $v_{q+1}(0,x)$ and $\mathring{R}_{q+1}(0,x)$ are deterministic if $v_{q}(0,x)$ and $\mathring{R}_{q}(0,x)$ are deterministic is similar to the proof of Proposition \ref{[Pro. 4.8, Y20a]} and previous works \cite{HZZ19, Y20a}. \\

\section{Appendix}\label{Appendix}
\subsection{Further preliminaries}
\begin{lemma}\label{divergence inverse}
\rm{(\cite[Equ. (5.34)]{BV19b})} For any $v \in C^{\infty}(\mathbb{T}^{3})$ that is mean-zero, define 
\begin{equation}\label{estimate 5}
(\mathcal{R}v)_{kl} \triangleq ( \partial^{k}\Delta^{-1} v^{l} + \partial^{l} \Delta^{-1} v^{k}) - \frac{1}{2} (\delta_{kl} + \partial^{k} \partial^{l} \Delta^{-1}) \text{div} \Delta^{-1} v
\end{equation} 
for $k, l \in \{1,2,3\}$. Then $\mathcal{R} v(x)$ is a symmetric trace-free matrix for each $x \in \mathbb{T}^{3}$ that  satisfies $\text{div} (\mathcal{R} v) = v$. When $v$ does not satisfy $\int_{\mathbb{T}^{3}} vdx = 0$, we overload notation and denote $\mathcal{R} v \triangleq \mathcal{R} (v  -\int_{\mathbb{T}^{3}} v dx)$. Moreover, $\mathcal{R}$ satisfies the classical Calder$\acute{\mathrm{o}}$n-Zygmund and Schauder estimates: $\lVert (-\Delta)^{\frac{1}{2}} \mathcal{R} \rVert_{L_{x}^{p} \mapsto L_{x}^{p}} + \lVert \mathcal{R} \rVert_{L_{x}^{p} \mapsto L_{x}^{p}}  + \lVert \mathcal{R} \rVert_{C_{x} \mapsto C_{x}} \lesssim 1$ for all $p \in (1, \infty)$. 
\end{lemma}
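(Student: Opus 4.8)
\textbf{Proof proposal for Lemma \ref{divergence inverse}.}

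The plan is to verify the three asserted properties of the operator $\mathcal{R}$ defined by \eqref{estimate 5} in turn: the algebraic structure (symmetry and trace-freeness of $\mathcal{R}v(x)$ at each point), the divergence identity $\operatorname{div}(\mathcal{R}v)=v$, and the boundedness estimates. First I would observe that symmetry is immediate from the formula: swapping $k\leftrightarrow l$ in \eqref{estimate 5} leaves each of the three summands invariant, since $\partial^k\Delta^{-1}v^l+\partial^l\Delta^{-1}v^k$ is manifestly symmetric and $\delta_{kl}+\partial^k\partial^l\Delta^{-1}$ is as well. For the trace, I would sum over $k=l\in\{1,2,3\}$: the first term contributes $2\sum_k\partial^k\Delta^{-1}v^k=2\operatorname{div}\Delta^{-1}v$, while the second contributes $\tfrac12(3+\operatorname{div}\nabla\Delta^{-1})\operatorname{div}\Delta^{-1}v=\tfrac12(3+1)\operatorname{div}\Delta^{-1}v=2\operatorname{div}\Delta^{-1}v$ (using $\Delta\Delta^{-1}=\mathrm{Id}$ on mean-zero functions), so the trace of $\mathcal{R}v$ vanishes identically.

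Next I would establish $\operatorname{div}(\mathcal{R}v)=v$, i.e. $\sum_l\partial^l(\mathcal{R}v)_{kl}=v^k$ for each $k$. Applying $\partial^l$ and summing: the first group gives $\sum_l\partial^l\partial^k\Delta^{-1}v^l+\sum_l\partial^l\partial^l\Delta^{-1}v^k=\partial^k\Delta^{-1}\operatorname{div}v+v^k$ (again using $\Delta\Delta^{-1}=\mathrm{Id}$); the second group gives $-\tfrac12\sum_l(\partial^l\delta_{kl}+\partial^l\partial^k\partial^l\Delta^{-1})\operatorname{div}\Delta^{-1}v=-\tfrac12(\partial^k+\partial^k)\operatorname{div}\Delta^{-1}v=-\partial^k\operatorname{div}\Delta^{-1}v=-\partial^k\Delta^{-1}\operatorname{div}v$. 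The two $\partial^k\Delta^{-1}\operatorname{div}v$ terms cancel, leaving $v^k$. The case of general (not mean-zero) $v$ follows by the stated overloading, since $v-\fint_{\mathbb{T}^3}v\,dx$ is mean-zero and $\operatorname{div}$ annihilates constants. Finally, the boundedness claims are standard harmonic analysis: $\mathcal{R}$ is built from compositions of $\Delta^{-1}$ with two derivatives (and one term with a single derivative composed with $(-\Delta)^{1/2}$), so each entry of $\mathcal{R}$ is a Fourier multiplier homogeneous of degree $0$ (for the $L^p\to L^p$ bounds and for $(-\Delta)^{1/2}\mathcal{R}$) and these are classical Calderón--Zygmund operators bounded on $L^p(\mathbb{T}^3)$ for $p\in(1,\infty)$; the $C_x\to C_x$ bound is the corresponding Schauder estimate. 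Since this lemma is quoted verbatim from \cite[Equ. (5.34)]{BV19b}, I would simply cite that reference for the routine verifications rather than reproduce them.

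The only mild subtlety — and the place I would be most careful — is keeping track of the normalization constant $\tfrac12$ in the projection term and confirming that the trace computation and the divergence computation are simultaneously consistent with it; a different constant would spoil either trace-freeness or the identity $\operatorname{div}(\mathcal{R}v)=v$. But this is bookkeeping rather than a genuine obstacle. I expect no real difficulty here: the lemma is purely a recollection of a known construction, and the proof is a short direct computation plus an appeal to classical singular-integral theory.
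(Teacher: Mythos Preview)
Your proposal is correct, and in fact goes beyond what the paper does: the paper provides no proof of this lemma at all, merely stating it with a citation to \cite[Equ. (5.34)]{BV19b}. Your direct verification of symmetry, trace-freeness, and the divergence identity is accurate. One small imprecision: you write that ``each entry of $\mathcal{R}$ is a Fourier multiplier homogeneous of degree $0$,'' but actually $\mathcal{R}$ is order $-1$ (e.g., $\partial^k\Delta^{-1}$ has symbol $\sim|\xi|^{-1}$), which is why $(-\Delta)^{1/2}\mathcal{R}$ is order $0$ and why the $C_x\to C_x$ bound holds (the kernel of $\mathcal{R}$ is locally integrable, behaving like $|x|^{-2}$ in three dimensions); this only strengthens, not weakens, your boundedness claims.
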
 

The following stationary phase lemma played a crucial role in our proofs. 
\begin{lemma}\label{[Lem. 5.7, BV19b]}
\rm{(\cite[Lem. 5.7]{BV19b}, \cite[Lem. 2.2]{DS17} )} Let $\lambda \zeta \in \mathbb{Z}^{3}$, $\alpha \in (0,1)$, and $p  \in \mathbb{N}$. Assume that $a \in C^{p+\alpha}(\mathbb{T}^{3})$ and $\Phi \in C^{p+1 + \alpha} (\mathbb{T}^{3})$ are smooth functions such that the phase function $\Phi$ obeys 
\begin{equation}\label{[Equ. (5.34a), BV19b]}
C^{-1} \leq \lvert \nabla \Phi \rvert \leq C
\end{equation} 
on $\mathbb{T}^{3}$, for some constant $C \geq 1$. Then 
\begin{equation}\label{[Equ. (5.34b), BV19b]}
\lVert \mathcal{R} ( a(x) e^{i \lambda \zeta \cdot \Phi(x)} ) \rVert_{C_{x}^{\alpha}} \lesssim \frac{\lVert a \rVert_{C_{x}}}{\lambda^{1-\alpha}} + \frac{ \lVert a \rVert_{C_{x}^{p+\alpha}} + \lVert a \rVert_{C_{x}} \lVert \nabla \Phi \rVert_{C_{x}^{p+\alpha}}}{\lambda^{p-\alpha}}. 
\end{equation} 
\end{lemma}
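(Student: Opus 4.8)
\textbf{Proof plan for Lemma \ref{[Lem. 5.7, BV19b]} (stationary phase for the anti-divergence operator).} The statement to be proved is the stationary-phase bound \eqref{[Equ. (5.34b), BV19b]} on $\lVert \mathcal{R}(a(x) e^{i\lambda\zeta\cdot\Phi(x)})\rVert_{C_x^\alpha}$, where $\mathcal{R}$ is the divergence-inverse operator from Lemma \ref{divergence inverse}. The plan is to first reduce to estimating the convolution kernel of $\mathcal{R}$ against the oscillatory function, then split into a ``main'' contribution (frequencies comparable to $\lambda$) and an ``error'' contribution (the rest), exploiting non-stationarity of the phase in the error regime via repeated integration by parts.

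First I would recall that, by Lemma \ref{divergence inverse}, $\mathcal{R}$ is built from $\Delta^{-1}$, $\partial^k\Delta^{-1}$, $\partial^k\partial^l\Delta^{-1}$ applied to the mean-free part of its argument, so componentwise $\mathcal{R}f = K * f$ for a fixed matrix-valued kernel $K$ on $\mathbb{T}^3$ whose Fourier symbol is a homogeneous-degree-$(-1)$ Calder\'on--Zygmund type multiplier (plus lower order). Writing $g(x) \triangleq a(x)e^{i\lambda\zeta\cdot\Phi(x)}$, the $C_x^\alpha$-norm of $\mathcal{R}g$ is controlled, via interpolation \eqref{estimate 9} and the Schauder bound $\lVert\mathcal{R}\rVert_{C_x\to C_x}\lesssim 1$, by a combination of $\lVert\mathcal{R}g\rVert_{C_x}$ and a fractional quantity; the cleanest route is to bound $\lVert\mathcal{R}g\rVert_{C_x^\alpha}$ directly through the oscillatory-integral estimate $\lvert\mathcal{R}g(x)\rvert = \lvert\int K(x-y)g(y)\,dy\rvert$ and its finite-difference analogue. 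The decomposition is: write $g = \sum_{k\ge 0} g_k$ with $g_k = P_k g$ Littlewood--Paley pieces (or, equivalently, split $K = K_{\text{low}} + K_{\text{high}}$). On the band $2^k \sim \lambda$ (finitely many $k$, using \eqref{[Equ. (5.34a), BV19b]} so that $\nabla\Phi$ neither vanishes nor blows up) one simply uses $\lVert\mathcal{R}P_k g\rVert_{C_x^\alpha}\lesssim 2^{k(\alpha-1)}\lVert P_k g\rVert_{C_x}\lesssim \lambda^{\alpha-1}\lVert a\rVert_{C_x}$, which is the first term of \eqref{[Equ. (5.34b), BV19b]}.

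For the off-diagonal bands ($2^k\ll\lambda$ or $2^k\gg\lambda$) the phase $\lambda\zeta\cdot\Phi(y)$ has gradient of size $\sim\lambda$ (bounded below and above by \eqref{[Equ. (5.34a), BV19b]}), so against a symbol localized at frequency $2^k\ll\lambda$ we integrate by parts $p$ times, each step producing a factor $2^k/\lambda$ and redistributing at most one derivative onto the amplitude or onto $\nabla\Phi$; this yields a gain $(2^k/\lambda)^p$ at the cost of $\lVert a\rVert_{C_x^{p+\alpha}} + \lVert a\rVert_{C_x}\lVert\nabla\Phi\rVert_{C_x^{p+\alpha}}$, and summing the geometric series in $k$ produces exactly the second term $\lambda^{\alpha-p}(\lVert a\rVert_{C_x^{p+\alpha}} + \lVert a\rVert_{C_x}\lVert\nabla\Phi\rVert_{C_x^{p+\alpha}})$. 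For the high bands $2^k\gg\lambda$ one instead uses that $P_k g$ is itself small, $\lVert P_k g\rVert_{C_x}\lesssim 2^{-k(p+\alpha)}(\lVert a\rVert_{C_x^{p+\alpha}} + \lVert a\rVert_{C_x}\lVert\nabla\Phi\rVert_{C_x^{p+\alpha}})$ by the chain rule and Fa\`a di Bruno (the $\lambda$-powers from differentiating the exponential are absorbed since $\lambda\ll 2^k$), and then $\lVert\mathcal{R}P_k g\rVert_{C_x^\alpha}\lesssim 2^{k(\alpha-1)}\lVert P_k g\rVert_{C_x}$ summed over $2^k\gg\lambda$ again gives a term dominated by $\lambda^{\alpha-p}$ times the same constant (using $p\ge 1$). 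Combining the three regimes gives \eqref{[Equ. (5.34b), BV19b]}. Alternatively, and perhaps more transparently, one can follow \cite[Lem. 2.2]{DS17} directly: write $\mathcal{R}g$ as an honest oscillatory integral, integrate by parts $p$ times inside $\int K(x-y)a(y)e^{i\lambda\zeta\cdot\Phi(y)}dy$ using the vector field $\frac{\nabla_y\Phi}{i\lambda\lvert\nabla_y\Phi\rvert^2}\cdot\nabla_y$ (well-defined by \eqref{[Equ. (5.34a), BV19b]}), and track the worst term.

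\textbf{Main obstacle.} The delicate point is the interaction of the \emph{singularity} of the kernel $K$ of $\mathcal{R}$ (it is only of Calder\'on--Zygmund type, not smooth) with the integration-by-parts argument: naively integrating by parts in $y$ lands derivatives on $K(x-y)$, which is not better than $\lvert x-y\rvert^{-3}$ and its derivatives worsen by one power each, so the boundary/near-diagonal region $\lvert x-y\rvert\lesssim\lambda^{-1}$ must be handled separately and must not spoil the gain. The clean fix is to stay on the Fourier side throughout — work with the Littlewood--Paley decomposition so that $\mathcal{R}$ acts as a bounded operator on each dyadic block with the harmless factor $2^{k(\alpha-1)}$, and never differentiate the kernel at all; then \emph{all} the work is in the two amplitude estimates, (i) the non-stationary phase estimate $\lVert\mathcal{R}P_k g\rVert\lesssim (2^k/\lambda)^p\cdots$ for $2^k\lesssim\lambda$, and (ii) the decay $\lVert P_k g\rVert_{C_x}\lesssim 2^{-k(p+\alpha)}\cdots$ for $2^k\gtrsim\lambda$, both of which are routine once one commits to that framework. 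A secondary bookkeeping nuisance is the mean-free projection hidden in $\mathcal{R}$ (the $-\int_{\mathbb{T}^3}$ in Lemma \ref{divergence inverse}), but since subtracting a constant changes neither $C_x^\alpha$-seminorms nor the oscillatory estimates, it only affects the $C_x$ part and is absorbed into the first term of \eqref{[Equ. (5.34b), BV19b]}.
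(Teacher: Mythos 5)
The paper itself does not prove this lemma: it is quoted from \cite[Lem. 5.7]{BV19b} and \cite[Lem. 2.2]{DS17} (see also \cite[Pro. E.2]{BDIS15}), and your Littlewood--Paley decomposition with non-stationary phase off the band $2^{k}\sim\lambda$ is essentially the proof given in those references, so the approach is the right one. One bookkeeping caution: in the regime $2^{k}\ll\lambda$ the per-step gain $2^{k}/\lambda$ that you quote is the worst case in which all $p$ derivatives from the integration by parts fall on the frequency-localized kernel; those terms carry only $\lVert a\rVert_{C_{x}}$-type constants and, after summing $2^{k(\alpha-1)}(2^{k}/\lambda)^{p}$ over $2^{k}\lesssim\lambda$, they feed into the \emph{first} term $\lambda^{\alpha-1}\lVert a\rVert_{C_{x}}$ of \eqref{[Equ. (5.34b), BV19b]} rather than the second, whereas the terms in which derivatives land on $a$ or on $\nabla\Phi$ come with the full gain $\lambda^{-p}$ and are the ones producing $\lambda^{\alpha-p}(\lVert a\rVert_{C_{x}^{p+\alpha}}+\lVert a\rVert_{C_{x}}\lVert\nabla\Phi\rVert_{C_{x}^{p+\alpha}})$, the intermediate cross terms being absorbed by interpolating the $C^{j}$ norms between $C^{0}$ and $C^{p+\alpha}$. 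With that accounting made explicit (and the high bands $2^{k}\gg\lambda$ handled as you indicate), your plan closes.
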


\begin{lemma}\label{[The. 1.4, RS16]}
(\cite[The. 1.4]{RS16}, \cite[The. B.1]{D19}) Let $\gamma, \epsilon > 0$ and $\beta \geq 0$ such that $2 \gamma + \beta + \epsilon \leq 1$, and let $f(t): \mathbb{T}^{3} \mapsto \mathbb{R}^{3}$. If $f \in C_{x}^{2 \gamma + \beta + \epsilon}$, then $(-\Delta)^{\gamma} f \in C_{x}^{\beta}$, and there exists a constant $C = C(\epsilon)> 0$ such that 
\begin{equation}\label{estimate 145}
\lVert (-\Delta)^{\gamma} f \rVert_{C_{t}C_{x}^{\beta}} \leq C(\epsilon) [ f ]_{C_{t}C_{x}^{2 \gamma + \beta + \epsilon}}. 
\end{equation} 
\end{lemma}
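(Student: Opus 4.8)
The statement is the classical fractional-Laplacian Schauder estimate on the torus, and it is moreover quoted verbatim from \cite{RS16, D19}, so the plan is simply to reconstruct a proof by a Littlewood--Paley (equivalently, Besov-space) argument. The key reduction is to the standard boundedness $(-\Delta)^{\gamma}: B^{s}_{\infty,\infty}(\mathbb{T}^{3}) \to B^{s-2\gamma}_{\infty,\infty}(\mathbb{T}^{3})$, after which the Hölder-norm conclusion follows from the identification of Hölder--Zygmund with Besov spaces.

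First I would observe that $(-\Delta)^{\gamma}$ is a Fourier multiplier with symbol $\lvert\xi\rvert^{2\gamma}$ that vanishes at $\xi = 0$, so it annihilates constants; hence for each fixed $t$ one may replace $f(t,\cdot)$ by its mean-zero part $f(t,\cdot) - \fint_{\mathbb{T}^{3}} f(t,\cdot)\, dx$ without changing $(-\Delta)^{\gamma}f(t,\cdot)$. For a mean-zero function $g$ on $\mathbb{T}^{3}$ one has $\lVert g\rVert_{C_{x}} \lesssim [g]_{C_{x}^{s}}$ by a Poincaré-type inequality; this is exactly what legitimizes the \emph{seminorm} $[f]_{C_{t}C_{x}^{2\gamma+\beta+\epsilon}}$ (rather than the full norm) on the right of \eqref{estimate 145}, and it reduces everything to a purely spatial estimate, uniform in $t$, so the $C_{t}$ in both norms is a spectator.

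Next, fix $t$, write $f = f(t,\cdot)$ mean-zero, and set $s \triangleq 2\gamma + \beta + \epsilon$, which by hypothesis lies in $(0,1]$ (strictly positive since $\gamma,\epsilon>0$, and $\le 1$ by assumption). Using a periodic Littlewood--Paley decomposition $f = \sum_{j\ge 0}\Delta_{j}f$, I would apply $(-\Delta)^{\gamma}$ blockwise: since $\widehat{\Delta_{j}f}$ is supported in $\{\lvert\xi\rvert\sim 2^{j}\}$, write $(-\Delta)^{\gamma}\Delta_{j}f = \psi_{j}\ast\Delta_{j}f$ with $\widehat{\psi_{j}}(\xi) = \lvert\xi\rvert^{2\gamma}\tilde\chi(2^{-j}\xi)$ for a fixed annular bump $\tilde\chi$; by scaling $\lVert\psi_{j}\rVert_{L^{1}}\lesssim 2^{2\gamma j}$ (the periodization of the kernels being harmless), so $\lVert(-\Delta)^{\gamma}\Delta_{j}f\rVert_{L^{\infty}_{x}}\lesssim 2^{2\gamma j}\lVert\Delta_{j}f\rVert_{L^{\infty}_{x}}$. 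Since $(-\Delta)^{\gamma}$ commutes with $\Delta_{j}$ up to a slight fattening of the projector, summing against the $B^{\beta+\epsilon}_{\infty,\infty}$ norm gives
\[
\lVert(-\Delta)^{\gamma}f\rVert_{B^{\beta+\epsilon}_{\infty,\infty}}\;\lesssim\;\sup_{j\ge 0}2^{(\beta+\epsilon)j}2^{2\gamma j}\lVert\Delta_{j}f\rVert_{L^{\infty}_{x}}\;=\;\sup_{j\ge 0}2^{sj}\lVert\Delta_{j}f\rVert_{L^{\infty}_{x}}\;\approx\;\lVert f\rVert_{B^{s}_{\infty,\infty}}\;\approx_{\epsilon}\;[f]_{C_{x}^{s}},
\]
using $C_{x}^{\sigma} = B^{\sigma}_{\infty,\infty}$ for $0<\sigma<1$ (and $C_{x}^{1}\hookrightarrow B^{1}_{\infty,\infty}$ for the endpoint $s=1$) together with the mean-zero reduction. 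Then $B^{\beta+\epsilon}_{\infty,\infty}\hookrightarrow C^{\beta}_{x}$, valid since $0<\beta+\epsilon\le1$, finishes the estimate; this embedding, and the Besov--Hölder identification near $\sigma\to1$, are where the constant degenerates, which is the source of the $C(\epsilon)$ in \eqref{estimate 145}.

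I do not expect a genuine obstacle here: this is routine harmonic analysis. The only points needing care are (i) the torus periodization of the Littlewood--Paley kernels, which changes nothing qualitatively; (ii) handling the zero mode so the seminorm on the right is legitimate; and (iii) tracking the $\epsilon$-dependence of the implicit constants. An alternative, Littlewood--Paley-free route is the singular-integral representation $(-\Delta)^{\gamma}f(x) = c\,\mathrm{p.v.}\!\int\bigl(f(x)-f(x-y)\bigr)K(y)\,dy$ with $K$ the periodized kernel $\sim\lvert y\rvert^{-3-2\gamma}$: absolute convergence near the origin follows from $f\in C_{x}^{2\gamma+\epsilon}$ with $\epsilon>0$, and the $C^{\beta}_{x}$ bound follows by splitting the difference $(-\Delta)^{\gamma}f(x)-(-\Delta)^{\gamma}f(x')$ into the regions $\lvert y\rvert\lesssim\lvert x-x'\rvert$ and $\lvert y\rvert\gtrsim\lvert x-x'\rvert$ and using $f\in C_{x}^{2\gamma+\beta+\epsilon}$, the constraint $2\gamma<1$ (forced by $2\gamma+\beta+\epsilon\le1$) keeping us in the first-difference regime throughout.
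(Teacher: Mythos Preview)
Your proposal is correct. Note, however, that the paper does not actually prove this lemma: it is stated in the Appendix as a quoted result from \cite{RS16} and \cite{D19} with no argument given, so there is no ``paper's proof'' to compare against. Your Littlewood--Paley/Besov reconstruction is the standard route and is sound; the mean-zero reduction to justify the seminorm on the right, the multiplier bound $\lVert(-\Delta)^{\gamma}\Delta_{j}f\rVert_{L^{\infty}}\lesssim 2^{2\gamma j}\lVert\Delta_{j}f\rVert_{L^{\infty}}$, and the embedding $B^{\beta+\epsilon}_{\infty,\infty}\hookrightarrow C^{\beta}$ (valid for $0<\beta+\epsilon\le 1$, covering also the case $\beta=0$ used in \eqref{estimate 35} and \eqref{estimate 117}) are all correct, and you have correctly identified the $\epsilon\to 0$ degeneration of the constant. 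The alternative singular-integral sketch is also fine and closer in spirit to the original proof in \cite{RS16}.
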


\subsection{Proof of Theorem \ref{Theorem 2.2}}
The proof is similar to those of previous works \cite{HZZ19, Y20a}. In short, we can fix $T > 0$ arbitrarily, any $\kappa \in (0,1)$ and $K > 1$ such that $\kappa K^{2} \geq 1$, rely on Theorem \ref{Theorem 2.1} and Proposition \ref{[Proposition 3.8, HZZ19]} to deduce the existence of $L > 1$ and a measure $P \otimes_{\tau_{L}} R$ that is a martingale solution to \eqref{stochastic GNS} on $[0,\infty)$ starting from a deterministic initial condition $\xi^{\text{in}}$ of Theorem \ref{Theorem 2.1} which coincides with $P = \mathcal{L} (u)$ over a random interval $[0, \tau_{L} ]$ and satisfies 
\begin{equation}\label{[Equation (3.19a), HZZ19]}
P \otimes_{\tau_{L}} R( \{ \tau_{L} \geq T \}) 
 \overset{\eqref{[Equ. (23), Y20a]} \eqref{estimate 6}}{=} \int_{\Omega_{0}} Q_{\omega} (\{ \tau_{L} (\omega) \geq T \}) P(d \omega) \overset{\eqref{[Equ. (23), Y20a]} \eqref{[Equ. (184), Y20a]}}{=} \textbf{P} ( \{ T_{L} \geq T \})  \overset{\eqref{estimate 1}}{>} \kappa.
\end{equation} 
It follows that  
\begin{equation}\label{estimate 161}
\mathbb{E}^{P \otimes_{\tau_{L}} R} [ \lVert \xi(T) \rVert_{L_{x}^{2}}^{2}] \overset{\eqref{estimate 7} \eqref{[Equation (3.19a), HZZ19]} }{>} \kappa [ K \lVert \xi^{\text{in}} \rVert_{L_{x}^{2}} + K (T \text{Tr} (GG^{\ast} ))^{\frac{1}{2}}]^{2}  
\geq  \kappa K^{2}(\lVert \xi^{\text{in}} \rVert_{L_{x}^{2}}^{2} + T \text{Tr} (GG^{\ast})). 
\end{equation} 
On the other hand, the classical method of Galerkin approximation gives us another martingale solution $\Theta$ (e.g., \cite{FR08, GRZ09}) which starts from the same initial condition $\xi^{\text{in}}$ and satisfies 
\begin{equation*}
\mathbb{E}^{\Theta} [ \lVert \xi(T)\rVert_{L_{x}^{2}}^{2}] \leq \lVert \xi^{\text{in}} \rVert_{L_{x}^{2}}^{2} + T\text{Tr}(GG^{\ast}).
\end{equation*} 
Because $\kappa K^{2} \geq 1$, this implies $P \otimes_{\tau_{L}} R\neq \Theta$ and hence \eqref{stochastic GNS} fails the uniqueness in law. 
 
\subsection{Proof of Theorem \ref{Theorem 2.4}}
The proof is similar to that of Theorem \ref{Theorem 2.2}; we sketch it for completeness. We fix $T> 0$ arbitrarily, any $\kappa \in (0,1)$, and $K > 1$ such that $\kappa K^{2} \geq 1$. The probability measure $P \otimes_{\tau_{L}} R$ from Proposition \ref{[Pro. 5.5, Y20a]} satisfies $P \otimes_{\tau_{L}} R (\{ \tau_{L} \geq T \})  > \kappa$ due to \eqref{[Equ. (144), Y20c]}-\eqref{[Equ. (147), Y20c]} and \eqref{estimate 1} which, together with \eqref{estimate 68}, implies 
\begin{equation}\label{estimate 162}
\mathbb{E}^{P \otimes_{\tau_{L}} R} [ \lVert \xi(T) \rVert_{L_{x}^{2}}^{2}] > \kappa K^{2} e^{T} \lVert \xi^{\text{in}} \rVert_{L_{x}^{2}}^{2}, 
\end{equation}
where $\xi^{\text{in}}$ is the deterministic initial condition constructed through Theorem \ref{Theorem 2.3}. On the other hand, via a standard Galerkin approximation scheme (e.g., \cite{FR08, GRZ09}), one can readily construct a probabilistically weak solution $\Theta$ to \eqref{stochastic GNS} starting also from $\xi^{\text{in}}$ such that 
\begin{equation*}
\mathbb{E}^{\Theta} [ \lVert \xi(T) \rVert_{L_{x}^{2}}^{2}] \leq e^{T} \lVert \xi^{\text{in}} \rVert_{L_{x}^{2}}^{2}. 
\end{equation*}
This implies the lack of joint uniqueness in law for \eqref{stochastic GNS} and consequently the non-uniqueness in law for \eqref{stochastic GNS} by \cite[The. C.1]{HZZ19}, which is an infinite-dimensional version of \cite[The. 3.1]{C03}. 

\section*{Acknowledgements}
The author expresses deep gratitude to Prof. Jiahong Wu and Prof. Zachary Bradshaw for valuable discussions and Prof. Michael Salins for informing the author of the works \cite{M96, MMP14, MS93}.

\end{document}